\documentclass[11pt]{article}
\makeatletter
\newcommand*{\rom}[1]{\expandafter\@slowromancap\romannumeral #1@}

\usepackage{amsmath,amsthm,amssymb,comment,fullpage}

\usepackage[T1]{fontenc}
\usepackage{latexsym}
\usepackage{amsmath,amsfonts,amsthm,amssymb,amscd}
\input amssym.def
\input amssym.tex
\usepackage{color}
\usepackage{hyperref}
\usepackage{url}



\newcommand{\ol}[1]{\overline{#1}}
\newcommand{\calb}{\mathcal{B}}
\newcommand{\calp}{\mathcal{P}}

\numberwithin{equation}{section}

\newtheorem{thm}{Theorem}[section]

\newtheorem{cor}[thm]{Corollary}

\newtheorem{prop}[thm]{Proposition}

\newtheorem{que}[thm]{Question}

\newtheorem{cla}[thm]{Claim}


\newtheoremstyle{TheoremNum}
{\topsep}{\topsep}              
{\itshape}                      
{}                              
{\bfseries}                     
{.}                             
{ }                             
{\thmname{#1}\thmnote{ \bfseries #3}}
\theoremstyle{TheoremNum}
\newtheorem{thmrep}{Theorem}

\newtheoremstyle{TheoremNum}
{\topsep}{\topsep}              
{\itshape}                      
{}                              
{\bfseries}                     
{.}                             
{ }                             
{\thmname{#1}\thmnote{ \bfseries #3}}
\theoremstyle{TheoremNum}
\newtheorem{proprep}{Proposition}

\newtheoremstyle{TheoremNum}
{\topsep}{\topsep}              
{\itshape}                      
{}                              
{\bfseries}                     
{.}                             
{ }                             
{\thmname{#1}\thmnote{ \bfseries #3}}
\theoremstyle{TheoremNum}

\theoremstyle{plain}

\newtheorem{definition}[thm]{Definition}

\newtheorem{lemma}[thm]{Lemma}
\newtheorem{proposition}[thm]{Proposition}
\newtheorem{theorem}[thm]{Theorem}

\newtheorem{remark}[thm]{Remark}

\newcommand\be{\begin{equation}}
	\newcommand\ee{\end{equation}}
\newcommand\bea{\begin{eqnarray}}
	\newcommand\eea{\end{eqnarray}}
\newcommand\bi{\begin{itemize}}
	\newcommand\ei{\end{itemize}}
\newcommand\ben{\begin{enumerate}[(a)]}
	\newcommand\een{\end{enumerate}}
\newcommand\bc{\begin{center}}
	\newcommand\ec{\end{center}}
\def\ba#1\ea{\begin{align*}#1\end{align*}}




\newcommand{\R}{\ensuremath{\mathbb{R}}}
\newcommand{\C}{\ensuremath{\mathbb{C}}}
\newcommand{\Z}{\ensuremath{\mathbb{Z}}}
\newcommand{\Q}{\mathbb{Q}}
\newcommand{\N}{\mathbb{N}}

\newcommand{\ve}{\varepsilon}

\newcommand{\lr}[1]{\left\langle#1\right\rangle}
\newcommand{\inv}[1]{{#1}^{-1}}

\newcommand{\norm}[1]{\left\vert\left\vert {#1}\right\vert\right\vert}

\newcommand{\cavg}[2]{\mathop{\mathbb{E}}_{{#1}\in{#2}}\ }
\newcommand{\lpf}[1]{\mathrm{lpf}({#1})}

\newcommand{\zn}{{\mathbb{Z}_N}}

\newcommand{\rcavg}[1]{\mathop{\mathbb{E}}_{#1\in R}}
\newcommand{\lcm}{\ensuremath{\mathrm{lcm}}}

\usepackage{enumerate}

\date{\today}
\title{On the Polynomial Szemer\'edi Theorem in Finite Commutative Rings}
\author{Vitaly Bergelson and Andrew Best}

\begin{document}
	\maketitle
	\begin{abstract}
		The polynomial Szemer\'{e}di theorem from \cite{bl} implies that, for every $\delta \in (0,1)$, every family $\{P_1,\ldots, P_m\} \subset \Z[y]$ of nonconstant polynomials with constant term zero, and all sufficiently large $N \in \N$, every subset of $\{1,\ldots, N\}$ with cardinality at least $\delta N$ contains a nontrivial configuration $\{x,x+P_1(y),\ldots, x+P_m(y)\}$. When the polynomials are assumed independent, one can expect a sharper result to hold over finite fields, which was established by Peluse in \cite{peluse}.
		
		One goal of this article is to explain this and similar results as the consequence of joint ergodicity in the presence of \emph{asymptotic total ergodicity}, introduced in \cite{bb}. Guided by this concept, we establish, over finite commutative rings, a version of the polynomial Szemer\'{e}di theorem for independent polynomials $\{P_1,\ldots, P_m\} \subset \Z[y_1,\ldots, y_n]$, deriving new combinatorial consequences. Recall that the characteristic of a ring $R$ with unity is the least positive integer $N$ (possibly composite) such that $N \cdot 1_R = 0_R$.
		\begin{enumerate}
			\item Let $\mathcal R$ be one of the collections of finite commutative rings indicated in Corollary~\ref{cor: zero density} below (for example, finite fields, $\Z/N\Z$, Galois rings). There exists $\gamma \in (0,1)$ such that, for every $R \in \mathcal R$ such that the least prime factor of its characteristic is sufficiently large, every subset $A \subset R$ with $|A| \geq |R|^{1-\gamma}$ contains a nontrivial configuration $\{x,x+P_1(y),\ldots, x+P_m(y)\}$.  
			\item 	Let $\delta \in (0,1)$. For every finite commutative ring $R$ such that the least prime factor of its characteristic is sufficiently large and every subset $A \subset R$ with $|A| \geq \delta |R|$, the number of pairs $(x,y) \in R \times R^n$ such that $\{x,x+P_1(y),\ldots, x+P_m(y)\}$ is a subset of $A$ with cardinality $m+1$ is roughly the expected number, $(|A|/|R|)^{m+1} \cdot |R|^{n+1}$. 
		\end{enumerate}
		The fact that rings have zero divisors is the source of many obstacles, which we overcome; for example, by studying character sums, we develop a new bound on the number of roots of an integer polynomial over a general finite commutative ring, a result which is of independent interest.
	\end{abstract}

	\section{Introduction}
	
	\subsection{A version of the polynomial Szemer\'{e}di theorem over finite commutative rings}
	We are interested in \emph{nontrivial configurations} that may be found in subsets of finite rings with various properties. Let $S$ be a set, and let $f_1(x,y),\ldots, f_m(x,y) : S\times S \to S$ be some functions. We say that a configuration $\{f_1(x,y),\ldots, f_m(x,y)\}$ is \emph{nontrivial} for given $x,y \in S$ if the set has cardinality $m$ and \emph{degenerated} if the cardinality is less than $m$.
	
	A special case of the polynomial Szemer\'{e}di theorem over $\N = \{1,2,\ldots, \}$ obtained in \cite{bl} and \cite{bm unif} has the following finitary form.
	\begin{theorem}\label{finitary poly Sz}
		Let $\mathbf P = \{P_1(y),\ldots, P_m(y)\} \subset \Z[y]$ be a family of nonconstant polynomials, each with constant term zero, and let $\delta \in (0,1)$. There exists $C > 0$ such that, for every integer $N \geq C$, every subset of the interval $\{1,\ldots, N\}$ of cardinality at least $\delta N$ contains a nontrivial configuration $\{x,x+P_1(y),\ldots, x+P_m(y)\}$ for some $x,y \in \N$. 
	\end{theorem}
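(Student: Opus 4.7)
The plan is to pass from the finitary statement to an infinitary one via Furstenberg's correspondence principle, and then prove the resulting multiple recurrence theorem by PET (polynomial ergodic) induction. First I would argue by contradiction: suppose Theorem~\ref{finitary poly Sz} fails for some $\delta \in (0,1)$ and polynomial family $\mathbf P$, so there exist arbitrarily large $N$ and sets $E_N \subset \{1,\ldots,N\}$ of density at least $\delta$ containing no nontrivial configuration of the prescribed form. Encoding each $E_N$ as a point of $\{0,1\}^\Z$ and extracting a weak-$*$ limit of averaged point masses along the shift orbit produces an invertible measure-preserving system $(X,\mathcal B,\mu,T)$ and a measurable set $A$ with $\mu(A) \geq \delta$ whose asymptotic configuration counts match those of the $E_N$. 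Under this correspondence, the absence of configurations forces
\[
\mu\bigl(A \cap T^{-P_1(y)} A \cap \cdots \cap T^{-P_m(y)} A\bigr) = 0
\]
for every nonzero $y \in \Z$.

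It thus suffices to establish the Bergelson--Leibman polynomial recurrence theorem: for any such system and any $A$ with $\mu(A) > 0$,
\[
\liminf_{N \to \infty} \frac{1}{N} \sum_{y=1}^{N} \mu\bigl(A \cap T^{-P_1(y)} A \cap \cdots \cap T^{-P_m(y)} A\bigr) > 0.
\]
I would prove this by PET induction: assign to each tuple $(P_1,\ldots,P_m)$ a well-founded complexity (for instance, a lexicographically ordered invariant built from degrees and leading coefficients), then apply the van der Corput inequality to the $L^2$ average $\frac{1}{N}\sum_{y=1}^N T^{P_1(y)} f_1 \cdots T^{P_m(y)} f_m$. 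The resulting double average involves the shifted-difference polynomials $P_i(y+h) - P_j(y)$, which form a strictly simpler family. Iterating reduces matters to averages controlled by a finite-step characteristic factor of the system, at which stage one shows that the limiting average is bounded below by a positive quantity depending only on $\delta$ and $\mathbf P$.

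The main obstacle is implementing the PET scheme so that the induction terminates with explicit control on the characteristic factor. One must choose the complexity ordering so that each van der Corput step strictly decreases the invariant, manage the combinatorial bookkeeping of the evolving polynomial families, and ultimately invoke the Host--Kra--Ziegler structure theorem to identify the characteristic factor as an inverse limit of nilsystems. Positivity of the average is then reduced to a question about equidistribution of polynomial orbits on nilmanifolds, where the density hypothesis $\mu(A) \geq \delta$ yields a lower bound depending only on $\delta$ and $\mathbf P$. With the recurrence theorem in hand, the forced vanishing for every nonzero $y$ contradicts the positive $\liminf$, producing the sought contradiction and proving Theorem~\ref{finitary poly Sz}.
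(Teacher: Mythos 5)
The paper does not prove Theorem~\ref{finitary poly Sz}; it states it as a known result, citing Bergelson--Leibman \cite{bl} for the multiple recurrence theorem and Bergelson--McCutcheon \cite{bm unif} for the passage to a uniform finitary statement. So there is no in-paper proof to compare against, and your proposal should be judged as an independent sketch of how the cited result is established.

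Your outline is the correct standard framework (Furstenberg correspondence plus polynomial multiple recurrence), but two points need care. First, the correspondence as you state it is slightly too strong: the hypothesis only excludes \emph{nontrivial} configurations, so you get $\mu(A\cap T^{-P_1(y)}A\cap\cdots\cap T^{-P_m(y)}A)=0$ only for those $y\in\Z$ with $0,P_1(y),\ldots,P_m(y)$ pairwise distinct. Over $\Z$ there are only finitely many degenerating $y$ (the paper makes exactly this remark), so the $\liminf$ still forces a contradiction, but you should say this. Second, and more substantively, the entire difficulty of Theorem~\ref{finitary poly Sz} is concentrated in the Bergelson--Leibman recurrence theorem, and your treatment of it is a one-sentence gesture at ``PET $\to$ characteristic factor $\to$ Host--Kra--Ziegler $\to$ equidistribution on nilmanifolds $\to$ positivity.'' That chain is a plausible modern framework but it is not how \cite{bl} argue (their 1996 proof predates Host--Kra--Ziegler and proceeds by a multiple-recurrence argument through primitive/compact extensions), and each link you invoke --- identifying the characteristic factor for a general independent polynomial tuple, and proving the positivity lower bound on the structured part --- is itself a major theorem. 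As written, the proposal amounts to assuming the polynomial Szemerédi theorem in order to prove its finitary corollary; the reduction step you actually carry out is correct, but the core recurrence input is left as an unproved black box.
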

	\begin{remark} For example, if $\mathbf P = \{y,y^2\}$, the configuration $\{x,x+y,x+y^2\} \subset \Z$ is nontrivial as long as $y \neq 0,1$, and the value of $C$ guaranteed to exist by Theorem~\ref{finitary poly Sz} accounts for these degenerating values of $y$. Using the fact that the number of real roots of a univariate real polynomial is bounded by its degree, it is clear that a finite family of polynomials $\mathbf P \subset \Z[y]$ will have finitely many values of $y$ which degenerate a configuration $\{x,x+P_1(y),\ldots, x+P_m(y)\}$ over integers. The same bound does not in general hold over finite rings, so ensuring nontriviality of configurations in rings is more involved, as we will see later.
	\end{remark}
	From Theorem~\ref{finitary poly Sz}, it is not difficult to derive a corresponding theorem over finite commutative rings, a result which seems to have gone unnoticed in the literature. In this article, rings have unity, denoted $1_R$ or $1$, and $1_R \neq 0_R$. We recall that the \emph{characteristic} of a finite commutative ring $R$ is the additive order of unity, i.e., the smallest positive integer $N$ such that $N \cdot 1 = 0$. All finite fields have prime characteristic, but finite rings can have composite characteristic; for example, $\Z/N\Z$, denoted $\Z_N$ in this article, has characteristic $N$.
	\begin{theorem}\label{ring poly Sz}
		Let $\mathbf P = \{P_1(y),\ldots, P_m(y)\} \subset \Z[y]$ be a family of nonconstant polynomials, each with constant term zero, and let $\delta \in (0,1)$. There exists $C > 0$ such that for any finite commutative ring $R$ with characteristic at least $C$, any subset $A \subset R$ of cardinality\footnote{We denote the cardinality of a finite set $S$ by $|S|$ or $\#S$.} at least $\delta|R|$ contains a nontrivial configuration $\{x,x+P_1(y),\ldots, x+P_m(y)\}$ for some $x,y \in R$. 
	\end{theorem}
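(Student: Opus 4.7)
The plan is to deduce Theorem~\ref{ring poly Sz} from Theorem~\ref{finitary poly Sz} in two descending steps: first, replace $R$ with its prime subring $H = \Z \cdot 1_R \cong \Z_N$, where $N = \mathrm{char}(R)$; then, replace $H$ with a subinterval of $\N$ short enough that nontriviality of a configuration in $\Z$ is preserved under reduction modulo $N$.

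\emph{Step 1: descent to the prime subring.} The prime subring $H = \{k\cdot 1_R : k \in \Z\}$ is a subring of $R$ of cardinality $N$, and as an additive subgroup it partitions $R$ into $|R|/|H|$ cosets of equal size. Pigeonhole produces some $c \in R$ with $|A \cap (c + H)| \geq \delta |H| = \delta N$; set $A' := (A - c) \cap H \subseteq H$. Because each $P_i \in \Z[y]$ and $H$ is a subring, $P_i(H) \subseteq H$, so any nontrivial configuration $\{x', x' + P_1(y'), \ldots, x' + P_m(y')\}$ found inside $A'$ lifts, via $x := c + x'$ and $y := y'$, to a nontrivial configuration of the required form inside $A$.

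\emph{Step 2: descent to a short integer interval.} Apply Theorem~\ref{finitary poly Sz} to $\mathbf{P}$ at density $\delta/2$ to obtain a threshold $M_0$, and set $C := \lceil 2M_0/\delta \rceil + 1$. Fix $R$ with $N = \mathrm{char}(R) \geq C$. Via the canonical isomorphism $H \cong \Z_N$, lift $A'$ to $\widetilde A \subseteq \{0,1,\ldots,N-1\} \subset \N$, so $|\widetilde A| \geq \delta N$. Partition this lift into $\lfloor N/M_0 \rfloor$ consecutive intervals of length exactly $M_0$, discarding a tail of length less than $M_0$; by the choice of $C$ the tail contains fewer than $(\delta/2) N$ elements of $\widetilde A$, so pigeonhole supplies an interval $I$ of length $M_0$ with $|\widetilde A \cap I| \geq (\delta/2) M_0$. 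A translation identifies $\widetilde A \cap I$ with a subset of $\{1,\ldots,M_0\}$ of cardinality at least $(\delta/2) M_0$, and Theorem~\ref{finitary poly Sz} then yields integers $x_0, y_0 \in \N$ with
\[
\{x_0,\, x_0 + P_1(y_0),\, \ldots,\, x_0 + P_m(y_0)\} \subseteq \widetilde A \cap I
\]
nontrivial in $\Z$.

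\emph{Step 3: the main obstacle.} The entire configuration lies in the length-$M_0$ interval $I$, and since $M_0 \leq N$, reduction modulo $N$ is injective on $I$; the image therefore still has $m+1$ elements, yielding a nontrivial configuration in $A' \subseteq H$, which by Step~1 completes the proof. This short-interval trick is the crux of the reduction, and addresses precisely the difficulty flagged by the remark following Theorem~\ref{finitary poly Sz}: polynomial equations over $\Z_N$ may possess many more solutions than over $\Z$, so one cannot directly quote a $\Z_N$-analogue of Theorem~\ref{finitary poly Sz} without worrying about large families of degenerate $y$. Confining the $(m+1)$-tuple to an integer interval shorter than $N$ forces all nonzero integer differences $(x_0 + P_i(y_0)) - (x_0 + P_j(y_0))$ to lie in $(-N, N) \setminus \{0\}$, hence to remain nonzero modulo $N$, sidestepping the issue entirely.
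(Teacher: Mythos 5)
Your proof is correct and follows the paper's overall outline: descend to the prime subring $\Z_N \cdot 1_R$ via coset-slicing and pigeonhole, lift to an integer interval, apply Theorem~\ref{finitary poly Sz}, and note that the resulting nontrivial integer configuration remains nontrivial modulo $N$. The one place you diverge is Step~2, where you shrink the lift further to a length-$M_0$ interval (at the cost of halving the density to $\delta/2$) before invoking Theorem~\ref{finitary poly Sz}; this is unnecessary. The paper applies Theorem~\ref{finitary poly Sz} directly to the lift $I \subseteq \{1,\ldots,N\}$ at density $\delta$, taking $C$ to be the threshold for that density. Since the resulting configuration lies entirely inside $\{1,\ldots,N\}$, whose $N$ elements represent each residue class modulo $N$ exactly once, reduction modulo $N$ is already injective on the configuration and nontriviality is preserved for free. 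Your framing of ``confine the $(m+1)$-tuple to an interval shorter than $N$'' as the crux is accurate as far as it goes, but the full interval $\{1,\ldots,N\}$ already qualifies, so the additional shrinking step and the density loss are harmless yet dispensable. (One minor point to tighten: you write ``$M_0 \leq N$'' in Step~3, but injectivity of reduction modulo $N$ on an interval requires length at most $N$, which your choice $N \geq C > 2M_0/\delta > M_0$ does guarantee; and you should take $M_0$ strictly larger than the Theorem~\ref{finitary poly Sz} threshold for $\delta/2$, since that theorem's conclusion holds for integers strictly exceeding its constant.)
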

	\begin{proof}
		Applying Theorem~\ref{finitary poly Sz}, let $C > 0$ be such that, for every integer $N \geq C$, every subset of the interval $\{1,\ldots, N\}$ of cardinality at least $\delta N$ contains a nontrivial configuration $\{x,x+P_1(y),\ldots, x+P_m(y)\}$ for some $x,y \in \N$.
		
		Let $R$ be a finite commutative ring with characteristic $N \geq C$, and let $A$ be a subset of cardinality at least $\delta|R|$. As an additive group, $R$ contains the subgroup $\Z_N$ generated by $1$; hence we may decompose $R$ as a finite disjoint union of cosets $J_j$ of $\Z_N$. Since $|A| \geq \delta |R|$, it follows that $A \cap J_j$ has cardinality at least $\delta |J_j| = \delta N$ for at least one $j$. Thus, there exists $r \in R$ and a subset $I$ of $\{1,\ldots, N\}$ such that $A \cap J_j = \{r + i : i \in I\}$ and $I$ has cardinality at least $\delta N$. By choice of $C$, the set $I$ contains a nontrivial configuration $\{x',x'+P_1(y'),\ldots, x'+P_m(y')\}$ for some $x',y' \in \N$. A nontrivial configuration over the interval $\{1,\ldots, N\}$ remains nontrivial when interpreted modulo $N$. Thus, $A \cap J_j$ contains the nontrivial configuration $\{x,x+P_1(y),\ldots, x+P_m(y)\}$, where $x = r + x'$ and $y = y'$.
	\end{proof}
	We collect some remarks on Theorem~\ref{ring poly Sz} below:
	\begin{enumerate}
		\item As can be seen by considering the family $\mathbf P = \{y,2y,\ldots, ky\}$ for a positive integer $k$, the restriction of the conclusion of the theorem to only those rings with sufficiently large characteristic is clearly necessary, since the configuration $\{x,x+y,\ldots, x+ky\}$ is degenerated for every $x,y \in R$ if $R$ has characteristic $k$ or less.
		\item With the conclusion restricted to finite fields, Theorem~\ref{ring poly Sz} was shown in the course of proving \cite[Theorem 5.16]{blm}, and the idea of slicing by cosets comes from there. Moreover, the full strength of the cited theorem allows one to only require that the cardinality of the field, instead of the characteristic, be sufficiently large. It is not possible to strengthen Theorem~\ref{ring poly Sz} in the same way because of local obstruction. To give a trivial counterexample, in a finite commutative ring of characteristic 3, which may be of arbitrarily large finite cardinality, every configuration $\{x,x+3y\}$ is degenerated. One may ask whether there is a similar counterexample in which there actually are nontrivial configurations $\{x,x+3y\}$ for at least some $x,y \in R$ that may in principle be found in a large set $A$, but $A$ nonetheless avoids them. Indeed, there is one: Over the ring $R_m$ defined as the direct product of $m$ copies of $\Z_3$ and one copy of $\Z_9$ with coordinatewise addition and multiplication, the set $A = \Z_3 \times \cdots \times \Z_3 \times \{0,1,2\}$ has cardinality $|R_m|/3$ and contains no nontrivial configurations $\{x,x+3y\}$; the positive integer $m$ was arbitrary.
		\item It is known (see \cite{bll}) that, for a family $\mathbf P = \{P_1,\ldots, P_m\}$ of nonconstant integer-valued polynomials\footnote{A polynomial $P \in \R[y]$ is \emph{integer-valued} if $P(y) \in \Z$ for every $y \in \Z$. Every integer-valued polynomial has rational coefficients.}, Theorem~\ref{finitary poly Sz} holds if and only if $\mathbf P$ is \emph{jointly intersective}, i.e., for every positive integer $k$, there exists 
		an integer $y \in \Z$ such that $P_j(y) \equiv 0 \bmod k$ for each $j$. When $m = 1$ and $\mathbf P = \{P\}$, we usually say $P$ is intersective rather than $\mathbf P$ is jointly intersective. Recalling that integer-valued polynomials may have non-integer rational coefficients, we observe that some jointly intersective families, such as $\mathbf P_1 = \{\frac{y(y-1)}{2}, 3 \cdot \frac{y(y-1)}{2}\}$,\footnote{It follows by \cite[Proposition 6.1]{bll} that $\mathbf P_1$ is jointly intersective.} are composed of such polynomials. Theorem~\ref{ring poly Sz} is by the same proof also true for jointly intersective families, provided that we further restrict the conclusion of the theorem to only those rings with characteristic coprime with any denominator of any coefficient of any polynomial in the family. Consider $\mathbf P_1$. There is no well-defined way to interpret division by 2 in a ring of even characteristic, so there is nothing to assert about such rings in connection with configurations determined by this family. To avoid disclaimers like this, throughout this article we will generally only work with \emph{integer polynomials}, i.e., polynomials with integer coefficients, which are meaningful in any commutative ring.
		\item For a family $\mathbf P$ of nonconstant integer polynomials, $\mathbf P$ is jointly intersective if and only if Theorem~\ref{ring poly Sz} holds. The forward implication follows from the previous remark. It is not hard to see that the backward implication holds by contraposition. Indeed, suppose that $\mathbf P$ is not jointly intersective, and let $k$ be a positive integer such that, for every integer $y \in \Z$, there exists $j$ such that $P_j(y) \not\equiv 0 \bmod k$. Fix a positive integer $m$, and consider the set $A := \{n \in \Z_{km} : n \equiv 0 \bmod k\}$, which has cardinality $m = \frac{1}{k} |\Z_{km}|$. It is clear that $A$ does not contain any configurations of the form $\{x,x+P_1(y),\ldots, x+P_m(y)\}$ for $x,y \in \Z_{km}$, whether nontrivial or degenerated, because the presence of any such configuration would imply that $P_j(y) \equiv 0 \bmod k$ for all $j$. The ring $\Z_{km}$ has arbitrarily large characteristic since $m$ was arbitrary, so the conclusion of Theorem~\ref{ring poly Sz} is false for $\delta < 1/k$. 
	\end{enumerate}
	Some of the previous remarks together imply the following equivalence, which we record here for clarity:
	\begin{theorem}\label{ring poly Sz equiv versions}
		Let $\mathbf P = \{P_1,\ldots, P_m\} \subset \Z[y]$ be a family of nonconstant integer polynomials. The following are equivalent:
		\begin{enumerate}
			\item $\mathbf P$ is jointly intersective, i.e., for every positive integer $k$, there exists an integer $y \in \Z$ such that $P_j(y) \equiv 0 \bmod k$ for each $j \in \{1,\ldots, m\}$.
			\item For each $\delta \in (0,1)$, there exists $C > 0$ such that, for every integer $N > C$, every subset of the interval $\{1,\ldots, N\}$ of cardinality at least $\delta N$ contains a nontrivial configuration $\{x,x+P_1(y),\ldots, x+P_m(y)\}$ for some $x,y \in \N$.
			\item For each $\delta \in (0,1)$, there exists $C > 0$ such that, for any finite commutative ring $R$ with characteristic at least $C$, any subset $A \subset R$ of cardinality at least $\delta|R|$ contains a nontrivial configuration $\{x,x+P_1(y),\ldots, x+P_m(y)\}$ for some $x,y \in R$.
		\end{enumerate}
	\end{theorem}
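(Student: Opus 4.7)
The plan is to establish the three implications $(1) \Rightarrow (2) \Rightarrow (3) \Rightarrow (1)$, each of which has essentially been indicated in one of the remarks preceding the statement. The only nonelementary ingredient is $(1) \Rightarrow (2)$, which is the polynomial Szemer\'edi theorem for jointly intersective families: for this I would simply cite \cite{bll}, noting that integer polynomials with constant term zero are automatically jointly intersective (take $y=0$), so this in particular recovers Theorem~\ref{finitary poly Sz}.

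For $(2) \Rightarrow (3)$, I would repeat the coset-slicing argument used in the proof of Theorem~\ref{ring poly Sz}. Given a finite commutative ring $R$ whose characteristic $N$ exceeds the constant supplied by $(2)$ and a subset $A \subset R$ of density at least $\delta$, decompose $R$ as a disjoint union of additive cosets of the subgroup $\langle 1_R \rangle \cong \Z_N$. Pigeonhole produces a coset $J$ on which $|A \cap J| \geq \delta N$, and translating identifies $A \cap J$ with a subset $I \subset \{1,\ldots,N\}$ of density at least $\delta$, which by $(2)$ contains a nontrivial configuration. Because all entries of this configuration lie in $\{1,\ldots,N\}$, their pairwise differences are bounded in absolute value by $N-1$, so distinct integer entries remain distinct modulo $N$; hence the configuration stays nontrivial when pushed into $R$.

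For $(3) \Rightarrow (1)$, I would argue by contraposition, as in the fourth remark following Theorem~\ref{ring poly Sz}. Suppose $\mathbf P$ is not jointly intersective, and fix a positive integer $k$ for which no $y \in \Z$ satisfies $P_j(y) \equiv 0 \bmod k$ for every $j$. For each positive integer $t$, take $R = \Z_{kt}$, whose characteristic $kt$ is arbitrarily large, and consider the subset $A := k\Z_{kt} = \{n \in \Z_{kt} : k \mid n\}$ of density $1/k$. Any configuration $\{x, x+P_1(y),\ldots, x+P_m(y)\} \subset A$ would force $P_j(y) \equiv 0 \bmod k$ in $\Z_{kt}$, hence also in $\Z$ for any integer preimage of $y$, contradicting the choice of $k$. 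Thus $(3)$ fails for every $\delta < 1/k$, which closes the cycle.

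The main (really the only) obstacle is the cited black box $(1) \Rightarrow (2)$ from \cite{bll}; the remaining two implications are essentially bookkeeping, with the sole minor point being the verification in $(2) \Rightarrow (3)$ that the integer configuration survives reduction modulo $N$ without any two of its entries collapsing. The equivalence proved here is recorded only for convenient reference, and does not require any machinery beyond what is already invoked in the preceding discussion.
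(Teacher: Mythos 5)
Your proposal is correct and follows essentially the same route the paper sketches via its remarks: $(1)\Rightarrow(2)$ is exactly the citation to \cite{bll}, $(2)\Rightarrow(3)$ reproduces the coset-slicing argument from the proof of Theorem~\ref{ring poly Sz} (including the observation that an integer configuration supported in $\{1,\ldots,N\}$ survives reduction modulo $N$), and $(3)\Rightarrow(1)$ is the contraposition argument with $R = \Z_{kt}$ and $A = k\Z_{kt}$ from the fourth remark.
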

	\subsection{Motivation: Can we strengthen Theorem~\ref{ring poly Sz} to a ``zero density'' version?}
	In number theory and, more specifically, additive combinatorics, there has been an enduring interest in questions regarding ``zero density'' phenomena in various settings.
	
	To explain what we mean, we recall a classical example of a ``positive density'' phenomenon. In \cite{et}, Erd\H{o}s and Tur\'{a}n conjectured that, for any positive integer $k \geq 2$, for any $\delta \in (0,1)$, for any sufficiently large integer $N$, any subset of $\{1,\ldots, N\}$ of cardinality at least $\delta N$ contains a nontrivial configuration $\{x,x+y,\ldots, x+ky\}$. This was famously proven when $k = 2$ by Roth in \cite{roth} and when $k \geq 3$ by Szemer\'{e}di in \cite{sz1,sz2}.
	
	As is perhaps well known, before any of these proofs appeared, a stronger statement had been ``widely conjectured,''\footnote{See \cite{sasp}. On p. 200 of the much later article \cite{sz2}, Szemer\'{e}di attributes the stronger conjecture to Erd\H{o}s and Tur\'{a}n.} namely, that, for any positive integer $k \geq 2$, for some $\gamma > 0$, for any sufficiently large integer $N > 0$, any subset of the interval $\{1,\ldots, N\}$ of cardinality at least $N^{1-\gamma}$ contains a nontrivial configuration $\{x,x+y,\ldots,x+ky\}$. Salem and Spencer showed this was untrue in \cite{sasp} by constructing sets $A_N \subset \{1,\ldots, N\}$ free of such configurations with cardinality large enough for a disproof; Behrend gave a quantitatively better counterexample in \cite{behr}.
	
	Being harder to deal with, zero density phenomena have remained of significant interest; for example, see the refinements of Szemer\'{e}di's theorem by Gowers in \cite{g-sz1} and \cite{g-sz2}, of Roth's theorem by Bloom and Sisask in \cite{bloomsisask}, and of the $\mathbf{P} = \{y,y^2\}$ case of Theorem~\ref{finitary poly Sz} by Peluse and Prendiville in \cite{pelprend2} and \cite{pelprend1}. See also \cite{eg}, \cite{pel20}, and \cite{prend} for bounds on sets avoiding specific arithmetic or polynomial configurations.
	
	Another relevant aspect of research in this area is the tradeoff in quantitative strength versus generality of results. In some articles, authors restrict themselves to finding specific configurations and/or working in specific settings, and this specificity helps them to obtain quantitative savings that may not be so straightforward in a more general setup. For example, consider the Furstenberg--S\'{a}rk\"{o}zy theorem; given an integer-valued polynomial $P(y)$, let $r_{P}(N)$ denote the cardinality of the largest subset of $\{1,\ldots, N\}$ that does not contain a nontrivial configuration $\{x,x+P(y)\}$. What is traditionally called the Furstenberg--S\'{a}rk\"{o}zy theorem asserts that $r_P(N) = o(N)$ for every nonzero intersective polynomial $P$. In \cite{sar1}, S\'{a}rk\"{o}zy showed by a variant of the Hardy--Littlewood circle method that $r_{y^2}(N) = O(N ( \log \log N)^{2/3}/ (\log N)^{1/3})$, a bound which certainly suffices to conclude $r_{y^2}(N) = o(N)$; in \cite{sar3}, similarly better-than-$o(N)$ bounds on $r_P(N)$ are claimed for $P = y^2-1$ and $P = y^k$ for integers $k > 2$. In contrast, by ergodic methods, $r_P(N) = o(N)$ was proved by Furstenberg in the case $P(y) = y^2$ in \cite{diag77}, generalized to the case\footnote{This is equivalent to the case $m=1$ of Theorem~\ref{finitary poly Sz}.} that $P \in \Z[y] \setminus \{0\}$ satisfies $P(0) = 0$ in \cite{furstenbergbook}, and in short order maximally generalized to the case that $P \neq 0$ is intersective by \cite{kmf}; the softness of the ergodic approach trades quantitative strength for generality.
	
	Recently, the situation has developed more. Stronger-than-$o(N)$ bounds for $r_P(N)$ were shown for any intersective polynomial $P \neq 0$ in, for example, \cite{rice}. On the other side, it has been established that the Furstenberg--S\'{a}rk\"{o}zy phenomenon has more general scope, extending to so-called generalized polynomials and to (generalized) polynomials with arguments taken from certain rarefied sets of integers. Recalling that $[\cdot ]$ denotes the greatest integer function, the bound $r_P(N) = o(N)$ is known for, e.g., $P(y) = [[\sqrt{2}y]\sqrt{3}y^2]^3[\sqrt{5} y]y$ (see \cite[Proposition 2.5]{bh}) or $P(y) = [7.9y^3 + \sqrt{2}y^2 + 3y]$ (see \cite[Proposition 6.18]{bhs}). For examples where the argument of a (generalized) polynomial is from a prescribed IP set, see \cite{bergfm} and \cite{bhm}. The authors are not aware of any quantitative bounds for these kinds of results.

	When working with general finite commutative rings, a common theme is that the tradeoff between quantitative strength (broadly understood) and generality of scope is rather harsh. We have seen an example of this already; see the second remark under Theorem~\ref{ring poly Sz} above, which explains a difference with the finite field situation. As we discuss our principal results below, this theme will develop in the background, manifesting, for example, in the second remark under Theorem~\ref{main thm for intro} or in the difference between the scope of the formulations of Corollary~\ref{cor: zero density} and Corollaries~\ref{cor: one config cor},~\ref{cor: one config cor, all same set},~and~\ref{cor: one config cor, simpler}. As the reader will see, our principal results are quite broad, and the resulting picture is complex.
	
	
	Let us discuss now the possibility of obtaining a certain zero density version of Theorem~\ref{ring poly Sz}. Consider the following question. We say polynomials $P_1,\ldots, P_m \in \Z[y]$ are \emph{essentially distinct} if for all distinct $i, j$, $P_i - P_j$ is not a constant polynomial.
	\begin{que}\label{naive zero density problem}
		For which families $\mathbf P = \{P_1,\ldots, P_m\} \subset \Z[y]$ of nonconstant, essentially distinct polynomials is it true that there exists $\gamma \in (0,1)$ depending only on $\mathbf P$ such that for any finite commutative ring $R$ and any subset $A \subset R$ of cardinality at least $|R|^{1-\gamma}$, $A$ contains a nontrivial configuration $\{x,x+P_1(y),\ldots, x+P_m(y)\}$ for some $x,y \in R$?
	\end{que}
	We collect some remarks on the formulation of this question before seeking to address it:
	\begin{enumerate}
		\item The hypothesis of essential distinctness rules out some families $\mathbf P$ that for trivial reasons are not candidates. For example, given a finite commutative ring $R$, one can easily define a large subset $A \subset R$ that avoids nontrivial configurations $\{x,x+2\}$ (and hence also avoids nontrivial configurations $\{x,x+P(y),x+P(y)+2\}$ for any integer polynomial $P$). 
		\item Even after assuming essential distinctness, there can be another issue that is related to ring torsion. For many choices of $\mathbf P$, an obvious obstacle to $\mathbf P$ being an answer to Question~\ref{naive zero density problem} is the existence of rings with characteristic from a small prescribed set. For example, every configuration $\{x,x+3y,x+5y^2\}$ is degenerated in characteristic 3 or 5. To address this issue, one could make one of several naive modifications to the question, such as, for example, restricting the conclusion to only those finite commutative rings with sufficiently large characteristic depending on $\mathbf P$, which clearly addresses the issue with families like $\mathbf P = \{3y,5y^2\}$ and is sufficient for the sake of discussion. Below this remark, we will give examples of families $\mathbf P$ which are not answers to Question~\ref{naive zero density problem}; none of these non-answers would become answers on account of this modification (or a similar one) pertaining to torsion.
	\end{enumerate}
	
	Our discussion above of some of the developments surrounding the conjectures of Erd\H{o}s and Tur\'{a}n was not merely for historical reasons: the mentioned counterexample sets $A_N \subset \{1,\ldots, N\}$ from \cite{sasp} can be interpreted as sets modulo $(k+1)N$ to obtain large subsets of $\mathbb{Z}_{(k+1)N}$ free of nontrivial configurations\footnote{Interpreting the set $A_N$ modulo $(k+1)N$ instead of, say, modulo $N$ ensures that the only possible $\Z_{(k+1)N}$ configurations $\{x,x+y,\ldots, x+ky\}$ it could contain would also be configurations over integers, which are already avoided by construction; in other words, this trick ensures that there cannot be any configurations $\{x,x+y,\ldots,x+ky\}$ that exist because of the ``wraparound'' aspect of $\Z_{(k+1)N}$.} $\{x,x+y,\ldots, x+ky\}$, showing that, for any integer $k \geq 2$, the family $\mathbf P = \{y,2y,\ldots, ky\}$ is not an answer to Question~\ref{naive zero density problem}.
	
	Turning to other families of polynomials yields other kinds of non-answers. For a fixed prime $p$ and positive integer $k$, consider the ring $R = \Z_{p^k}$. The set $A := \{x \in \Z_{p^k} : x \equiv 0 \bmod p \}$ contains no configurations of the form $\{x,x+y,x+y^2+1\}$ and has cardinality $|R|^{(k-1)/k}$. Since $\lim_{k\to\infty} (k-1)/k = 1$, the family\footnote{Note that this family is not jointly intersective, so even the third statement in Theorem~\ref{ring poly Sz equiv versions} does not hold, never mind the ``zero density'' content of Question~\ref{naive zero density problem}. However, a variant of the third statement nonetheless holds; see Corollary~\ref{cor: one config cor, simpler}.} $\mathbf P = \{y,y^2+1\}$ is not an answer to Question~\ref{naive zero density problem}. 
	
	Consider one last example. In a discussion with Alan Loper, he drew our attention to the ring which we will presently define and pointed out that many of its elements square to zero. Let $p$ be prime, and let $R$ be the quotient ring formed by taking $\Z_p[x_1,\ldots, x_k]$ modulo the ideal generated by all elements of the form $x_ix_j$, where $i,j \in \{1,\ldots, k\}$ are possibly indistinct. Then $R = \{c_0 + \sum_{i=1}^k c_ix_i : c_0,\ldots, c_k \in \Z_p\}$. Let $A \subset R$ be the subset of elements such that $c_0 = 0$. We claim that $A$ contains no nontrivial configurations of the form $\{x, x+y^2\}$, where $x,y \in R$. Indeed, any two elements of $A$ differ by an element with constant term zero, so $y^2$ must have constant term zero. However, for every $y \in R$, by explicitly writing $y = c_0 + \sum_{i=1}^k c_ix_i$ and computing $y^2 = c_0^2 + \sum_{i=1}^k 2c_ic_0 x_i$, we see that if $y^2$ has constant term zero, then $y^2 = 0$. By definition, if $\{x,x+y^2\}$ is a nontrivial configuration, that excludes the situation that $y^2 = 0$, proving the claim. Moreover, $|A| = |R|^{k/(k+1)}$. In this example, $k$ is arbitrary, so again the family $\mathbf P = \{y^2\}$ is not an answer to Question~\ref{naive zero density problem}. Similar examples can be constructed for $\mathbf P = \{y^\ell\}$ for any integer $\ell > 2$.
	
	The discussion above indicates that many polynomial families are not answers to Question~\ref{naive zero density problem}. However, the situation changes if one assumes the family $\mathbf P$ appearing in Question~\ref{naive zero density problem} is \emph{independent}, i.e., the only integer linear combination of the $P_i$ that is constant is the trivial combination with each $P_i$ multiplied by zero, and if one restricts the conclusion to certain (rather wide) collections of rings $\mathcal R$. Independent families are, in some sense, opposite to ``degenerated'' families like $\{y,2y,\ldots,ky\}$. The following question is a version of Question~\ref{naive zero density problem} that accounts for this narrowing of focus.
	
	\begin{que}\label{restricted zero density problem}
		Fix an independent family $\mathbf P = \{P_1,\ldots, P_m\} \subset \Z[y]$. For which collections $\mathcal R$ of finite commutative rings is it true that there exists $\gamma \in (0,1)$ such that for every $R \in \mathcal R$, every subset $A \subset R$ of cardinality at least $|R|^{1-\gamma}$ contains a nontrivial configuration $\{x,x+P_1(y),\ldots, x+P_m(y)\}$ for some $x,y \in R$?
	\end{que}

	Relevant\footnote{There has been some recent progress on the situation in finite prime fields with certain special non-independent families, but it would take the discussion a bit afield, as such work, to our knowledge, does not give an answer to Question~\ref{restricted zero density problem}, but instead to some variant thereof dealing with (iterated) logarithmic factors rather than power savings. See, e.g., \cite{kucafurther} and \cite{leng}.} previous work is all recent and concentrated on finite fields. Initiating this line of inquiry, Bourgain and Chang showed in \cite{bc} an affirmative answer to Question~\ref{naive zero density problem} for the family $\mathbf P = \{y,y^2\}$ and the collection of finite prime fields $\mathcal R = \{\Z_p : p \geq C \text{ prime}\}$, where $C \in \N$ is some constant. Peluse in \cite{pel3term} and Dong, Li, and Sawin in \cite{dls} showed an affirmative answer for any independent family $\mathbf P = \{P_1,P_2\} \subset \Z[y]$ where $P_1(0) = P_2(0) = 0$ and the collection of all finite fields $\mathcal R = \{\mathbb{F}_q : q = p^n \text{ a prime power and } p \geq C\}$, again where $C \in \N$ is some constant. Finally, Peluse showed in \cite{peluse} an affirmative answer for any independent family $\mathbf P = \{P_1,\ldots, P_m\} \subset \Z[y]$ where $P_1(0) = \ldots = P_m(0) = 0$, also for all finite fields with sufficiently large characteristic. The approach in \cite{peluse} did not depend on algebraic geometry, which had played a significant role in \cite{dls} and her earlier work \cite{pel3term}. For recent developments over finite fields in the complementary situation when the characteristic is assumed to be small, see \cite{lisauermann}, \cite{ab1}, and \cite{ab2}.
	
	In this article, our main theorem is a multifaceted generalization of the main result in \cite{peluse} to the setting of finite commutative rings and for independent\footnote{In the multivariable case, the definition of an \emph{independent} family remains the same as above.} families of multivariable polynomials. We defer discussion of the method until Subsection~\ref{subsec: methods}.
	
	Let us prepare to state our main theorem now. Given a positive integer $N$, we denote by $\lpf N$ its least prime factor; when $N$ is the characteristic of a ring, $\lpf N$ has both an algebraic interpretation and an ergodic one related to \emph{asymptotic total ergodicity}, both of which we will explain in the next subsection. We say a complex-valued function is \emph{1-bounded} if it takes values in $\{z \in \C : |z| \leq 1\}$.
	\begin{theorem}\label{main thm for intro}
		Let $\mathbf P = \{ P_1,\ldots,P_{m}\} \subset \Z[y_1,\ldots,y_n]$ be an independent family of polynomials. There exist $C,\gamma > 0$ such that the following holds. For any finite commutative ring $R$ with characteristic $N$ satisfying $\lpf N > C$ and any 1-bounded functions $f_0,\ldots, f_{m} : R \to \C$,
		\begin{multline}\label{eqn in main thm}
			\Bigg| \frac{1}{|R|^{n+1}}\sum_{x,y_1,\ldots, y_n \in R} f_0(x)f_1(x+P_1(y_1,\ldots,y_n))\cdots f_{m}(x+P_{m}(y_1,\ldots,y_n)) \\ - \left( \frac{1}{|R|} \sum_{x\in R} f_0(x) \right) \dots \left( \frac{1}{|R|} \sum_{x\in R} f_m(x) \right) \Bigg| \ \leq \ \lpf{N}^{-\gamma}.
		\end{multline}
	\end{theorem}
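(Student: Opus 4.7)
The plan is to follow a Peluse-type strategy---PET induction followed by degree lowering and a final Fourier-analytic step---adapted to the setting of a general finite commutative ring whose characteristic $N$ has large least prime factor, and to the multivariable family $\mathbf{P} \subset \Z[y_1,\ldots,y_n]$.

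First I would reduce \eqref{eqn in main thm} to a generalized von Neumann statement. By replacing each $f_i$ in turn by $f_i - |R|^{-1}\sum_{x \in R} f_i(x)$ and telescoping, it suffices to show that
\[
\bigg|\frac{1}{|R|^{n+1}}\sum_{x,y_1,\ldots,y_n \in R} f_0(x)\prod_{i=1}^m f_i(x+P_i(y_1,\ldots,y_n))\bigg| \ \leq \ \lpf{N}^{-\gamma}
\]
whenever the $f_i$ are $1$-bounded and at least one of them has mean zero. After a translation $x \mapsto x - P_j(y)$ for an appropriate $j$ (which preserves independence of the family), we may take this zero-mean function to be $f_m$.

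Next comes the PET induction. Iteratively applying Cauchy--Schwarz after substitutions $y_j \mapsto y_j + h_j$ in a carefully chosen variable, one replaces $\mathbf{P}$ by a differenced family whose PET weight is strictly smaller. Independence of $\mathbf{P}$ is essential here: it guarantees that the differenced polynomials do not collapse to constants and that the induction terminates after finitely many steps depending only on $\mathbf{P}$. This yields control of the original average by a Gowers-type box norm of $f_m$ of some degree $s$, integrated against simpler polynomial phases. At this point Peluse-style degree lowering bootstraps from control by a degree-$s$ box norm down to control by a degree-$2$, that is, Fourier, norm of $f_m$: each stage proceeds by inserting a dual function, reapplying Cauchy--Schwarz and van der Corput, and using independence of $\mathbf{P}$ to show that the phase that arises is essentially linear. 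After finitely many iterations, one concludes that if the left-hand side of \eqref{eqn in main thm} exceeds $\lpf{N}^{-\gamma}$, then $f_m$ must correlate nontrivially with some additive character of $R$.

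The final and most delicate step is to handle this Fourier obstruction. It reduces to bounding character sums of the form $|R|^{-n}\sum_{y \in R^n} \psi(P_m(y))$ for nontrivial additive characters $\psi$ of $R$. Over a finite field these are controlled by Weil estimates, but in a general ring with zero divisors such bounds fail outright, and this is the step I expect to be the main obstacle. The substitute needed is a quantitative bound on the number of $R$-roots of an integer polynomial involving a power of $\lpf{N}$---precisely the root-counting result the authors advertise as being of independent interest. I would approach it via character sums themselves, writing the root count as a Fourier expression, using the additive decomposition of $R$ into cosets of the subring generated by $1_R$ together with a Chinese remainder decomposition to reduce to components corresponding to each prime factor of $N$, and exploiting that each such prime is large to obtain cancellation at every level. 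Combining this root-counting estimate with the Fourier step yields the required $\lpf{N}^{-\gamma}$ savings.
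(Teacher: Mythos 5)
Your outline captures the Peluse-type architecture, but it treats the PET step as if it were the finite-field argument, and that is exactly where the argument breaks in a ring with zero divisors. After the PET differencing, one arrives at a multilinear average whose polynomial arguments are affine in the $y_i$ with coefficients that are certain polynomial expressions in the differencing parameters $h_1,h_2,\ldots$; to control this average by a Gowers $U^d$ norm (the analogue of Lemma~\ref{lem: Ud bound if invertible} here) one needs those linear coefficients \emph{and all their pairwise differences} to be units in $R$. Over a finite field nonzero automatically means invertible, so "independence guarantees nothing collapses to a constant" does all the work; over a general finite commutative ring this is simply false, and the paper exhibits a concrete $\Z_6$ example (after Lemma~\ref{lem: Ud bound if invertible}) where the control fails. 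Fixing this is the bulk of the article: one must run the PET induction so that every coefficient produced is nonzero \emph{and of small $\Z_N$-height} (hence coprime to $N$, hence a unit), which requires the notions of $\Z_N$-height, essential distinctness modulo $N$, weight sequences, and permissible operations, and an a priori bound (uniform in $R$ and the $f_i$) on the number of PET steps (Proposition~\ref{alg stops})---the latter because the choice of which polynomial to difference by can genuinely depend on $N$ and the $f_i$, as shown in \ref{subsec: claimed possible nonuniformity}. Your proposal, as stated, asserts all of this follows from independence, which is a genuine gap.

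There is also a structural mismatch in the degree-lowering step. You describe it as a self-contained bootstrap from $U^s$ down to $U^2$ ending with a Fourier obstruction, but in the paper the key identity \eqref{random fourier identity for g} shows that the Fourier coefficient of a discrete derivative of the dual function is again a polynomial multiple average, now with \emph{one fewer shifted} $f_i$ and one extra auxiliary additive-character factor. Bounding it therefore requires the theorem for smaller $m$, forcing the strengthened formulation Theorem~\ref{main thm} (with the $\psi_j(Q_j(y))$ factors) and an outer induction on $m_1$. The lowering in the paper runs all the way to $U^1$, where the mean-zero normalization of $f_m$ gives zero directly---one does not end by "concluding $f_m$ correlates with a character." Rather, the Fourier-analytic step is the base case $m_1=1$ (Proposition~\ref{main thm base}), which uses the character-sum estimate Lemma~\ref{lem: multicharacter product of independent polynomials as arguments}. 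Finally, the root-counting bound Proposition~\ref{prop: bound on number of roots} is not an input to Theorem~\ref{main thm for intro}: it is a \emph{consequence} of the character-sum estimate, and it is used only to estimate degenerate configurations in deriving Proposition~\ref{prop: config count} and the zero-density corollaries.
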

	We collect some remarks below and then state corollaries of the theorem.
	\begin{enumerate}
		\item The combinatorial strength of Theorem~\ref{main thm for intro} comes both from the fact that the $f_i$ may be different and from the fact that the leftmost average in \eqref{eqn in main thm} is approximately the product of the integrals of $f_i$. One consequence of the first fact is that, in the formulation of the theorem, the $P_i$ need not satisfy $P_i(0) = 0$. A more important consequence of both facts is that, for subsets $A_0,\ldots, A_m$ of $R$ with $\prod_{i=0}^m (|A_i|/|R|)$ not too small, there are many $(x,y) \in R \times R^n$ such that $x \in A_0$, $x + P_1(y) \in A_1$, ..., and $x + P_m(y) \in A_m$, in fact roughly as many $(x,y)$ as statistically expected. See Proposition~\ref{prop: config count} immediately following these remarks and the subsequent discussion for a precise formulation of this result and several of its consequences.
		\item Let $R = \Z_{p^k}$ for a prime $p > 2$ and a positive integer $k$. The set $A = \{x \in \Z_{p^k} : x \equiv 0 \bmod p\}$ contains no configurations of the form $\{x,x+y,x+y^2+1\}$ and has cardinality $|A| = p^{k-1} = |R|^{(k-1)/k}$, whence, for $f_0 = f_1 = f_2 = 1_A$, the left-hand side of \eqref{eqn in main thm} equals $|R|^{-3/k}$. Therefore, at least for the family $\mathbf P = \{y,y^2 + 1\}$, it is not possible to strengthen the error term in \eqref{eqn in main thm} to $|R|^{-\gamma}$. Using a similar idea and the same family, by considering the set of multiples of $\lpf N$ in $\Z_N$, where $N$ is a very highly composite number\footnote{For example, one may take squarefree $N$ such that $N > \lpf{N}^4$.}, one can see that it is also impossible to obtain in \eqref{eqn in main thm} an error term like $N^{-\gamma}$. For context, note that if a finite commutative ring $R$ has characteristic $N$, then $|R| \geq N \geq \lpf N$. When restricting attention to finite fields, the better error term $|R|^{-\gamma}$ may be obtained, at least in the case $n = 1$ (see \cite{peluse}).
		\item The assumption of independence is in fact necessary for Theorem~\ref{main thm for intro} to hold. Thus, linear independence of polynomials is necessary and sufficient for the approximate ``statistical independence'' exhibited by \eqref{eqn in main thm}. To see the necessity, suppose that $c_1,\ldots, c_m \in \Z$, not all zero, are such that $\sum_{i=1}^m c_iP_i$ is constant. Let $R$ be a finite commutative ring with characteristic $N$ satisfying $\lpf N > \max\{|c_1|,\ldots, |c_m|\}$. Let $\chi$ be a nontrivial additive character\footnote{An additive character is a homomorphism $(R,+)$ to $\{z \in \C: |z| = 1\}$. In the context of finite fields, it is common to refer to ``additive characters'' and ``multiplicative characters'' to indicate which operation is respected by the character. Since we will not discuss multiplicative characters of rings, it is not strictly necessary for us to use the word ``additive'' in this article, but we will do so for clarity.} of $R$. Then, taking $f_0 = \chi^{-\sum_{i=1}^m c_i}$ and $f_i = \chi^{c_i}$ for $i \in \{1,\ldots, m\}$, we observe that, by assumption on $\lpf N$, at least one of $f_0,f_1,\ldots, f_m$ is a nontrivial additive character of $R$ and hence the left-hand side of \eqref{eqn in main thm} equals 1.
		\item It is not clear whether the assumption $\lpf{\mathrm{char}(R)} > C$ may be weakened, say, to $\mathrm{char}(R) > C$. As indicated in the second remark following Question~\ref{naive zero density problem}, it is necessary to restrict the torsion on the ring in order to hope for interesting results, but there is somewhat of a gap between the former and latter assumptions.
		\item The quite special case $\mathbf P = \{P(y)\} \subset \Z[y]$ of Theorem~\ref{main thm for intro} generalizes\footnote{Up to a change in notation, inequality (2.44) in \cite{bb}, which applies only to rings of the form $\zn$, is the claim which is generalized here.} the main theorem in \cite{bb}, and this case already requires the use of new facts about general finite commutative rings. As illustration, we prove the case $P(y) = y^2$ in Proposition~\ref{example: main thm base} in Subsection~\ref{subsec: example base case}.
	\end{enumerate}
	
	There are many combinatorial consequences of Theorem~\ref{main thm for intro}. We start with the main corollary, from which the others are derived:
	
	\begin{proposition}\label{prop: config count}
		Let $\ve \in (0,1]$. Let $\mathbf P = \{ P_1,\ldots,P_{m}\} \subset \Z[y_1,\ldots,y_n]$ be an independent family of polynomials. There exists $\gamma = \gamma(\mathbf P)$ such that, for any finite commutative ring $R$ with $\lpf{\mathrm{char}(R)}$ sufficiently large and any subsets $A_0,\ldots, A_m$ of $R$ such that
		\begin{equation}\label{intro eqn 6}
			\ve|A_0|\cdots |A_m| \ > \ |R|^{m+1} \cdot \lpf{\mathrm{char}(R)}^{-\gamma},
		\end{equation}
		the normalized number of nontrivial configurations
		\begin{equation}
			S \ := \ \frac{\# \{ (x,y) \in R\times R^{n} : (x,x+P_1(y),\ldots, x+P_m(y)) \in A_0 \times A_1 \times \cdots \times A_m \} }{|R|^{n+1}}	
		\end{equation}
		satisfies
		\begin{equation}
			(1-\ve)\frac{|A_0|\cdots |A_m|}{|R|^{m+1}} \ < \ S \ < \ (1+\ve)\frac{|A_0|\cdots |A_m|}{|R|^{m+1}}.
		\end{equation}
	\end{proposition}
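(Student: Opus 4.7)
The proof is essentially a direct specialization of Theorem~\ref{main thm for intro}. Write $\chi := \lpf{\mathrm{char}(R)}$ and apply the theorem to the 1-bounded indicator functions $f_i := 1_{A_i}$ for $i \in \{0,\ldots,m\}$, which yields
\[ \left| S - \prod_{i=0}^m \frac{|A_i|}{|R|} \right| \ \leq \ \chi^{-\gamma_0} \]
for some $\gamma_0 = \gamma_0(\mathbf{P}) > 0$ supplied by the theorem. Setting $\gamma := \gamma_0$, hypothesis \eqref{intro eqn 6} immediately forces
\[ \chi^{-\gamma_0} \ < \ \ve \cdot \frac{|A_0|\cdots|A_m|}{|R|^{m+1}} \ = \ \ve \prod_{i=0}^m \frac{|A_i|}{|R|}, \]
which is exactly the two-sided relative bound claimed for $S$. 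The ``sufficiently large $\lpf{\mathrm{char}(R)}$'' hypothesis absorbs the constant $C$ appearing in Theorem~\ref{main thm for intro}.

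The estimate above controls the normalized count of \emph{all} tuples $(x,y) \in R \times R^n$ with $(x, x+P_1(y),\ldots, x+P_m(y)) \in A_0 \times \cdots \times A_m$, including those producing degenerated configurations. To upgrade the conclusion so that $S$ records only nontrivial configurations — a refinement that is essential for the downstream corollaries asserting existence of nontrivial configurations inside prescribed subsets — one must subtract off the contribution of tuples with $P_i(y) = P_j(y)$ for some $0 \leq i < j \leq m$, where $P_0 := 0$. By the independence hypothesis, each $Q_{ij} := P_j - P_i$ is a nonconstant, hence nonzero, integer polynomial. Summing trivially over $x \in R$ and invoking the bound on the number of roots of an integer polynomial over a general finite commutative ring (the new tool developed later in the paper), one expects an estimate of the form
\[ \#\{y \in R^n : Q_{ij}(y) = 0_R\} \ \leq \ |R|^n \cdot \chi^{-\gamma_1} \]
for some $\gamma_1 = \gamma_1(\mathbf{P}) > 0$. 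Summing over the $\binom{m+1}{2}$ pairs and shrinking $\gamma$ to, say, $\min(\gamma_0, \gamma_1)/2$ recovers the same conclusion for the restricted count.

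The only substantive obstacle is the root bound used in the second paragraph. Over an integral domain a nonzero polynomial of degree $d$ has at most $d$ roots, but over a general finite commutative ring with zero divisors this can fail so dramatically that the root set has positive density (cf.~the constructions involving $\Z_{p^k}$ and $\Z_p[x_1,\ldots,x_k]/\langle x_ix_j\rangle$ in the introduction). The right replacement must save a power of $\chi$ rather than of $|R|$, and produce this saving for multivariable integer polynomials over arbitrary finite commutative rings; supplying such a bound is precisely the new contribution the paper advertises. Once that tool is in hand, the rest of the argument above is bookkeeping.
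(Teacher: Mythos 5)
Your proposal is correct and follows essentially the same route as the paper: apply Theorem~\ref{main thm for intro} to the indicators $f_i = 1_{A_i}$ to control the total count $M$, bound the degenerate count $M_2$ by summing the root bound (Proposition~\ref{prop: bound on number of roots}) over the $\binom{m+1}{2}$ pairwise differences $P_j - P_i$ (with $P_0 = 0$), and combine. The only cosmetic difference is that you state the root count cleanly as $|R|^n\chi^{-\gamma_1}$, whereas the paper's bound has the shape $|R|^{n-1} + c|R|^n\chi^{-\ve}$; these are equivalent for this purpose since $|R| \geq \lpf{\mathrm{char}(R)}$ lets the $|R|^{n-1}$ term be absorbed, exactly as the paper does in its Proposition~\ref{prop: bound on number of degen configs}.
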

	\begin{remark}
		The number $S$ is thus close to the probability that $m+1$ independently uniformly randomly chosen elements of $R$ respectively lie in $A_0, \ldots, A_m$---it is as though each of $x$, $x + P_1(y)$, ..., $x+P_m(y)$ is such.
	\end{remark}
	
	This corollary is proved in Section~\ref{sec: derivation of zero and positive density results}, and we label it as a Proposition because its proof requires some input besides Theorem~\ref{main thm for intro}. The proof is complicated by the fact that, given an independent family $\mathbf P = \{P_1,\ldots, P_m\} \subset \Z[y_1,\ldots, y_n]$ and subsets $A_0,\ldots, A_m \subset R$ of a ring, one should carefully estimate the proportion of configurations $(x,x+P_1(y),\ldots, x + P_m(y)) \in A_0 \times A_1 \times \cdots \times A_m$ that are degenerated, a task which is nontrivial over a general finite commutative ring, for reasons we will explore in the discussion following Proposition~\ref{prop: sufficient condition for zero density result}. This is quite necessary: As we saw in the example from Alan Loper below Question~\ref{naive zero density problem}, all of the configurations $(x,x+y^2) \in A \times A$, where $A$ is the same set as in the example, are degenerated. (This situation does not contradict Proposition~\ref{prop: config count} because the condition \eqref{intro eqn 6} is not satisfied for any choice of $\ve$.)
	
	From Proposition~\ref{prop: config count}, we deduce two kinds of corollaries. The first kind is ``universal'' in scope; that is, the results are valid for all finite commutative rings, provided $\lpf{\mathrm{char}(R)}$ is large enough. The second kind, being restricted to certain subcollections of finite commutative rings, is more limited in scope but is quantitatively stronger. It is this second kind of corollary that will answer Question~\ref{restricted zero density problem} by giving a ``zero density'' version of Theorem~\ref{ring poly Sz}. The existence of these two kinds of corollaries recalls the previous discussion of tradeoffs between scope and strength when working with rings.
	
	We now give three corollaries of the ``universal'' kind, which may be easier to parse than Proposition~\ref{prop: config count}:
	
	\begin{cor}\label{cor: one config cor}
		Let $\mathbf P = \{ P_1,\ldots,P_{m}\} \subset \Z[y_1,\ldots,y_n]$ be an independent family of polynomials. There exists $\gamma = \gamma(\mathbf P)$ such that, for any finite commutative ring $R$ with $\lpf{\mathrm{char}(R)}$ sufficiently large (depending on $\mathbf P$) and any subsets $A_0,\ldots, A_m \subset R$ such that
		\begin{equation}
			|A_0|\cdots |A_m| \ > \ |R|^{m+1} \cdot \lpf{\mathrm{char}(R)}^{-\gamma},
		\end{equation}
		there is a nontrivial configuration $(x,x+P_1(y),\ldots, x+P_m(y))$ contained in $A_0 \times A_1 \times \cdots \times A_m$ for some $(x,y) \in R \times R^n$. 
	\end{cor}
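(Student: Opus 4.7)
The plan is to derive Corollary~\ref{cor: one config cor} directly from Proposition~\ref{prop: config count} by instantiating the free parameter $\varepsilon$ at $\varepsilon = 1$. Since the $\gamma$ in the corollary is allowed to coincide with the $\gamma(\mathbf P)$ supplied by the proposition, and since the hypothesis in the proposition, namely $\varepsilon |A_0|\cdots |A_m| > |R|^{m+1} \cdot \lpf{\mathrm{char}(R)}^{-\gamma}$, reduces at $\varepsilon = 1$ to exactly the hypothesis appearing in the corollary, there is no loss in simply plugging in.

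More concretely, I would fix $\mathbf P$, let $\gamma = \gamma(\mathbf P)$ be the constant produced by Proposition~\ref{prop: config count}, and take $\lpf{\mathrm{char}(R)}$ at least as large as the threshold required by that proposition. Given the hypothesis $|A_0|\cdots|A_m| > |R|^{m+1}\lpf{\mathrm{char}(R)}^{-\gamma}$, Proposition~\ref{prop: config count} applied with $\varepsilon = 1$ yields
\[
    0 \ = \ (1-\varepsilon)\frac{|A_0|\cdots|A_m|}{|R|^{m+1}} \ < \ S \ < \ (1+\varepsilon)\frac{|A_0|\cdots|A_m|}{|R|^{m+1}},
\]
where $S$ is the normalized count of \emph{nontrivial} configurations $(x,x+P_1(y),\ldots,x+P_m(y)) \in A_0 \times \cdots \times A_m$. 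In particular $S > 0$, so the unnormalized count of such nontrivial configurations is a positive integer, producing the required $(x,y) \in R \times R^n$.

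Because Proposition~\ref{prop: config count} already counts only \emph{nontrivial} configurations, there is no separate need to control the contribution of degenerated tuples at this stage; that delicate issue has been absorbed into the proposition itself. Consequently there is no genuine obstacle in the derivation of Corollary~\ref{cor: one config cor} beyond correctly quoting the proposition. The only thing to verify is that the threshold on $\lpf{\mathrm{char}(R)}$ implicit in the corollary may be taken identical to the one in the proposition, which is immediate from the statement of the latter.
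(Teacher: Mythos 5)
Your proposal is correct and coincides with the paper's own one-line proof: set $\ve = 1$ in Proposition~\ref{prop: config count} and read off $S > 0$. The additional commentary about degenerated tuples being already absorbed into the proposition is accurate and just makes the reasoning explicit.
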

	\begin{proof}
		Take $\ve = 1$ in Proposition~\ref{prop: config count}, so that $S > 0$.
	\end{proof}
	\begin{cor}\label{cor: one config cor, all same set}
		Let $\mathbf P = \{ P_1,\ldots,P_{m}\} \subset \Z[y_1,\ldots,y_n]$ be an independent family of polynomials. There exists $\gamma = \gamma(\mathbf P)$ such that, for any finite commutative ring $R$ with $\lpf{\mathrm{char}(R)}$ sufficiently large (depending on $\mathbf P$), every set $A \subset R$ with  
		\begin{equation}
			|A| \ > \ |R|/ \lpf{\mathrm{char}(R)}^{\gamma}
		\end{equation}
		contains a nontrivial configuration $\{x,x+P_1(y),\ldots, x+P_m(y)\}$. 
	\end{cor}
	\begin{proof}
		Take $A_0 = \cdots = A_m$ in Corollary~\ref{cor: one config cor}. 
	\end{proof}
	
	\begin{cor}\label{cor: one config cor, simpler}
		Let $\delta \in (0,1)$, and let $\mathbf P = \{P_1(y),\ldots, P_m(y)\} \subset \Z[y_1,\ldots,y_n]$ be an independent family. For any finite commutative ring $R$ with $\lpf{\mathrm{char}(R)}$ sufficiently large (depending on $\mathbf P$ and $\delta$) and any subsets $A_0,\ldots, A_m \subset R$ such that
		\begin{equation} |A_0|\cdots |A_m| \ \geq \ \delta|R|^{m+1},
		\end{equation}
		there is a nontrivial configuration $(x,x+P_1(y),\ldots, x+P_m(y))$ contained in $A_0 \times A_1 \times \cdots \times A_m$ for some $(x,y) \in R \times R^n$.
	\end{cor}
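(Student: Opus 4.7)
The plan is to derive this corollary as an essentially immediate consequence of Corollary~\ref{cor: one config cor} (equivalently, of Proposition~\ref{prop: config count} with $\ve = 1$), by absorbing the factor $\delta$ into the ``sufficiently large'' assumption on $\lpf{\mathrm{char}(R)}$.

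More precisely, let $\gamma = \gamma(\mathbf P)$ be the exponent furnished by Corollary~\ref{cor: one config cor}, and choose $\lpf{\mathrm{char}(R)}$ large enough that two conditions hold simultaneously: first, $\lpf{\mathrm{char}(R)}$ exceeds the threshold appearing in Corollary~\ref{cor: one config cor} for the family $\mathbf P$; and second, $\lpf{\mathrm{char}(R)}^{-\gamma} < \delta$, which can be arranged by requiring $\lpf{\mathrm{char}(R)} > \delta^{-1/\gamma}$. Under the hypothesis $|A_0|\cdots|A_m| \geq \delta |R|^{m+1}$, this second condition gives
\[
|A_0|\cdots |A_m| \ \geq \ \delta |R|^{m+1} \ > \ |R|^{m+1} \cdot \lpf{\mathrm{char}(R)}^{-\gamma},
\]
so the hypothesis of Corollary~\ref{cor: one config cor} is satisfied, and the desired nontrivial configuration in $A_0 \times A_1 \times \cdots \times A_m$ exists.

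There is no genuine obstacle here beyond invoking the previously established result; the role of this corollary is simply to repackage the power-saving statement of Corollary~\ref{cor: one config cor} in the more familiar ``positive density'' language, at the price of allowing the threshold on $\lpf{\mathrm{char}(R)}$ to depend on $\delta$. One could equally well bypass Corollary~\ref{cor: one config cor} and appeal directly to Proposition~\ref{prop: config count} with $\ve = 1$, which yields the stronger quantitative statement that the normalized number of configurations $S$ satisfies $0 < S < 2 \cdot |A_0|\cdots|A_m|/|R|^{m+1}$; in particular, $S > 0$, which is all that is needed.
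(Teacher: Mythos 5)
Your proof is correct and matches the paper's approach: the paper likewise takes $\ve = 1$ in Proposition~\ref{prop: config count} and enlarges the threshold on $\lpf{\mathrm{char}(R)}$ so that $|A_0|\cdots|A_m| \geq \delta|R|^{m+1}$ implies the required inequality \eqref{intro eqn 6}. Your explicit threshold $\lpf{\mathrm{char}(R)} > \delta^{-1/\gamma}$ just makes this quantitative.
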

	\begin{proof} Take $\ve = 1$ in Proposition~\ref{prop: config count} and, if necessary, increase the threshold on $\lpf{\mathrm{char}(R)}$ so that the assumption $|A_0|\cdots |A_m| \geq \delta |R|^{m+1}$ always implies \eqref{intro eqn 6}.
	\end{proof}
	\begin{remark}
		When $n = 1$, each $P_i(0) = 0$, and $A_0 = \dots = A_m$, the conclusion of Corollary~\ref{cor: one config cor, simpler} is implied by Theorem~\ref{ring poly Sz}.
	\end{remark}
	
	Let us consider the second kind of corollaries to Theorem~\ref{main thm for intro}. The following proposition provides a sufficient condition on $\mathcal R$ under which Question~\ref{restricted zero density problem} has an affirmative answer.
	
	\begin{prop}\label{prop: sufficient condition for zero density result}
		Let $\mathcal R$ be a collection of finite commutative rings, and suppose there exists $\alpha \in (0,1)$ such that $\lpf{\mathrm{char}(R)} \geq |R|^\alpha$ for any $R \in \mathcal R$.
		Then, for any independent family $\mathbf P = \{P_1,\ldots, P_m\} \subset \Z[y_1,\ldots, y_n]$, there exists $\gamma \in (0,1)$ such that, for any ring $R \in \mathcal R$ with $\lpf{\mathrm{char}(R)}$ sufficiently large (depending on $\mathbf P$), for any subsets $A_0,\ldots, A_m \subset R$ such that
		\begin{equation}
			|A_0|\cdots |A_m| \ \geq \ |R|^{(m+1)(1-\gamma)},
		\end{equation}
		there is a nontrivial configuration $(x,x+P_1(y),\ldots, x+P_m(y)) \in A_0 \times \cdots \times A_m$ for some $(x,y) \in R \times R^n$.
	\end{prop}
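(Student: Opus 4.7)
The plan is to derive Proposition~\ref{prop: sufficient condition for zero density result} as a rather direct consequence of Corollary~\ref{cor: one config cor} (or equivalently of Proposition~\ref{prop: config count} with $\ve=1$), by using the hypothesis $\lpf{\mathrm{char}(R)} \geq |R|^\alpha$ to convert the bound $|R|^{m+1}\cdot \lpf{\mathrm{char}(R)}^{-\gamma_0}$ appearing there into a bound of the form $|R|^{(m+1)(1-\gamma)}$.

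Concretely, first I would invoke Corollary~\ref{cor: one config cor} applied to the given independent family $\mathbf P$ to obtain a constant $\gamma_0 = \gamma_0(\mathbf P) > 0$ such that, for every finite commutative ring $R$ with $\lpf{\mathrm{char}(R)}$ sufficiently large (depending on $\mathbf P$) and any subsets $A_0,\ldots,A_m \subset R$ satisfying
\begin{equation*}
|A_0|\cdots|A_m| \ > \ |R|^{m+1}\cdot \lpf{\mathrm{char}(R)}^{-\gamma_0},
\end{equation*}
the product $A_0 \times \cdots \times A_m$ contains a nontrivial configuration $(x, x+P_1(y), \ldots, x+P_m(y))$. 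Then I would set $\gamma := \alpha \gamma_0 / (2(m+1))$, which depends only on $\mathbf P$ and $\mathcal R$ (through $\alpha$).

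Next I would verify that this choice works. Since $\lpf{\mathrm{char}(R)} \geq |R|^\alpha$ for $R \in \mathcal R$, we have $\lpf{\mathrm{char}(R)}^{-\gamma_0} \leq |R|^{-\alpha\gamma_0}$, and therefore
\begin{equation*}
|R|^{m+1}\cdot \lpf{\mathrm{char}(R)}^{-\gamma_0} \ \leq \ |R|^{m+1-\alpha\gamma_0} \ = \ |R|^{(m+1)(1-\gamma) - \alpha\gamma_0/2},
\end{equation*}
using $(m+1)\gamma = \alpha\gamma_0/2$. Since any finite ring $R$ in the collection satisfies $|R| \geq 2$, the strict inequality $|R|^{(m+1)(1-\gamma)} > |R|^{(m+1)(1-\gamma) - \alpha\gamma_0/2}$ holds, so the assumption $|A_0|\cdots|A_m| \geq |R|^{(m+1)(1-\gamma)}$ implies the strict inequality required by Corollary~\ref{cor: one config cor}. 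Applying that corollary yields the desired nontrivial configuration.

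There is essentially no obstacle here; the work is entirely bookkeeping and relies on the prior results rather than any new ideas. The only point worth care is ensuring that the threshold on $\lpf{\mathrm{char}(R)}$ provided by Corollary~\ref{cor: one config cor} (which depends on $\mathbf P$) is absorbed into the phrase ``$\lpf{\mathrm{char}(R)}$ sufficiently large (depending on $\mathbf P$)'' in the conclusion, and that the constant $\gamma$ ultimately depends only on $\mathbf P$ and $\alpha$, as is consistent with the statement.
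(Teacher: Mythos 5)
Your proposal is correct and follows essentially the same route as the paper: the paper's proof just says to take $\ve=1$ in Proposition~\ref{prop: config count} (which is exactly Corollary~\ref{cor: one config cor}) and set $\gamma = \alpha\gamma'/(m+1)$. Your extra factor of $2$ in $\gamma$ is a minor bookkeeping refinement to turn the $\geq$ hypothesis into the strict $>$ required by Proposition~\ref{prop: config count}, but the underlying argument is identical.
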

	\begin{proof} Take $\ve = 1$ in Proposition~\ref{prop: config count}, and write $\gamma'$ for the constant that is asserted to exist. Now take $\gamma := \frac{\alpha \gamma'}{m+1}$.
	\end{proof}
	
	In Corollary~\ref{cor: zero density} below, we will derive some corollaries of the second kind to Theorem~\ref{main thm for intro}.
	
	Before doing that, we make a small digression to illustrate some considerations behind Propositions~\ref{prop: config count}~and~\ref{prop: sufficient condition for zero density result}, each of which separately introduce some subtlety that must be addressed in the process of answering Question~\ref{restricted zero density problem} using Theorem~\ref{main thm for intro}. In particular, we will give context both to the problem of estimating how many configurations $(x,x+P_1(y),\ldots, x+P_m(y))$ are degenerated (apropos of Proposition~\ref{prop: config count}) and to the condition $\lpf{\mathrm{char}(R)} \geq |R|^\alpha$ appearing in Proposition~\ref{prop: sufficient condition for zero density result}. First, we show how to derive an answer to Question~\ref{restricted zero density problem} in the simpler case that $\mathbf P = \{y,y^2\}$ and $\mathcal R$ is the collection of finite prime fields $\Z_p$ by using a limited version of our Theorem~\ref{main thm for intro} that was proved in \cite{bc}. With this point of reference, we can then explain the two points of complication that arise in the general situation.
	
	It follows from Theorem 1.1 in \cite{bc} that there exists a constant $C > 0$ such that, for every sufficiently large prime $p$ and every subset $A \subset \Z_p$, 
	\begin{equation}\label{intro eqn 1}
		\left| \cavg{x,y}{\Z_p} 1_A(x)1_A(x+y)1_A(x+y^2) - (|A|/p)^3 \right| \ \leq \ C|A|^{3/2}p^{-8/5} \ \leq \ Cp^{-1/10},
	\end{equation}
	where the second inequality is meant to simplify the discussion and follows from the fact that $|A| \leq p$. Denoting by $M$ the cardinality of the set $S := \{(x,y) \in \Z_p \times \Z_p : x,x+y,x+y^2 \in A\}$ and by $M_1$ (resp. $M_2$) the cardinality of the subset of $S$ where $(x,x+y,x+y^2)$ is a nontrivial (resp. degenerated) configuration, \eqref{intro eqn 1} becomes
	\begin{equation}
		\left|M/p^{2} - (|A|/p)^{3} \right| \ \leq \ Cp^{-1/10}.
	\end{equation}
	Our goal is to find $\gamma \in (0,1)$ such that, for sufficiently large $p$, whenever $|A| \geq p^{1-\gamma}$, $M_1 > 0$. Since $M = M_1 + M_2$, it follows that
	\begin{equation}
		M_1/p^{2} \ \geq \ -M_2/p^{2} + (|A|/p)^{3} - Cp^{-1/10},
	\end{equation}
	so it would suffice to show that the right-hand side exceeds zero, i.e.,
	\begin{equation}
		M_2/p^{2} + Cp^{-1/10} \ < \ (|A|/p)^{3}.
	\end{equation}
	The configuration $(x,x+y,x+y^2)$ is degenerated if and only if $y = 0$ or $y = 1$. Therefore,
	\begin{equation}\label{intro eqn 2}
		M_2/p^{2} \ \leq \ 2|A|/p^2 \ \leq \ 2/p,
	\end{equation}
	whence
	\begin{equation}
		M_2/p^{2} + Cp^{-1/10} \ \leq \ 2/p + Cp^{-1/10} \ \leq \ p^{-1/11},
	\end{equation}
	where the right-most inequality holds for sufficiently large $p$. Taking $\gamma = 1/36$ and $|A| \geq p^{1-\gamma}$, we observe that
	\begin{equation}
		(|A|/p)^{3} \ > \ p^{-1/11},
	\end{equation}
	guaranteeing $M_1 > 0$, which affirmatively answers Question~\ref{restricted zero density problem} in this special case (namely $\mathbf P = \{y,y^2\}$ and $\mathcal R = \{\Z_p : p \geq C' \text{ prime}\}$ for some constant $C' \in \N$).
	
	%
	
	We now indicate two points where the general problem of affirmatively answering Question~\ref{restricted zero density problem} using Theorem~\ref{main thm for intro} becomes harder.
	
	First, the analogous assertion to \eqref{intro eqn 1}, which arises from applying Theorem~\ref{main thm for intro}, is weaker, since the bound on the right-hand side is a negative power of $\lpf{\mathrm{char}(R)}$, rather than\footnote{See the second remark under the theorem for a discussion of the impossibility of strengthening the bound.} a negative power of $|R|$. In the prime field case, $|R| = \mathrm{char}(R) = \lpf{\mathrm{char}(R)}$, but in a general finite commutative ring, there is almost no quantitatively useful relationship between $|R|$, $\mathrm{char}(R)$, $\lpf{\mathrm{char}(R)}$, and powers or (iterated) logarithms of these. Thus, when one desires to assert the existence of nontrivial configurations in sets whose cardinality is measured in terms of a power of $|R|$, introducing an (apparently) incommensurable term like $\lpf{\mathrm{char}(R)}$ complicates matters.
	
	The second point is related to another phenomenon peculiar to rings, namely the relative untamedness of the number of roots of a polynomial. Consider the inequality \eqref{intro eqn 2}. We estimated $M_2 \leq 2|A|$ by counting the exact number of $y \in \Z_p$ that degenerate a configuration $(x,x+y,x+y^2)$---the $2$ came from the two values $y= 0$ and $y = 1$. For more general configurations (still over finite fields), the number of degenerating $y$ values may be simply estimated. For example, if $\{P_1,P_2\} \subset \Z[y]$ is an independent family of polynomials with maximal degree $d$, then at most $\binom{3}{2} d$ values of $y$ degenerate the configuration $\{x,x+P_1(y),x+P_2(y)\}$, since each of the equations $P_1(y) = 0$, $P_2(y) = 0$, and $P_1(y) = P_2(y)$ has at most $d$ solutions over a finite field. Thus, over finite fields, we have an estimate $M_2 \leq C'|A|$, where $C'$ is a constant that depends on the number of polynomials in $\mathbf P$ and on their maximal degree, and it is safe to introduce this constant into the analysis, because it does not depend on $|R|$, $\mathrm{char}(R)$, and $\lpf{\mathrm{char}(R)}$.
	
	Now, over finite commutative rings, it remains difficult to count the values of $y$ that degenerate a given configuration, so we hope to use the same ``pairwise sum of bounds'' argument as above (and we also hope that the argument generalizes to multiple variables).
	
	Thus, given $P \in \Z[y_1,\ldots, y_n]$ and a finite commutative ring $R$, we need a nontrivial estimate on the number of $(y_1,\ldots, y_n) \in R^n$ such that $P(y_1,\ldots, y_n) = 0$.
	
	Outside of finite fields, this is a hard problem. Certainly the degree of $P$ will not suffice, as can be seen by observing that, in the example from Alan Loper below Question~\ref{naive zero density problem}, the mentioned set $A$ is the set of zeros of $P(y) = y^2$ and has cardinality a power of $|R|$ that approaches 1 as the parameter $k$ is increased. In light of this example, it seems a strong bound is objectively impossible to obtain in general.
	
	In our case, the desired bound, regardless of its quality, only needs to be valid for $R$ with sufficiently large $\lpf{\mathrm{char}(R)}$ (which, as we will see in Subsection~\ref{subsec: heuristic}, is the minimal additive order of any nonzero element), which helps somewhat. Even so, we are not aware of any estimate over general finite commutative rings, even in the univariate case. Thus, we develop our own---see Lemma~\ref{prop: bound on number of roots}, which we also record for its possible independent interest. The bound from this lemma depends on $|R|$ and $\lpf{\mathrm{char}(R)}$, as well as the degree of the polynomial and the number of variables.
	
	Now that various inputs to the argument which is purported to affirmatively answer Question~\ref{restricted zero density problem} seem to depend on both $|R|$ and $\lpf{\mathrm{char}(R)}$, we must confront the incommensurability of these quantities. If we restrict to certain collections $\mathcal R$ of finite commutative rings by stipulating that $\lpf{\mathrm{char}(R)}$ is at least some fixed positive power of $|R|$, everything becomes commensurable, and the argument can be carried out. This stipulation can be roughly understood as a requirement that the whole collection of rings not vary too much in terms of ``relative amount of torsion.'' This abstract-sounding condition yields several natural collections of rings (going beyond finite fields), which we describe in Corollary~\ref{cor: zero density} below. Moreover, a condition of this sort seems to be justified \emph{a priori} by the influence of torsion on questions about polynomial configurations (cf. the second remark below Question~\ref{naive zero density problem}) and by the wildness of torsion in general finite commutative rings.
	
	In preparation to formulate Corollary~\ref{cor: zero density}, let us recall two classical collections of finite commutative rings and define a third. Given a prime $p$ and positive integers $n$ and $r$, the Galois ring $R = GR(p^n,r)$ is the quotient ring $\Z_{p^n}[x]/(f(x))$, where $f \in \Z[x]$ is a monic polynomial of degree $r$ that is irreducible modulo $p$. Note that $R$ is a commutative ring with $|R| = p^{nr}$ and $\mathrm{char}(R) = p^n$. Moreover, it is known\footnote{See, e.g., \cite{ragh}, in which Raghavendran named these rings ``Galois rings'' and described their properties, both independently of Janusz, who did the same in \cite{janusz}.} that the definition of $R$ does not depend on the choice of $f$ among monic irreducible (mod $p$) polynomials of degree $r$. Every finite field is a Galois ring (since $\mathbb{F}_{p^r} = GR(p,r)$), as is every $\zn$ with $N$ a prime power (since $\mathbb{Z}_{p^n} = GR(p^n,1)$). Thus, Galois rings encompass two natural and quite distinct kinds of rings.
	
	A finite commutative ring is called a \emph{chain ring} if all its ideals form a chain under inclusion. Chain rings appear in various areas of mathematics\footnote{Finite commutative chain rings appear in geometry and in algebraic number theory respectively as coordinatizing rings of Pappian Hjelmslev planes (see \cite{klingenberg} and references in \cite{tv}) and as quotient rings of rings of integers in number fields (see \cite{krull}), and there has been recent interest in constructions of partial difference sets and relative difference sets---see, e.g., \cite{leungma}---as well as in codes over chain rings, particularly over Galois rings. Most information in this footnote we learned from \cite{clarkliang}.}. Every finite commutative chain ring is the quotient of $GR(p^n,r)[x]$ by the ideal generated by $p^{n-1} x^t$ and an Eisenstein polynomial $g(x) \in GR(p^n,r)[x]$ of degree $k$, i.e., $g(x) = x^k - \sum_{i=0}^{k-1} pa_{i}x^i$ with $a_0$ a unit of $GR(p^n,r)$, where $p$ is prime and $n$, $r$, $k$, and $t$ are positive integers satisfying
	\begin{equation}
		\begin{cases} t = k & n = 1, \\
			1 \leq t \leq k & n \geq 2.
		\end{cases}
	\end{equation}
	The numbers $p$, $n$, $r$, $k$, $t$ above are called the invariants of the given chain ring, which has cardinality $p^{r(nk-k+t)}$ and characteristic $p^n$. See \cite[pp. 307--349]{brmcdonald} for proofs of these claims. Taking $t = k = 1$, we see that Galois rings are chain rings.
	
	The notion of Galois ring may also be slightly generalized in another direction while still retaining nice enough properties for our purposes. Let $N > 1$ be an integer. We say a polynomial $f \in \Z[x]$ is \emph{irreducible over $\zn$} if $f(x)$ is not a unit\footnote{Let $f(x) = a_0 + \sum_{i=1}^k a_i x^i \in \Z[x]$. Then $f(x)$ is a unit in $\zn[x]$ if and only if $a_0$ is a unit modulo $N$ and each $a_i$ with $i > 0$ is nilpotent in $\zn$, i.e., each $a_i$ is a multiple of the product of all the distinct prime factors of $N$.} in $\zn[x]$ and $f(x)$ cannot be factored into a product of units in $\zn[x]$. Given a positive integer $N > 1$ and a polynomial $f \in \Z[x]$ which is monic and irreducible over $\zn$, define the \emph{pseudo-Galois ring} $PGR(N,f)$ as the quotient ring $\zn[x]/(f(x))$; then $PGR(N,f)$ has cardinality $N^{d}$ and characteristic $N$, where $d = \deg_{\zn}(f)$ is the degree of $f(x)$ as a polynomial over $\zn$. Every $\zn$ (with $N > 1$ an integer) is a pseudo-Galois ring. Every Galois ring is both a pseudo-Galois ring and a chain ring. The set $\{ a + b \sqrt 2 : a,b\in \Z_6\}$ endowed with usual addition and multiplication is the pseudo-Galois ring $PGR(6,x^2-2)$, which is not a chain ring. Among finite commutative rings, some, such as $\Z_{p^2}[x]/(px,x^2-px-p) = \{a+bx : a \in \{0,\ldots,p^2-1\}, b \in \{0,\ldots, p-1\}\}$, are chain rings but not pseudo-Galois rings, and others, such as $\Z_3 \times \Z_{15}$, are neither chain rings nor pseudo-Galois rings.
	
	We conclude this section with an explicit formulation of the main corollary of interest, which, in the special case that $A_0 = \cdots = A_m$, is an affirmative answer to Question~\ref{restricted zero density problem} for certain collections of rings, establishing a ``zero density'' version of Theorem~\ref{ring poly Sz} for independent families.
	
	\begin{cor}\label{cor: zero density}  Let $\mathcal R$ be one of the collections of finite commutative rings indicated below. Let $\mathbf P = \{P_1,\ldots, P_m\} \subset \Z[y_1,\ldots,y_n]$ be a family of independent polynomials. There exists $\gamma = \gamma(\mathbf P,\mathcal R) \in (0,1)$ such that, for every $R \in \mathcal R$ with $\lpf{\mathrm{char}(R)}$ sufficiently large, for any subsets $A_0,\ldots, A_m \subset R$ such that $|A_0|\cdots |A_m| \geq |R|^{(m+1)(1-\gamma)}$, there is a nontrivial configuration $(x,x+P_1(y),\ldots, x+P_m(y)) \in A_0 \times \cdots \times A_m$ for some $(x,y) \in R \times R^n$. Here is the list:
		\begin{enumerate}
			\item $\mathcal R_1^{(\ell)} = \{ \Z_{p^i} : p \text{ prime, } 1 \leq i \leq \ell\}$, where $\ell \in \N$,
			\item $\mathcal R_2^{(\ell)} = \{ \mathbb{F}_{p^i} : p \text{ prime, } 1 \leq i \leq \ell\}$, where $\ell \in \N$,
			\item $\mathcal R_3^{(\ell)} = \{GR(p^n,r) : p \text{ prime and } nr \leq \ell\}$, where $\ell \in \N$,
			\item $\mathcal R_4^{(\ell)} = \{ R : R \text{ a chain ring with invariants } p, n, r, k, t \text{ and } r(nk-k+t) \leq \ell \}$, where $\ell \in \N$,
			\item $\mathcal R_5^{(\alpha)} = \{ \Z_N : \lpf N \geq N^\alpha \}$, where $\alpha \in (0,1)$,
			\item $\mathcal R_6^{(\alpha)} = \{ PGR(N,f) : f \in \Z[x] \text{ monic irreducible over } \zn \text{ and } \lpf N \geq N^{\deg_{\zn}(f)\alpha} \}$, where $\alpha \in (0,1)$,
			\item $\mathcal R_7^{(\alpha)} = \{ R : \lpf{\mathrm{char}(R)} \geq |R|^\alpha\}$, where $\alpha \in (0,1)$.
		\end{enumerate}
	\end{cor}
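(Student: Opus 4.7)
The plan is to apply Proposition~\ref{prop: sufficient condition for zero density result} directly to each of the seven collections listed. That proposition reduces the entire corollary to verifying, for each $\mathcal R$, the existence of $\alpha \in (0,1)$ such that $\lpf{\mathrm{char}(R)} \geq |R|^\alpha$ for every $R \in \mathcal R$. This uniform ``commensurability'' condition between $|R|$ and $\lpf{\mathrm{char}(R)}$ is precisely what the classifications summarized in the paragraphs preceding the corollary are set up to deliver, so the proof is essentially a checklist indexed by the seven collections.

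For the first four collections, every $R$ is attached to a single prime $p$, and so $\lpf{\mathrm{char}(R)} = p$. For $R = \Z_{p^i} \in \mathcal R_1^{(\ell)}$ we have $|R| = p^i$, so $\lpf{\mathrm{char}(R)} = p = |R|^{1/i} \geq |R|^{1/\ell}$. For $R = \mathbb{F}_{p^i} \in \mathcal R_2^{(\ell)}$ the same calculation yields $\lpf{\mathrm{char}(R)} \geq |R|^{1/\ell}$. For a Galois ring $R = GR(p^n,r) \in \mathcal R_3^{(\ell)}$, the identity $|R| = p^{nr}$ together with $nr \leq \ell$ gives $\lpf{\mathrm{char}(R)} = p = |R|^{1/(nr)} \geq |R|^{1/\ell}$. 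For a chain ring $R \in \mathcal R_4^{(\ell)}$ with invariants $(p,n,r,k,t)$, the formula $|R| = p^{r(nk-k+t)}$ together with $r(nk-k+t) \leq \ell$ produces the same bound $\lpf{\mathrm{char}(R)} \geq |R|^{1/\ell}$. In each of these cases, it suffices to take $\alpha = \min(1/\ell, 1/2) \in (0,1)$, the minimum being there only to accommodate the edge case $\ell = 1$.

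For the last three collections, the bound is essentially definitional. For $R \in \mathcal R_5^{(\alpha)}$, the defining condition $\lpf N \geq N^\alpha$ is exactly $\lpf{\mathrm{char}(R)} \geq |R|^\alpha$. For $R = PGR(N,f) \in \mathcal R_6^{(\alpha)}$ with $d = \deg_{\zn}(f)$, the facts $|R| = N^d$ and $\mathrm{char}(R) = N$ translate the hypothesis $\lpf N \geq N^{d\alpha}$ into $\lpf{\mathrm{char}(R)} \geq |R|^\alpha$. The collection $\mathcal R_7^{(\alpha)}$ is defined by the bound itself, so no further work is needed.

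With the uniform bound $\lpf{\mathrm{char}(R)} \geq |R|^\alpha$ verified in all seven cases, Proposition~\ref{prop: sufficient condition for zero density result} yields the desired $\gamma = \gamma(\mathbf P, \mathcal R) \in (0,1)$ and the existence of a nontrivial configuration in $A_0 \times \cdots \times A_m$ whenever $|A_0| \cdots |A_m| \geq |R|^{(m+1)(1-\gamma)}$, completing the proof. There is no real obstacle at this stage; all of the difficulty, including the sensitive estimate on the proportion of degenerated configurations and the treatment of the incommensurability between $|R|$ and $\lpf{\mathrm{char}(R)}$, is already packaged into Propositions~\ref{prop: config count} and~\ref{prop: sufficient condition for zero density result}.
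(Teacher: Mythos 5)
Your proof is correct and takes essentially the same approach as the paper: both reduce the corollary to Proposition~\ref{prop: sufficient condition for zero density result} by verifying, for each collection $\mathcal R$, the existence of $\alpha \in (0,1)$ with $\lpf{\mathrm{char}(R)} \geq |R|^\alpha$. The only difference is organizational — the paper establishes the result for $\mathcal R_7^{(\alpha)}$ and then derives the other cases via the chain of inclusions $\mathcal R_1^{(\ell)} \cup \mathcal R_2^{(\ell)} \subset \mathcal R_3^{(\ell)} \subset \mathcal R_4^{(\ell)} \subset \mathcal R_7^{(\alpha)}$ (with $\ell = \lfloor 1/\alpha\rfloor$) and $\mathcal R_5^{(\alpha)} \subset \mathcal R_6^{(\alpha)} \subset \mathcal R_7^{(\alpha)}$, whereas you verify the bound directly case by case, which amounts to the same calculations.
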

	\begin{proof}
		Fix $\alpha \in (0,1)$. Proposition~\ref{prop: sufficient condition for zero density result} is precisely the assertion that the conclusion of the corollary holds for $\mathcal R_7^{(\alpha)}$. Obviously $\mathcal R_5^{(\alpha)} \subset R_6^{(\alpha)} \subset \mathcal R_7^{(\alpha)}$. Let $\ell = \lfloor 1/\alpha \rfloor$, i.e., the greatest integer that is at most $1/\alpha$. The rest follows since $\mathcal R_1^{(\ell)} \cup \mathcal R_2^{(\ell)} \subset \mathcal R_3^{(\ell)} \subset \mathcal R_4^{(\ell)} \subset R_7^{(\alpha)}$.
	\end{proof}

	\subsection{Heuristic: Why can one expect Theorem~\ref{main thm for intro} to hold?}\label{subsec: heuristic}
	
	As stated above, the short answer is ``asymptotic total ergodicity.'' We now briefly explain what this means.\footnote{One may consult the introduction of \cite{bb} for a more detailed discussion from a slightly different perspective.}
	
	Recall that a \emph{measure-preserving system} is a quadruple $\mathsf{X} = (X,\calb,\mu,T)$, where $(X,\calb,\mu)$ is a probability space and $T: X \to X$ is a measurable transformation which preserves the measure $\mu$ (i.e., $\mu(T^{-1}A) = \mu(A)$ for all $A \in \calb$), and we say $\mathsf{X}$ is \emph{invertible} if moreover $T$ is a bijection and $T^{-1}$ is a measurable transformation. A system $\mathsf{X} = (X,\calb,\mu,T)$ is \emph{ergodic} if every $T$-invariant set $A \in \calb$ satisfies either $\mu(A) = 0$ or $\mu(A) = 1$, and $\mathsf{X}$ is \emph{totally ergodic} if, for every $k \in \N$, the system $(X,\calb,\mu,T^k)$ is ergodic.
	
	Answering a question of the first author, Frantzikinakis and Kra showed in \cite{frakra} the following theorem.
	\begin{theorem}\label{thm: frakra}
		Let $(X,\calb, \mu,T)$ be a totally ergodic invertible measure-preserving system, and assume that $\{P_1(n),\ldots, P_m(n)\} \subset \Z[n]$ is an independent family of polynomials. Then, for functions $f_1,f_2,\ldots, f_m \in L^\infty(\mu)$,
		\begin{equation}
			\lim_{N\to \infty} \norm{\frac{1}{N} \sum_{n=0}^{N-1} f_1(T^{P_1(n)}x)f_2(T^{P_2(n)}x) \cdots f_m(T^{P_m(n)}x) - \prod_{i=1}^m \int f_i \ d\mu }_{L^2(\mu)}\ = \ 0.
		\end{equation}
	\end{theorem}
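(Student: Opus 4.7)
The plan is to prove the theorem via the theory of characteristic factors for multiple polynomial ergodic averages, as developed by Furstenberg--Weiss, Host--Kra, and Ziegler. By their work, for any family of polynomials $\{P_1,\ldots,P_m\}$ there exists an integer $k$ (depending only on the degrees of the $P_i$) such that the $k$-step Host--Kra nilfactor $Z_k$ is \emph{characteristic} for the average $A_N(f_1,\ldots,f_m) := \frac{1}{N}\sum_{n=0}^{N-1} f_1(T^{P_1(n)}x)\cdots f_m(T^{P_m(n)}x)$, meaning that replacing each $f_i$ by its conditional expectation $\mathbb{E}(f_i \mid Z_k)$ does not affect the $L^2$-limit. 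We may therefore assume that $(X,\calb,\mu,T)$ is an inverse limit of $k$-step nilsystems, and by a standard approximation argument, that it actually is a nilsystem $X = G/\Gamma$ with $T$ acting by left translation by a fixed $a \in G$.

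Next I would invoke Leibman's equidistribution theorem for polynomial sequences on nilmanifolds. Applied to the product nilmanifold $X^m = G^m/\Gamma^m$ along the polynomial sequence $n \mapsto (a^{P_1(n)},\ldots,a^{P_m(n)})$, the orbit closure is a subnilmanifold $Y \subseteq X^m$ and the ergodic average along the orbit converges to $\int_Y f_1\otimes\cdots\otimes f_m\, d\mu_Y$. The problem reduces to showing that total ergodicity of $T$ combined with independence of the $P_i$ forces $Y = X^m$ and $\mu_Y$ to be the product measure, which would give the target value $\prod_{i=1}^m \int f_i\, d\mu$.

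The main obstacle is this last step: upgrading ``joint orbit lives in $X^m$'' to ``joint orbit equidistributes in $X^m$.'' On the Kronecker (i.e., maximal abelian) factor this is Weyl's equidistribution theorem for polynomials: for any nonzero integer tuple $(c_1,\ldots,c_m)$ and any nontrivial eigenvalue/character $\chi$ of the Kronecker factor, one must show that $n \mapsto \chi(c_1 P_1(n) + \cdots + c_m P_m(n))$ is not identically $1$. Total ergodicity ensures that $T$ has no nontrivial rational eigenvalues, so $\chi$ has irrational frequency; polynomial independence ensures $\sum c_i P_i$ is nonconstant, so the resulting polynomial exponential sum is genuinely irrational of positive degree and Weyl's criterion applies.

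To lift this from the Kronecker factor to an arbitrary $k$-step nilfactor, I would run a PET induction (à la Bergelson--Leibman) driven by van der Corput's inequality. Each van der Corput step replaces the family $\{P_1,\ldots,P_m\}$ by a new family of polynomial differences whose ``PET weight'' is strictly smaller while preserving independence; after finitely many steps the scheme terminates at the abelian case treated above, and unwinding the van der Corput estimates shows that the seminorm controlling $A_N(f_1,\ldots,f_m) - \prod_i \int f_i\, d\mu$ vanishes as $N \to \infty$. Combining this with the characteristic factor reduction above completes the proof.
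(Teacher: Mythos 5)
The paper does not prove Theorem~\ref{thm: frakra}; it is stated and cited from Frantzikinakis and Kra \cite{frakra} as a known result, so there is no in-paper proof to compare your proposal against. That said, your first three paragraphs give a reasonable outline of the route those authors actually take: reduce via Host--Kra/Leibman characteristic factors to a nilsystem, apply Leibman's equidistribution theorem to the polynomial orbit on the product nilmanifold $X^m$, and argue that total ergodicity together with linear independence of the $P_i$ forces the limiting subnilmanifold to be all of $X^m$ carrying product Haar measure, with the Weyl/Kronecker computation as the abelian base case.

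The final paragraph is where the sketch goes wrong. Once you have invoked the characteristic-factor reduction and are on a nilsystem, the remaining task is a structural claim about subnilmanifolds of $X^m$, normally handled by induction on the nilpotency class via the toral central extension structure, not by another round of PET. PET and van der Corput are precisely the tools used to establish the seminorm control that underlies the characteristic-factor reduction, so running them again at this stage is circular. More concretely, van der Corput differencing does not preserve independence of the polynomial family. Starting from the independent family $\{n^3,\; n^3+n^2\}$, a single differencing step produces a family of polynomials all lying in the span of $n$ and $n^2$; since there are more than two of them, they cannot be linearly independent. The invariant PET actually preserves is essential distinctness, not independence, which is exactly why the finitary PET argument in Section~\ref{sec: proof of prop: Us control} of this paper is organized around essential distinctness modulo $N$ rather than around independence. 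To make your plan rigorous, drop the PET paragraph and instead flesh out the nilmanifold equidistribution argument directly.
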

	Theorem~\ref{thm: frakra} epitomizes the phenomenon of joint ergodicity (see \cite{bereberg}) for totally ergodic actions along independent polynomials. Another example of this phenomenon is given by Theorem~1.2 from \cite{bfm}, which establishes a similar theorem for ergodic actions of countable fields with characteristic zero along independent polynomials.
	
	Given an integer $N > 1$, the \emph{rotation on $N$ points} is the measure-preserving system $(\zn, \calp(\zn), \mu, T)$, where $\calp(\zn)$ is the power set, $\mu$ is the counting measure normalized so that $\mu(\zn) = 1$, and $T$ is the map $n \mapsto n + 1$ modulo $N$. Clearly, the rotation on $N$ points is ergodic but not totally ergodic. For any positive integer $k$, it is easy to see that $(\zn, \calp(\zn), \mu, T^k)$ is not ergodic if and only if $\gcd(k,N) > 1$. Hence, the smallest value of $k$ that witnesses the lack of total ergodicity of the rotation on $N$ points is precisely $\lpf N$. One may therefore conjecture that, at least for rotations on $N$ points, a version of Theorem~\ref{thm: frakra} holds, provided that we introduce an error term which decays to zero as $\lpf N$ tends to infinity, hence the name ``asymptotic total ergodicity.''
	
	In \cite{bb}, we obtained such a result; up to a change in notation, we obtained the special case $m = 1$ (and $n = 1$) of Theorem~\ref{main thm for intro} with the conclusion restricted to rings of the form $\zn$. Theorem~\ref{main thm for intro} can be viewed as a finitary version of the phenomenon of joint ergodicity for polynomial actions involving an independent family. Having validated the asymptotic total ergodicity heuristic in a special case, we should, on account of Theorem~\ref{thm: frakra}, therefore expect\footnote{A version of Theorem~\ref{thm: frakra} in the case of an independent family of \emph{multivariate} polynomials also holds, though it is not explicitly mentioned in the literature.} Theorem~\ref{main thm for intro} to hold for general positive integers $m$ and $n$, at least for rings of the form $\zn$.
	
	Certainly, it does, as we show in this article. However, a natural question arises: Why should one expect the conclusion of Theorem~\ref{main thm for intro} to hold for finite commutative rings other than $\zn$? It turns out that the quantity $\lpf N$, when $N$ is the characteristic of a finite ring, has a nice algebraic interpretation that justifies this expectation. Recall that for any nonzero element $r$ in a finite commutative ring $R$ with characteristic $N$, if $M$ is the additive order of $r$ (so $Mr = 0$ and $M$ is the minimal positive integer with this property), then $M$ divides $N$. Thus, the smallest possible additive order of a nonzero element in $R$ is precisely $\lpf N$, and some elements, e.g., $\frac{N}{\lpf{N}}\cdot 1 \in R$, attain this order. Therefore, in contrast to $N = \mathrm{char}(R)$, the ``lightest'' possible torsion, $\lpf N$ is the ``worst'' possible torsion and hence a natural measurement of the ``failure of a finite ring to be totally ergodic,'' if such a notion could be formalized.
	
	Of course, this potential conceptual distinction between $\mathrm{char}(R)$ and $\lpf{\mathrm{char}(R)}$ is not apparent in a finite field, where every nonzero element has the same additive order, a prime number. Nonetheless, using the heuristic of asymptotic total ergodicity, one can predict the shape of the main results of \cite{bc,dls,pel3term,peluse}, the latter of which is most general among these four and is essentially\footnote{The error term in \cite{peluse} is better than the error term in Theorem~\ref{main thm for intro} by virtue of working over $\mathbb{F}_q$. The power of the asymptotic total ergodicity heuristic lies in guessing the shape, not the strength, of relevant results.} Theorem~\ref{main thm for intro} (when $n=1$) for finite fields with large enough characteristic. More importantly, if one notices the distinction between $\mathrm{char}(R)$ and $\lpf{\mathrm{char}(R)}$, then one can predict the main theorem in this article, Theorem~\ref{main thm for intro}, which generalizes the main results just cited.

	\subsection{Outline and discussion of the proof of Theorem~\ref{main thm for intro}}\label{subsec: methods}
	
	Our main theorem, Theorem~\ref{main thm for intro}, is a wide generalization of the main result in \cite{peluse} to the setting of finite commutative rings and for independent families of multivariable polynomials.
	
	Like many results which deal with multiple polynomial averages, we will make use of the \emph{PET induction} technique introduced in \cite{wmpet} and used, with a variety of modifications, to prove, e.g., the results in \cite{bl,blm,lei,peluse,prend}. In the setting of finite commutative rings, we must make several innovations, but the rough idea of the method is the same. Namely, a multiple polynomial average is controlled in terms of another multiple polynomial average which is in some respect simpler (i.e., ``more linear'') than the original. The measure of ``complexity'' of the involved polynomial families roughly depends on several aspects thereof, such as the degrees of the constituent polynomials and the number of distinct leading coefficients, and is inducted on. By an intricate series of moves, the complexity of the involved polynomial families is decreased (while at the same time the families themselves get larger). The simplification process is accomplished by judicious usage of discrete differentiation on the involved polynomials. At the end of this process, we find that the original multiple polynomial average of interest is reduced to a multiple \emph{linear} average, which is much more amenable to analysis. 
	
	In this article, roughly speaking, we manipulate the ``output'' of our PET induction with an adapted version of a technique from \cite{peluse} to obtain our final result. However, in sharp contrast to the situation in \cite{peluse}, where the entire PET induction argument is not spelled out because finite fields make it straightforward to execute, finite commutative rings introduce significant obstacles, all of which can be traced to the presence of \emph{zero divisors}. To handle zero divisors, we must significantly modify the PET induction technique in ways that we will describe below. The final deduction of an affirmative answer to Question~\ref{restricted zero density problem} also encounters new obstacles not visible in the finite field case, as we saw in the discussion following Theorem~\ref{main thm for intro}. Together, the additional complications require this article, and especially Section~\ref{sec: proof of prop: Us control}, to be lengthy.
	
	For technical reasons, we will actually prove the following more general version of Theorem~\ref{main thm for intro}. For convenience, we will reuse some notation from \cite{peluse} in the formulation below.
	\begin{theorem}\label{main thm}
		Let $n \geq 1$, $m_1 \geq 1$, and $m_2 \geq 0$ be integers, and let $\mathbf P = \{ P_1,\ldots,P_{m_1},Q_1,\ldots,Q_{m_2}\} \subset \Z[y_1,\ldots,y_n]$ be an independent family of polynomials. There exist $c,C,\gamma > 0$ such that the following holds. For any finite commutative ring $R$ with characteristic $N$ satisfying $\lpf N > C$, any 1-bounded functions $f_0,\ldots, f_{m_1}: R \to \C$, and any additive characters $\psi_1,\ldots,\psi_{m_2}$ of $R$,
		\begin{multline}
			\Bigg| \frac{1}{|R|^{n+1}}\sum_{(x,y)\in R\times R^n} f_0(x)f_1(x+P_1(y))\cdots f_{m_1}(x+P_{m_1}(y))\psi_1(Q_1(y))\cdots \psi_{m_2}(Q_{m_2}(y)) \\ - 1_{\Psi=1}  \left(\frac{1}{|R|} \sum_{x\in R} f_0(x) \right) \dots \left(\frac{1}{|R|} \sum_{x\in R} f_{m_1}(x) \right) \Bigg| \ \leq \ c\,\lpf{N}^{-\gamma},
		\end{multline}
		where $1_{\Psi=1}$ equals 1 if every $\psi_i$ is trivial and 0 otherwise.
	\end{theorem}
	\begin{remark}
		Theorem~\ref{main thm} implies Theorem~\ref{main thm for intro}; just take $m_2 = 0$ (so $\mathbf P$ does not contain any polynomials with the label $Q_i$) and, if necessary, use a choice of $f_i$ which absorbs the constant term of $P_i$. The proof of Theorem~\ref{main thm}, which proceeds by induction on $m_1$, makes use of both the $m_2 = 0$ and $m_2 > 0$ cases. We also note that the formulation of this theorem, its proof, and associated lemmas are significantly simplified by allowing the possibility of $m_2 = 0$, which means no additive characters $\psi_i$ are instantiated in the conclusion, the number $1_{\psi = 1}$ equals 1 because the statement ``every $\psi_i$ is trivial'' is vacuously true, and so on. The statements and proofs of key lemmas also benefit by not having separate formulations for the $m_2 = 0$ and $m_2 > 0$ cases. For clarity, we have added footnotes at certain points so that readers are not misled by consequences of this flavor.
	\end{remark}
	
	We prove Theorem~\ref{main thm} by induction on $m_1$; the aforementioned PET induction will be instrumental in a subargument of the induction step.
	
	Broadly speaking, the base case $m_1 = 1$ is proved in the same way as the corresponding base case in \cite{peluse}---via Fourier analysis. We will need a lemma establishing a bound on character sums over rings with certain polynomial arguments, viz.,
	\begin{equation} \label{intro eqn 5}
		\frac{1}{|R|^n} \sum_{y\in R^n} \prod_{j=1}^m \psi_j(Q_j(y)),
	\end{equation}
	where $\{Q_1,\ldots, Q_m\} \subset \Z[y_1,\ldots, y_n]$ is an independent family of polynomials and $\psi_1,\ldots, \psi_m$ are additive characters of a finite commutative ring $R$.
	
	In \cite{peluse}, the corresponding lemma is proved by appealing to the Weil bound for curves over finite fields (see, e.g., Theorem 3.2 in \cite{kowalskinotes}). In our case, we are not aware of any existing bound over finite commutative rings that performs the same function. Exponential sums over $\zn$ are well studied, and an analogue of the Weil bound exists for Galois rings (see \cite{kuheca}), and there is even a computation of the exact value of \eqref{intro eqn 5} over finite commutative rings with odd characteristic in the case $n = m = 1$ and $Q_1(y) = y^2$ (see \cite{szechtman}), but these do not cover the general situation. Thus, we will need to develop a suitable general estimate for our purposes; see Lemma~\ref{lem: multicharacter product of independent polynomials as arguments}. This bound (and the base case of Theorem~\ref{main thm}) is proved in Section~\ref{sec: base case}.
	
	Let us now consider the induction step. First, we introduce some notation. For any finite set $S$ with cardinality $|S|$ and function $f : S \to \C$, we write
	\begin{equation}
		\mathop{\mathbb{E}}_{x \in S} f(x) \ := \ \frac{1}{|S|} \sum_{x \in S} f(x).
	\end{equation}
	Let $R$ be a finite commutative ring. Given polynomials $P_1(y),\ldots, P_{m_1}(y) \in \Z[y_1,\ldots, y_n]$ and a vector $F = (f_0,\ldots, f_{m_1})$ of functions $R \to \C$, we will write $\Lambda_{P_1,\ldots,P_{m_1}}(F)$ or $\Lambda_{P_1,\ldots,P_{m_1}}(f_0,\ldots,f_{m_1})$ for the expression
	\begin{equation}
		\cavg{(x,y)}{R \times R^n} f_0(x) f_1(x+P_1(y)) \cdots f_{m_1}(x+P_{m_1}(y)),
	\end{equation}
	and, given a vector $\Psi = (\psi_1,\ldots, \psi_{m_2})$ of additive characters of $R$, we will write $\Lambda_{P_1,\ldots,P_{m_1}}^{Q_1,\ldots,Q_{m_2}}(F;\Psi)$ or $\Lambda_{P_1,\ldots,P_{m_1}}^{Q_1,\ldots,Q_{m_2}}(f_0,\ldots,f_{m_1};\psi_1,\ldots, \psi_{m_2})$ for the expression
	\begin{equation}\label{pointer 1} \cavg{(x,y)}{R \times R^n} f_0(x) f_1(x+P_1(y)) \cdots f_{m_1}(x+P_{m_1}(y))\psi_1(Q_1(y))\cdots \psi_{m_2}(Q_{m_2}(y)).
	\end{equation}
	
	Let $\mathbf P = \{P_1,\ldots, P_{m_1},Q_1,\ldots, Q_{m_2}\} \subset \Z[y]$ be an independent family of polynomials with zero constant term. The goal is to find $c,C,\gamma > 0$ such that, for any finite commutative ring $R$ with characteristic $N$ satisfying $\lpf N > C$, any vector $F = (f_0,\ldots, f_{m_1})$ of 1-bounded functions $R \to \C$, and any vector $\Psi = (\psi_1,\ldots,\psi_{m_2})$ of additive characters of $R$, one has
	\be
	\left| \Lambda_{P_1,\ldots,P_{m_1}}^{Q_1,\ldots,Q_{m_2}}(F;\Psi) - 1_{\Psi=1} \prod_{i=0}^{m_1} \rcavg x f_i(x) \right| \ \leq \  c\,\lpf{N}^{-\gamma}.
	\ee
	For some appropriate value of $C$, this will follow by the triangle inequality once we establish the following estimates for some $c_1,c_2, \gamma_1,\gamma_2 > 0$ and some (for now unspecified) vector $F'$ that is related to $F$: 
	\be\label{intro step 1 of induction arg}
	\left| \Lambda_{P_1,\ldots,P_{m_1}}^{Q_1,\ldots,Q_{m_2}}(F;\Psi) - \left(\rcavg{x} f_{m_1}(x)\right) \Lambda_{P_1,\ldots,P_{m_1-1}}^{Q_1,\ldots,Q_{m_2}}(F';\Psi) \right| \ \leq \ c_1\,\lpf{N}^{-\gamma_1}
	\ee
	and
	\be\label{intro step 2 of induction arg}
	\left| \left(\rcavg{x} f_{m_1}(x)\right) \Lambda_{P_1,\ldots,P_{m_1-1}}^{Q_1,\ldots,Q_{m_2}}(F';\Psi)
	- 1_{\Psi=1} \prod_{i=0}^{m_1} \rcavg x f_i(x)
	\right| \ \leq \ c_2\,\lpf{N}^{-\gamma_2}.
	\ee
	After using the 1-boundedness of $f_{m_1}$ to bound the common factor $\rcavg{x} f_{m_1}(x)$ by 1 in \eqref{intro step 2 of induction arg},  this second estimate will follow immediately by the induction hypothesis. We now turn to discuss the ideas behind the first estimate \eqref{intro step 1 of induction arg}.
	
	Let $f_{m_1}' = f_{m_1} - \rcavg x f_{m_1}(x)$; then $f_{m_1}'$ has zero integral and is certainly 1-bounded if we scale it by the factor $\frac 12$. Let $F' = (f_0,\ldots, f_{m_1-1})$ and $F'' = (f_0,f_1,\ldots, f_{m_1-1},\frac{1}{2} f_{m_1}')$. Then we can trivially rewrite the content to be bounded in \eqref{intro step 1 of induction arg} as follows:
	\be
	\Lambda_{P_1,\ldots,P_{m_1}}^{Q_1,\ldots,Q_{m_2}}(F;\Psi) - \left(\rcavg{x} f_{m_1}(x)\right) \Lambda_{P_1,\ldots,P_{m_1-1}}^{Q_1,\ldots,Q_{m_2}}(F';\Psi) \ = \ 2\Lambda_{P_1,\ldots,P_{m_1}}^{Q_1,\ldots,Q_{m_2}}(F'';\Psi),
	\ee
	where the factor of 2 just cancels the factor $\frac 12$. Thus, the first estimate would follow if $\left| \Lambda_{P_1,\ldots,P_{m_1}}^{Q_1,\ldots,Q_{m_2}}(F'';\Psi) \right|$ could be bounded by a negative power of $\lpf N$.
	
	What has changed when shifting our focus to the average $\Lambda_{P_1,\ldots,P_{m_1}}^{Q_1,\ldots,Q_{m_2}}(F'';\Psi)$ is that now we are not working with a totally arbitrary vector of 1-bounded functions, but instead with a vector of 1-bounded functions such that one of them has zero integral, namely the function $\frac{1}{2} f'_{m_1}$ in the last entry of $F''$.
	
	The strategy from here is as follows. First, using a PET induction argument, we bound
	\[\left|\Lambda_{P_1,\ldots,P_{m_1}}^{Q_1,\ldots,Q_{m_2}}(F'';\Psi) \right|\]
	in terms of the Gowers uniformity norm\footnote{See the definition and discussion around \eqref{defn: Gowers unif norm}.} $\norm{f_i}_{U^s}$ of the constituent $f_i$ and an error term of size a negative power of $\lpf N$, where $s \geq 2$ is an integer. See Subsection~\ref{subsec: bounding in terms of Us}.
	Second, we will then reduce $s$ using a technique due to Peluse in \cite{peluse}; this technique is the reason we need to work with averages like $\Lambda_{P_1,\ldots,P_{m_1}}^{Q_1,\ldots,Q_{m_2}}(F;\Psi)$ instead of just $\Lambda_{P_1,\ldots,P_{m_1}}(F)$. See Subsection~\ref{subsec: reducing the step} for more information about this method. Each step of this technique reduces $s$ by 1 at the cost of introducing some error terms. In the end, we will obtain a bound of the shape
	\begin{multline}
		\left| \Lambda_{P_1,\ldots,P_{m_1}}^{Q_1,\ldots,Q_{m_2}}(F'';\Psi) \right| \\ \leq \ b_1^{(1)} \left( \min\left\{\norm{f_0}_{U^1},\norm{f_1}_{U^1},\ldots, \norm{f_{m_1-1}}_{U^1}, \norm{\frac{1}{2}f_{m_1}'}_{U^1}\right\}\right)^{1/2} + b_3^{(1)}(N), 
	\end{multline}
	for some constant $b_1^{(1)}$ and some function $b_3^{(1)}(N)$ that depends on the characteristic $N$ in a complicated way. We have already arranged for $\norm{\frac{1}{2} f_{m_1}'}_{U^1} =0$. Thus, our bound is actually
	\be \label{pointer 2}
	\left| \Lambda_{P_1,\ldots,P_{m_1}}^{Q_1,\ldots,Q_{m_2}}(F'';\Psi) \right| \ \leq \ b_3^{(1)}(N).
	\ee
	The final step in the proof of Theorem~\ref{main thm} is to analyze the error terms that comprise $b_{3}^{(1)}(N)$ to show that this quantity is bounded by a negative power of $\lpf{N}$.
	
	In the rest of this section, let us sketch a few of the novel difficulties that arise in the PET induction argument. Without loss of generality (see Subsection~\ref{subsec: bounding in terms of Us}), we only need to bound $\Lambda_{P_1,\ldots,P_m}(f_0,\ldots, f_m)$ in terms of $\norm{f_i}_{U^s}$ and an error term of size a negative power of $\lpf N$. This bound is the content of Proposition~\ref{prop: Us control}, whose proof occupies  Section~\ref{sec: proof of prop: Us control}. As a basis for discussion, we first give a very rough sketch of the proof that omits the novelties, then add some details. The main prerequisites to the proof are two lemmas, Lemma~\ref{lem: Ud bound if invertible} and Lemma~\ref{linearization step}. The first lemma asserts that, over finite commutative rings, Gowers uniformity (semi)norms $\norm{\cdot}_{U^s}$ control certain linear averages, and the second lemma bounds a polynomial average $\Lambda_{P_1,\ldots, P_m}(f_0,\ldots, f_m)$ in terms of some other polynomial average $\Lambda_{Q_1,\ldots,Q_{m'}}(g_0,\ldots g_{m'})$, where the family $\{Q_1,\ldots, Q_{m'}\}$ is, as discussed above, ``less complex'' or ``more linear'' than the original family $\{P_1,\ldots, P_m\}$. (Always $m' \geq m$.) To prove Proposition~\ref{prop: Us control}, we apply the second lemma repeatedly until some average is obtained to which the first lemma may be applied, then synthesize the bounds arising from the second lemma. The fact that there even is some finite sequence of applications of the second lemma which engenders a linear situation is in essence the insight behind the PET induction procedure, which is explained in Subsection~\ref{subsec: PET algorithms} and makes use of the notion of \emph{weight sequence} introduced in Subsection~\ref{subsec: weight sequences}.
	
	To explain some of the complications that arise, let us consider a concrete example. Suppose, after some number of applications of Lemma~\ref{linearization step}, we only need to bound the linear average
	\begin{equation}\label{intro eqn 3}
		\cavg{(x,y)}{R \times R} g_0(x) g_1(x+a_1y) \cdots g_{m'}(x+a_{m'}y),
	\end{equation}
	where $g_i : R \to \C$ are some 1-bounded functions and $a_i$ are some integers which depend on the original family $\{P_1,\ldots, P_m\}$ and on some hidden integer parameters $h_1,\ldots, h_k$, where $k$ is the number of times Lemma~\ref{linearization step} was applied. Using Lemma~\ref{lem: Ud bound if invertible}, the average \eqref{intro eqn 3} can be bounded in terms of the Gowers uniformity norms of $g_i$, but only\footnote{See the discussion following Lemma~\ref{lem: Ud bound if invertible} for a relevant counterexample.} provided that all $a_i$ and $a_i-a_j$ for distinct $i,j$ are units in $R$.
	
	This requirement on the $a_i$ and their pairwise differences is the point from which other technicalities spawn. The $a_i$ depend on the $h_j$, whose exact values largely depend on the original functions $f_0,\ldots, f_m$. We say ``largely'' because the strength of Lemma~\ref{linearization step} is that some values of $h_j$ may be excluded at the cost of introducing an error term.
	
	For brevity, if $a \in \Z$ and $R$ is a ring of characteristic $N$, we will routinely follow the convention of writing $a$ for the ring element $(a \bmod N) \cdot 1$ and, depending on the situation, freely refer to such elements either as integers or as elements of $\zn$ (which is a subring of $R$). We recall that an integer $a$ is a unit in a ring $R$ of characteristic $N$ if and only if $\gcd(a,N) = 1$ (see Lemma~\ref{lem: what is a unit}). It follows that any nonzero integer $a$ such that $|a| < \lpf N$ is a unit in $R$. Thus, modulo $N$, there is an ``interval'' around 0 in which every nonzero integer is a unit. We can exploit this fact and our limited control over the $h_j$ from Lemma~\ref{linearization step} to ensure that the $a_i$ and their pairwise differences are all units. For this reason, we will need to introduce the notion of the \emph{$\zn$-height} of an integer $a$, which is the minimal nonnegative integer $H$ such that $a \bmod N \in \{0,1,\ldots, H\} \cup \{N-H,\ldots, N-1\}$.\footnote{Here and elsewhere, we use the convention that, if $a$ and $b$ are integers with $b < a$, then the set $\{a,a+1,\ldots, b\}$ is the empty set.} Thus, if we can guarantee that the $a_i$ have $\zn$-height less than $\lpf N/2$, then each of the integers $a_i$ and $a_i - a_j$ will have $\zn$-height less than $\lpf N$ and hence be units, provided they are nonzero.
	
	Although it is difficult to determine the exact values of the $a_i$, we only need to ensure they are nonzero and distinct. We use a notion of \emph{essential distinctness modulo $N$} to manage the process of applying Lemma~\ref{linearization step} so that this situation occurs.
	\begin{definition}\label{definition of essential distinctness intro}
		Let $m \geq 2$, $n$, and $N$ be positive integers. Let $\mathbf{P} = \{P_1(y),\ldots, P_m(y)\} \subset \zn[y_1,\ldots, y_n]$ be a family of polynomials. We say that $\mathbf{P}$ is \emph{essentially distinct}\footnote{In our proofs, it will always be clear which value of $N$ is relevant, so, for brevity, we generally say ``essentially distinct'' rather than ``essentially distinct modulo $N$.''} if for any distinct $i,j \in \{1,\ldots, m\}$, neither $y \mapsto P_i(y)$ nor $y \mapsto P_i(y) - P_j(y)$ is a constant function $\mathbb{Z}_N^n \to \zn$.
	\end{definition}
	
	In the non-``modulo $N$'' setup, such as in \cite{wmpet}, a family of nonconstant integer polynomials is essentially distinct if none of the polynomials is constant and no difference of two distinct polynomials in the family is constant. In such a setup, essential distinctness plays a natural role as a non-degeneracy property of a family of polynomials that is preserved by the PET induction procedure. Essential distinctness modulo $N$ plays a similar role in our setup; what is different here is that it is harder to check that this property is actually preserved by our induction procedure.
	
	Indeed, for integer polynomials, there is a correspondence between polynomials and polynomial functions, so it is convenient to define the notion of essential distinctness in terms of polynomials, even though the relevant property for subsequent ergodic arguments is the fact that the induced polynomial functions (induced by a polynomial or a difference of two distinct polynomials) are not constant functions.
	
	However, as further explained in Subsection~\ref{subsec: polynomials}, there is no nice correspondence between polynomials over $\zn$ and polynomial functions over $\zn$, since multiple polynomials may induce the same polynomial function; sometimes, it is not even straightforward to decide whether a given polynomial induces a constant map! Moreover, as in other PET induction arguments, the relevant property that we need to guarantee is the non-constant nature of certain induced polynomial functions. Thus, Definition~\ref{definition of essential distinctness intro} directly accounts for the induced polynomials. (Curiously, the quantity $\lpf N$ makes another appearance in the study of polynomials versus polynomial functions over $\zn$; see Corollary~\ref{cor: polyuniq}.)
	
	\begin{remark}
		The reader may wonder why we only work with polynomials that have coefficients in $\zn$, rather than general ring coefficients. The short answer is that our Lemma~\ref{linearization step} allows us to stick with integer/$\zn$ coefficients, and if we choose not to take advantage of this, the resulting situation could perhaps best be described as a can of worms. At a minimum, one would have to be able to estimate the number of roots of polynomials with arbitrary ring coefficients and decide whether two such polynomials induce the same function $R^n \to R$; the problems with ensuring invertibility of the $a_i$ are likely harder to manage, and so on.
	\end{remark}
	
	We now close our discussion of the PET induction step by mentioning one more complication. The value $k$, the number of times Lemma~\ref{linearization step} has to be applied (and thus the number of parameters $h_j$), must be known or bounded in advance and in a way that is uniform over rings with sufficiently large $\lpf{\mathrm{char}(R)}$. We introduce the technical notion of a \emph{permissible operation} in Subsection~\ref{subsec: PET algorithms} for the purpose of analyzing our PET induction scheme in relation to such considerations (see that section for more expository details). We use this knowledge in tandem with the notion of $\zn$-height to ensure invertibility of $a_i$ and $a_i - a_j$. Typical PET induction arguments do not need this kind of knowledge and thus are formulated more simply. See the discussion in Subsection~\ref{subsec: PET algorithms}.
	
	As mentioned above, our notion of weight sequence, introduced in Subsection~\ref{subsec: weight sequences}, gives the well-ordering according to which our PET induction scheme is organized. For a reader familiar with the notion of \emph{weight} (as in, e.g., \cite{IPpolySz} or \cite{lei}) or \emph{degree sequence} (as in, e.g., \cite{prend}) that plays the same role in a usual PET induction argument, the following comment may be useful: Averages involving different polynomial families with the same weight sequence may not behave uniformly enough from the ``modulo $N$'' perspective, so we cannot prove statements that are quantified over all polynomial families and depend solely on the weight sequence of a given family; the non-uniformity we are referring to is the fact that we have to restrict some conclusions to rings with large enough $\lpf{\mathrm{char}(R)}$, where the threshold may grow arbitrarily large depending on the polynomial family (even among families with the same given weight sequence). Informally, this means our PET induction arguments must be ``unraveled,'' i.e., run a controlled number of times and with attention paid to aspects that would normally be homogenized by induction.
	
	In summary, the PET induction step in our case requires us to introduce and work with several new notions (such as $\zn$-height and permissible operations) and overcome several obstacles (related to invertibility, essential distinctness modulo $N$, non-uniformity among polynomial families with the same weight sequence, and representations of polynomial functions modulo $N$).

	
	\subsection{Organization of the article}
	The article is organized as follows.
	
	Section~\ref{sec: preliminaries} covers preliminary material from (higher order) Fourier analysis and ring theory.
	
	In Section~\ref{sec: base case}, we obtain pertinent results on character sum evaluation over finite commutative rings and use them to prove the base case of Theorem~\ref{main thm}. This section includes a demonstration of a relatively straightforward special case of our main lemma on character sums. 
	
	Section~\ref{sec: induction step} describes the strategy of the proof of the induction step of Theorem~\ref{main thm} and contains all necessary lemmas and propositions, leaving the technical proof by PET induction of the key Proposition~\ref{prop: Us control} for a later section.
	
	In Section~\ref{sec: proof of main thm}, we prove Theorem~\ref{main thm}, which implies Theorem~\ref{main thm for intro}.
	
	In Section~\ref{sec: derivation of zero and positive density results}, we prove Proposition~\ref{prop: config count}, the main tool for deriving the corollaries of Theorem~\ref{main thm} that have been explained above.
	
	The goal of Section~\ref{sec: proof of prop: Us control} is to prove Proposition~\ref{prop: Us control}. We first prove two key lemmas, Lemmas~\ref{lem: Ud bound if invertible}~and~\ref{linearization step}. Then, we give an ad hoc proof of a special case of Proposition~\ref{prop: Us control}. From our discussion of this proof, it will become clear that Lemma~\ref{linearization step} is insufficient as stated and needs to be considerably sharpened---the discussion of the relevant issues above can be viewed as a preview. Next, we develop the notions of $\zn$-height, essential distinctness modulo $N$, weight sequences, and permissible operations and use them to sharpen Lemma~\ref{linearization step} into Lemma~\ref{lem: PET inductive step}. Finally, we execute the PET induction argument to prove Proposition~\ref{prop: Us control}.
	
	An additional complication arises in the proof of Proposition~\ref{prop: Us control} in the multivariable situation. Section~\ref{sec: fixing a matrix} contains a straightforward, but tedious matrix manipulation that avoids this complication, while Section~\ref{sec: PET examples} collects this and other examples of certain obstacles that our PET induction argument is designed to avoid.
	
	\section*{Acknowledgments}
	We thank Borys Kuca for pointing out a significant simplification of the proof of Lemma~~\ref{lem: Gowers lowering}. We thank an anonymous referee for careful reading and for numerous helpful suggestions.

	\section{Preliminaries}\label{sec: preliminaries}
	Fix a finite set $S$ with cardinality $|S|$ and functions $f,g : S \to \C$. 
	We write
	\be
	\mathop{\mathbb{E}}_{x \in S} f(x) \ := \ \frac{1}{|S|} \sum_{x \in S} f(x),
	\ee
	and, as usual, the symbol $\mathop{\mathbb{E}}_{x,y \in S}$ abbreviates $\mathop{\mathbb{E}}_{x\in S} \mathop{\mathbb{E}}_{y \in S}$. We say $f$ is 1-bounded if $|f(x)| \leq 1$ for any $x \in S$. For any positive integer $p$, we set $\norm{f}_{L^p}^p := \rcavg x |f(x)|^p$, and we define the inner product
	\begin{equation}\label{inner product definition}
		\lr{f,g} \ := \ \rcavg{x} f(x) \ol{g}(x),
	\end{equation}
	which satisfies $\lr{f,f} \ = \ \norm{f}_{L^2}^2$.
	
	\subsection{Fourier-analytic preliminaries}
	
	Let $R$ be a finite commutative ring. Unless otherwise indicated, the facts stated in this subsection only make use of the fact that the additive group $(R,+)$ is a finite abelian group; the multiplicative structure of $R$ is not relevant.
	
	Let us first recall some classical facts from Fourier analysis. An additive character of $R$ is a homomorphism $\chi : (R,+) \to \{z \in \C: |z| = 1\}$. The set $\widehat{R}$ of additive characters of $R$ is an orthonormal basis for the $|R|$-dimensional $\mathbb{C}$-linear space of functions $f : R \to \mathbb{C}$ with the inner product $\lr{\cdot,\cdot}$ defined in \eqref{inner product definition}. Hence, the classical Fourier expansion of $f$ in this case is
	\begin{equation}\label{basisdecomp}
		f \ = \ \sum_{\chi \in \widehat{R}} \lr{f,\chi}\chi.
	\end{equation}
	
	We use the following normalization of the Fourier transform. The Fourier transform of $f$, written $\hat{f}$, is the function $\hat{f} : \widehat{R} \to \mathbb{C}$ given by $\hat{f}(\chi) = \lr{f,\overline{\chi}} = \cavg{x}{R} f(x)\chi(x)$. Thus $\lr{f,\chi} = \hat{f}(\overline{\chi})$; plugging this into~\eqref{basisdecomp} and reindexing the sum yields the Fourier inversion formula, valid for all $a \in R$:
	\begin{equation}\label{inversion formula} f(a) \ = \ \sum_{\chi \in \widehat{R}} \hat{f}(\chi) \chi(-a).
	\end{equation}
	
	Also, the classical Plancherel's theorem in our context is as follows: For any $f_1, f_2 : R \to \mathbb{C}$, we have $|R|\lr{\hat{f_1},\hat{f_2}} = \lr{f_1,f_2}$, where on the left-hand side we mean the inner product
	\begin{equation*} \lr{\hat{f_1},\hat{f_2}} := \cavg{\chi}{\widehat{R}} \hat{f_1}(\chi) \overline{\hat{f_2}(\chi)},
	\end{equation*}
	which is the same as the one defined above after identifying $\widehat{R}$ with $R$.
	
	The following lemma will be useful in the sequel.
	
	\begin{lemma} \label{config rearrangement product ver} Let $R$ be a finite commutative ring with characteristic $N$. Let $k \geq 0$ and $n \geq 1$ be integers, and let $R' = R^n$ with coordinatewise addition and multiplication. Let $c_0,\ldots,c_k \in \mathbb{Z}_N^n$ be nonzero in $R'$, writing $c_i = (c_{i,1},\ldots, c_{i,n})$ for each $i$. Define $S = \{(x_0,\ldots,x_k) \in (R')^{k+1} : \sum_{i=0}^k c_ix_i = 0_{R'}\}$. For any functions $f_0,\ldots,f_k : R' \to \C$, one has
		\be\label{fourier eqn 1}
		\sum_{(x_0,\ldots,x_k)\in S} f_0(x_0)\cdots f_k(x_k) = |R'|^{k+1}\cavg{X}{\widehat{R'}} \hat{f_0}(X^{c_0})\cdots \hat{f_k}(X^{c_k}),
		\ee
		where, if $X = (\chi_1,\ldots, \chi_n) \in \widehat{R'}$, then $X^{c_i}$ denotes $(\chi_1^{c_{i,1}},\ldots, \chi_n^{c_{i,n}}) \in \widehat{R'}$.
	\end{lemma}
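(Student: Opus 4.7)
The plan is to unpack the constraint $\sum_i c_i x_i = 0_{R'}$ via orthogonality of characters on $R'$, so that the left-hand side becomes an unrestricted $(k+1)$-fold sum over $(R')^{k+1}$ against a product of characters; then swapping the order of summation factors the inner sums into Fourier transforms of the $f_i$.

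Concretely, first I would use the standard identity $1_{y=0_{R'}} = \cavg{X}{\widehat{R'}} X(y)$, valid for any $y\in R'$, applied to $y = \sum_{i=0}^k c_i x_i$. Since $X$ is a character of the additive group of $R'$, this gives
\[ 1_{\sum_i c_i x_i = 0_{R'}} \;=\; \cavg{X}{\widehat{R'}} \prod_{i=0}^k X(c_i x_i). \]
Next, I would identify $\widehat{R'}$ with $\widehat{R}^n$: every $X \in \widehat{R'}$ has the form $X(y_1,\ldots,y_n) = \prod_{j=1}^n \chi_j(y_j)$ for some $\chi_j \in \widehat{R}$, and I would verify that, for $c_{i,j} \in \zn$ (which is well-defined since any $\chi_j$ has order dividing $N = \mathrm{char}(R)$), one has
\[ X(c_i x_i) \;=\; \prod_{j=1}^n \chi_j(c_{i,j} x_{i,j}) \;=\; \prod_{j=1}^n \chi_j(x_{i,j})^{c_{i,j}} \;=\; \prod_{j=1}^n \chi_j^{c_{i,j}}(x_{i,j}) \;=\; X^{c_i}(x_i), \]
which is the desired interpretation of $X^{c_i}$.

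With these two observations in hand, the computation is a swap of order of summation:
\[ \sum_{(x_0,\ldots,x_k)\in S} \prod_{i=0}^k f_i(x_i) \;=\; \sum_{(x_0,\ldots,x_k)\in (R')^{k+1}} \prod_{i=0}^k f_i(x_i) \cdot \cavg{X}{\widehat{R'}} \prod_{i=0}^k X^{c_i}(x_i), \]
and, pulling the average over $X$ outside, the inner $(k+1)$-fold sum over $(R')^{k+1}$ factors as a product of sums indexed by $i$, each of which is
\[ \sum_{x_i \in R'} f_i(x_i)\, X^{c_i}(x_i) \;=\; \hat{f_i}(X^{c_i}) \]
by the definition of the Fourier transform recalled just before the lemma. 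Assembling the pieces yields the right-hand side of \eqref{fourier eqn 1}.

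There is no real obstacle — the only point of care is the well-definedness of $X^{c_i}$, which requires recognizing that integer exponents on a character of $R$ only matter modulo the additive order of that character, which divides $N$, so lifting $c_{i,j} \in \zn$ to any integer representative gives the same $\chi_j^{c_{i,j}}$. The hypothesis that the $c_i$ are nonzero in $R'$ is not needed for this identity itself; presumably it is imposed for downstream applications where one wants $X \mapsto X^{c_i}$ to be a nontrivial endomorphism of $\widehat{R'}$.
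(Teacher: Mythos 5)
Your proof is correct and is essentially the paper's proof read in the opposite direction: the paper expands $\hat{f_i}(X^{c_i})$ on the right-hand side and invokes orthogonality, while you introduce orthogonality on the left-hand side and then factor the resulting unrestricted sum into Fourier transforms. Both hinge on the same identity $\prod_i X^{c_i}(x_i) = X(\sum_i c_i x_i)$ and on character orthogonality, so the arguments are interchangeable; your remark about well-definedness of $X^{c_i}$ modulo $N$ and about the nonzero hypothesis being unused here are both accurate.
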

	\begin{proof} Expand each $\hat{f_i}(X^{c_i})$ on the right-hand side of \eqref{fourier eqn 1} using the definition of Fourier transform, combine $\prod_{i=0}^k X^{c_i}(x_i) = X\left(\sum_{i=0}^k c_ix_i\right)$, then apply orthonormality of characters.
	\end{proof}

	Now, we prepare to define the Gowers uniformity (semi)norms on $R$. Let $f : R \to \C$. For any $h \in R$, define $\Delta_h f : R \to \C$ by $\Delta_hf(x) := f(x+h) \ol{f}(x)$ for all $x \in R$. For any $h_1,\ldots, h_s \in R$, define $\Delta_{h_1,\ldots, h_s} f : R \to \C$ by $\Delta_{h_1,\ldots,h_s} f = \Delta_{h_1}( \Delta_{h_2}( \cdots (\Delta_{h_s}f)\cdots ))(x)$. The order of $h_1,\ldots, h_s$ in the subscript of $\Delta_{h_1,\ldots,h_s}$ does not matter.
	
	For any positive integer $s$ and finite commutative ring $R$, we define the Gowers uniformity (semi)norm $\norm{\cdot}_{U^s}$ on the set of functions $f : R \to \C$ by
	\begin{equation}\label{defn: Gowers unif norm}
		\norm{f}_{U^s}^{2^s} := \rcavg{x,h_1,\ldots,h_{s}} \Delta_{h_1,\ldots,h_s}f(x),
	\end{equation}
	noting that $U^1$ is only a seminorm since $\norm{f}_{U^1} = \left| \rcavg x f(x) \right|$. These (semi)norms are nondecreasing in $s$: $\norm{f}_{U^1} \leq \norm{f}_{U^{2}} \leq \norm{f}_{U^3} \leq \ldots$ for every $f$. Moreover, we have an inductive relation
	\be
	\norm{f}_{U^{s+1}}^{2^{s+1}} = \rcavg h \norm{\Delta_h f}_{U^s}^{2^s}
	\ee
	for any $s \geq 1$. For more information about these (semi)norms, including why they are norms for $s > 1$, one may consult \cite[Section 1.3.3]{tao}.
	
	As one can find in, e.g., \cite{tao}, there is a rich theory of Gowers uniformity (semi)norms, including so-called inverse theorems. In this article, we do not use these tools. Let us now state a handful of elementary facts that we will use, which mostly concern how to rewrite or bound the $U^2$ norm of a function in terms of its Fourier transform.
	\begin{lemma}\label{lem: u2l4} Let $R$ be a finite commutative ring. Then, for each $f : R \to \C$, we have
		\begin{equation}
			\norm{f}_{U^2}^4 = |R|\norm{\hat{f}}_{L^4}^4.
		\end{equation}
	\end{lemma}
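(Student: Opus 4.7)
The plan is to unfold $\|f\|_{U^2}^4$ using the definition of the iterated discrete derivative, then re-parametrize the resulting sum over an affine linear constraint, and finally express that constrained sum via the Fourier transform.

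First, I would compute $\Delta_{h_1,h_2}f(x) = f(x+h_1+h_2)\overline{f(x+h_1)}\,\overline{f(x+h_2)}f(x)$, so that
\[
\|f\|_{U^2}^4 \;=\; \mathop{\mathbb{E}}_{x,h_1,h_2\in R}\, f(x+h_1+h_2)\overline{f(x+h_1)}\,\overline{f(x+h_2)}f(x).
\]
Next, I would make the substitution $a=x$, $b=x+h_1$, $c=x+h_2$, $d=x+h_1+h_2$, which is a bijection between $R^3$ and the additive variety $S = \{(a,b,c,d)\in R^4 : a+d=b+c\}$. This gives
\[
\|f\|_{U^2}^4 \;=\; \frac{1}{|R|^3}\sum_{(a,b,c,d)\in S} f(a)\overline{f(b)}\,\overline{f(c)}f(d).
\]

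Then I would appeal to Lemma~\ref{config rearrangement product ver} with $n=1$, $k=3$, and coefficients $(c_0,c_1,c_2,c_3)=(1,-1,-1,1)$, applied to $(f_0,f_1,f_2,f_3) = (f,\overline{f},\overline{f},f)$, to rewrite the above sum over $S$ as an average over characters in $\widehat{R}$. Using $\widehat{\overline{f}}(\chi)=\overline{\widehat{f}(\overline{\chi})}$ together with the reindexing $\chi\mapsto\chi^{-1}$ (which leaves the uniform average over $\widehat{R}$ invariant), each factor collapses to $\widehat{f}(\chi)$ or $\overline{\widehat{f}(\chi)}$, and the sum becomes $\mathop{\mathbb{E}}_{\chi\in\widehat{R}}|\widehat{f}(\chi)|^4 = \|\widehat{f}\|_{L^4}^4$. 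Combining with the factor $1/|R|^3$ above yields $|R|^3\|f\|_{U^2}^4 = \|\widehat{f}\|_{L^4}^4$.

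An equivalent, more elementary route (not requiring invocation of the earlier lemma) is to insert the identity $\mathbf{1}[a+d-b-c=0]=\frac{1}{|R|}\sum_{\chi\in\widehat{R}}\chi(a+d-b-c)$, which factors the sum over $S$ into $\frac{1}{|R|}\sum_\chi \widehat{f}(\chi)^2\overline{\widehat{f}(\chi)}^2=\frac{1}{|R|}\sum_\chi |\widehat{f}(\chi)|^4$; this yields the same conclusion after recognizing that $\|\widehat{f}\|_{L^4}^4=\frac{1}{|R|}\sum_\chi|\widehat{f}(\chi)|^4$. I do not anticipate any genuine obstacle here: the only points that require care are the combinatorial change of variables making the constraint $a+d=b+c$ manifest, and keeping track of the normalization factors $1/|R|$ appearing in the definitions of the Gowers norm, the inner product, and the $L^4$ norm on $\widehat{R}$.
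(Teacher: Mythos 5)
Your proof is correct and is essentially the paper's own proof. The paper expands $|R|^3\|f\|_{U^2}^4$ as the sum over the constraint set $S=\{(x_1,x_2,x_3,x_4):x_1-x_2-x_3+x_4=0\}$ of $f(x_1)\overline{f}(x_2)\overline{f}(x_3)f(x_4)$ and then applies Lemma~\ref{config rearrangement product ver} with $n=1$ to convert this into $\mathop{\mathbb{E}}_{\chi\in\widehat{R}}\hat f(\chi)\hat{\overline{f}}(\chi^{-1})\hat{\overline{f}}(\chi^{-1})\hat f(\chi)=\|\hat f\|_{L^4}^4$, which is exactly your argument (your ``more elementary route'' is just a reproof of that lemma in this special case).
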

	\begin{proof} We apply Lemma~\ref{config rearrangement product ver} with $n = 1$. Indeed, setting $S = \{(x_1,x_2,x_3,x_4) \in R^4 : x_1-x_2-x_3+x_4 = 0 \}$, we observe that
		\ba
		|R|^3 \norm{f}_{U^2}^4 \ & = \sum_{x,h_1,h_2 \in R} \Delta_{h_1,h_2} f(x) \ = \ \sum_{x,h_1,h_2 \in R} f(x+h_1+h_2)\ol{f}(x+h_1)\ol{f}(x+h_2)f(x) \\
		& = \ \sum_{(x_1,x_2,x_3,x_4) \in S} f(x_1)\ol{f}(x_2)\ol{f}(x_3)f(x_4) \ = \ |R|^4 \cavg{\chi}{\hat{R}} \hat{f}(\chi) \hat{\ol{f}}(\chi^{-1})\hat{\ol{f}}(\chi^{-1})\hat{f}(\chi) \\
		& = \ |R|^4 \cavg{\chi}{\hat{R}} \left| \hat{f}(\chi) \right|^4 \ = \ |R|^4 \norm{\hat{f}}_{L^4}^4.
		\ea
	\end{proof}
	For reference, we record the following consequence of Lemma~\ref{lem: u2l4}.
	\begin{lemma}\label{simple inequality for U2 norm}
		Let $f: R \to \C$ be a 1-bounded function on a finite commutative ring $R$. Then
		\be
		\norm{f}_{U^2}^4 \ \leq \ \max_{\psi \in \widehat{R}} |\hat{f}(\psi)|^2.
		\ee
	\end{lemma}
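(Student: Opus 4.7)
The plan is to combine Lemma~\ref{lem: u2l4} with Plancherel's theorem via the standard ``pull out the maximum'' trick for an $L^4$ norm.

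First, I would apply Lemma~\ref{lem: u2l4} to rewrite
\[
\norm{f}_{U^2}^4 \;=\; \frac{1}{|R|^3}\,\norm{\hat f}_{L^4}^4 \;=\; \frac{1}{|R|^3}\,\cavg{\chi}{\widehat R} |\hat f(\chi)|^4.
\]
Next, I would estimate $|\hat f(\chi)|^4 = |\hat f(\chi)|^2\cdot |\hat f(\chi)|^2$ by pulling out the pointwise maximum of one of the factors, yielding
\[
\cavg{\chi}{\widehat R} |\hat f(\chi)|^4 \;\leq\; \Bigl(\max_{\psi \in \widehat R} |\hat f(\psi)|\Bigr)^{2}\,\cavg{\chi}{\widehat R} |\hat f(\chi)|^2.
\]
Then I would invoke Plancherel's theorem (stated in the preliminaries as $\lr{\hat{f_1},\hat{f_2}} = |R|\lr{f_1,f_2}$ with the appropriate inner product) to evaluate the remaining average, getting
\[
\cavg{\chi}{\widehat R} |\hat f(\chi)|^2 \;=\; \lr{\hat f,\hat f} \;=\; |R|\,\lr{f,f} \;=\; |R|\,\norm{f}_{L^2}^2 \;\leq\; |R|,
\]
where the last inequality uses the 1-boundedness of $f$.

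Putting the pieces together,
\[
\norm{f}_{U^2}^4 \;\leq\; \frac{1}{|R|^3}\cdot \Bigl(\max_{\psi \in \widehat R}|\hat f(\psi)|\Bigr)^2 \cdot |R| \;=\; \max_{\psi \in \widehat R} \left(\frac{|\hat f(\psi)|}{|R|}\right)^2,
\]
which is the claimed inequality. There is no real obstacle here; the only points requiring minor care are keeping the normalization conventions straight (the Fourier transform has no $1/|R|$ normalization, while inner products do) and remembering that 1-boundedness of $f$ forces $\norm{f}_{L^2}^2 \leq 1$ rather than any weaker bound.
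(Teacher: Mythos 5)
Your proof is correct and follows essentially the same route as the paper's: apply Lemma~\ref{lem: u2l4}, pull out the maximum of $|\hat f|^2$, and finish with Plancherel and $\norm{f}_{L^2}\leq 1$. The only cosmetic difference is that the paper writes the $L^4$ norm as an unnormalized sum over $\widehat R$ rather than as an average, but the computation is identical.
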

	\begin{proof}
		We have
		\begin{multline}
			\norm{f}_{U^2}^4 \ = \ |R| \norm{\hat{f}}_{L^4}^4 \ = \  \sum_{\psi \in \widehat{R}} \left| \hat{f}(\psi)\right|^4 \\ \leq \ |R| \norm{\hat{f}}_{L^2}^2 \max_{\psi \in \widehat{R}}  |\hat{f}(\psi)|^2  \ = \ \norm{f}_{L^2}^2 \max_{\psi \in \widehat{R}} |\hat{f}(\psi)|^2  \ \leq \ \max_{\psi \in \widehat{R}} |\hat{f}(\psi)|^2,
		\end{multline}
		where in the second line we used Plancherel's theorem and the fact that $\norm{f}_{L^2}^2 \leq 1$ holds by the 1-boundedness of $f$.
	\end{proof}
	The previous facts did not use the ring structure of $R$, only the fact that its additive group is finite abelian. We now record two estimates which make use of an invertibility assumption:
	\begin{lemma}\label{Fourier dilate trivial bound} Let $R$ be a finite commutative ring with characteristic $N$. Let $n \geq 1$ be an integer, and let $R' = R^n$ with coordinatewise addition and multiplication. Let $f : R' \to \C$ be a 1-bounded function, and let $c = (c_1,\ldots, c_n) \in \mathbb{Z}_N^n$ be invertible in $R'$. The function $g : \widehat{R'} \to \C$ given by $g(X) := \hat{f}(X^c)$ satisfies
		\be
		\norm{g}_{L^2} \ \leq \ |R'|^{-1/2}
		\ee
		and
		\be
		\norm{g}_{L^4} \ \leq \ |R'|^{-1/4},
		\ee
		where, if $X = (\chi_1,\ldots, \chi_n) \in \widehat{R'}$, then $X^c$ denotes $(\chi_1^{c_1},\ldots, \chi_n^{c_n}) \in \widehat{R'}$.
	\end{lemma}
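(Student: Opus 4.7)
The plan is to exploit the invertibility hypothesis on $c$ to reduce the estimates on $g$ to the standard estimates on $\hat f$ (Plancherel for $L^2$ and Lemma~\ref{lem: u2l4} for $L^4$), via a change of variables on $\widehat{R'}$.

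First, I would verify that the map $\Phi : \widehat{R'} \to \widehat{R'}$ defined by $\Phi(X) = X^c$ is a bijection. Since $R' = R^n$ with coordinatewise operations, we have $\widehat{R'} \cong \widehat R^n$, and $c \in \Z_N^n$ being invertible in $R'$ is equivalent to each $c_i$ being a unit in $R$. For any finite abelian group $G$ and any integer $a$ coprime to the exponent of $G$ (here $\Z_N^n \subset R'$ gives an element $c_i \cdot 1_R$ with multiplicative inverse in $R$, forcing $\gcd(c_i, N)=1$ via Lemma~\ref{lem: what is a unit}), the map $\chi \mapsto \chi^a$ is a bijection of $\widehat G$, since it is the Pontryagin dual of the bijection ``multiplication by $a$'' on $G$. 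Applying this coordinate-wise to $\widehat{R'} \cong \widehat R^n$ shows $\Phi$ is a bijection.

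Second, the bijection $\Phi$ gives, for any $p \in \{2,4\}$,
\[ \norm{g}_{L^p}^p \ = \ \cavg{X}{\widehat{R'}} |\hat f(X^c)|^p \ = \ \cavg{Y}{\widehat{R'}} |\hat f(Y)|^p \ = \ \norm{\hat f}_{L^p}^p. \]
So it suffices to prove the two bounds with $\hat f$ in place of $g$.

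Third, for the $L^2$ bound, Plancherel's theorem gives $\norm{\hat f}_{L^2}^2 = |R'| \cdot \norm{f}_{L^2}^2 \leq |R'|$, using 1-boundedness of $f$ in the last step; taking square roots yields $\norm{\hat f}_{L^2} \leq |R'|^{1/2}$. For the $L^4$ bound, Lemma~\ref{lem: u2l4} (applied on the ring $R'$) gives $\norm{\hat f}_{L^4}^4 = |R'|^3 \norm{f}_{U^2}^4$. Since $f$ is 1-bounded, so is $\Delta_{h_1,h_2} f$, hence $\norm{f}_{U^2}^4 \leq 1$, giving $\norm{\hat f}_{L^4} \leq |R'|^{3/4}$. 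Combining with the change-of-variables identity yields both claims.

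There is essentially no serious obstacle here; the only point that must be handled carefully is the bijectivity of $\Phi$, which is where the invertibility hypothesis on $c$ is used (and is exactly the reason the lemma is stated with this hypothesis). Everything else is routine Fourier analysis on the finite abelian group $(R',+)$, using only facts already established in this subsection.
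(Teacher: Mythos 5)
Your proof is correct and takes essentially the same route as the paper: a change of variables $X \mapsto X^c$ on $\widehat{R'}$ (you justify its bijectivity via Pontryagin duality, the paper via the dual change $x \mapsto c^{-1}x$ on $R'$), followed by Plancherel for the $L^2$ bound and Lemma~\ref{lem: u2l4} for the $L^4$ bound.
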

	\begin{proof} By the change of variables $X \mapsto X^{c^{-1}}$ and Plancherel's theorem, we observe that
		\begin{equation} \norm{g}_{L^2}^2 \ = \ \cavg{X}{\widehat{R'}} |\hat{f}(X^c)|^2 \ = \ \cavg{X}{\widehat{R'}} |\hat{f}(X)|^2 \ = \ \norm{\hat{f}}_{L^2}^2 \ = \ \frac{1}{|R'|} \norm{f}_{L^2}^2 \ \leq \ \frac{1}{|R'|}.
		\end{equation}
		The change of variables $X \mapsto X^{c^{-1}}$ on $\widehat{R'}$ is valid because it corresponds to the change of variables $x \mapsto c^{-1}x$ on $R'$. Similarly, by Lemma~\ref{lem: u2l4} and the triangle inequality,
		\be
		\norm{g}_{L^4}^4 = \cavg{X}{\widehat{R'}} |\hat{f}(X^c)|^4 = \cavg{X}{\widehat{R'}} |\hat{f}(X)|^4 = \frac{1}{|R'|}||f||_{U^2}^4 = \frac{1}{|R'|}\left| \cavg{x,h_1,h_2}{R'} \Delta_{h_1,h_2}f(x) \right| \leq |R'|^{-1},
		\ee
		as desired.
	\end{proof}
	
	\subsection{Ring-theoretic preliminaries}
	Recall that if $a \in \Z$ and $R$ is a ring of characteristic $N$, we will routinely follow the convention of writing $a$ for the ring element $(a \bmod N) \cdot 1$ and, depending on the situation, freely refer to such elements either as integers or as elements of $\zn$ (which is a subring of $R$). The following lemma is useful in that it guarantees that the set $\{1,\ldots, \lpf N - 1\} \cup \{N- \lpf N + 1, \ldots, N-1\}$ consists entirely of units in a finite ring with characteristic $N$.
	\begin{lemma}\label{lem: what is a unit}
		Let $R$ be a finite commutative ring with characteristic $N$. Let $m$ be a positive integer. If $\gcd(m,N) = 1$, then $m$ is a unit in $R$.
	\end{lemma}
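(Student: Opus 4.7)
The plan is to invoke Bezout's identity together with the defining property of the characteristic. Since $\gcd(m,N)=1$, there exist integers $a,b \in \Z$ such that
\begin{equation*}
am + bN = 1.
\end{equation*}
Following the convention recalled just before the lemma, I will interpret both sides as elements of $R$ via the canonical map $\Z \to R$ sending $k \mapsto k \cdot 1_R$. Because $R$ has characteristic $N$, we have $N \cdot 1_R = 0_R$, and hence $bN \cdot 1_R = 0_R$ regardless of $b$.

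Applying this to the Bezout relation gives $(a \cdot 1_R)(m \cdot 1_R) = am \cdot 1_R = 1_R - bN \cdot 1_R = 1_R$. Since $R$ is commutative, this exhibits $a \cdot 1_R$ as a two-sided multiplicative inverse of $m \cdot 1_R$, so $m$ is a unit in $R$, as desired.

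I do not expect any serious obstacle here; the entire argument is one line after Bezout, and the only subtlety worth mentioning is the implicit identification of $m \in \Z$ with $m \cdot 1_R \in R$, which is precisely the convention spelled out in the paragraph preceding the lemma. No finiteness of $R$ is actually used beyond the definition of characteristic, though the lemma is stated for finite $R$ because that is the setting of the paper.
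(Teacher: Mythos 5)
Your proof is correct. The paper takes a slightly more structural route: it observes that $1_R$ generates a subring of $R$ isomorphic to $\Z_N$, reduces the question to whether $m$ is a unit in $\Z_N$, and then cites the standard criterion $\gcd(m,N)=1$. Your argument bypasses the intermediate subring and applies B\'ezout's identity directly, using only the defining property $N \cdot 1_R = 0_R$; as you observe, this makes the finiteness of $R$ irrelevant and the argument valid in any commutative ring whose characteristic divides $N$ (or more precisely, any commutative ring in which $N \cdot 1_R = 0_R$). In substance the two proofs are the same --- the paper's appeal to units in $\Z_N$ is itself B\'ezout under the hood --- but yours is the more self-contained and marginally more general formulation.
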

	\begin{proof}
		It is clear that $1$ generates a cyclic subgroup of $(R,+)$ of order $N$. This subgroup is actually a subring of $R$ isomorphic to $\Z_N$. Then $m$ is a unit in $R$ if and only if $m$ is a unit in $\Z_N$, which holds if and only if $\gcd(m,N) = 1$.
	\end{proof}
	
	Since we work with general finite commutative rings, it may be tempting to use the fact\footnote{See, e.g., \cite[Section 3]{cohenroche} for a proof.} that every finite ring is a direct product of rings of prime power order. However, this fact would not help us in the overall proof of our main theorem. At best, it might strengthen some auxiliary bounds we use, such as the one in Proposition~\ref{prop: bound on number of roots}. Using hypothetical strengthened versions of such bounds would complicate our presentation for not enough benefit.
	
	In the course of making calculations over a finite commutative ring $R$, it will be useful to have $\Z_{\mathrm{char}(R)}$ as a direct summand of $R$, which is generally only possible\footnote{For example, if $\mathbb{F}_9 \times \Z_5$ were ring-theoretically isomorphic to a direct product of some ring $S$ with $\Z_{15}$, then $|S| = 3$ and hence $S$ would be $\Z_3$ with usual addition and multiplication modulo 3, in which case the failure of isomorphism can be noticed by, for example, observing that the range of the map $x \mapsto x^8$ has different cardinality over $\mathbb{F}_9 \times \Z_5$ vs. over $S \times \Z_{15}$. In contrast, $(\mathbb{F}_9 \times \Z_5,+)$ is clearly isomorphic to $\Z_3 \times \Z_3 \times \Z_5 \cong \Z_3 \times \Z_{15}$.} if we regard $R$ solely from an additive viewpoint. The following lemma describes the additive structure of a finite commutative ring in a useful way.
	
	\begin{lemma}\label{lem: structure of (R,+)}
		Let $R$ be a finite commutative ring with characteristic $N$. Then there is an isomorphism $\phi : (R,+) \to \Z_{b_1} \times \cdots \times \Z_{b_r}$, where $b_r = N$, the integers $b_i > 1$, $i \in \{1,\ldots, r-1\}$, are possibly indistinct prime powers which divide $N$, and $\phi(1) = (0,\ldots, 0,1)$. 
	\end{lemma}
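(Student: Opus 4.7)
The plan is to treat this as a purely additive/abelian-group statement and reduce to the classical structure theorem for finite abelian groups, with the extra twist that we need to arrange the decomposition so that one cyclic summand is precisely the one generated by $1_R$. The first observation I would record is that, since $1_R$ has additive order exactly $N$, and since for every $r \in R$ we have $Nr = (N \cdot 1_R)r = 0$, the exponent of the finite abelian group $(R,+)$ equals $N$. So the task is: in a finite abelian group $A$ of exponent $N$ containing an element $a$ of order $N$, exhibit a direct sum decomposition $A = \langle a\rangle \oplus B$.

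My next step is to pass to $p$-primary components. Writing $N = \prod_p p^{a_p}$, the primary decomposition gives $(R,+) = \bigoplus_p R_p$, and correspondingly $1_R = \sum_p e_p$ where $e_p \in R_p$ is the $p$-part of $1_R$. Because the exponent of $(R,+)$ is $N$, the exponent of $R_p$ is exactly $p^{a_p}$; and because the order of $1_R$ is $N$, the order of $e_p$ must be exactly $p^{a_p}$. Thus in each $R_p$ the cyclic subgroup $\langle e_p\rangle$ has order equal to the exponent of $R_p$.

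The main (and essentially only) non-routine step is the following classical fact from abelian group theory, which I would invoke without reproof: in a finite abelian $p$-group $A$, any cyclic subgroup whose order equals the exponent of $A$ is a direct summand. Applying this to each $R_p$ produces a complement $S_p$ with $R_p = \langle e_p\rangle \oplus S_p$, where $S_p$ itself is a finite abelian $p$-group of exponent dividing $p^{a_p}$. Reassembling across primes,
\[
(R,+) \;=\; \bigoplus_p \langle e_p\rangle \;\oplus\; \bigoplus_p S_p,
\]
and by the Chinese remainder theorem the first summand is cyclic of order $N$ generated by $\sum_p e_p = 1_R$, hence isomorphic to $\Z_N$ via the map sending $1_R$ to $1 \in \Z_N$.

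Finally, I would apply the standard structure theorem to the remaining factor $\bigoplus_p S_p$ to write it as $\Z_{b_1} \times \cdots \times \Z_{b_{r-1}}$ with each $b_i$ a prime power; since each $S_p$ has exponent dividing $p^{a_p}$, every such $b_i$ divides $N$. Setting $b_r := N$ and combining gives the desired isomorphism $\phi$, and by construction $\phi(1_R) = (0,\ldots,0,1)$. The only subtle ingredient is the splitting lemma for maximal cyclic subgroups of abelian $p$-groups; everything else is bookkeeping.
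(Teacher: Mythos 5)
Your proof is correct but takes a genuinely different route from the paper's. The paper begins by applying the fundamental theorem of finite abelian groups to get a prime-power decomposition $(R,+) \cong \Z_{b'_1} \times \cdots \times \Z_{b'_{r'}}$ and then works coordinatewise: it verifies $N = \lcm(b'_1,\ldots,b'_{r'})$, argues by contradiction that for each prime $p \mid N$ there is a coordinate $i_p$ of maximal $p$-power order in which the corresponding entry of $\phi'(1)$ is a unit, glues those coordinates into a single $\Z_N$ factor via the Chinese remainder theorem, and finishes by composing with an explicit automorphism of the target so that $1$ lands on $(0,\ldots,0,1)$. You instead work abstractly with $(R,+)$: you pass to primary components, note that the $p$-component $e_p$ of $1_R$ has order equal to the exponent of $R_p$, invoke the classical splitting lemma (a cyclic subgroup of maximal order in a finite abelian $p$-group is a direct summand) to peel off $\langle e_p\rangle$ as a summand of each $R_p$, reassemble via CRT, and apply the structure theorem only to the residual complement. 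Both proofs are sound and live on the same underlying fact; yours makes the key mechanism (the splitting lemma) explicit and avoids the final automorphism step since $\phi(1)=(0,\ldots,0,1)$ falls out of the construction, whereas the paper's is more self-contained in that it imports only the statement of the structure theorem rather than one of the lemmas normally used to prove it, at the cost of more coordinate bookkeeping. The only thing I would add to your write-up is a one-line observation that the $b_i$ in the complement are automatically $>1$ (the structure theorem produces no trivial factors), which the statement requires.
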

	\begin{proof}
		By the fundamental theorem of finite abelian groups, there is an isomorphism $\phi' : (R,+) \to \Z_{b'_1} \times \dots \times \Z_{b'_{r'}}$, where the integers $b'_i > 1$ are possibly indistinct prime powers. We claim that $N = \lcm(b'_1,\ldots, b'_{r'})$. Indeed, it is clear from the isomorphism that $\lcm (b'_1,\ldots, b'_{r'})x = 0$ for every $x \in R$, which implies that $\mathrm{char}(R) \leq \lcm(b'_1,\ldots, b'_{r'})$. Moreover, this inequality must be sharp, as there is an element $s \in R$ such that $\lcm(b'_1,\ldots, b'_{r'})$ is the minimal positive integer $M$ such that $Ms = 0$. Indeed, for each prime $p$ dividing $\lcm(b'_1,\ldots, b'_{r'})$, there exists a positive integer $a_p$ and an index $i_p \in \{1,\ldots, r'\}$ such that $b'_{i_p} = p^{a_p}$ and $p^{a_p+1}$ does not divide $\lcm(b'_1,\ldots, b'_{r'})$. Let $I$ be a set of indices $i_p$ chosen in this way. Then the desired element $s$ satisfies $\phi'(s) = (c_1,\ldots, c_{r'})$, where
		\begin{equation*}c_i = \begin{cases} 1 & \text{ if } i \in I, \\ 0 & \text{ otherwise.} \end{cases}
		\end{equation*}
		Thus $N = \lcm(b'_1,\ldots, b'_{r'})$.
		
		With that shown, we will proceed with another argument; for clarity, what is indicated below by the notation $i_p$ and $I$ may be different from what is indicated by the same symbols above.
		
		Write $\phi'(1) = (e''_1,\ldots, e''_{r'})$. We now claim that, for each prime $p$ dividing $N$, there exists $i_p \in \{1,\ldots, r'\}$ such that $pb'_{i_p}$ does not divide $N$ and $e_{i_p}''$ is invertible modulo $b'_{i_p}$. Suppose, for a contradiction, that there is a prime $p$ dividing $N$ such that, for every $i \in \{1,\ldots, r'\}$, we have $pb'_i$ divides $N$ or $e_{i}''$ is not a unit modulo $b'_i$. Let $I = \{i \in \{1,\ldots, r'\} : pb'_i \text{ does not divide } N\}$ be the set of indices where $b'_i$ is the largest power of $p$ that appears among $b'_1, \ldots, b'_{r'}$. Clearly $I$ is nonempty. For all $i \in I$, the contrary assumption implies $e_{i}''$ is not a unit modulo $b'_i$ and hence is a multiple of $p$. However, this forces $(N/p)\phi'(1) = 0$, which contradicts that the characteristic of $R$ is $N$. This proves the claim.
		
		By the claim, $N$ is the product of $b_{i_p}'$ over all prime $p$ dividing $N$, which expresses $N$ as a product of pairwise coprime prime power factors. Moreover, $e_{i_p}'' \in \Z_{b_{i_p}'}^\times$ for each prime $p$ dividing $N$.
		
		Hence, by the Chinese remainder theorem, it follows that there is an isomorphism $\phi'' : (R,+) \to \Z_{b_1} \times \cdots \times \Z_{b_r}$, where $r \leq r'$, $b_r = N$, the integers $b_i > 1$, $i \in \{1,\ldots, r-1\}$, are possibly indistinct prime powers which divide $N$, and, writing $\phi''(1) = (e_1,\ldots, e_r)$, we have that $e_r$ is invertible modulo $N$.
		
		For each $j \in \{1,\ldots, r\}$, define the element $\delta^{(j)} = (\delta^{(j)}_1, \ldots, \delta^{(j)}_r) \in \Z_{b_1} \times \cdots \times \Z_{b_r}$ by
		\begin{equation*}
			\delta^{(j)}_i \ := \ \begin{cases} 1 & \text{ if } j = i, \\ 0 & \text{ otherwise,} \end{cases}  
		\end{equation*}
		and recall that $\phi''(1) = (e_1,\ldots, e_r)$. Then, using the fact that $e_r$ is invertible modulo $b_r$, we observe that $\{\delta^{(j)} : j \in \{1,\ldots, r-1\} \} \cup \{(e_1,\ldots, e_r)\}$ is a minimal set of generators of $\Z_{b_1} \times \cdots \times \Z_{b_r}$. Thus, by composing $\phi''$ with the isomorphism $\Z_{b_1} \times \cdots \times \Z_{b_r} \to \Z_{b_1} \times \cdots \times \Z_{b_r}$ defined by sending $\delta^{(j)}$ to itself for $j < r$ and $(e_1,\ldots, e_r)$ to $\delta^{(r)}$, we obtain an isomorphism $\phi: (R,+) \to \Z_{b_1} \times \cdots \times \Z_{b_r}$ which satisfies $\phi(1) = (0,\ldots, 0,1)$, as desired.
	\end{proof}

	\subsection{Polynomials}\label{subsec: polynomials}
	This article deals extensively with polynomials over various rings. In this subsection, we collect basic notions and results about such polynomials, starting with their definition.
	
	Fix a ring $R$ and a positive integer $n$. Let $\alpha = (\alpha_1,\ldots, \alpha_n) \in (\N \cup \{0\})^n$. A monomial $y_1^{\alpha_1}\cdots y_n^{\alpha_n}$, also written $y^\alpha$, is a formal product of the $n$ indeterminates $y_i$, $i \in \{1,\ldots, n\}$, which are assumed to commute with each other and with elements of $R$. The degree of the monomial $y^\alpha$ is defined to be $\sum_{i=1}^n \alpha_i$. A polynomial $P = P(y) = P(y_1,\ldots, y_n) \in R[y_1,\ldots, y_n]$ is a finite linear combination of monomials with coefficients in $R$. The degree of $P$ is the maximum of the degrees of its monomials with nonzero coefficients.
	
	A finite commutative ring with characteristic $N$ contains a unique subring isomorphic to $\Z_N$. Hence, several facts about polynomials over $\zn$ will be relevant to us.
	
	To begin, the technical distinction between polynomials in the ring $\zn[y_1,\ldots, y_n]$ and (multivariable) polynomial functions on $\zn$, if not treated carefully, may frustrate our efforts at a key juncture. First, we recall the situation in one variable and over $\Z$, which is simpler. A polynomial $P \in \Z[y_1]$ induces the polynomial function $\Z \to \Z$ given by $y_1 \mapsto P(y_1)$. Conversely, if a function $f : \Z \to \Z$ is known to be a polynomial function, then it is induced by exactly one polynomial $P \in \Z[y_1]$. Thus, if two polynomial functions are equal over $\Z$, then, helpfully, it follows that corresponding coefficients of their inducing polynomials in $\Z[y_1]$ are equal.
	
	However, in $\zn$, while it is true that a polynomial $P \in \zn[y_1]$ induces the polynomial function $\zn \to \zn$ given by $y_1 \mapsto P(y_1)$, a (single-variable) polynomial function on $\zn$ may be induced by a variety of (single-variable) polynomials over $\zn$, even of degree less than $N$. For example, $5{y_1}^3 - 6y_1 \equiv 14y_1 \bmod 15$ for all $y_1 \in \mathbb{Z}_{15}$, so the polynomial function $y_1 \mapsto 5{y_1}^3 - 6y_1$ on $\mathbb{Z}_{15}$ is induced by both the polynomial $5y_1^3 - 6y_1 \in \mathbb{Z}_{15}[y_1]$ and by the polynomial $14y_1 \in \mathbb{Z}_{15}[y_1]$ -- and indeed, by others as well.
	
	In our arguments below, we mostly will be working with polynomial functions on $R$ or $\zn$, and not directly with elements of the polynomial rings $R[y_1,\ldots, y_n]$ or $\zn[y_1,\ldots, y_n]$. However, we will encounter a situation with the following description: Two multivariable polynomial functions $f_1, f_2 : \mathbb{Z}_N^n \to \zn$, respectively induced by polynomials $P_1, P_2 \in \zn[y_1,\ldots, y_n]$, will be equal as functions, and we would like to conclude the equality of corresponding coefficients of $P_1$ and $P_2$. Thus, we will want this situation to be one where a multivariable polynomial function over $\zn$ corresponds to one polynomial in $\zn[y_1,\ldots,y_n]$, which will hold if some reasonable restriction on the degree is assumed. To this end, we state a theorem, a corollary, and a lemma.
	
	The theorem describes a canonical representation of a single-variable polynomial function modulo a positive integer. Such a theorem has been reproved several times (see, e.g., \cite{kempner} and \cite{singmaster}), so we do not prove it here. Then, the corollary specifies the ``reasonable'' degree restriction to which we have just now alluded. Finally, the lemma is the analogous result for multivariable polynomials that we actually use in this article.
	
	\begin{theorem}[{\cite[Theorem 10]{singmaster}}]\label{thm: singmaster}
		Let $N$ be a positive integer, and let $\ell$ be the least positive integer such that $N$ divides $\ell!$. Let $f$ be a polynomial function on $\zn$. Then $f$ is induced by a unique polynomial $F = \sum_{k=0}^{\ell-1} b_kx^k \in \zn[x]$ such that $0 \leq b_k < N/\gcd(k!,N)$ for each $k \in \{0,\ldots, \ell-1\}$. 
	\end{theorem}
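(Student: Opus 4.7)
My approach uses the falling factorial polynomials $(x)_k := x(x-1)(x-2)\cdots(x-k+1)$ (with $(x)_0 := 1$) as an auxiliary basis. Since each $(x)_k$ is monic of degree $k$, the set $\{(x)_k\}_{k=0}^{\ell-1}$ is a $\zn$-basis of the polynomials of degree less than $\ell$ in $\zn[x]$. At integer arguments, $(x)_k = k!\binom{x}{k}$ takes values in $k!\Z$; consequently, $\frac{N}{\gcd(k!,N)}(x)_k$ takes values in $N \cdot \frac{k!}{\gcd(k!,N)}\Z \subseteq N\Z$, so the polynomial $\frac{N}{\gcd(k!,N)}(x)_k$ induces the zero function on $\zn$ for each $k \in \{0,\ldots,\ell-1\}$. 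In particular, $(x)_\ell$ itself (and hence $x^j(x)_\ell$ for every $j \geq 0$) induces the zero function, since $N \mid \ell!$.

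For existence, I would take any polynomial $F \in \zn[x]$ inducing a given polynomial function $f$ on $\zn$. First, using monicity of $(x)_\ell$ and the fact that $x^j(x)_\ell$ induces zero, I repeatedly cancel the leading term of $F$ by subtracting appropriate multiples of $x^j(x)_\ell$ to reduce $F$ to degree less than $\ell$ without altering $f$. Then, working from $k = \ell-1$ down to $k = 0$, I reduce the coefficient $b_k$ of $x^k$ modulo $N/\gcd(k!,N)$: write $b_k = q \cdot \frac{N}{\gcd(k!,N)} + r$ with $0 \leq r < \frac{N}{\gcd(k!,N)}$ and subtract $q \cdot \frac{N}{\gcd(k!,N)}(x)_k$. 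Monicity of $(x)_k$ ensures this replaces the $x^k$ coefficient by $r$ and only modifies coefficients of $x^j$ for $j < k$, preserving the previously canonicalized higher coefficients; the function $f$ is unchanged since the subtracted polynomial induces zero. Iterating yields the claimed canonical form.

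For uniqueness, I would count. The set of canonical tuples $(b_0,\ldots,b_{\ell-1})$ with $0 \leq b_k < N/\gcd(k!,N)$ has cardinality $\prod_{k=0}^{\ell-1} N/\gcd(k!,N)$. Consider the $\zn$-linear map $\Phi$ from $\{P \in \zn[x] : \deg P < \ell\}$ (of cardinality $N^\ell$) to the space of polynomial functions on $\zn$, sending a polynomial to its induced function; by the existence argument, $\Phi$ is surjective onto all polynomial functions on $\zn$. To compute $|\ker \Phi|$, work in the $(x)_k$ basis: I claim $Q(x) = \sum_{k=0}^{\ell-1} a_k (x)_k$ induces the zero function iff each $a_k \in \frac{N}{\gcd(k!,N)}\zn$. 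Sufficiency follows from the opening paragraph; for necessity, evaluate at $x = k \in \{0,1,\ldots,\ell-1\}$. Since $(k)_j = 0$ for $j > k$, one gets $Q(k) = a_k\, k! + \sum_{j<k} a_j\, j!\binom{k}{j}$, and induction on $k$ then forces $a_k k! \equiv 0 \pmod N$, i.e.\ $a_k \in \frac{N}{\gcd(k!,N)}\zn$. Hence $|\ker \Phi| = \prod_{k=0}^{\ell-1} \gcd(k!,N)$, and the total number of polynomial functions equals $N^\ell / \prod_{k=0}^{\ell-1}\gcd(k!,N) = \prod_{k=0}^{\ell-1} N/\gcd(k!,N)$, matching the count of canonical tuples; combined with surjectivity of the map from canonical tuples to polynomial functions (from Step 2), the matched cardinalities force bijectivity. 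The main subtlety is that the canonical form is stated in the monomial basis $\{x^k\}$ while the natural zero-inducing polynomials live in the falling factorial basis $\{(x)_k\}$; monicity of $(x)_k$ is precisely what reconciles these viewpoints and lets the top-down reduction avoid undoing earlier work.
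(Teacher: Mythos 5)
The paper does not prove this theorem; it is stated as a citation to Singmaster's work, with the remark that ``such a theorem has been reproved several times \ldots so we do not prove it here.'' Your blind proof is therefore assessed on its own merits rather than against an in-paper argument.

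Your proof is correct. The opening observations are sound: $(x)_k = k!\binom{x}{k}$ takes values in $k!\Z$, so $\frac{N}{\gcd(k!,N)}(x)_k$ induces the zero function on $\zn$, and in particular $(x)_\ell$ and all $x^j(x)_\ell$ do, since $N \mid \ell!$. The existence argument is a clean two-phase reduction: first kill all degree-$\geq \ell$ terms by subtracting multiples of the monic, zero-inducing polynomials $x^j(x)_\ell$, then normalize the coefficients $b_{\ell-1}, \ldots, b_0$ in descending order by subtracting $q\cdot\frac{N}{\gcd(k!,N)}(x)_k$; monicity of $(x)_k$ is exactly what guarantees the top-down pass never disturbs an already-normalized coefficient. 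The uniqueness argument by counting is also correct: you identify the kernel of the evaluation map on $\{P \in \zn[x] : \deg P < \ell\}$ by passing to the falling-factorial basis and using the triangularity $(k)_j = 0$ for $j > k$ together with an induction on $k$ to show that a zero-inducing $\sum a_k(x)_k$ must have $\frac{N}{\gcd(k!,N)} \mid a_k$. The step ``$a_k k! \equiv 0 \bmod N \Rightarrow \frac{N}{\gcd(k!,N)} \mid a_k$'' relies on $\gcd\bigl(\frac{N}{\gcd(k!,N)}, \frac{k!}{\gcd(k!,N)}\bigr) = 1$, which is standard and correct. The kernel thus has size $\prod_{k<\ell} \gcd(k!,N)$, the image has size $\prod_{k<\ell} N/\gcd(k!,N)$, and since the surjection from the set of canonical tuples to polynomial functions is between finite sets of equal cardinality, it is a bijection. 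This is essentially the classical argument in the Kempner--Singmaster tradition: the falling-factorial basis is the natural coordinate system in which the zero-inducing polynomials become diagonal, and the monomial-basis statement in the theorem is recovered by the triangularity of the change of basis. No gaps.
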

	\begin{cor}\label{cor: polyuniq}
		Let $N$ be a positive integer. Let $P \in \zn[x]$ be a polynomial with degree less than $\lpf N$. Then the induced polynomial function $x\mapsto P(x)$ on $\zn$ cannot be represented by any other polynomial $Q \in \zn[x]$ with degree less than $\lpf N$.
	\end{cor}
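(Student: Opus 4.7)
The plan is to deduce Corollary~\ref{cor: polyuniq} directly from Singmaster's uniqueness theorem (Theorem~\ref{thm: singmaster}) by verifying that any polynomial of degree less than $\lpf N$ already lies in the canonical form prescribed there. Write $p = \lpf N$ and let $\ell$ be the least positive integer such that $N$ divides $\ell!$. The first routine check is that $p \leq \ell$: since $p$ divides $N$ and $N$ divides $\ell!$, the prime $p$ divides $\ell!$, forcing $\ell \geq p$. Hence any $P \in \zn[x]$ of degree less than $p$ can be written as $P = \sum_{k=0}^{\ell-1} b_k x^k$ (with $b_k = 0$ for $p \leq k \leq \ell-1$).

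The crucial numerical observation is that for $0 \leq k < p$ one has $\gcd(k!,N) = 1$. Indeed, every prime factor of $k!$ is at most $k < p$, while every prime factor of $N$ is at least $p$ by the definition of $\lpf N$. Consequently $N/\gcd(k!,N) = N$ for such $k$, and the constraint $0 \leq b_k < N/\gcd(k!,N)$ in Theorem~\ref{thm: singmaster} is automatically satisfied by any representative $b_k \in \zn$. For $p \leq k < \ell$ the coefficient $b_k = 0$ trivially satisfies $0 \leq 0 < N/\gcd(k!,N)$. Thus the extended representation of $P$ sits inside Singmaster's canonical family.

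Now suppose $Q \in \zn[x]$ also has degree less than $p$ and induces the same polynomial function on $\zn$ as $P$. By the same reasoning, $Q$ (extended by zero coefficients up to degree $\ell-1$) lies in Singmaster's canonical family. Since Theorem~\ref{thm: singmaster} asserts that each polynomial function on $\zn$ admits a \emph{unique} such canonical representative, the extended forms of $P$ and $Q$ coincide, i.e., their coefficients match term by term. In particular $P = Q$ as elements of $\zn[x]$, which is the desired conclusion.

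There is no real obstacle here; the only point needing attention is the coprimality $\gcd(k!,N)=1$ for $k < \lpf N$, which is precisely the mechanism by which $\lpf N$ (rather than the often much larger $\ell$) governs uniqueness of polynomial representations. This corollary is what allows later arguments to pass freely between polynomials in $\zn[x]$ of controlled degree and the functions they induce.
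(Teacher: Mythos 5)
Your proof is correct and takes essentially the same approach as the paper: both argue that $\ell \geq \lpf N$ and that $\gcd(k!, N) = 1$ for $k < \lpf N$, so any polynomial of degree less than $\lpf N$ already sits in Singmaster's canonical form, and uniqueness follows from Theorem~\ref{thm: singmaster}. The paper's proof is the same argument, just stated more tersely.
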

	\begin{proof}
		Observe that $\gcd(k!,N) = 1$ for each $k < \lpf N$. Define $\ell$ as in Theorem~\ref{thm: singmaster}. If $\ell < \lpf N$, then $\gcd(\ell!,\lpf N) = 1$, so $N$ cannot divide $\ell!$. Therefore $\ell \geq \lpf N$. The result follows.
	\end{proof}
	
	\begin{lemma}\label{lem: mvpolyuniq}
		Let $n$ and $N$ be positive integers. Let $P(y_1,\ldots, y_n)$ and $Q(y_1,\ldots, y_n) \in \Z_N[y_1,\ldots, y_n]$ be polynomials, each of degree less than $\lpf N$, which satisfy $P(y_1,\ldots, y_n) \equiv Q(y_1,\ldots, y_n) \bmod N$ for all $y_1,\ldots, y_n \in \Z_N$. Then $P(y_1,\ldots, y_n) = Q(y_1,\ldots, y_n)$ as polynomials; i.e., corresponding coefficients are equal.
	\end{lemma}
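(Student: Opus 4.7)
The plan is to reduce the multivariable statement to the single-variable statement of Corollary~\ref{cor: polyuniq} via induction on $n$. Setting $R := P - Q \in \Z_N[y_1,\ldots,y_n]$, we have $\deg R < \lpf N$, and $R$ induces the zero function on $\Z_N^n$; it suffices to conclude that every coefficient of $R$ is $0$.

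The base case $n=1$ is exactly Corollary~\ref{cor: polyuniq}. For the induction step, I would write
\begin{equation*}
R(y_1,\ldots,y_n) \ = \ \sum_{k=0}^{d} R_k(y_1,\ldots,y_{n-1}) \, y_n^k,
\end{equation*}
where $d := \deg R < \lpf N$. Because $R$ has total degree at most $d$, each coefficient polynomial $R_k \in \Z_N[y_1,\ldots,y_{n-1}]$ has total degree at most $d-k < \lpf N$, so the inductive hypothesis is available for every $R_k$.

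Now fix $(a_1,\ldots,a_{n-1}) \in \Z_N^{n-1}$ arbitrarily. The single-variable polynomial $\sum_{k=0}^d R_k(a_1,\ldots,a_{n-1}) \, y_n^k \in \Z_N[y_n]$ has degree less than $\lpf N$ and vanishes as a function of $y_n$ by hypothesis on $R$. Corollary~\ref{cor: polyuniq} therefore forces $R_k(a_1,\ldots,a_{n-1}) = 0$ for every $k$. Since $(a_1,\ldots,a_{n-1})$ was arbitrary, each $R_k$ induces the zero function on $\Z_N^{n-1}$, and the induction hypothesis yields $R_k = 0$ as a polynomial, hence $R = 0$.

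No step is really the main obstacle here; the only mild point to check carefully is that the restriction $\deg R < \lpf N$ descends to each coefficient polynomial $R_k$ in the remaining $n-1$ variables, which is immediate from the total-degree convention. The heavy lifting is already done in Corollary~\ref{cor: polyuniq}.
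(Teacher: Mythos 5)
Your proof is correct and takes essentially the same approach as the paper: induction on $n$ with Corollary~\ref{cor: polyuniq} as the base case, decomposing with respect to a single variable. The only superficial difference is the order in which the two ingredients are applied at the inductive step—you fix the first $n-1$ coordinates, invoke Corollary~\ref{cor: polyuniq} in the last variable, and then apply the induction hypothesis to the coefficient polynomials $R_k$, whereas the paper fixes the last coordinate, applies the induction hypothesis to $P_a$ and $Q_a$, and then invokes Corollary~\ref{cor: polyuniq} in that coordinate; both orderings are sound.
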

	\begin{proof} We induct on $n$. When $n = 1$, the result follows by Corollary~\ref{cor: polyuniq}. Now suppose the result holds for $n \geq 1$. We show that it holds for $n + 1$. Let $P(y_1,\ldots, y_{n+1})$ and $Q(y_1,\ldots, y_{n+1}) \in \Z_N[y_1,\ldots, y_{n+1}]$ be two polynomials, each of degree less than $\lpf N$, which satisfy $P(y_1,\ldots, y_{n+1}) \equiv Q(y_1,\ldots, y_{n+1}) \bmod N$ for all $y_1,\ldots, y_{n+1} \in \Z_N$.
		
		Given indeterminates $y_1, \ldots, y_{n+1}$ and a vector $\beta$ of nonnegative integers $\beta_1, \ldots, \beta_{n + 1}$, write $y^\beta$ for the expression $y_1^{\beta_1} \cdots y_{n +1}^{\beta_{n + 1}}$ and $\underline{y}^{\underline{\beta}}$ for the expression $y_1^{\beta_1} \cdots y_{n}^{\beta_{n}}$. Let $d$ be the maximum of the degrees of $P$ and of $Q$. Let $S$ denote the set $\{ \beta \in \Z^{n+1}: 0 \leq \beta_1, \ldots, \beta_{n +1} \leq d\}$, and for each $i \in \{0,\ldots, d\}$, let $S_i$ denote the set $\{ \beta \in S : \beta_{n+1} = i\}$.
		
		Write $P(y_1, \dots, y_{n + 1}) := \sum_{\beta \in S} c_\beta y^\beta$ and $Q(y_1, \dots, y_{n + 1}) := \sum_{\beta \in S} c'_\beta y^\beta$. We wish to show that $c_\beta = c'_\beta$ for each $\beta \in S$.
		
		For each $a \in \Z_N$, let $P_a(y_1,\ldots, y_n) := P(y_1,\ldots, y_n, a) \in \Z_N[y_1, \dots, y_n]$ and $Q_a(y_1,\dots, y_n) := Q(y_1,\dots, y_n, a) \in \Z_N[y_1, \dots, y_n]$.
		
		By assumption on $P$ and $Q$, we have $P_a(y_1, \dots, y_n) \equiv Q_a(y_1, \ldots, y_n) \bmod N$ for all $y_1,\ldots, y_n \in \Z_N$, and both $P_a$ and $Q_a$ have degree less than $\lpf N$ since $P$ and $Q$ do. Hence, by the inductive hypothesis, $P_a = Q_a$ as polynomials in $\Z_N[y_1,\ldots, y_n]$. We will apply this conclusion in a moment. First, we observe that
		\begin{equation}
			P_a(y_1,\dots, y_n) = \sum_{i = 0}^d \sum_{\beta \in S_i} c_\beta a^i \underline{y}^{\underline{\beta}},
		\end{equation}
		and, letting $\underline{S}$ denote the set $\{ \underline{\alpha} = (\alpha_1, \dots, \alpha_n) \in \Z^{n} : 0 \leq \alpha_1, \ldots,\alpha_n \leq d \}$, it follows that
		\begin{equation}
			P_a(y_1,\dots, y_n) = \sum_{\underline{\alpha} \in \underline{S}} (c_{(\underline{\alpha},0)} + c_{(\underline{\alpha},1)} a + \dots + c_{(\underline{\alpha},d)} a^d) \underline{y}^{\underline{\alpha}},  
		\end{equation}
		where $(\underline{\alpha},i) := (\alpha_1,\ldots, \alpha_{n}, i)$. Similarly,
		\begin{equation}
			Q_a(y_1,\ldots, y_n) = \sum_{\underline{\alpha} \in \underline{S}} (c'_{(\underline{\alpha},0)} + c'_{(\underline{\alpha},1)} a + \dots + c'_{(\underline{\alpha},d)} a^d) \underline{y}^{\underline{\alpha}}.
		\end{equation}
		Since, for each $a \in \Z_N$, $P_a = Q_a$ as polynomials in $\Z_N[y_1,\ldots, y_{n}]$, it follows that for each $\underline{\alpha} \in \underline{S}$, 
		\begin{equation}
			c_{(\underline{\alpha},0)} + c_{(\underline{\alpha},1)} x + \dots + c_{(\underline{\alpha},d)} x^d \equiv c'_{(\underline{\alpha},0)} + c'_{(\underline{\alpha},1)} x + \dots + c'_{(\underline{\alpha},d)} x^d \bmod N,
		\end{equation}
		so by Corollary~\ref{cor: polyuniq}, valid since $d < \lpf N$, it follows that $c_{(\underline{\alpha},i)} \equiv c'_{(\underline{\alpha},i)} \bmod N$ for each $i \in \{0,\ldots, d\}$. The previous statement is true for all $\underline{\alpha} \in \underline{S}$, which suffices to show that $c_\beta = c'_\beta$ for each $\beta \in S$, as desired.
	\end{proof}
	
	Next, we consider the phenomenon of linear independence over $\zn$. It is a simple matter to ensure that independence of polynomials in $\Z[y_1,\ldots, y_n]$ translates to linear independence over $\zn$:
	\begin{prop}\label{prop: lindep over zn}
		Let $n \geq 1$ be an integer. Let $\{P_1,\ldots, P_m \}\subset \Z[y_1,\ldots, y_n]$ be an independent family. Then there exists $C_1 > 0$ such that, for any positive integer $M$ with $\lpf M > C_1$, the family $\{P_1,\ldots, P_m\}$, viewed as a family of polynomials in $\mathbb{Z}_M[y_1,\ldots, y_n]$, is linearly independent over $\mathbb{Z}_M$.
	\end{prop}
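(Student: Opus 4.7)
The plan is to reduce the statement to an elementary rank calculation over $\Z$. First, I would write each polynomial as $P_i = c_i + \tilde{P}_i$, where $c_i \in \Z$ is the constant term and $\tilde{P}_i \in \Z[y_1,\ldots, y_n]$ has zero constant term. The independence of $\{P_1,\ldots, P_m\}$ in the sense defined in the paper is then equivalent to the statement that the only $\Z$-linear combination $\sum_{i=1}^m a_i \tilde{P}_i$ which vanishes as an element of $\Z[y_1,\ldots, y_n]$ is the trivial one.

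Next, I would enumerate the finitely many non-constant monomials $y^{\beta_1},\ldots, y^{\beta_K}$ that occur with nonzero coefficient in at least one $\tilde{P}_i$, and form the $m \times K$ integer matrix $A = (a_{ij})$ via the expansion $\tilde{P}_i = \sum_{j=1}^K a_{ij} y^{\beta_j}$. The independence hypothesis is precisely the statement that the rows of $A$ are linearly independent over $\Q$, i.e., $A$ has $\Q$-rank $m$. Consequently, some $m \times m$ submatrix $B$ of $A$, obtained by selecting columns $j_1,\ldots, j_m$, has determinant $d \in \Z \setminus \{0\}$.

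I would then set $C_1 := |d|$. If $M$ is a positive integer with $\lpf M > C_1$, every prime factor of $M$ strictly exceeds $|d|$ and hence does not divide $d$, so $\gcd(M,d) = 1$; by Lemma~\ref{lem: what is a unit}, $d$ is a unit in $\Z_M$, and thus $B$ is invertible over $\Z_M$. Now suppose $\sum_{i=1}^m c_i P_i$ is a constant polynomial in $\Z_M[y_1,\ldots, y_n]$ with coefficients $c_i \in \Z_M$. Subtracting off the constant term, this is equivalent to $\sum_{i=1}^m c_i \tilde{P}_i = 0$ in $\Z_M[y_1,\ldots, y_n]$, which in turn is equivalent to $\sum_{i=1}^m c_i a_{ij} \equiv 0 \bmod M$ for every $j \in \{1,\ldots, K\}$. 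Restricting $j$ to the chosen columns $j_1,\ldots, j_m$ yields the system $B^T \vec{c} \equiv \vec{0} \bmod M$; since $\det B^T = d$ is a unit in $\Z_M$, we conclude $\vec{c} \equiv \vec{0} \bmod M$, as required.

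The argument is pure linear algebra; there is no serious obstacle. The only subtle points are (i) recognizing that the paper's polynomial notion of ``independent'' translates into a $\Q$-rank statement about the coefficient matrix of the non-constant parts, and (ii) choosing $C_1$ so that the hypothesis $\lpf M > C_1$ forces $\gcd(M, \det B) = 1$, which is precisely what allows the $\Q$-rank information to descend to $\Z_M$.
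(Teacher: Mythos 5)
Your proof is correct and takes essentially the same approach as the paper: form the integer coefficient matrix, observe that independence forces full rank, extract a nonzero $m\times m$ minor, and choose $C_1$ large enough that the hypothesis $\lpf M > C_1$ makes that minor a unit in $\Z_M$. The only cosmetic difference is that you explicitly strip off the constant terms $c_i$ before forming the matrix, whereas the paper works directly with the full coefficient matrix (which still has $\Q$-linearly independent columns, since a vanishing combination is in particular constant); both versions descend to $\Z_M$ by the same Cramer-type argument.
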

	\begin{proof} 
		Enumerate the set $\{\alpha^{(1)},\ldots, \alpha^{(W)}\}$ of $\alpha \in (\N \cup \{0\})^n$ such that the coefficient of $y^\alpha$ is nonzero for some $P_j(y)$.
		
		First, form the $W\times m$ matrix of coefficients of $P_1,\ldots,P_m$ such that, in the $i$th row, the $j$th entry is the coefficient of $y^{\alpha^{(i)}}$ in $P_j(y)$. Since $\{P_1,\ldots,P_m\}$ is an independent family, the columns of this matrix are linearly independent, and hence there exists a nonzero $m \times m$ minor $M'$. Let $C_1$ be a positive integer larger than any prime which divides $M'$. If $M$ satisfies $\lpf M > C_1$, then $M'$ is invertible mod $M$, so $\{P_1,\ldots,P_m\}$ is a linearly independent family over $\mathbb{Z}_M$.
	\end{proof}
	
	We will also need the following result.
	
	\begin{lemma}\label{lem: Bx-c=0 solutions}
		Let $N > 1$ be a positive integer. Let $B,C \in \Z$ be integers such that $B \not\equiv 0 \bmod N$. Then the number of solutions $x \in \Z_N$ to the equation $Bx-C \equiv 0 \bmod N$ is at most $N/\lpf N$.
	\end{lemma}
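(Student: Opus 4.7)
The plan is to count the solutions via the kernel of multiplication by $B$ on $\Z_N$ and then observe that this kernel size is a proper divisor of $N$, hence at most $N/\lpf N$.

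First I would set $d := \gcd(B, N)$. The classical theory of linear congruences tells us that $Bx \equiv C \pmod{N}$ has a solution if and only if $d \mid C$, in which case the solution set is a coset of the subgroup $\{x \in \Z_N : Bx \equiv 0 \pmod N\}$, which has order exactly $d$. Thus the number of solutions is either $0$ or $d$, and in either case it is at most $d$. To make this self-contained in the paper one could just verify it directly: if $x_0, x_1 \in \Z_N$ both satisfy the congruence, then $B(x_1 - x_0) \equiv 0 \pmod N$, so solutions form a coset of $\{z \in \Z_N : Bz \equiv 0 \pmod N\}$, and the latter set has cardinality $d$ because the map $z \mapsto Bz$ on $\Z_N$ is $\Z_N$-linear with image the cyclic subgroup $\langle B \rangle$ of order $N/d$.

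Next, I would observe that since $B \not\equiv 0 \pmod N$, the integer $d = \gcd(B, N)$ is a proper divisor of $N$. Every proper divisor of $N$ is at most $N/p$ for some prime $p$ dividing $N$, and in particular is at most $N/\lpf N$. Combining this with the bound $d$ on the number of solutions yields the desired inequality.

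The argument is entirely routine; there is no real obstacle. The only thing to be mindful of is the edge case when the congruence has no solutions (which is fine, since $0 \leq N/\lpf N$) and the book-keeping to guarantee $\gcd(B, N) < N$, which follows immediately from $B \not\equiv 0 \pmod N$.
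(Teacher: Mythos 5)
Your proof is correct, and it takes a genuinely different and cleaner route than the paper's. The paper factors $N$ into prime powers, invokes the Chinese remainder theorem to reduce to the prime-power case, and then does a case analysis comparing the $p_j$-adic valuations of $B$ and $C$; it ultimately bounds the count by $N/p_j^{a_j - b_j}$ for a suitably chosen prime $p_j$. You instead observe directly that the solution set, if nonempty, is a coset of $\ker(z \mapsto Bz)$ in $\Z_N$, which has exactly $d := \gcd(B,N)$ elements (by the first isomorphism theorem, since the image $\langle B \rangle$ has order $N/d$), and that $B \not\equiv 0 \bmod N$ forces $d$ to be a proper divisor of $N$, hence $d \leq N/\lpf N$ because $N/d > 1$ divides $N$ and so is at least $\lpf N$. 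Your version avoids the CRT decomposition and the two-way valuation case split entirely, and it also makes explicit the sharper fact that the count is exactly $\gcd(B,N)$ or $0$; the paper's route is perhaps more in keeping with its CRT-flavored arguments elsewhere, but yours is shorter and self-contained. No gaps.
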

	\begin{proof}
		Factor $N = p_1^{a_1}\cdots p_\ell^{a_\ell}$ with $p_1 < \cdots < p_\ell$ and all $a_i$ positive. There exist nonnegative integers $b_1,\ldots b_\ell, c_1,\ldots, c_\ell$ and integers $b,c$ such that $B = bp_1^{b_1}\cdots p_\ell^{b_\ell}$, $C = cp_1^{c_1}\cdots p_\ell^{c_\ell}$, and $\gcd(b,N) = \gcd(c,N) = 1$. Without loss of generality, we may assume $b = c = 1$ because the invertibility of $b$ and $c$ modulo $N$ ensures that the number of solutions $x \in \Z_N$ to the equation $Bx - C \equiv 0 \bmod N$ is the same as the number of solutions to the equation $p_1^{b_1}\cdots p_\ell^{b_\ell} x - p_1^{c_1}\cdots p_\ell^{c_\ell} \equiv 0 \bmod N$.
		
		By the Chinese remainder theorem,
		\begin{equation*}
			\# \{ x \in \Z_N : Bx - C \equiv 0 \bmod N \} \ = \ \prod_{i=1}^\ell \# \{ x \in \Z_{p_i^{a_i}} : Bx - C \equiv 0 \bmod p_i^{a_i} \}.
		\end{equation*}
		
		By assumption, $B \not\equiv 0 \bmod N$, so there exists $j$ such that $b_j < a_j$. The integers $\prod_{i \neq j} p_i^{b_i}$ and $\prod_{i \neq j} p_i^{c_i}$ are invertible modulo $p_j^{a_j}$, so
		\begin{equation*}
			\#\{ x \in \Z_{p_j^{a_j}} : Bx - C \equiv 0 \bmod p_j^{a_j} \} = \# \{ x \in \Z_{p_j^{a_j}} : p_j^{b_j}x - p_j^{c_j} \equiv 0 \bmod p_j^{a_j} \}.
		\end{equation*}
		First, suppose $b_j > c_j$. Then $p_j^{b_j} x - p_j^{c_j} =  p_j^{c_j}(p_j^{b_j - c_j}x - 1) \equiv 0 \bmod p_j^{a_j}$ has no solutions $x \in \Z_{p_j^{a_j}}$. Second, suppose $b_j \leq c_j$. Then $p_j^{b_j} x - p_j^{c_j} =  p_j^{b_j}(x - p^{c_j-b_j}) \equiv 0 \bmod p_j^{a_j}$ has $p_j^{b_j}$ solutions $x \in \Z_{p_j^{a_j}}$. In either case, it follows that 
		\begin{equation*}
			\# \{ x \in \Z_N : Bx - C \equiv 0 \bmod N \} \ \leq \ \left(\prod_{i\neq j} p_i^{a_i} \right) p_j^{b_j} \ = \ \frac{N}{p_j^{a_j-b_j}} \ \leq \ \frac{N}{\lpf N},
		\end{equation*}
		as desired.
	\end{proof}

	%
	%
	%
	%
	%
	%
	%
	%
	%
	%
	%
	%
	%
	%
	%

	%
	%
	%
	%
	%
	%
	%
	%
	%
	%
	%
	%
	%
	%
	%
	%
	%
	%
	\section{The base case of Theorem~\ref{main thm}}\label{sec: base case}
	Our overall plan is to prove Theorem~\ref{main thm} by induction on $m_1$. In this section, we prove the base case:
	\begin{prop}[Theorem~\ref{main thm} in the case $m_1 = 1$]\label{main thm base}
		Let $m_2 \geq 0$ and $n \geq 1$ be integers, and let $\mathbf P = \{ P_1,Q_1,\ldots,Q_{m_2}\} \subset \Z[y_1,\ldots, y_n]$ be an independent family of polynomials with zero constant term. There exist $c,C,\gamma > 0$ such that the following holds. For any finite commutative ring $R$ with characteristic $N$ such that $\lpf N > C$, any vector $F = (f_0,f_1)$ of 1-bounded functions $R \to \C$, and any vector $\Psi = (\psi_1,\ldots,\psi_{m_2})$ of additive characters of $R$, one has
		\be
		\left| \Lambda_{P_1}^{Q_1,\ldots,Q_{m_2}}(F;\Psi) - 1_{\Psi=1} \rcavg x f_0(x) \rcavg x f_1(x)\right| \ \leq \  c\,\lpf{N}^{-\gamma},
		\ee
		where $1_{\Psi=1}$ equals 1 if every character in $\Psi$ is trivial and 0 otherwise.
	\end{prop}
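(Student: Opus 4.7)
The plan is a direct Fourier-analytic argument that reduces the claim to the character-sum estimate of Lemma~\ref{lem: multicharacter product of independent polynomials as arguments}. First, apply the Fourier inversion formula \eqref{inversion formula} to $f_1(x+P_1(y))$ to write
\[
f_1(x+P_1(y)) \ = \ \cavg{\chi}{\widehat{R}} \hat{f}_1(\chi)\,\chi(-x)\,\chi(-P_1(y)).
\]
Substituting and interchanging averages gives
\[
\Lambda_{P_1}^{Q_1,\ldots,Q_{m_2}}(F;\Psi) \ = \ \frac{1}{|R|^2}\sum_{\chi \in \widehat{R}} \hat{f}_1(\chi)\,\hat{f}_0(\chi^{-1})\,B(\chi),
\]
where $B(\chi) := \mathbb{E}_{y\in R^n}\,\chi(-P_1(y))\psi_1(Q_1(y))\cdots\psi_{m_2}(Q_{m_2}(y))$.

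Next, isolate the trivial character $\chi = \chi_0$. Its contribution equals
\[
\Big(\rcavg{x} f_0(x)\Big)\Big(\rcavg{x} f_1(x)\Big) \cdot \mathbb{E}_{y\in R^n}\,\psi_1(Q_1(y))\cdots \psi_{m_2}(Q_{m_2}(y)).
\]
If $\Psi = 1$, the last factor is $1$, producing the intended main term. If $\Psi \neq 1$, then $\{Q_1,\ldots,Q_{m_2}\}$ is independent (as a subfamily of an independent family) and at least one $\psi_j$ is nontrivial, so Lemma~\ref{lem: multicharacter product of independent polynomials as arguments} bounds this factor by $c_1\lpf{N}^{-\gamma_1}$; combined with the 1-boundedness of $f_0,f_1$, the whole $\chi_0$-term is then $O(\lpf{N}^{-\gamma_1})$, consistent with $1_{\Psi=1}=0$.

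For the nontrivial part $\sum_{\chi \neq \chi_0}$, bound $|B(\chi)|$ uniformly in $\chi\neq\chi_0$: since $\{P_1,Q_1,\ldots,Q_{m_2}\}$ is independent and $\chi^{-1}$ is nontrivial, Lemma~\ref{lem: multicharacter product of independent polynomials as arguments} again yields $|B(\chi)| \leq c_2\lpf{N}^{-\gamma_2}$, provided $\lpf{N}$ exceeds some constant depending on $\mathbf{P}$. Then apply the Cauchy--Schwarz inequality and Plancherel's theorem:
\[
\frac{1}{|R|^2}\sum_{\chi \neq \chi_0} |\hat{f}_1(\chi)||\hat{f}_0(\chi^{-1})| \ \leq \ \frac{1}{|R|^2}\Big(\sum_\chi |\hat{f}_1(\chi)|^2\Big)^{1/2}\Big(\sum_\chi |\hat{f}_0(\chi)|^2\Big)^{1/2} \ \leq \ 1,
\]
using $\sum_\chi |\hat{f}_i(\chi)|^2 = |R|^2 \norm{f_i}_{L^2}^2 \leq |R|^2$ by 1-boundedness. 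Multiplying by the uniform bound on $|B(\chi)|$ gives an overall contribution of size $O(\lpf{N}^{-\gamma_2})$. Combining the trivial and nontrivial $\chi$ pieces yields the claimed estimate with $\gamma := \min\{\gamma_1,\gamma_2\}$.

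The main obstacle is the invocation of Lemma~\ref{lem: multicharacter product of independent polynomials as arguments}: over a general finite commutative ring, one must bound the multivariate character sum $\mathbb{E}_{y\in R^n}\prod_j \chi_j(P_j(y))$ by a negative power of $\lpf{N}$ (rather than of $|R|$, which may fail as noted in the second remark under Theorem~\ref{main thm for intro}); the Weil-type bounds familiar from finite fields are unavailable here and must be replaced by the authors' own estimate developed in Section~\ref{sec: base case}. Everything else in the proof is a routine Fourier-Plancherel unpacking, so the base case reduces cleanly to that key lemma.
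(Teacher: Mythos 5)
Your proposal is correct and follows essentially the same Fourier-analytic route as the paper: Fourier inversion on $f_1$, isolation of the trivial-character contribution (which produces the main term and, when $\Psi \neq 1$, is itself small by Lemma~\ref{lem: multicharacter product of independent polynomials as arguments}), and Cauchy--Schwarz with Plancherel on the nontrivial-character sum, again controlled by that lemma. The only cosmetic difference is that the paper subtracts the mean $f_1' = f_1 - \rcavg{x} f_1(x)$ before invoking Fourier inversion, which makes $\hat{f_1'}(1) = 0$ and thereby removes the trivial character automatically; you instead perform inversion on $f_1$ directly and split off the $\chi = \chi_0$ term afterward, and you apply the lemma to the subfamily $\{Q_1,\ldots,Q_{m_2}\}$ in the $\Psi\neq 1$ case where the paper applies it uniformly to the full family with the first character set to $1$ — both are valid and lead to the same conclusion.
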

	We will prove this proposition by Fourier analysis. We need a lemma establishing a bound on character sums over rings with certain polynomial arguments, viz.,
	\begin{equation}
		\frac{1}{|R|^n} \sum_{y\in R^n} \prod_{j=1}^m \psi_j(Q_j(y)),
	\end{equation}
	where $\{Q_1,\ldots, Q_m\} \subset \Z[y_1,\ldots, y_n]$ is an independent family of polynomials and $\psi_1,\ldots, \psi_m$ are additive characters of a finite commutative ring $R$.
	
	In \cite{peluse}, the corresponding lemma is proved by appealing to the Weil bound for curves (see, e.g., Theorem 3.2 in \cite{kowalskinotes}). In our case, we are not aware of any existing bound over finite commutative rings that performs the same function.  Thus, we will need to develop a suitable general estimate for our purposes. This is Lemma~\ref{lem: multicharacter product of independent polynomials as arguments} below.
	
	To illustrate some of the arguments involved in the proof of Proposition~\ref{main thm base}, we will first prove Proposition~\ref{main thm base} in the special case that $m_2 = 0$, $n = 1$, and $P_1(y_1) = y_1^2$. After proving this special case in Subsection~\ref{subsec: example base case}, we prove Proposition~\ref{main thm base} more generally in Subsections~\ref{subsec: some character sums}~and~\ref{subsec: proof of prop main thm base}, in which, having seen the skeleton of the argument, we are ready to handle any complexity that is introduced by working with a multivariable family $\mathbf P$ rather than just a single polynomial in one variable with low degree. The exposition in Subsection~\ref{subsec: example base case} is independent of the exposition in Subsections~\ref{subsec: some character sums}~and~\ref{subsec: proof of prop main thm base}.
	
	\subsection{Proof of a special case of Proposition~\ref{main thm base}}\label{subsec: example base case}
	In this subsection, we prove Proposition~\ref{main thm base} in the special case that $m_2 = 0$, $n = 1$, and $P_1(y_1) = y_1^2$. The organization of this subsection is straightforward: Proposition~\ref{example: prop: multiplicative character estimate} is used to prove Proposition~\ref{example: prop: distinct powers character estimate}, which is used to prove Proposition~\ref{example: main thm base}, which is precisely the special case we deal with.
	
	We begin with a simple character sum estimate in two variables. The following proposition is a special case of Proposition~\ref{prop: multiplicative character estimate}. We include the proof here both to make this section more self-contained and to ease the understanding of the general case by introducing the main ideas in this simpler case.

	\begin{proposition}\label{example: prop: multiplicative character estimate}
		Let $R$ be a finite commutative ring with characteristic $N$. Let $\chi \in \hat{R}$ be a nontrivial additive character. Then
		\be
		\left|  \rcavg{h_1,h_2}  \chi\left( h_1h_2 \right) \right| \ \leq \ \frac{1}{\lpf N}.
		\ee
	\end{proposition}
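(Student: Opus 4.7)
The plan is to perform the inner sum over $h_2$ for fixed $h_1$ and identify when this sum is nonzero. Write
\begin{equation*}
\mathop{\mathbb{E}}_{h_1, h_2 \in R} \chi(h_1 h_2) \ = \ \frac{1}{|R|} \sum_{h_1 \in R} \left( \mathop{\mathbb{E}}_{h_2 \in R} \chi(h_1 h_2) \right).
\end{equation*}
For each fixed $h_1 \in R$, the map $h_2 \mapsto \chi(h_1 h_2)$ is a character of the additive group $(R,+)$, so the inner average equals $1$ if this character is trivial and $0$ otherwise. Letting $A := \{ h_1 \in R : \chi(h_1 h_2) = 1 \text{ for all } h_2 \in R\}$, we get
\begin{equation*}
\left| \mathop{\mathbb{E}}_{h_1,h_2 \in R} \chi(h_1h_2) \right| \ = \ \frac{|A|}{|R|}.
\end{equation*}

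The key step is to bound $|A|$ by $|R|/\lpf N$. First, taking $h_2 = 1_R$ in the defining condition shows $A \subseteq \ker \chi$. So it suffices to show $|\ker \chi| \leq |R|/\lpf N$, i.e.\ $[R : \ker \chi] \geq \lpf N$. The index $[R : \ker \chi]$ equals $d := |\chi(R)|$, the order of the image of $\chi$ in $S^1 = \{z \in \C : |z|=1\}$. Because $\mathrm{char}(R) = N$, every element $h \in R$ satisfies $Nh = 0$, whence $\chi(h)^N = \chi(Nh) = 1$; thus every element of the cyclic group $\chi(R)$ has order dividing $N$, and so $d \mid N$. Since $\chi$ is nontrivial, $d \geq 2$, so $d$ is a divisor of $N$ greater than $1$, forcing $d \geq \lpf N$.

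Combining, $|A| \leq |\ker \chi| = |R|/d \leq |R|/\lpf N$, which yields the claimed bound. There is no real obstacle here beyond the clean observation that the image of a nontrivial additive character of a ring of characteristic $N$ must have order at least $\lpf N$; this is precisely where the ``worst torsion'' quantity $\lpf N$ enters the analysis, foreshadowing its role throughout the paper.
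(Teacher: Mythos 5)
Your proof is correct and takes a genuinely different, more conceptual route than the paper's.

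You fix $h_1$, average over $h_2$, and observe that the inner average is the indicator of whether $h_2 \mapsto \chi(h_1 h_2)$ is trivial; the surviving set $A$ sits inside $\ker\chi$ (take $h_2 = 1_R$), and the first isomorphism theorem plus the observation that $\chi(R)$ is a cyclic group of order $d$ dividing $N$ with $d\geq 2$ give $d \geq \lpf N$ and hence $|A|/|R| \leq 1/\lpf N$. This is clean, short, and uses nothing beyond standard group theory.

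The paper instead invokes Lemma~\ref{lem: structure of (R,+)} to fix a product decomposition $(R,+) \cong \Z_{b_1}\times\cdots\times\Z_{b_r}$ with $\phi(1) = (0,\ldots,0,1)$, expresses the character sum in coordinates via the $N^r/|R|$-to-1 covering by $\Z_N^r$-tuples, encodes the ring multiplication through structure constants $c_k^{(i,j)}$, averages the first variable by orthogonality of characters of $\Z_N$, and finally counts solutions of a linear congruence with Lemma~\ref{lem: Bx-c=0 solutions}. This is considerably more machinery than your argument requires. The payoff, however, is that the paper's setup is exactly what is needed for the $m$-fold version (Proposition~\ref{prop: multiplicative character estimate}, which this proposition is a warm-up for), where your approach does not obviously extend: after peeling off $h_m$ you would need to count $(h_1,\ldots,h_{m-1}) \in R^{m-1}$ with $h_1\cdots h_{m-1}$ in the annihilator ideal of $\ker\chi$, and there is no comparably simple structural fact to finish. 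So for the special case $m=2$ your proof is both correct and preferable in isolation, but the paper's heavier structural argument is the one that scales.
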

	\begin{proof}
		
		By Lemma~\ref{lem: structure of (R,+)}, there is an isomorphism $\phi : (R,+) \to \Z_{b_1} \times \cdots \times \Z_{b_r}$, where $b_r = N$, the integers $b_i > 1$, $i \in \{1,\ldots, r-1\}$, are possibly indistinct prime powers which divide $N$, and $\phi(1) = (0,\ldots, 0,1)$.
		
		For each $j \in \{1,\ldots, r\}$, define the element $\delta^{(j)} = (\delta^{(j)}_1, \ldots, \delta^{(j)}_r) \in \Z_{b_1} \times \cdots \times \Z_{b_r}$ by
		\begin{equation*}
			\delta^{(j)}_i \ := \ \begin{cases} 1 & \text{ if } i = j, \\ 0 & \text{ otherwise,} \end{cases}  
		\end{equation*}
		and let $g_j = \phi^{-1}(\delta^{(j)}) \in R$.
		Then $g_r = 1$ and each element $s \in R$ can be written in exactly one way as $\sum_{i=1}^r x_i'g_i$ for some $x_i' \in \Z_{b_i}$, $i \in \{1,\ldots,r\}$. Moreover, if $s = \sum_{i=1}^r x_i' g_i$ for some $x_i' \in \Z_{b_i}, i \in \{1,\ldots, r\}$, then for any $r$-tuple $\mathbf{x^{(1)}} = (x_1^{(1)},\ldots, x_r^{(1)}) \in \Z_{N}^r$ such that $x_i^{(1)} \equiv x_i' \bmod b_i$ holds for all $i$, it follows that $s = \sum_{i=1}^r x_i^{(1)} g_i$ as well. Hence, for each $s \in R$, there are exactly $\prod_{i=1}^r (N/b_i) = N^r / |R|$ $r$-tuples $\mathbf{x^{(1)}} = (x_1^{(1)},\ldots, x_r^{(1)}) \in \Z_{N}^r$ such that $s = \sum_{i=1}^r x_i^{(1)}g_i$. Thus, $r$-tuples over $\Z_N$ are in $N^r/|R|$-to-1 correspondence with elements of $R$, which justifies the equation
		\begin{equation}\label{example: lpfcharestimate eqn1}
			\left| \rcavg{h_1,h_2}\chi\left( h_1h_2 \right) \right| \ = \ \left| \cavg{\mathbf{x^{(1)}},\mathbf{x^{(2)}}}{\Z_N^r} \chi\left( \left(\sum_{i_1=1}^r x^{(1)}_{i_1} g_{i_1} \right) \left(\sum_{i_2=1}^r x^{(2)}_{i_2} g_{i_2} \right)\right) \right|.
		\end{equation}
		Let us now compute the right-hand side.
		
		Given a positive integer $n$, we use the shorthand $e_n(x) := e^{2\pi i x/n}$ for any real $x$. 
		The group of additive characters of $R$ is isomorphic to $(R,+)$. Since $\chi \in \hat{R}$ is nontrivial, it follows that there exist $a_i \in \Z_{b_i}$, $i \in \{1,\ldots, r\}$, not all zero, such that $\chi(\sum_{i=1}^r x_i g_i)  \ = \ \prod_{i=1}^r e_{b_i}(a_ix_i) \ = \ e_N\left( \sum_{i=1}^r \frac{N}{b_i} a_i x_i \right)$ for each $(x_1,\ldots, x_r) \in \Z_N^r$.
		
		To proceed, we will need an explicit expression for the argument of $\chi$ in \eqref{example: lpfcharestimate eqn1}. Since $\chi$ is an additive character, not a multiplicative one, in order to understand the product inside, we will express $g_{i_1}g_{i_2}$ as a linear combination of $g_i$. Thus, for each $i,j \in \{1,\ldots, r\}$, let $c_k^{(i,j)} \in \Z_{b_k}$, $k \in \{1,\ldots, r\}$, be such that $g_ig_j = \sum_{k=1}^r c_k^{(i,j)} g_k$. Then, for any $i,k \in \{1,\ldots, r\}$, we observe that, since $g_r = 1$,
		\begin{equation}\label{example: eqn explaining multiplication law}
			c_{k}^{(i,r)} \ = \ c_k^{(r,i)} \ = \ \begin{cases} 1 & \text{ if } i = k, \\ 0 & \text{ otherwise.} \end{cases}
		\end{equation}
		Thus, we can write
		\begin{align*}
			\left( \sum_{i_1=1}^r x_{i_1}^{(1)} g_{i_1} \right)\left( \sum_{i_2=1}^r x_{i_2}^{(2)} g_{i_2} \right) \ & =  \sum_{i_1,i_2=1}^r x_{i_1}^{(1)}x_{i_2}^{(2)} g_{i_1}g_{i_2} \\
			& = \  \sum_{i_1,i_2=1}^r x_{i_1}^{(1)}x_{i_2}^{(2)} \left( \sum_{k=1}^r c_{k}^{(i_1,i_2)} g_{k}\right)  \\ 
			& = \ \sum_{k=1}^r \left( \sum_{i_1, i_2 = 1}^r x_{i_1}^{(1)} x_{i_2}^{(2)} c_k^{(i_1,i_2)} \right) g_{k}.
		\end{align*}
		Hence, for each $\mathbf{x^{(2)}} \in \Z_N^r$, denoting by $F(\mathbf{x^{(2)}};j)$ the expression $\sum_{i=1}^r \frac{N}{b_i}a_i \sum_{i_2=1}^r x_{i_2}^{(2)}c_i^{(j,i_2)}$, we find
		\begin{align*}
			\cavg{\mathbf{x^{(1)}}}{\Z_N^r} \chi\left( \left( \sum_{i_1=1}^r x_{i_1}^{(1)} g_{i_1} \right)\left( \sum_{i_2=1}^r x_{i_2}^{(2)} g_{i_2} \right)  \right) \ & = \ \cavg{\mathbf{x^{(1)}}}{\Z_N^r} e_N\left( \sum_{i = 1}^r \frac{N}{b_i}a_i \left( \sum_{i_1,i_2 = 1}^r x_{i_1}^{(1)} x_{i_2}^{(2)} c_i^{(i_1,i_2)} \right) \right) \\
			& = \ \cavg{x_1^{(1)},\ldots,x_r^{(1)}}{\Z_N} \prod_{i_1=1}^r e_N\left( x_{i_1}^{(1)} F(\mathbf{x^{(2)}};i_1) \right) \\
			& = \ \prod_{j=1}^r  \cavg{x_j^{(1)}}{\Z_N} e_N\left( x_{j}^{(1)} F(\mathbf{x^{(2)}}; j) \right) \\
			& = \ \begin{cases} 1  \quad \text{ if } \displaystyle F(\mathbf{x^{(2)}}; j) \equiv 0 \bmod N \text{ for every } j \in \{1,\ldots, r\}, \\ 0 \quad \text{ otherwise.} \end{cases}
		\end{align*}
		It follows that
		\begin{align*}
			& \left|\cavg{\mathbf{x^{(1)}},\mathbf{x^{(2)}}}{\Z_N^r} \chi\left(  \left( \sum_{i_1=1}^r x_{i_1}^{(1)} g_{i_1} \right)\left( \sum_{i_2=1}^r x_{i_2}^{(2)} g_{i_2} \right) \right)\right| \\ & = \ \left|\cavg{\mathbf{x^{(2)}}}{\Z_N^r} \cavg{\mathbf{x^{(1)}}}{\Z_N^r}\chi\left( \left( \sum_{i_1=1}^r x_{i_1}^{(1)} g_{i_1} \right)\left( \sum_{i_2=1}^r x_{i_2}^{(2)} g_{i_2} \right)  \right)\right|
			\\ & = \ \frac{1}{N^{r}}\# \Big\{ \mathbf{x^{(2)}} \in \Z_N^{r} : \text{For every } j \in \{1,\ldots, r\}, \; F(\mathbf{x^{(2)}}; j) \equiv 0 \bmod N \Big\} \\
			& \leq \ \frac{1}{N^{r}}\# \Big\{ \mathbf{x^{(2)}} \in \Z_N^{r} : F(\mathbf{x^{(2)}};r) \equiv 0 \bmod N \Big\}.
		\end{align*}
		Let us estimate $\# \Big\{ \mathbf{x^{(2)}} \in \Z_N^{r} : F(\mathbf{x^{(2)}};r) \equiv 0 \bmod N \Big\}$ in a simple way. By \eqref{example: eqn explaining multiplication law}, it follows that
		\begin{equation*}
			F(\mathbf{x^{(2)}}; r) \ \equiv \ \sum_{i=1}^r a_i \frac{N}{b_i} \sum_{i_2 = 1}^r x_{i_2}^{(2)} c_{i}^{(r,i_2)} \ \equiv \ \sum_{i=1}^{r} a_i \frac{N}{b_i} x_{i}^{(2)} \bmod N.
		\end{equation*}
		Since $\chi$ is nontrivial, there is an index $i_0 \in \{1,\ldots, r\}$ such that $a_{i_0} \not\equiv 0 \bmod b_{i_0}$, whence $a_{i_0} N / b_{i_0} \not\equiv 0 \bmod N$. Then, for each of the $N^{r-1}$-many tuples $(x_1^{(2)},\ldots, x_{i_0-1}^{(2)}, x_{i_0+1}^{(2)}, \ldots, x_r^{(2)}) \in \Z_N^{r-1}$, there are at most $N/\lpf N$ solutions $x_{i_0}^{(2)} \in \Z_N$ to the equation $\sum_{i=1}^{r} a_i \frac{N}{b_i} x_{i}^{(2)} \equiv 0 \bmod N$ by Lemma~\ref{lem: Bx-c=0 solutions}, so
		\begin{equation*}
			\# \{ \mathbf{x^{(2)}} \in \Z_N^{r} : F(\mathbf{x^{(2)}}; r) \equiv 0 \bmod N \} \ \leq \ \frac{N^{r}}{\lpf N},
		\end{equation*}
		which completes the proof.
	\end{proof}
	
	Next, we use the previous proposition to prove a character sum estimate along $y^2$. Again, this estimate is a special case of a later one (namely Proposition~\ref{prop: distinct powers character estimate}), and we include it for self-containedness and for ease of understanding. 
	
	\begin{proposition}\label{example: prop: distinct powers character estimate}
		Let $R$ be a finite commutative ring with characteristic $N$. Suppose that $\lpf N >2$. Let $\chi \in \hat{R}$ be a nontrivial additive character. Then
		\begin{equation*}
			\left\vert \cavg{y}{R}  \chi(y^{2})\right\vert \ \leq \ \left(\frac{1}{\lpf N}\right)^{1/4}. 
		\end{equation*}
	\end{proposition}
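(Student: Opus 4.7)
The plan is to apply the classical Weyl differencing/van der Corput strategy, adapted to the ring setting, so as to reduce the one-dimensional sum along $y^2$ to the two-dimensional multiplicative sum $\cavg{h_1,h_2}{R}\chi(h_1 h_2)$, to which Proposition~\ref{example: prop: multiplicative character estimate} can be applied directly. Write $S := \cavg{y}{R}\chi(y^2)$ for the quantity to be estimated.

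First, I would compute $|S|^2 = \cavg{y_1,y_2}{R}\chi(y_1^2 - y_2^2)$ and substitute $y_1 = y_2 + h$, which is a bijection on $R \times R$, to obtain
\begin{equation*}
|S|^2 \ = \ \cavg{y_2,h}{R}\chi\bigl(2y_2 h + h^2\bigr) \ = \ \cavg{h}{R}\chi(h^2)\,\cavg{y_2}{R}\chi(2y_2 h).
\end{equation*}
Applying Cauchy--Schwarz to the outer average in $h$ and using $|\chi(h^2)| = 1$ gives
\begin{equation*}
|S|^4 \ \leq \ \cavg{h}{R}\left|\cavg{y}{R}\chi(2yh)\right|^2.
\end{equation*}

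Next, I would expand the inner square and use the substitution $u = y_1 - y_2$ (a measure-preserving bijection on $R \times R$ when one keeps $y_2$ fixed) to collapse the expression to
\begin{equation*}
\cavg{h,u}{R}\chi(2hu).
\end{equation*}
Here is where the hypothesis $\lpf N > 2$ enters: by Lemma~\ref{lem: what is a unit}, $2$ is a unit in $R$, so the change of variable $v = 2u$ is a bijection on $R$, yielding $\cavg{h,v}{R}\chi(hv)$. Crucially, this expression was reached from $\cavg{h}{R}|\cavg{y}{R}\chi(2yh)|^2$, so it is a nonnegative real number, which means I can freely insert absolute values and then invoke Proposition~\ref{example: prop: multiplicative character estimate} to bound it by $1/\lpf N$.

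Putting the chain together yields $|S|^4 \leq 1/\lpf N$, and taking fourth roots gives the desired bound. I do not expect a serious obstacle here: the only non-routine step is keeping track of the invertibility of $2$ (handled by the hypothesis $\lpf N > 2$) and noting that the intermediate average is nonnegative real, which justifies the final appeal to the character-sum bound.
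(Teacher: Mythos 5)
Your proof is correct and takes essentially the same route as the paper's: square, Weyl-difference, apply Cauchy--Schwarz, square again to reach $\cavg{h_1,h_2}{R}\chi(2h_1h_2)$, change variables using the invertibility of $2$, and invoke Proposition~\ref{example: prop: multiplicative character estimate}. The only cosmetic difference is that you factor out $\chi(h^2)$ and phrase Cauchy--Schwarz as an inner-product bound, whereas the paper applies $|\cavg{h}{R} g(h)|^2 \leq \cavg{h}{R}|g(h)|^2$ directly to $g(h_1) = \cavg{y}{R}\chi(2h_1y+h_1^2)$; the content is identical.
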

	\begin{proof}
		Observe that by a change of variables $(y',y) \mapsto (y+h_1,y) \in R^2$ we have 
		\begin{equation}
			\left\vert \cavg{y}{R}  \chi(y^{2})\right\vert^2 \ = \ \rcavg{y'} \chi((y')^2) \ol{\rcavg{y} \chi(y^2)} \ = \ \rcavg{y,h_1} \chi((y+h_1)^2-y^2) \ = \ \rcavg{y,h_1} \chi(2h_1y+h_1^2)
		\end{equation}
		and, similarly, for each $h_1 \in R$,
		\begin{equation}
			\left\vert \rcavg{y} \chi(2h_1y+h_1^2) \right\vert^2 \ = \ \rcavg{y'} \chi(2h_1y'+h_1^2) \ol{\rcavg{y} \chi(2h_1y+h_1^2)} \ = \ \rcavg{y,h_2} \chi(2h_1h_2).  
		\end{equation}
		It follows by Cauchy--Schwarz that
		\begin{multline}
			\left\vert \rcavg{y} \chi(y^2) \right\vert^4 \ = \ \left\vert \rcavg{y,h_1} \chi(2h_1y+h_1^2) \right\vert^2 \\ \leq \ \rcavg{h_1} \left\vert \rcavg{y} \chi(2h_1y+h_1^2) \right\vert^2 \ = \ \rcavg{h_1} \rcavg{y,h_2} \chi(2h_1h_2) \ = \ \rcavg{h_1,h_2} \chi(2h_1h_2).
		\end{multline}
		Since $\lpf N > 2$, $2$ is a unit in $R$ by Lemma~\ref{lem: what is a unit} and hence $2h_1 \mapsto h_1$ is a valid change of variables, yielding
		\begin{equation}
			\left\vert \rcavg{y} \chi(y^2) \right\vert^4 \ \leq \ \rcavg{h_1,h_2} \chi(h_1h_2).
		\end{equation}
		By Proposition~\ref{example: prop: multiplicative character estimate}, we conclude that
		\begin{equation*}
			\left\vert \cavg{y}{R} \chi(y^{2})\right\vert \ \leq \ \left( \frac{1}{\lpf{N}} \right)^{1/4}.
		\end{equation*}
	\end{proof}
	
	Finally, we use Fourier analysis and the previous proposition to finish the proof of the special case. This nontrivially generalizes the corresponding special case of the main theorem in \cite{bb}; see the fifth remark under Theorem~\ref{main thm for intro} for more precise commentary.
	\begin{prop}\label{example: main thm base}
		There exist $c,C,\gamma > 0$ such that the following holds. For any finite commutative ring $R$ with characteristic $N$ such that $\lpf N > C$ and any 1-bounded functions $f_0,f_1: R \to \C$, one has
		\be
		\left| \rcavg{x,y} f_0(x)f_1(x+y^2) - \rcavg x f_0(x) \rcavg x f_1(x)\right| \ \leq \  c\,\lpf{N}^{-\gamma}.
		\ee
	\end{prop}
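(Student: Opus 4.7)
The plan is to apply Fourier inversion to $f_1$, thereby decoupling the variables $x$ and $y$, and then to separate the trivial-character contribution (which produces the main term) from the nontrivial-character contribution (which will be small by Proposition~\ref{example: prop: distinct powers character estimate}). Substituting $f_1(x+y^2) = \cavg{\chi}{\widehat{R}} \hat{f_1}(\chi)\chi(-(x+y^2))$ into the average and interchanging sums gives
\begin{equation*}
\rcavg{x,y} f_0(x) f_1(x+y^2) \ = \ \cavg{\chi}{\widehat{R}} \hat{f_1}(\chi) \cdot \frac{\hat{f_0}(\chi^{-1})}{|R|} \cdot \rcavg{y} \chi(-y^2),
\end{equation*}
where I used the identity $\rcavg{x} f_0(x)\chi(-x) = \hat{f_0}(\chi^{-1})/|R|$ that follows directly from the definition of the Fourier transform.

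Next, I isolate the trivial character $\chi = 1$. Its contribution equals $\rcavg{x} f_0(x) \rcavg{x} f_1(x)$ (after plugging in $\hat{f_i}(1) = |R| \rcavg{x} f_i(x)$ and $\rcavg{y} 1 = 1$), which is exactly the claimed main term. Hence the quantity to bound is
\begin{equation*}
\left| \frac{1}{|R|^2} \sum_{\chi \neq 1} \hat{f_1}(\chi) \hat{f_0}(\chi^{-1}) \rcavg{y} \chi(-y^2) \right|.
\end{equation*}
For any nontrivial $\chi$, the map $z \mapsto \chi(-z)$ is again a nontrivial additive character of $R$, so, taking $C = 2$ and using the hypothesis $\lpf N > C$, Proposition~\ref{example: prop: distinct powers character estimate} applied to this character yields $|\rcavg{y} \chi(-y^2)| \leq \lpf{N}^{-1/4}$.

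It remains to handle the sum over characters, which I will do by Cauchy--Schwarz and Plancherel. By Cauchy--Schwarz and the identity $\sum_\chi |\hat{f}(\chi)|^2 = |R| \sum_x |f(x)|^2 \leq |R|^2$ (valid for any 1-bounded $f$), we obtain
\begin{equation*}
\sum_{\chi \neq 1} |\hat{f_1}(\chi)| \cdot |\hat{f_0}(\chi^{-1})| \ \leq \ \left(\sum_\chi |\hat{f_1}(\chi)|^2\right)^{1/2}\left(\sum_\chi |\hat{f_0}(\chi^{-1})|^2\right)^{1/2} \ \leq \ |R|^2.
\end{equation*}
Combining the two estimates gives the claim with, for example, $c = 1$, $\gamma = 1/4$, and $C = 2$. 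No step poses a real obstacle here, since the whole argument is a clean Fourier-analytic reduction to the already-established bound of Proposition~\ref{example: prop: distinct powers character estimate}; the only mild care required is checking that $\chi \mapsto \chi(-\,\cdot\,)$ preserves nontriviality and correctly tracking the normalization factors between $\cavg{\chi}{\widehat{R}}$ and the raw sums appearing in Plancherel.
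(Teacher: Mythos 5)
Your proof is correct and follows essentially the same Fourier-analytic approach as the paper's: Fourier inversion to decouple $x$ and $y$, isolation of the trivial-character term as the main term, and Cauchy--Schwarz plus Plancherel combined with Proposition~\ref{example: prop: distinct powers character estimate} for the error. The only cosmetic difference is that the paper first subtracts $\rcavg{x} f_1(x)$ from $f_1$ (which forces the trivial Fourier coefficient to vanish at the cost of a factor of $2$ in the final bound), whereas you drop the $\chi = 1$ term directly, giving the marginally cleaner constant $c = 1$.
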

	\begin{proof}
		Let $C = 2$. Applying Lemma~\ref{example: prop: distinct powers character estimate}, we see that for any finite commutative ring $R$ with characteristic $N$ such that $\lpf N > C$ and any nontrivial additive character $\phi$ of $R$, we have
		\be
		\left| \cavg{y}{R} \phi(y^2)  \right| \ \leq \ \left(\frac{1}{\lpf N} \right)^{1/4}.
		\ee
		
		Fix a finite commutative ring $R$ with characteristic $N$ satisfying $\lpf N > C$ and 1-bounded functions $f_0,f_1 : R \to \C$, and let $f_1' = f_1 - \rcavg x f_1(x)$. By splitting $f_1 = f_1' + \rcavg x f_1(x)$, we observe that
		\begin{equation}\label{example: one-time-use eqn 1}
			\rcavg{x,y} f_0(x)f_1(x+y^2)
			\ = \ \left(\rcavg x f_0(x) \rcavg{x} f_1(x) \right) + \rcavg{x,y}f_0(x)f_1'(x+y^2).
		\end{equation}
		
		We claim that
		\be
		\left| \rcavg{x,y} f_0(x)f_1'(x+y^2) \right| \ \leq \ 2\cdot  \left(\frac{1}{\lpf N} \right)^{1/4}.
		\ee
		Indeed, by Fourier inversion, we can compute that
		\begin{align*}
			\rcavg{x,y} f_0(x)f_1'(x+y^2) \
			& = \  \sum_{\phi_0,\phi_1 \in \widehat{R}} \hat{f_0}(\phi_0) \hat{f_1'}(\phi_1) \left( \rcavg x \phi_0(x) \phi_1(x) \right) \left(\cavg{y}{R} \ol{\phi_1}(y^2) \right) \\
			& = \ \sum_{\phi \in \widehat{R}} \hat{f_0}(\phi) \hat{f_1'}(\ol{\phi}) \left(\cavg{y}{R} \phi(y^2) \right) \\
			& = \ \sum_{1 \neq \phi \in \widehat{R}} \hat{f_0}(\phi) \hat{f_1'}(\ol{\phi}) \left(\cavg{y}{R} \phi(y^2)  \right),
		\end{align*}
		where the first equality holds by the change of variables $x \mapsto -x$ in $R$, the second equality holds by orthogonality relations, and the last equality holds since $\hat{f_1'}(1) = \rcavg x f_1'(x) = 0$. By our choice of $C$, Cauchy--Schwarz, Plancherel's theorem, and the fact that $f_0$ is 1-bounded and $f_1'$ is 2-bounded, we conclude that
		\ba
		\left| \rcavg{x,y} f_0(x)f_1'(x+y^2) \right| \ & \leq \ \left(\frac{1}{\lpf N} \right)^{1/4} \sum_{1 \neq \phi \in \widehat{R}} \left| \hat{f_0}(\phi) \hat{f_1'}(\ol{\phi})\right| \\
		& \leq \ \left(\frac{1}{\lpf N} \right)^{1/4} |R| \lr{\hat{f_0},\hat{f_0}}^{1/2} \lr{\hat{f_1'},\hat{f_1'}}^{1/2} \\
		& = \ \left(\frac{1}{\lpf N} \right)^{1/4} \norm{f_0}_{L^2} \norm{f_1'}_{L^2} \\
		& \leq \ 2\cdot  \left(\frac{1}{\lpf N} \right)^{1/4}.
		\ea
		Combining the claim and the equality \eqref{example: one-time-use eqn 1} yields the result.
	\end{proof}
	
	\subsection{Character sums in finite commutative rings}\label{subsec: some character sums}
	The goal of this subsection is to prove Lemma~\ref{lem: multicharacter product of independent polynomials as arguments}, which asymptotically controls certain character averages with independent polynomial arguments in terms of the ring characteristic and the maximal degree of the polynomial family. Unlike the Weil bound, which enables the power savings (in terms of $|R|$) in the finite field case, our Lemma~\ref{lem: multicharacter product of independent polynomials as arguments} instead saves a factor in terms of $\lpf{\mathrm{char}(R)}$, a quantity which, though sufficient for our purposes, has no general relation to $|R|$.
	
	In the previous subsection, we saw that Proposition~\ref{example: prop: distinct powers character estimate}, when combined with some Fourier analysis, yielded the desired Proposition~\ref{example: main thm base}. Similarly, Lemma~\ref{lem: multicharacter product of independent polynomials as arguments} will yield the desired generalization, Proposition~\ref{main thm base}. 
	
	To prove the lemma, we first need to show Proposition~\ref{prop: multiplicative character estimate}, which establishes a bound on $\left| \rcavg{h_1,\ldots, h_m} \chi(h_1\cdots h_m)\right|$ for an additive character $\chi \in \hat{R}$ in terms of $m$ and the characteristic of the ring $R$. We begin with the prerequisite to the proof of Proposition~\ref{prop: multiplicative character estimate}, namely a technical proposition which generates our version of power savings.

	Let $r$ be a positive integer. Suppose $c_{k}^{(i,j)}$, $i,j,k \in \{1,\ldots, r\}$, are integers. For each $m \geq 2$, define the associated auxiliary function $\eta_m : \{1,\ldots,r\}^{2m-1} \to \Z$ by
	\begin{equation}\label{definition of auxiliary eta_m}
		\eta_m(k_1,\ldots,k_{m-1};i_1,\ldots, i_m) := c_{k_1}^{(i_1,i_2)} \prod_{j=3}^m c_{k_{j-1}}^{(k_{j-2},i_j)},
	\end{equation}
	using the convention that an empty product equals one. These functions satisfy the identity
	\begin{equation}\label{identity for auxiliary eta_m}
		\eta_m(k_1,\ldots,k_{m-2},i;r,i_2,\ldots, i_m) = \eta_{m-1}(k_1,\ldots,k_{m-2};r,i_2,\ldots,i_{m-1}) c_i^{(k_{m-2},i_m)}
	\end{equation}
	for each $m \geq 3$. These fearsome-looking functions are only used in Proposition~\ref{prop trivial estimate on linear system of equations mod N}, the purpose of which is to bound the cardinality of zero sets of certain polynomials arising in the proof of our exponential sum estimate, Proposition~\ref{prop: multiplicative character estimate}. The inductive proof of Proposition~\ref{prop trivial estimate on linear system of equations mod N} is reminiscent of the proof of the classical Schwarz--Zippel lemma.
	
	\begin{proposition}\label{prop trivial estimate on linear system of equations mod N} 
		Fix a positive integer $N > 1$, a positive integer $r$, positive integers $b_1,\ldots, b_r$ that are (non-1) divisors of $N$, and integers $a_i \in \Z_{b_i}$, $i \in \{1,\ldots, r\}$, not all zero. Let $c_{k}^{(i,j)}$, $i,j,k\in \{1,\ldots, r\}$, be integers such that
		\begin{equation*}
			c_k^{(r,i)} \ = \ c_k^{(i,r)} \ = \ \begin{cases} 1 & \text{ if } i = k, \\ 0 & \text{ otherwise,} \end{cases}
		\end{equation*}
		and define the associated auxiliary functions $\eta_m$, $m \geq 2$, as in \eqref{definition of auxiliary eta_m}.
		Then, for any integer $m \geq 2$,
		\begin{multline}
			\# \{ (x_1^{(2)},\ldots,x_r^{(2)},\ldots,x_1^{(m)},\ldots, x_r^{(m)}) \in \Z_N^{r(m-1)} :  \\ \sum_{i=1}^r a_i \frac{N}{b_i} \sum_{i_2,\ldots,i_m = 1}^r x_{i_2}^{(2)}\cdots x_{i_m}^{(m)}\sum_{k_1,\ldots,k_{m-2} = 1}^r \eta_m(k_1,\ldots, k_{m-2},i;r,i_2,\ldots,i_m) \equiv 0 \bmod N \} \\ \leq \ \frac{(m-1)N^{r(m-1)}}{\lpf N}.
		\end{multline}
	\end{proposition}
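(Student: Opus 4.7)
The plan is to prove the proposition by induction on $m \geq 2$.

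\textbf{Base case ($m=2$).} Here the sum $\sum_{k_1,\dots,k_{m-2}}$ is empty (equals $1$) and $f_2(i;r,i_2) = c_i^{(r,i_2)}$, which by hypothesis equals $1$ when $i = i_2$ and $0$ otherwise. The equation therefore collapses to
\[
\sum_{i=1}^r a_i \tfrac{N}{b_i} x_i^{(2)} \equiv 0 \bmod N.
\]
Since not all $a_i$ vanish modulo $b_i$, at least one coefficient $a_{i_0} N/b_{i_0}$ is nonzero modulo $N$. Fixing the other $r-1$ coordinates arbitrarily and applying Lemma~\ref{lem: Bx-c=0 solutions} to the resulting affine equation in $x_{i_0}^{(2)}$ yields at most $N/\lpf N$ solutions, for a total of at most $N^{r-1}\cdot N/\lpf N = N^r/\lpf N$, matching the claimed bound for $m=2$.

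\textbf{Inductive step.} Assume the bound holds for $m-1 \geq 2$; we prove it for $m$. The identity~\eqref{identity for auxiliary f_m} lets us factor the coefficient of $x_{i_m}^{(m)}$ in the equation as
\[
A_{i_m}(x^{(2)},\dots,x^{(m-1)}) \ := \ \sum_{i=1}^r a_i \tfrac{N}{b_i} \sum_{i_2,\dots,i_{m-1}=1}^r x_{i_2}^{(2)}\cdots x_{i_{m-1}}^{(m-1)} \sum_{k_1,\dots,k_{m-2}=1}^r f_{m-1}(k_1,\dots,k_{m-2};r,i_2,\dots,i_{m-1})\,c_i^{(k_{m-2},i_m)},
\]
so that $E_m$ becomes the linear equation $\sum_{i_m=1}^r A_{i_m}\, x_{i_m}^{(m)} \equiv 0 \bmod N$ in the variables $x^{(m)}$. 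The key observation is that the coefficient $A_r$, obtained by setting $i_m = r$ and using $c_i^{(k_{m-2},r)} = \mathbf{1}[i=k_{m-2}]$, simplifies to
\[
A_r(x^{(2)},\dots,x^{(m-1)}) \ = \ \sum_{i=1}^r a_i \tfrac{N}{b_i} \sum_{i_2,\dots,i_{m-1}=1}^r x_{i_2}^{(2)}\cdots x_{i_{m-1}}^{(m-1)} \sum_{k_1,\dots,k_{m-3}=1}^r f_{m-1}(k_1,\dots,k_{m-3},i;r,i_2,\dots,i_{m-1}),
\]
which is precisely the left-hand side of the equation for the parameter $m-1$ (with the same $a_i$'s).

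\textbf{Splitting and synthesis.} For each $(x^{(2)},\dots,x^{(m-1)})$, either $A_r \not\equiv 0 \bmod N$, in which case Lemma~\ref{lem: Bx-c=0 solutions} bounds the number of admissible $x^{(m)}$ by $N^{r-1}\cdot N/\lpf N = N^r/\lpf N$; or $A_r \equiv 0 \bmod N$, in which case the crude bound $N^r$ on the number of $x^{(m)}$ is used but the number of tuples $(x^{(2)},\dots,x^{(m-1)})$ with $A_r \equiv 0$ is controlled by the inductive hypothesis as at most $(m-2)N^{r(m-2)}/\lpf N$. Combining,
\[
\# \{\text{solutions to } E_m\} \ \leq \ \frac{(m-2)N^{r(m-2)}}{\lpf N}\cdot N^r \ + \ N^{r(m-2)} \cdot \frac{N^r}{\lpf N} \ = \ \frac{(m-1)N^{r(m-1)}}{\lpf N},
\]
completing the induction. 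The main obstacle is not a technical estimate but rather the bookkeeping: recognizing that the coefficient $A_r$ of the ``top'' variable $x_r^{(m)}$, after using the boundary values $c_i^{(k,r)} = \mathbf{1}[i=k]$, reproduces exactly the previous-stage equation $E_{m-1}$, which is what makes the induction close cleanly with the constant $(m-1)$.
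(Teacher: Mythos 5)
Your proof is correct and takes essentially the same approach as the paper: the same base case, the same inductive step via the factoring identity \eqref{identity for auxiliary f_m}, the same observation that the coefficient of $x_r^{(m)}$ reproduces the stage-$(m-1)$ expression, and the same two-case split (applying Lemma~\ref{lem: Bx-c=0 solutions} when $A_r \not\equiv 0 \bmod N$, and using the inductive hypothesis together with a trivial bound when $A_r \equiv 0 \bmod N$). The only difference is notational ($A_{i_m}$ versus the paper's $B(\mathbf{x^{(2)}},\ldots,\mathbf{x^{(m-1)}},i_m)$).
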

	\begin{proof}
		To simplify notation in this proof, we will write $\mathbf{x^{(j)}} = (x_1^{(j)},\ldots,x_r^{(j)})$ for each $j \in \{2,\ldots, m\}$. We proceed by induction on $m$. First suppose $m = 2$. We wish to show that
		\begin{equation*}
			\# \{ \mathbf{x^{(2)}} \in \Z_N^{r} : \sum_{i=1}^r a_i \frac{N}{b_i} \sum_{i_2 = 1}^r x_{i_2}^{(2)} c_{i}^{(r,i_2)} \equiv 0 \bmod N \} \ \leq \ \frac{N^{r}}{\lpf N}.
		\end{equation*}
		By assumption,
		\begin{equation*}
			c_i^{(r,i_2)} \ = \ \begin{cases} 1 & \text{ if } i_2 = i, \\ 0 & \text{ otherwise,} \end{cases}
		\end{equation*}
		so it follows that
		\begin{equation*}
			\sum_{i=1}^r a_i \frac{N}{b_i} \sum_{i_2 = 1}^r x_{i_2}^{(2)} c_{i}^{(r,i_2)} \ \equiv \ \sum_{i=1}^{r} a_i \frac{N}{b_i} x_{i}^{(2)} \bmod N.
		\end{equation*}
		By assumption on $a_i$, we may choose an index $i_0 \in \{1,\ldots, r\}$ such that $a_{i_0} \not\equiv 0 \bmod b_{i_0}$; hence $a_{i_0} N / b_{i_0} \not\equiv 0 \bmod N$. Then, for each of the $N^{r-1}$-many tuples $(x_1^{(2)},\ldots, x_{i_0-1}^{(2)}, x_{i_0+1}^{(2)}, \ldots, x_r^{(2)}) \in \Z_N^{r-1}$, there are at most $N/\lpf N$ solutions $x_{i_0}^{(2)} \in \Z_N$ to the equation $\sum_{i=1}^{r} a_i \frac{N}{b_i} x_{i}^{(2)} \equiv 0 \bmod N$ by Lemma~\ref{lem: Bx-c=0 solutions}, so
		\begin{equation*}
			\# \{ \mathbf{x^{(2)}} \in \Z_N^{r} : \sum_{i=1}^r a_i \frac{N}{b_i} \sum_{i_2 = 1}^r x_{i_2}^{(2)} c_{i}^{(r,i_2)} \equiv 0 \bmod N \} \ \leq \ \frac{N^{r}}{\lpf N},
		\end{equation*}
		which proves the base case.
		
		Now assume that $m \geq 3$ and that the result holds for $m-1$, i.e.,
		\begin{multline*}
			\# \{ (\mathbf{x^{(2)}},\ldots,\mathbf{x^{(m-1)}}) \in \Z_N^{r(m-2)} :  \\ \sum_{i=1}^r a_i \frac{N}{b_i} \sum_{i_2,\ldots,i_{m-1} = 1}^r x_{i_2}^{(2)}\cdots x_{i_{m-1}}^{(m-1)}\sum_{k_1,\ldots,k_{m-3} = 1}^r \eta_{m-1}(k_1,\ldots, k_{m-3},i;r,i_2,\ldots,i_{m-1}) \equiv 0 \bmod N \} \\ \leq \ \frac{(m-2)N^{r(m-2)}}{\lpf N}.
		\end{multline*}
		Given $\mathbf{x^{(2)}},\ldots,\mathbf{x^{(m-1)}} \in \Z_N^r$ and $i_m \in \{1,\ldots, r\}$, define the quantity
		\begin{multline*}
			B(\mathbf{x^{(2)}},\ldots,\mathbf{x^{(m-1)}}, i_m) := \\ \sum_{i=1}^r a_i \frac{N}{b_i} \sum_{i_2,\ldots,i_{m-1} = 1}^r x_{i_2}^{(2)}\cdots x_{i_{m-1}}^{(m-1)}\sum_{k_1,\ldots,k_{m-2} = 1}^r \eta_{m-1}(k_1,\ldots, k_{m-2};r,i_2,\ldots,i_{m-1})c_{i}^{(k_{m-2},i_m)}.
		\end{multline*}
		Then, in light of the identity \eqref{identity for auxiliary eta_m}, our goal is to show that
		\begin{multline*}
			\# \{ (\mathbf{x^{(2)}},\ldots,\mathbf{x^{(m)}}) \in \Z_N^{r(m-1)} : \sum_{i_m=1}^r x_{i_m}^{(m)} B(\mathbf{x^{(2)}},\ldots,\mathbf{x^{(m-1)}}, i_m) \equiv 0 \bmod N \} \\ \leq \ \frac{(m-1)N^{r(m-1)}}{\lpf N}.
		\end{multline*}
		By assumption,
		\begin{equation*}
			c_i^{(k_{m-2},r)} \ = \ \begin{cases} 1 & \text{ if } k_{m-2} = i, \\ 0 & \text{ otherwise,} \end{cases}
		\end{equation*}
		so we observe that
		\begin{multline*} B(\mathbf{x^{(2)}},\ldots,\mathbf{x^{(m-1)}}, r) \ = \\ \sum_{i=1}^r a_i \frac{N}{b_i} \sum_{i_2,\ldots,i_{m-1} = 1}^r x_{i_2}^{(2)}\cdots x_{i_{m-1}}^{(m-1)}\sum_{k_1,\ldots,k_{m-3} = 1}^r \eta_{m-1}(k_1,\ldots, k_{m-3},i;r,i_2,\ldots,i_{m-1}).
		\end{multline*}
		Let 
		\begin{equation*}
			\mathcal{T}_1 := \{ (\mathbf{x^{(2)}},\ldots,\mathbf{x^{(m-1)}}) \in \Z_N^{r(m-2)} :  B(\mathbf{x^{(2)}},\ldots,\mathbf{x^{(m-1)}}, r) \equiv 0 \bmod N  \}
		\end{equation*}
		and $\mathcal{T}_2 := \Z_N^{r(m-2)} \setminus \mathcal{T}_1$. By the induction hypothesis, $|\mathcal{T}_1| \leq  \frac{(m-2)N^{r(m-2)}}{\lpf N}$, and trivially we have $|\mathcal{T}_2| \leq N^{r(m-2)}$.
		
		On the one hand, suppose $(\mathbf{x^{(2)}},\ldots,\mathbf{x^{(m-1)}}) \in \mathcal{T}_1$. Then we trivially estimate that all tuples $\mathbf{x^{(m)}} \in \Z_N^r$ may be such that
		\begin{equation*}
			\sum_{i_m=1}^r x_{i_m}^{(m)} B(\mathbf{x^{(2)}},\ldots,\mathbf{x^{(m-1)}}, i_m) \equiv 0 \bmod N,
		\end{equation*}
		contributing at most $|\mathcal{T}_1| \cdot N^r \leq \frac{(m-2)N^{r(m-1)}}{\lpf N}$ tuples to the set
		\begin{equation*}
			\{ (\mathbf{x^{(2)}},\ldots,\mathbf{x^{(m)}}) \in \Z_N^{r(m-1)} :  \sum_{i_m=1}^r x_{i_m}^{(m)} B(\mathbf{x^{(2)}},\ldots,\mathbf{x^{(m-1)}}, i_m) \equiv 0 \bmod N \}.
		\end{equation*}
		On the other hand, suppose $(\mathbf{x^{(2)}},\ldots,\mathbf{x^{(m-1)}}) \in \mathcal{T}_2$. Then
		\begin{equation*}
			B(\mathbf{x^{(2)}},\ldots,\mathbf{x^{(m-1)}}, r) \not\equiv 0 \bmod N,
		\end{equation*}
		so it follows that, for each of the $N^{r-1}$-many tuples $(x_1^{(m)},\ldots, x_{r-1}^{(m)}) \in \Z_N^{r-1}$, there are at most $N/\lpf N$ solutions $x_{r}^{(m)} \in \Z_N$ to the equation
		\begin{equation*}
			\sum_{i_m=1}^r x_{i_m}^{(m)} B(\mathbf{x^{(2)}},\ldots,\mathbf{x^{(m-1)}}, i_m) \equiv 0 \bmod N
		\end{equation*}
		by Lemma~\ref{lem: Bx-c=0 solutions}, contributing at most $|\mathcal{T}_2| \cdot N^{r-1} \cdot N/\lpf{N} \ \leq \ N^{r(m-1)}/\lpf{N}$ tuples to the set
		\begin{equation*}
			\{ (\mathbf{x^{(2)}},\ldots,\mathbf{x^{(m)}}) \in \Z_N^{r(m-1)} : \sum_{i_m=1}^r x_{i_m}^{(m)} B(\mathbf{x^{(2)}},\ldots,\mathbf{x^{(m-1)}}, i_m) \equiv 0 \bmod N \}.
		\end{equation*}
		Each tuple in the previous set is such that $(\mathbf{x^{(2)}},\ldots,\mathbf{x^{(m-1)}})$ belongs to either $\mathcal{T}_1$ or $\mathcal{T}_2$, so between the two estimates, no tuple is missing. Adding the two estimates proves the induction hypothesis for $m$.
	\end{proof}

	\begin{proposition}\label{prop: multiplicative character estimate}
		Fix a positive integer $m$. Let $R$ be a finite commutative ring with characteristic $N$. Let $\chi \in \hat{R}$ be a nontrivial additive character. Then
		\be
		\left|  \rcavg{h_1,\ldots ,h_m}  \chi\left( \prod_{i=1}^m h_i \right) \right| \ \leq \ \frac{m-1}{\lpf N}.
		\ee
	\end{proposition}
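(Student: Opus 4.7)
The plan is to generalize the argument for $m=2$ already carried out in Proposition~\ref{example: prop: multiplicative character estimate}, using Lemma~\ref{lem: structure of (R,+)} to reduce to a linear counting problem over $\Z_N$ and then invoking the just-proved Proposition~\ref{prop trivial estimate on linear system of equations mod N}.

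First I would apply Lemma~\ref{lem: structure of (R,+)} to fix an isomorphism $\phi : (R,+) \to \Z_{b_1}\times\cdots\times\Z_{b_r}$ with $b_r = N$ and $\phi(1) = (0,\ldots,0,1)$. Setting $g_j := \phi^{-1}(\delta^{(j)})$, we have $g_r = 1$ and every $h \in R$ has a unique expression $h = \sum_{i=1}^r x_i g_i$ with $x_i \in \Z_{b_i}$. As in the proof of Proposition~\ref{example: prop: multiplicative character estimate}, $r$-tuples in $\Z_N^r$ correspond $N^r/|R|$-to-$1$ to elements of $R$, so
\[
\left|\mathbb{E}_{h_1,\ldots,h_m\in R} \chi(h_1\cdots h_m)\right| = \left|\mathbb{E}_{\mathbf{x^{(1)}},\ldots,\mathbf{x^{(m)}}\in\Z_N^r} \chi\!\left(\prod_{j=1}^m \sum_{i_j=1}^r x_{i_j}^{(j)} g_{i_j}\right)\right|.
\]
Defining the structure constants $c_k^{(i,j)} \in \Z_{b_k}$ by $g_ig_j = \sum_k c_k^{(i,j)} g_k$, the choice $g_r = 1$ forces $c_k^{(r,i)} = c_k^{(i,r)} = \delta_{ik}$, which is exactly the hypothesis of Proposition~\ref{prop trivial estimate on linear system of equations mod N}.

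Second, I would expand the product $g_{i_1}g_{i_2}\cdots g_{i_m}$ in the basis $\{g_k\}$ by iterating the multiplication law. A short induction shows that this expansion is precisely controlled by the auxiliary functions $f_m$ from \eqref{definition of auxiliary f_m}: the coefficient of $g_k$ in $g_{i_1}\cdots g_{i_m}$ is $\sum_{k_1,\ldots,k_{m-2}} f_m(k_1,\ldots,k_{m-2},k;i_1,\ldots,i_m)$. Writing the nontrivial character as $\chi(\sum_k x_k g_k) = e_N\bigl(\sum_k \frac{N}{b_k} a_k x_k\bigr)$ for some $a_k \in \Z_{b_k}$, not all zero, and substituting, the exponent becomes a polynomial that is $\Z_N$-linear in each coordinate of $\mathbf{x^{(1)}}$.

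Third, I would average over $\mathbf{x^{(1)}}$ first. Orthogonality collapses the sum unless, for every $i_1 \in \{1,\ldots,r\}$, the coefficient
\[
F(\mathbf{x^{(2)}},\ldots,\mathbf{x^{(m)}};i_1) := \sum_{i=1}^r a_i\tfrac{N}{b_i}\sum_{i_2,\ldots,i_m=1}^r x_{i_2}^{(2)}\cdots x_{i_m}^{(m)}\!\!\sum_{k_1,\ldots,k_{m-2}=1}^r\!\! f_m(k_1,\ldots,k_{m-2},i;i_1,i_2,\ldots,i_m)
\]
vanishes modulo $N$. In particular,
\[
\left|\mathbb{E}_{h_1,\ldots,h_m\in R}\chi(h_1\cdots h_m)\right| \leq \frac{1}{N^{r(m-1)}}\,\#\!\left\{(\mathbf{x^{(2)}},\ldots,\mathbf{x^{(m)}}) \in \Z_N^{r(m-1)} : F(\cdot\,;r)\equiv 0\bmod N\right\}.
\]

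Finally, I would apply Proposition~\ref{prop trivial estimate on linear system of equations mod N} directly, since the specialization $i_1 = r$ produces exactly the linear functional appearing there. This yields the stated bound $(m-1)/\lpf{N}$. The main obstacle is purely bookkeeping: verifying that the recursive expansion of $g_{i_1}\cdots g_{i_m}$ matches the chosen form of $f_m$ and that the $i_1 = r$ specialization lines up with the hypothesis of the counting proposition; no new idea beyond those in the $m=2$ case is required.
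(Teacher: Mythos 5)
Your proposal is correct and follows the paper's proof essentially step for step: fix the additive decomposition from Lemma~\ref{lem: structure of (R,+)}, pass to $\Z_N^r$ via the $N^r/|R|$-to-$1$ correspondence, expand the product $g_{i_1}\cdots g_{i_m}$ using the structure constants and the auxiliary functions $f_m$, average over $\mathbf{x^{(1)}}$ and use orthogonality, specialize to $i_1 = r$, and invoke Proposition~\ref{prop trivial estimate on linear system of equations mod N}. The only detail the paper notes that you omit is the trivial $m=1$ base case, where the bound $0$ follows directly from nontriviality of $\chi$.
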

	\begin{proof}
		The result is trivial when $m = 1$, so assume $m \geq 2$.
		
		By Lemma~\ref{lem: structure of (R,+)}, there is an isomorphism $\phi : (R,+) \to \Z_{b_1} \times \cdots \times \Z_{b_r}$, where $b_r = N$, the integers $b_i > 1$, $i \in \{1,\ldots, r-1\}$, are possibly indistinct prime powers which divide $N$, and $\phi(1) = (0,\ldots, 0,1)$.
		
		For each $j \in \{1,\ldots, r\}$, define the element $\delta^{(j)} = (\delta^{(j)}_1, \ldots, \delta^{(j)}_r) \in \Z_{b_1} \times \cdots \times \Z_{b_r}$ by
		\begin{equation*}
			\delta^{(j)}_i \ := \ \begin{cases} 1 & \text{ if } i = j, \\ 0 & \text{ otherwise,} \end{cases}  
		\end{equation*}
		and let $g_j = \phi^{-1}(\delta^{(j)}) \in R$.
		Then $g_r = 1$ and each element $s \in R$ can be written in exactly one way as $\sum_{i=1}^r x_i'g_i$ for some $x_i' \in \Z_{b_i}$, $i \in \{1,\ldots,r\}$. Moreover, if $s = \sum_{i=1}^r x_i' g_i$ for some $x_i' \in \Z_{b_i}, i \in \{1,\ldots, r\}$, then for any $r$-tuple $\mathbf{x^{(1)}} = (x_1^{(1)},\ldots, x_r^{(1)}) \in \Z_{N}^r$ such that $x_i^{(1)} \equiv x_i' \bmod b_i$ holds for all $i$, it follows that $s = \sum_{i=1}^r x_i^{(1)} g_i$ as well. Hence, for each $s \in R$, there are exactly $\prod_{i=1}^r (N/b_i) = N^r / |R|$ $r$-tuples $\mathbf{x^{(1)}} = (x_1^{(1)},\ldots, x_r^{(1)}) \in \Z_{N}^r$ such that $s = \sum_{i=1}^r x_i^{(1)}g_i$. Thus, $r$-tuples over $\Z_N$ are in $N^r/|R|$-to-1 correspondence with elements of $R$, which justifies the equation
		\begin{equation}\label{lpfcharestimate eqn1}
			\left| \rcavg{h_1,\ldots,h_m}\chi\left( \prod_{j=1}^m h_j \right) \right| \ = \ \left| \cavg{\mathbf{x^{(1)}},\ldots,\mathbf{x^{(m)}}}{\Z_N^r} \chi\left( \prod_{j=1}^m \sum_{i=1}^r x^{(j)}_i g_i  \right) \right|.
		\end{equation}
		Let us now compute the right-hand side.
		
		Recall that, given a positive integer $n$, we use the shorthand $e_n(x) := e^{2\pi i x/n}$ for any real $x$. 
		The group of additive characters of $R$ is isomorphic to $(R,+)$. Since $\chi \in \hat{R}$ is nontrivial, it follows that there exist $a_i \in \Z_{b_i}$, $i \in \{1,\ldots, r\}$, not all zero, such that $\chi(\sum_{i=1}^r x_i g_i)  \ = \ \prod_{i=1}^r e_{b_i}(a_ix_i) \ = \ e_N\left( \sum_{i=1}^r \frac{N}{b_i} a_i x_i \right)$ for each $(x_1,\ldots, x_r) \in \Z_N^r$.
		
		To proceed, we will need an explicit expression for the argument of $\chi$ in \eqref{lpfcharestimate eqn1}. Since $\chi$ is an additive character, not a multiplicative one, in order to understand the product inside, we will express $g_{i_1}g_{i_2}$ as a linear combination of $g_i$. Thus, for each $i,j \in \{1,\ldots, r\}$, let $c_k^{(i,j)} \in \Z_{b_k}$, $k \in \{1,\ldots, r\}$, be such that $g_ig_j = \sum_{k=1}^r c_k^{(i,j)} g_k$. Then, for any $i,k \in \{1,\ldots, r\}$, we observe that, since $g_r = 1$,
		\begin{equation}\label{eqn explaining multiplication law}
			c_{k}^{(i,r)} \ = \ c_k^{(r,i)} \ = \ \begin{cases} 1 & \text{ if } i = k, \\ 0 & \text{ otherwise.} \end{cases}
		\end{equation}
		Thus, recalling the auxiliary functions $\eta_m$, $m\geq 2$, as defined in \eqref{definition of auxiliary eta_m}, we write
		\begin{align*}
			\prod_{j=1}^m \sum_{i=1}^r x^{(j)}_i g_i \ & = \ \left( \sum_{i_1=1}^r x_{i_1}^{(1)} g_{i_1} \right)\left( \sum_{i_2=1}^r x_{i_2}^{(2)} g_{i_2} \right) \cdots \left( \sum_{i_m=1}^r x_{i_m}^{(m)} g_{i_m} \right) \\
			& = \ \left( \sum_{i_1,i_2=1}^r x_{i_1}^{(1)}x_{i_2}^{(2)} g_{i_1}g_{i_2}\right) \cdots \left( \sum_{i_m=1}^r x_{i_m}^{(m)} g_{i_m} \right) \\
			& = \ \left( \sum_{i_1,i_2=1}^r x_{i_1}^{(1)}x_{i_2}^{(2)} \left( \sum_{k_1=1}^r c_{k_1}^{(i_1,i_2)} g_{k_1}\right) \right) \cdots \left( \sum_{i_m=1}^r x_{i_m}^{(m)} g_{i_m} \right) \\
			& = \ \left( \sum_{i_1,i_2,i_3=1}^r x_{i_1}^{(1)}x_{i_2}^{(2)}x_{i_3}^{(3)}  \sum_{k_1=1}^r c_{k_1}^{(i_1,i_2)} g_{k_1}g_{i_3} \right) \cdots \left( \sum_{i_m=1}^r x_{i_m}^{(m)} g_{i_m} \right) \\
			& = \ \left( \sum_{i_1,i_2,i_3=1}^r x_{i_1}^{(1)}x_{i_2}^{(2)}x_{i_3}^{(3)}  \sum_{k_1,k_2=1}^r c_{k_1}^{(i_1,i_2)}c_{k_2}^{(k_1,i_3)} g_{k_2} \right) \cdots \left( \sum_{i_m=1}^r x_{i_m}^{(m)} g_{i_m} \right),
		\end{align*}
		and, continuing in this way, we obtain
		\begin{align*}		
			\prod_{j=1}^m \sum_{i=1}^r x^{(j)}_i g_i
			\ & = \ \sum_{i_1,\ldots, i_m = 1}^r x_{i_1}^{(1)} \cdots x_{i_m}^{(m)} \sum_{k_1,\ldots, k_{m-1} = 1}^r c_{k_1}^{(i_1,i_2)}c_{k_2}^{(k_1,i_3)} \cdots c_{k_{m-1}}^{(k_{m-2},i_m)} g_{k_{m-1}} \\
			& = \ \sum_{k_{m-1}=1}^r \left( \sum_{i_1,\ldots, i_m = 1}^r x_{i_1}^{(1)} \cdots x_{i_m}^{(m)} \sum_{k_1,\ldots, k_{m-2} = 1}^r \eta_m(k_1,\ldots,k_{m-1};i_1,\ldots,i_m) \right) g_{k_{m-1}} \\
			& = \ \sum_{i=1}^r \left( \sum_{i_1,\ldots, i_m = 1}^r x_{i_1}^{(1)} \cdots x_{i_m}^{(m)} \sum_{k_1,\ldots, k_{m-2} = 1}^r \eta_m(k_1,\ldots,k_{m-2},i;i_1,\ldots,i_m) \right) g_{i}.
		\end{align*}
		Hence, for each $\mathbf{x^{(2)}}, \ldots, \mathbf{x^{(m)}} \in \Z_N^r$, abbreviating the expression 
		\begin{equation}
			F(\mathbf{x^{(2)}},\ldots, \mathbf{x^{(m)}};j) \ := \ 
			\sum_{i = 1}^r \frac{N}{b_i}a_i \sum_{i_2,\ldots, i_m = 1}^r x_{i_2}^{(2)} \cdots x_{i_m}^{(m)} \sum_{k_1,\ldots, k_{m-2} = 1}^r \eta_m(k_1,\ldots,k_{m-2},i;j,i_2,\ldots,i_m),
		\end{equation}
		we obtain
		\begin{align*}
			& \cavg{\mathbf{x^{(1)}}}{\Z_N^r} \chi\left( \prod_{j=1}^m \sum_{i=1}^r x^{(j)}_i g_i  \right) \\ & = \ \cavg{\mathbf{x^{(1)}}}{\Z_N^r} e_N\left( \sum_{i = 1}^r \frac{N}{b_i}a_i \left( \sum_{i_1,\ldots, i_m = 1}^r x_{i_1}^{(1)} \cdots x_{i_m}^{(m)} \sum_{k_1,\ldots, k_{m-2} = 1}^r \eta_m(k_1,\ldots,k_{m-2},i;i_1,\ldots,i_m) \right) \right) \\
			& = \ \cavg{x_1^{(1)},\ldots,x_r^{(1)}}{\Z_N} \prod_{i_1=1}^r e_N\left( x_{i_1}^{(1)} F(\mathbf{x^{(2)}},\ldots, \mathbf{x^{(m)}};i_1) \right) \\
			& = \ \prod_{j=1}^r  \cavg{x_j^{(1)}}{\Z_N} e_N\left( x_{j}^{(1)} F(\mathbf{x^{(2)}},\ldots, \mathbf{x^{(m)}};j) \right) \\
			& = \ \begin{cases} 1  \quad \text{ if } F(\mathbf{x^{(2)}},\ldots, \mathbf{x^{(m)}};j) \equiv 0 \bmod N \text{ for every } j \in \{1,\ldots, r\}, \\ 0 \quad \text{ otherwise.} \end{cases}
		\end{align*}
		It follows that
		\begin{align*}
			& \left| \cavg{\mathbf{x^{(1)}},\ldots,\mathbf{x^{(m)}}}{\Z_N^r} \chi\left( \prod_{j=1}^m \sum_{i=1}^r x^{(j)}_i g_i  \right)\right| \ = \ \left|\cavg{\mathbf{x^{(2)}},\ldots,\mathbf{x^{(m)}}}{\Z_N^r} \cavg{\mathbf{x^{(1)}}}{\Z_N^r}\chi\left( \prod_{j=1}^m \sum_{i=1}^r x^{(j)}_i g_i  \right)\right|
			\\ & = \ \frac{1}{N^{r(m-1)}}\# \Big\{ (\mathbf{x^{(2)}},\ldots,\mathbf{x^{(m)}}) \in \Z_N^{r(m-1)} :  \forall j \in \{1,\ldots, r\}, F(\mathbf{x^{(2)}},\ldots, \mathbf{x^{(m)}};j) \equiv 0 \bmod N \Big\} \\
			& \leq \ \frac{1}{N^{r(m-1)}}\# \Big\{ (\mathbf{x^{(2)}},\ldots,\mathbf{x^{(m)}}) \in \Z_N^{r(m-1)} : F(\mathbf{x^{(2)}},\ldots, \mathbf{x^{(m)}};r) \equiv 0 \bmod N \Big\} \\
			& \leq \ \frac{m-1}{\lpf N}
		\end{align*}
		by Proposition~\ref{prop trivial estimate on linear system of equations mod N}. 
	\end{proof}
	
	
	We need one more intermediate result, which makes use of van der Corput-style differencing. To this end, we need to define the following notation. Let $R_1,\ldots, R_n$ be finite commutative rings, and let $f : R_1 \times \dots \times R_n \to \C$ be a function. Given $i \in \{1,\ldots, n\}$ and $h_i \in R_i$, we define the discrete derivative $\Delta^{(i)}_{h_i} f : R_1 \times \cdots \times R_n \to \C$ by $\Delta^{(i)}_{h_i} f(y_1,\ldots, y_n) := f(y_1,\ldots, y_{i-1},y_i+h_i,y_{i+1},\ldots, y_n) \ol{f}(y_1,\ldots, y_n)$. For $i,i' \in \{1,\ldots, n\}$ and $h_i \in R_i$ and $h_{i'} \in R_{i'}$, note that $\Delta^{(i)}_{h_i}\Delta^{(i')}_{h_{i'}} f = \Delta^{(i')}_{h_{i'}}\Delta^{(i)}_{h_{i}} f$.
	
	\begin{proposition}\label{prop: distinct powers character estimate}
		Let $R$ be a finite commutative ring with characteristic $N$. Let $n \geq 1$ be an integer. Let $S = \{\alpha_1,\ldots, \alpha_{|S|}\}$ be a finite subset of $(\N \cup \{0\})^n \setminus\{(0,\ldots,0)\}$, and write $\alpha_k = (\alpha^{(k)}_1,\ldots, \alpha^{(k)}_n)$ for each $k \in \{1,\ldots, |S|\}$. Let $\chi_1,\ldots, \chi_{|S|} \in \hat{R}$ be nontrivial additive characters. Let $d$ be a positive integer such that $d < \lpf N$ and $d \geq \max_{k \in \{1,\ldots, |S|\}} \sum_{i=1}^n \alpha^{(k)}_i$. Then
		\begin{equation*}
			\left\vert \cavg{y}{R^n} \prod_{k=1}^{|S|} \chi_{k}(y^{\alpha_k})\right\vert \ \leq \ \left(\frac{d-1}{\lpf N}\right)^{2^{-d}}. 
		\end{equation*}
	\end{proposition}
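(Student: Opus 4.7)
The plan is to iterate directional van der Corput differencing $d$ times in coordinate directions tailored to a maximal-degree multi-index, so that the resulting character sum falls under Proposition~\ref{prop: multiplicative character estimate}. First, I would relabel the multi-indices so that $\alpha_1$ has maximal total degree, which without loss of generality equals $d$ (if strictly less, one may shrink $d$, since the desired bound only improves). Let $i_1, \ldots, i_d \in \{1, \ldots, n\}$ be a sequence in which each coordinate index $i$ occurs exactly $\alpha^{(1)}_i$ times, so the total number of differencings matches the shape of $\alpha_1$.

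Next, I would apply the one-dimensional van der Corput identity $|\rcavg{y_i} H(y)|^2 = \rcavg{y_i, h} \Delta^{(i)}_h H(y)$ (valid for each fixed value of the other coordinates of $y$) iteratively in the directions $i_1, \ldots, i_d$, combining it with Cauchy--Schwarz at each stage to push absolute values outward and allow further iteration. After $d$ steps, one obtains
\[
\bigl|\cavg{y}{R^n} G(y)\bigr|^{2^d} \;\leq\; \bigl|\rcavg{h_1, \ldots, h_d}\, \cavg{y}{R^n}\, \Delta^{(i_d)}_{h_d} \cdots \Delta^{(i_1)}_{h_1} G(y)\bigr|.
\]
The iterated difference distributes multiplicatively across the product of characters in $G$, so each factor $\chi_k(y^{\alpha_k})$ becomes $\chi_k$ applied to the additive iterated difference of $y^{\alpha_k}$. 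For $\alpha_k = \alpha_1$, a direct computation generalizing $\Delta_{h_1, h_2} y_1^2 = 2 h_1 h_2$ yields $\alpha_1! \cdot h_1 h_2 \cdots h_d$, which is independent of $y$. For $\alpha_k \neq \alpha_1$ with $\sum_i \alpha^{(k)}_i \leq d$, since $\alpha_k \neq \alpha_1$ but the total degrees satisfy the reverse inequality, there must exist some coordinate $i$ with $\alpha^{(k)}_i < \alpha^{(1)}_i$; differencing $\alpha^{(1)}_i$ times in direction $i$ then annihilates $y^{\alpha_k}$, so $\chi_k$ contributes $\chi_k(0) = 1$. Only the $k = 1$ contribution survives, yielding $\chi_1(\alpha_1! \cdot h_1 h_2 \cdots h_d)$.

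Finally, since $\alpha_1! = \prod_i \alpha^{(1)}_i!$ divides $d!$, and the hypothesis $d < \lpf N$ together with Lemma~\ref{lem: what is a unit} shows that every positive integer at most $d$ is a unit in $R$, it follows that $\alpha_1!$ is a unit. Thus $\tilde\chi_1(x) := \chi_1(\alpha_1! \cdot x)$ defines a nontrivial additive character of $R$, and after a change of variables absorbing the factor $\alpha_1!$ we apply Proposition~\ref{prop: multiplicative character estimate} with $m = d$ to obtain $\bigl|\rcavg{h_1, \ldots, h_d} \tilde\chi_1(h_1 \cdots h_d)\bigr| \leq (d-1)/\lpf N$. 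Taking $2^d$-th roots yields the claimed estimate. The main obstacle is the bookkeeping in the directional van der Corput iteration: at each step one must interleave the van der Corput identity with Cauchy--Schwarz so as to push absolute values outward while preserving the shape of the expression, and the iterated difference computation (including the vanishing for $\alpha_k \neq \alpha_1$) must be carried out carefully in order to guarantee the clean product structure $h_1 \cdots h_d$ needed to invoke Proposition~\ref{prop: multiplicative character estimate}.
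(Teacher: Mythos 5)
Your proof is correct and follows essentially the same strategy as the paper's: iterated directional van der Corput differencing (established via Cauchy--Schwarz, exactly as in the paper's Claim at the start of its proof) to isolate a single surviving monomial whose image under differencing is $\alpha_1!\, h_1 \cdots h_d$, followed by an application of Proposition~\ref{prop: multiplicative character estimate}. The only genuine variation is your choice of differencing directions --- you pick any $\alpha_1$ of maximal \emph{total} degree and difference $\alpha^{(1)}_i$ times in coordinate $i$, whereas the paper picks the lexicographically dominant multi-index via its definitions of $d_1,\dots,d_n$; your choice can require more differencing steps (e.g.\ for $S=\{(0,2),(1,0)\}$ you difference twice while the paper differences once), but both routes reduce to the same estimate after the monotonicity of $x \mapsto ((x-1)/\lpf{N})^{2^{-x}}$ on $(1,\lpf N)$, which is the precise content of your ``the desired bound only improves'' and which you should state explicitly rather than leave implicit.
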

	\begin{proof}
		We claim for every integer $m \geq 1$, function $f : R^n \to \C$, and indices $i_1,\ldots, i_m \in \{1,\ldots, n\}$ that
		\begin{equation*}
			\left\vert \cavg{y}{R^n} f(y) \right\vert^{2^{m}} \ \leq \  \cavg{y}{R^n} \rcavg{h_1,\ldots,h_{m}} \Delta_{h_m}^{(i_m)} \cdots \Delta_{h_1}^{(i_1)}f(y).
		\end{equation*}
		This follows by induction on $m$. Indeed, suppose $m = 1$. Then, after applying Cauchy--Schwarz and changing variables $(y_{i_1},y_{i_1}') \in R^2$ to $(y_{i_1}+h_{1},y_{i_1}) \in R^2$, we observe that
		\begin{align*}
			\left\vert \cavg{y}{R^n} f(y) \right\vert^{2} \ & \leq \ \cavg{y_i}{R, \ i \neq i_1} \left\vert \rcavg{y_{i_1}} f(y) \right\vert^2 \\
			& = \ \cavg{y_i}{R, \ i \neq i_1} \rcavg{y_{i_1},y_{i_1}'} f(y_1,\ldots, y_{i_1-1},y_{i_1},y_{i_1+1},\ldots, y_n)\ol{f}(y_1,\ldots, y_{i_1-1},y_{i_1}',y_{i_1+1},\ldots, y_n)\\
			& = \ \cavg{y}{R^n} \rcavg{h_1} \Delta^{(i_1)}_{h_1} f(y).
		\end{align*}
		Suppose the result holds for $m \geq 1$. We now show it holds for $m+1$. First observe that
		\begin{align*}
			\left\vert \rcavg{y} f(y) \right\vert^{2^{m+1}} \ & \leq \ \left\vert \cavg{y}{R^n} \rcavg{h_1,\ldots,h_{m}} \Delta_{h_m}^{(i_m)} \cdots \Delta_{h_1}^{(i_1)}f(y) \right\vert^2\\
			& \ \leq \cavg{y_i}{R, \ i \neq i_{m+1}} \rcavg{h_1,\ldots,h_m} \left| \rcavg{y_{i_{m+1}}}  \Delta_{h_m}^{(i_m)} \cdots \Delta_{h_1}^{(i_1)}f(y)\right|^2
		\end{align*}
		by, respectively, the induction hypothesis and Cauchy--Schwarz. To finish the induction, we reuse the change of variables trick from the base case:
		\begin{align*}
			\left| \rcavg{y_{i_{m+1}}}  \Delta_{h_m}^{(i_m)} \cdots \Delta_{h_1}^{(i_1)}f(y)\right|^2 \ & = \ \rcavg{y_{i_{m+1}},h_{m+1}}\Delta_{h_{m+1}}^{(i_{m+1})} \Delta_{h_m}^{(i_m)} \cdots \Delta_{h_1}^{(i_1)}f(y).
		\end{align*}
		In preparation to apply the claim, we need to define a few parameters first. Let 
		\begin{align*}
			d_1 \ & := \ \max\{\alpha_1^{(k)} : k \in \{1,\ldots, |S| \} \}, \\
			d_2 \ & := \ \max\{\alpha_2^{(k)} : k \in \{1,\ldots, |S| \} \text{ such that } \alpha_1^{(k)} = d_1 \}, \\
			d_3 \ & := \ \max\{\alpha_3^{(k)} : k \in \{1,\ldots, |S| \} \text{ such that } \alpha_1^{(k)} = d_1, \alpha_2^{(k)} = d_2 \}, \\
			& \vdots \\
			d_n \ & := \ \max\{\alpha_n^{(k)} : k \in \{1,\ldots, |S| \} \text{ such that } \alpha_1^{(k)} = d_1, \ldots, \alpha_{n-1}^{(k)} = d_{n-1} \}.
		\end{align*}
		Then each $d_i \leq d$.
		Applying the claim to $m = \sum_{i=1}^n d_i$, the function $f(y) = \prod_{k = 1}^{|S|} \chi_k(y^{\alpha_k})$, and 
		\begin{align*} i_1 \ = \ \cdots \ = \ i_{d_1} \ & = \ 1, \\
			i_{d_1+1} \ = \ \cdots \ = \ i_{d_1+d_2} \ & = \ 2, \\
			i_{d_1+d_2+1} \  = \ \cdots \ = \ i_{d_1+d_2+d_3} \ & = \ 3, \\
			& \vdots \\
			i_{d_1+\dots + d_{n-1}+1} \ = \ \cdots \ = \ i_{m} \ & = \ n,
		\end{align*}
		we obtain
		\begin{multline}\label{label me 1}
			\left\vert \cavg{y}{R^n} \prod_{k = 1}^{|S|} \chi_k(y^{\alpha_k})\right\vert^{2^{m}} \ \leq \ \cavg{y}{R^n} \rcavg{h_1,\ldots,h_{m}} \Delta_{h_m}^{(i_m)} \cdots \Delta_{h_1}^{(i_1)}\prod_{k = 1}^{|S|} \chi_k(y^{\alpha_k}) \\ = \ \cavg{y}{R^n} \rcavg{h_1,\ldots,h_{m}} \prod_{k = 1}^{|S|} \Delta_{h_m}^{(i_m)} \cdots \Delta_{h_1}^{(i_1)}\chi_k(y^{\alpha_k}).
		\end{multline}
		Let us simplify the right-hand side of \eqref{label me 1}, step by step. For each $k$, we first claim that
		\begin{equation*}\Delta_{h_{d_1}}^{(i_{d_1})} \cdots \Delta_{h_1}^{(i_1)}\chi_k(y^{\alpha_k}) \ = \ \begin{cases} \chi_{k}\left((d_1!h_1\cdots h_{d_1})y_2^{\alpha^{(k)}_2}\cdots y_n^{\alpha_n^{(k)}}\right) & \text{ if } \alpha_1^{(k)} = d_1, \\
				1 & \text{ otherwise.} \end{cases}
		\end{equation*}
		Indeed, if $P(y) \in R[y_1,\ldots, y_n]$ is a polynomial and $\chi \in \hat{R}$ is an additive character, then $\Delta_{h}^{(1)}\chi(P(y)) = \chi(P(y_1+h,y_2,\ldots, y_n)-P(y_1,\ldots, y_n))$. Recall that $i_1 = \cdots = i_{d_1} = 1$. Thus, the sequence of operators $\Delta_{h_{d_1}}^{(i_{d_1})} \cdots \Delta_{h_1}^{(i_1)} = \Delta_{h_{d_1}}^{(1)} \cdots \Delta_{h_1}^{(1)}$ takes $d_1$-many finite differences of just the $y_1$ argument of the $y^{\alpha_{k}}$, which will obviously annihilate any $y^{\alpha_k} = y_1^{\alpha_1^{(k)}}(y_2^{\alpha_2^{(k)}} \cdots y_n^{\alpha_n^{(k)}})$ such that $\alpha_1^{(k)} < d_1$; when $\alpha_1^{(k)} = d_1$, it is easy to check that the claimed formula holds.
		
		Second, for $k$ such that $\alpha_1^{(k)} = d_1$, we similarly observe that
		\begin{multline*}\Delta_{h_{d_1+d_2}}^{(i_{d_1+d_2})} \cdots \Delta_{h_{d_1+1}}^{(i_{d_1+1})}\chi_{k}\left((d_1!h_1\cdots h_{d_1})y_2^{\alpha^{(k)}_2}\cdots y_n^{\alpha_n^{(k)}}\right) \\ = \ \begin{cases} \chi_{k}\left((d_1!h_1\cdots h_{d_1})(d_2!h_{d_1+1}\cdots h_{d_1+d_2})y_3^{\alpha^{(k)}_3}\cdots y_n^{\alpha_n^{(k)}}\right) & \text{ if } \alpha_2^{(k)} = d_2, \\
				1 & \text{ otherwise.} \end{cases}
		\end{multline*}
		We proceed in this way until we have exhausted the variables $y_1,\ldots, y_n$. In the process, we trivialize all but one character; indeed, there is exactly one $\alpha \in S$, say $\alpha_{k_0}$, such that $\alpha_i^{(k_0)} = d_i$ for each $i$. Thus, we eventually conclude that 
		\begin{multline}
			\left\vert \cavg{y}{R^n} \prod_{k = 1}^{|S|} \chi_k(y^{\alpha_k})\right\vert^{2^{m}} \ \leq \
			\cavg{y}{R^n} \rcavg{h_1,\ldots,h_{m}} \chi_{k_0}\left( \left(\prod_{i=1}^n d_i!\right) \left(\prod_{j=1}^m h_j\right)\right) \\ = \ \rcavg{h_1,\ldots,h_{m}} \chi_{k_0}\left( \left(\prod_{i=1}^n d_i!\right) \left(\prod_{j=1}^m h_j\right)\right) \ = \ \rcavg{h_1,\ldots,h_{m}} \chi_{k_0}\left( \prod_{j=1}^m h_j\right),
		\end{multline}
		where the change of variables $h_1 \mapsto h_1 \left(\prod_{i=1}^n d_i!\right)^{-1}$ used in the last equality is justified by the following fact: Each $d_i \leq d < \lpf N$, so $d_i!$ is a unit in $R$ by Lemma~\ref{lem: what is a unit}.
		
		By Proposition~\ref{prop: multiplicative character estimate}, we conclude that
		\begin{equation*}
			\left\vert \cavg{y}{R^n} \prod_{k = 1}^{|S|} \chi_k(y^{\alpha_k})\right\vert \ \leq \ \left( \frac{m-1}{\lpf{N}} \right)^{2^{-m}}.
		\end{equation*}
		The function $g: [1,\infty) \to \R$ given by $g(x) = ( \frac{x-1}{\lpf N} )^{2^{-x}}$ is increasing on $(1, \lpf N + 1)$. By assumption, $1 \leq m \leq d < \lpf N$, so $g(m) \leq g(d)$, which proves the proposition.
	\end{proof}
	
	We now state and prove the important lemma in this section.
	\begin{lemma}\label{lem: multicharacter product of independent polynomials as arguments}
		Let $m, n \geq 1$ be integers. Let $\mathbf P = \{Q_1,\ldots,Q_m\} \subset \Z[y_1,\ldots, y_n]$ be an independent family of polynomials with zero constant term. Let $d = \max_{1\leq j \leq m} \deg(Q_j)$. Then there exists $C=C(\mathbf P)$ with the following property. Namely, for any finite commutative ring $R$ with characteristic $N$ such that $\lpf N > C$ and any characters $\psi_1,\ldots,\psi_m \in \hat{R}$, not all of which are trivial, one has
		\be
		\left| \cavg{y}{R^n} \prod_{j=1}^m \psi_j(Q_j(y)) \right| \ \leq \ \left(\frac{d-1}{\lpf N} \right)^{2^{-d}}.
		\ee
	\end{lemma}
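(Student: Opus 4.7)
The plan is to reduce this multi‑character estimate with polynomial arguments to the monomial case handled in Proposition~\ref{prop: distinct powers character estimate}. Writing each $Q_j(y) = \sum_{\alpha \in A_j} c_{j,\alpha} y^\alpha$ with $A_j \subset (\N\cup\{0\})^n \setminus \{(0,\ldots,0)\}$ (by the zero constant term hypothesis) and using the homomorphism property of each $\psi_j$, I would rewrite
\begin{equation*}
\prod_{j=1}^m \psi_j(Q_j(y)) \ = \ \prod_{\alpha \in S} \chi_\alpha(y^\alpha), \qquad S \ := \ \bigcup_j A_j, \qquad \chi_\alpha \ := \ \prod_{j : \alpha \in A_j} \psi_j^{c_{j,\alpha}} \in \hat{R}.
\end{equation*}
Each $\alpha \in S$ satisfies $\sum_i \alpha_i \leq d$. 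After the key step below, I would discard from the product those $\alpha$ for which $\chi_\alpha = 1$ (they contribute $1$), obtain a nonempty set $T \subset S$ of $\alpha$'s with $\chi_\alpha$ nontrivial, and then apply Proposition~\ref{prop: distinct powers character estimate} (taking $C \geq d$ so that $d < \lpf N$) to conclude the claimed bound $\left( (d-1)/\lpf N \right)^{2^{-d}}$.

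The crux is therefore showing that if $\lpf N$ is sufficiently large (depending only on $\mathbf{P}$) and $(\psi_1,\ldots,\psi_m) \neq (1,\ldots,1)$, then at least one $\chi_\alpha$ is nontrivial. I would enumerate $S$ and form the $|S| \times m$ integer matrix $M = (c_{j,\alpha})_{\alpha \in S,\, j = 1,\ldots,m}$ (with $c_{j,\alpha} := 0$ when $\alpha \notin A_j$). Since $\{Q_1,\ldots,Q_m\}$ is independent over $\Z$ and each $Q_j$ has zero constant term, no nontrivial integer combination equals the zero polynomial, so the columns of $M$ are $\Z$‑linearly independent; hence $M$ contains a nonzero $m \times m$ minor $M'$. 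I would then take $C$ to exceed both $d$ and every prime divisor of $\det M'$.

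Identifying the additive character group $\hat{R}$ with $(R,+)$ and writing $\theta_j$ for the element of $\hat{R}$ corresponding to $\psi_j$, the condition that every $\chi_\alpha$ is trivial becomes the linear system $\sum_j c_{j,\alpha} \theta_j = 0$ in $\hat{R}$ for all $\alpha \in S$. Restricting to the $m$ rows of $M'$ gives $M'\vec\theta = 0$ in $\hat{R}^m$. Because $\det M'$ is coprime to $N$ and the exponent of $\hat R$ divides $N$, there exists an integer $d'$ with $d' \det M' \equiv 1 \pmod N$, so the cofactor formula $M' \cdot \mathrm{adj}(M') = \det M' \cdot I$ gives that $M'$ is invertible as an endomorphism of $\hat{R}^m$. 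Hence $\vec\theta = 0$, contradicting the hypothesis that not every $\psi_j$ is trivial.

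The mildly subtle point is this last linear algebra step over $\hat R$, since $\hat R$ is only a finite abelian group rather than a free module: the fact that integers coprime to $N$ act invertibly on $\hat R$ is what makes inverting $\det M'$ legitimate and allows the $\Z$‑independence of the $Q_j$ to transfer into nontriviality of the associated $\chi_\alpha$'s. Aside from this, the argument is direct, and the constant $C$ depends only on $\mathbf P$ (through the primes dividing $\det M'$ and through $d$), exactly as asserted.
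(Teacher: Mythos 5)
Your proposal is correct and follows essentially the same route as the paper's: recombine the $\psi_j$ into monomial-indexed characters $\chi_\alpha$, show via a nonzero $m\times m$ minor of the coefficient matrix (with $\lpf N$ taken large enough to make that minor a unit) that some $\chi_\alpha$ must be nontrivial, and then invoke Proposition~\ref{prop: distinct powers character estimate}. The one pleasant simplification is your linear-algebra step: rather than passing through Lemma~\ref{lem: structure of (R,+)} and decomposing $\widehat{R}$ coordinate-by-coordinate as the paper does, you treat $\widehat{R}$ directly as a $\Z_N$-module (its exponent being $N$) and invert $M'$ via the adjugate formula, which gets to the same contradiction a little more cleanly.
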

	\begin{proof}
		By Proposition~\ref{prop: lindep over zn}, there exists $C_1$ depending on $\mathbf P$ such that whenever $M$ is a positive integer with $\lpf M > C_1$, the family $\{Q_1,\ldots,Q_m\}$, viewed as a family of polynomials in $\mathbb{Z}_M[y_1,\ldots, y_n]$, is linearly independent over $\mathbb{Z}_M$.
		
		Set $C := \max \{ 2,d,C_1\}$, and suppose that $R$ is a finite commutative ring with characteristic $N$ such that $\lpf{N} > C$.
		
		By Lemma~\ref{lem: structure of (R,+)}, $(R,+)$ is isomorphic to $\Z_{b_1} \times \cdots \times \Z_{b_{r-1}} \times \Z_{b_r}$, where $b_r = N$ and $b_i > 1$, $i < r$, are possibly indistinct prime powers dividing $N$. Then, endowing $\hat{R}$ with pointwise multiplication, we obtain the character group of $R$, which is isomorphic to the same product of cyclic groups.
		
		Let $\psi_1,\ldots, \psi_m$ be characters of $R$, not all of which are trivial. For each $j \in \{1,\ldots, m\}$, write $\psi_j$ as $(a^{(j)}_1,\ldots, a^{(j)}_r) \in \Z_{b_1} \times \cdots \times \Z_{b_r}$. In other words, $\psi_j((x_1,\ldots, x_r)) = \prod_{i=1}^r e_{b_i}(a_i^{(j)}x_i)$.
		
		Let $S'$ be the set of $\alpha = (\alpha_1,\ldots, \alpha_n) \in (\N \cup \{0\})^n$ such that there exists $j \in \{1,\ldots, m\}$ such that the coefficient of $y^\alpha$ in $P_j(y)$ is nonzero.
		
		For each $j \in \{1,\ldots, m\}$, write $Q_j(y) = \sum_{\alpha \in S'} c_\alpha^{(j)} y^\alpha$, where $c_\alpha^{(j)} \in \Z$, noting that some $c_\alpha^{(j)}$ could be zero. Since characters are additive homomorphisms to $\C$, we rewrite
		\begin{equation*}
			\prod_{j=1}^m \psi_i(Q_i(y)) \ = \ \prod_{\alpha \in S'} \left( \psi_1^{c^{(1)}_\alpha}\cdots \psi_m^{c^{(m)}_\alpha}\right)(y^\alpha).
		\end{equation*}
		Next, we claim that there exists $\alpha \in S'$ such that $\psi_1^{c^{(1)}_\alpha}\cdots \psi_m^{c^{(m)}_\alpha}$ is not the trivial character. Indeed, suppose not. It would then follow that, for each $\alpha \in S'$ and $i \in \{1,\ldots, r\}$, 
		\begin{equation}\label{eqn: need to label 1}
			\sum_{j=1}^m c_\alpha^{(j)}a_i^{(j)} \ \equiv \ 0 \bmod b_i.
		\end{equation}
		Enumerate $S' = \{\alpha_1,\ldots, \alpha_{|S'|}\}$. Now, if we form the $|S'|\times m$ matrix $A$ of coefficients of $P_1,\ldots,P_m$ such that within each row, there exists $k \in \{1,\ldots, S'\}$ such that the $j$th entry in the row is the coefficient of $y^{\alpha_k}$ in $P_j$, then rewriting \eqref{eqn: need to label 1} in terms of matrix $A$, we observe that for each $i \in \{1,\ldots, r\}$,
		\begin{equation*}
			\underbrace{\begin{pmatrix}
					c_{\alpha_{|S'|}}^{(1)} & \cdots & c_{\alpha_{|S'|}}^{(m)} \\
					\vdots & \ddots & \vdots \\
					c_{\alpha_1}^{(1)} & \cdots & c_{\alpha_1}^{(m)} \end{pmatrix}}_{A} \begin{pmatrix} a_i^{(1)} \\ \vdots \\ a_i^{(m)} \end{pmatrix} \ \equiv \ \begin{pmatrix} 0 \\ \vdots \\ 0 \end{pmatrix} \bmod b_i.
		\end{equation*}
		By assumption, not all $\psi_1,\ldots, \psi_m$ are trivial. Hence, for at least one $i \in \{1,\ldots, r\}$, the vector $(a_i^{(1)},\ldots, a_i^{(m)})^T$ is nonzero. However, this is a contradiction since the matrix $A$ has $\Z_{b_i}$-linearly independent columns, which follows from the fact that $\lpf{b_i} \geq \lpf{N} > C \geq C_1$.
		
		Thus, the claim is proved. Let $S$ be the set of $\alpha \in S'$ such that $\psi_1^{c^{(1)}_\alpha}\cdots \psi_m^{c^{(m)}_\alpha}$ is not the trivial character. The claim showed that $S$ is nonempty. Moreover, since each $Q_j$ has constant term zero, $(0,\ldots, 0) \not\in S'$, so $S$ does not contain that vector either. Re-enumerating $S'$ if necessary, we have $S = \{\alpha_1,\ldots, \alpha_{|S|}\}$. For each $k \in \{1,\ldots, |S|\}$, define the additive character $\chi_k := \psi_1^{c_{\alpha_k}^{(1)}}\cdots \psi_m^{c_{\alpha_k}^{(m)}}$. Then
		\begin{equation*}
			\left| \cavg{y}{R^n} \prod_{j=1}^m \psi_i(Q_i(y)) \right| \ = \ \left| \cavg{y}{R^n} \prod_{k=1}^{|S|} \chi_k(y^{\alpha_k}) \right|.
		\end{equation*}
		Write $\alpha_k = (\alpha^{(k)}_1,\ldots, \alpha^{(k)}_n)$ for each $k$. Observe that $d = \max_{1\leq j\leq m} \deg(Q_j)$ satisfies \\$\max_{k \in \{1,\ldots, |S|\}} \sum_{i=1}^n \alpha^{(k)}_i \leq d \leq C < \lpf N$, so by Proposition~\ref{prop: distinct powers character estimate}, it follows that
		\begin{equation*}
			\left| \cavg{y}{R^n} \prod_{k=1}^{|S|} \chi_k(y^{\alpha_k}) \right| \ \leq \ \left(\frac{d-1}{\lpf N} \right)^{2^{-d}},
		\end{equation*}
		as desired.
	\end{proof}
	Finally, we record a consequence of the previous lemma, which we will use to derive a corollary of our main theorem. This result is a bound on the number of roots of an integer polynomial over a general finite commutative ring.
	\begin{proposition}\label{prop: bound on number of roots}
		Let $n \geq 1$ be a positive integer, and let $P(y) \in \Z[y_1,\ldots, y_n]$ have degree at least one. There exist $c, C > 0$ and $\ve \in (0,1)$, each depending only on $P$, such that the following holds. For any finite commutative ring $R$ with characteristic $N$ such that $\lpf N > C$, the number of $y = (y_1,\ldots, y_n) \in R^n$ such that $P(y) = 0_R$ is at most $|R|^{n-1} + \frac{c|R|^n}{\lpf{N}^\ve}$.
	\end{proposition}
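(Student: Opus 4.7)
The plan is to apply Fourier inversion to the indicator of the zero element and then reduce to the single-polynomial character sum estimate already established. Concretely, for any $a \in R$, orthogonality of characters gives $\mathbf{1}_{a = 0_R} = \frac{1}{|R|} \sum_{\psi \in \widehat R} \psi(a)$, so
\begin{equation*}
\#\{y \in R^n : P(y) = 0_R\} \ = \ \frac{1}{|R|} \sum_{\psi \in \widehat R} \sum_{y \in R^n} \psi(P(y)).
\end{equation*}
The contribution of the trivial character is exactly $|R|^{n-1}$, which accounts for the main term. The task therefore reduces to bounding the sum over nontrivial $\psi$.

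For each such $\psi$, I would write $P(y) = P(0) + Q(y)$ with $Q(y) := P(y) - P(0)$, so that $Q$ has zero constant term, is nonconstant (since $\deg P \geq 1$), and hence forms an independent family $\{Q\} \subset \Z[y_1,\ldots,y_n]$ of size one. Then $\psi(P(y)) = \psi(P(0)) \psi(Q(y))$, and Lemma~\ref{lem: multicharacter product of independent polynomials as arguments} applied with $m = 1$ gives a constant $C = C(P)$ such that, for $\lpf N > C$ and any nontrivial $\psi$,
\begin{equation*}
\left| \mathop{\mathbb{E}}_{y \in R^n} \psi(P(y)) \right| \ = \ \left| \mathop{\mathbb{E}}_{y \in R^n} \psi(Q(y)) \right| \ \leq \ \left( \frac{d-1}{\lpf N} \right)^{2^{-d}},
\end{equation*}
where $d = \deg P$.

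Summing over the $|R| - 1$ nontrivial characters, multiplying by $|R|^n$, and dividing by $|R|$ yields
\begin{equation*}
\#\{y \in R^n : P(y) = 0_R\} \ \leq \ |R|^{n-1} + \frac{|R|-1}{|R|} \cdot (d-1)^{2^{-d}} \cdot \frac{|R|^n}{\lpf{N}^{2^{-d}}},
\end{equation*}
which gives the desired bound with $\ve := 2^{-d} \in (0,1)$, $c := (d-1)^{2^{-d}}$, and $C$ as supplied by Lemma~\ref{lem: multicharacter product of independent polynomials as arguments}. There is no serious obstacle here: the only technical point is the trivial shift $Q = P - P(0)$ to ensure the constant-term hypothesis of the lemma, and the only ``lossy'' step is the trivial bound $|R| - 1 \leq |R|$ on the number of nontrivial characters, which is what forces the $|R|^n$ (as opposed to $|R|^{n-1}$) in the error term and is inherent to a Fourier argument at this level of generality.
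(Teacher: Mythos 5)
Your proof is correct and follows essentially the same Fourier-analytic route as the paper: both extract the trivial character for the $|R|^{n-1}$ main term and feed Lemma~\ref{lem: multicharacter product of independent polynomials as arguments} with $m=1$ to bound the nontrivial characters, arriving at the identical constants $\ve = 2^{-d}$ and $c = (d-1)^{2^{-d}}$. If anything, your explicit shift $Q = P - P(0)$ is slightly more careful than the paper's, which applies the lemma directly to $\mathbf P = \{P\}$ even though $P$ may have a nonzero constant term; the two are reconciled by the observation that the constant contributes only a global unimodular phase $\psi(P(0))$.
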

	\begin{proof}
		Applying Lemma~\ref{lem: multicharacter product of independent polynomials as arguments} with $\mathbf P = \{P\}$, we see that there exists $C = C(P)$ such that, for any finite commutative ring $R$ with characteristic $N$ such that $\lpf N > C$ and any nontrivial additive character $\psi \in \hat{R}$, one has
		\begin{equation}
			\left| \cavg{y}{R^n} \psi(P(y)) \right| \ \leq \ \left( \frac{d-1}{\lpf N} \right)^{2^{-d}},
		\end{equation}
		where $d = \deg(P)$.
		
		Let $R$ be a finite commutative ring with characteristic $N$ such that $\lpf N > C$. Set $\ve = 2^{-d}$ and $c = (d-1)^{2^{-d}}$. 
		
		By orthonormality of characters, we observe that the number of zeros of $P(y)$ in $R^n$ is given by
		\begin{equation}
			\sum_{y \in R^n} \cavg{\psi}{\hat{R}} \psi(P(y)),
		\end{equation}
		whose magnitude is at most
		\begin{align*}
			\left| \cavg{\psi}{\hat{R}} \sum_{y \in R^n} \psi(P(y)) \right| \ & \leq \ |R|^{n-1} + |R|^{n-1} \sum_{\psi \in \hat{R}, \psi \neq 1} \left| \cavg{y}{R^n} \psi(P(y)) \right| \\
			& \leq \ |R|^{n-1}\left(1 + \sum_{\psi \in \hat{R}, \psi \neq 1} \left(\frac{d-1}{\lpf N}\right)^{2^{-d}}\right) \\
			& = \ |R|^{n-1}\left(1 +  (|R|-1)\left(\frac{d-1}{\lpf N}\right)^{2^{-d}}\right) \\
			& \leq \ |R|^{n-1} + \frac{c|R|^n}{\lpf{N}^\ve}.
		\end{align*}
	\end{proof}
	
	\subsection{Proof of Proposition~\ref{main thm base}}\label{subsec: proof of prop main thm base}
	Using Fourier analysis, we can upgrade Lemma~\ref{lem: multicharacter product of independent polynomials as arguments} to prove Proposition~\ref{main thm base}, which we restate here for convenience.
	\begin{proprep}[\ref{main thm base}]
		Let $m_2 \geq 0$ and $n \geq 1$ be integers, and let $\mathbf P = \{ P_1,Q_1,\ldots,Q_{m_2}\} \subset \Z[y_1,\ldots, y_n]$ be an independent family of polynomials with zero constant term. There exist $c,C,\gamma > 0$ such that the following holds. For any finite commutative ring $R$ with characteristic $N$ such that $\lpf N > C$, any vector $F = (f_0,f_1)$ of 1-bounded functions $R \to \C$, and any vector $\Psi = (\psi_1,\ldots,\psi_{m_2})$ of additive characters of $R$, one has
		\be
		\left| \Lambda_{P_1}^{Q_1,\ldots,Q_{m_2}}(F;\Psi) - 1_{\Psi=1} \rcavg x f_0(x) \rcavg x f_1(x)\right| \ \leq \  c\,\lpf{N}^{-\gamma},
		\ee
		where $1_{\Psi=1}$ equals 1 if every character in $\Psi$ is trivial and 0 otherwise.
	\end{proprep}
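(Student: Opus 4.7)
The plan is to decompose $f_1$ into its mean and a mean-zero part, handle the two resulting terms separately by Fourier inversion, and use Lemma~\ref{lem: multicharacter product of independent polynomials as arguments} as the core character sum bound. Set $d := \max\{\deg P_1, \deg Q_1,\ldots, \deg Q_{m_2}\}$ and let $C$ and $c$ be determined by the lemma applied to the relevant independent subfamilies. Throughout, assume $\lpf N > C$.

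Write $f_1 = \overline{f_1} + f_1'$ where $\overline{f_1} := \rcavg{x} f_1(x)$ and $f_1'$ has zero mean and is $2$-bounded. Expanding accordingly,
\be
\Lambda_{P_1}^{Q_1,\ldots,Q_{m_2}}(F;\Psi) \ = \ \overline{f_1}\left(\rcavg{x} f_0(x)\right)\left(\cavg{y}{R^n}\psi_1(Q_1(y))\cdots \psi_{m_2}(Q_{m_2}(y))\right) + \Lambda_{P_1}^{Q_1,\ldots,Q_{m_2}}((f_0,f_1');\Psi).
\ee
For the first summand, if $\Psi$ is trivial the $y$-average equals $1$ and we recover $\bigl(\rcavg{x}f_0(x)\bigr)\bigl(\rcavg{x}f_1(x)\bigr)$; if $\Psi$ is nontrivial, the subfamily $\{Q_1,\ldots,Q_{m_2}\}$ is itself independent with zero constant terms (it is a subfamily of the independent family $\mathbf P$), so by Lemma~\ref{lem: multicharacter product of independent polynomials as arguments} the $y$-average is bounded by $\left(\tfrac{d-1}{\lpf N}\right)^{2^{-d}}$, and the whole first summand is similarly bounded.

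The second summand is handled via Fourier inversion~\eqref{inversion formula}: writing $f_1'(x+P_1(y)) = \cavg{\chi}{\widehat R} \hat{f_1'}(\chi)\,\overline{\chi}(x)\,\overline{\chi}(P_1(y))$ and collapsing the $x$-average using the definition of $\hat{f_0}$, one obtains
\be
\Lambda_{P_1}^{Q_1,\ldots,Q_{m_2}}((f_0,f_1');\Psi) \ = \ \cavg{\chi}{\widehat R} \frac{\hat{f_1'}(\chi)\,\hat{f_0}(\overline{\chi})}{|R|}\,\cavg{y}{R^n}\overline{\chi}(P_1(y))\psi_1(Q_1(y))\cdots\psi_{m_2}(Q_{m_2}(y)).
\ee
The trivial character contributes $0$ since $\hat{f_1'}(1) = |R|\,\rcavg{x} f_1'(x) = 0$. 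For each nontrivial $\chi$, the character $\overline\chi$ attached to $P_1$ is nontrivial, so the full family $\{P_1,Q_1,\ldots,Q_{m_2}\}$ together with the characters $(\overline\chi,\psi_1,\ldots,\psi_{m_2})$ satisfies the hypothesis of Lemma~\ref{lem: multicharacter product of independent polynomials as arguments}, giving the same bound $\left(\tfrac{d-1}{\lpf N}\right)^{2^{-d}}$ for the $y$-average. Pulling this uniform bound out and applying Cauchy--Schwarz and Plancherel yields
\be
\cavg{\chi}{\widehat R}\frac{|\hat{f_1'}(\chi)||\hat{f_0}(\overline{\chi})|}{|R|} \ \leq \ \|f_1'\|_{L^2}\|f_0\|_{L^2} \ \leq \ 2,
\ee
so the second summand is bounded by $2\left(\tfrac{d-1}{\lpf N}\right)^{2^{-d}}$.

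Combining the two estimates with the triangle inequality and choosing $\gamma := 2^{-d}$ and $c := 3(d-1)^{2^{-d}}$ proves the proposition. The only substantive work is the character sum bound of Lemma~\ref{lem: multicharacter product of independent polynomials as arguments} itself; once that is in hand, the argument here is essentially standard Fourier analysis, with the main care point being to verify that in both branches (the $\Psi$-nontrivial branch for the mean term and the $\chi$-nontrivial branch after Fourier inversion of $f_1'$) the relevant subfamily of $\mathbf P$ paired with at least one nontrivial character satisfies the hypotheses of the lemma.
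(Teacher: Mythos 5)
Your proof is correct and follows essentially the same route as the paper: decompose $f_1$ into its mean and a mean-zero part, dispatch the mean term by applying Lemma~\ref{lem: multicharacter product of independent polynomials as arguments} when $\Psi$ is nontrivial, and handle the mean-zero term by Fourier inversion, discarding the trivial character (where $\hat{f_1'}$ vanishes), applying the same character-sum lemma with the nontrivial character $\overline\chi$ attached to $P_1$, and finishing with Cauchy--Schwarz and Plancherel. The only cosmetic difference is that you invoke the lemma for the subfamily $\{Q_1,\ldots,Q_{m_2}\}$ in the first branch, whereas the paper invokes it once for the full family $\{P_1,Q_1,\ldots,Q_{m_2}\}$ and lets the character attached to $P_1$ be trivial in that branch; both are valid, and since the bound $\left(\tfrac{d-1}{\lpf N}\right)^{2^{-d}}$ is increasing in $d$ for $d < \lpf N$, using the common $d$ for both steps costs nothing.
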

	\begin{proof} Let $d = \max\{\deg(P_1),  \max_{1\leq j \leq m_2} \deg(Q_j)\}$.
		Applying Lemma~\ref{lem: multicharacter product of independent polynomials as arguments}, let $C = C(\mathbf P)$ be such that, for any finite commutative ring $R$ with characteristic $N$ such that $\lpf N > C$ and any additive characters $\phi,\psi_1,\ldots,\psi_{m_2}$ of $R$, not all of which are trivial, we have
		\be
		\left| \cavg{y}{R^n} \phi(P_1(y)) \prod_{i=1}^{m_2} \psi_i(Q_i(y)) \right| \ \leq \ \left(\frac{d-1}{\lpf N} \right)^{2^{-d}}.
		\ee
		
		Fix a finite commutative ring $R$ with characteristic $N$ satisfying $\lpf N > C$ and vectors $F$ and $\Psi$ as in the statement of this proposition, and let $f_1' = f_1 - \rcavg x f_1(x)$ and $F' := (f_0,f_1')$. By splitting $f_1 = f_1' + \rcavg x f_1(x)$, we observe that
		\begin{equation}\label{one-time-use eqn 1}
			\Lambda_{P_1}^{Q_1,\ldots, Q_{m_2}}(F;\Psi) \ = \ \left(\rcavg x f_0(x) \rcavg{x} f_1(x) \right) \cavg{y}{R^n} \prod_{i=1}^{m_2} \psi_i(Q_i(y)) + \Lambda_{P_1}^{Q_1,\ldots, Q_{m_2}}(F';\Psi).
		\end{equation}
		
		First, we claim that
		\be
		\left| \left( \rcavg{x} f_0(x) \rcavg{x} f_1(x) \right) \cavg{y}{R^n} \prod_{i=1}^{m_2} \psi_i(Q_i(y)) - 1_{\Psi=1} \rcavg{x} f_0(x) \rcavg{x} f_1(x) \right| \ \leq \ \left(\frac{d-1}{\lpf N} \right)^{2^{-d}}.
		\ee
		Indeed, if $\psi_i$ is trivial for every $i \in \{1,\ldots, m_2\}$, then $\cavg{y}{R^n} \prod_{i=1}^{m_2} \psi_i(Q_i(y)) = 1$ and $1_{\Psi=1} = 1$; otherwise, $1_{\Psi=1} = 0$ and $\left| \cavg{y}{R^n} \prod_{i=1}^{m_2} \psi_i(Q_i(y)) \right| \leq \left(\frac{d-1}{\lpf N} \right)^{2^{-d}}$ by our choice of $C$. Thus
		\be
		\left|\rcavg{y} \prod_{i=1}^{m_2} \psi_i(Q_i(y)) - 1_{\Psi=1} \right| \ \leq \ \left(\frac{d-1}{\lpf N} \right)^{2^{-d}},
		\ee
		which implies the claim since $f_0$ and $f_1$ are 1-bounded.
		
		Second, we claim that
		\be
		\left| \Lambda_{P_1}^{Q_1,\ldots, Q_{m_2}}(F';\Psi) \right| \ \leq \ 2\cdot  \left(\frac{d-1}{\lpf N} \right)^{2^{-d}}.
		\ee
		Indeed, by Fourier inversion, we can compute that
		\begin{align*}
			\Lambda_{P_1}^{Q_1,\ldots, Q_{m_2}}(F';\Psi) \
			& = \ \sum_{\phi_0,\phi_1 \in \widehat{R}} \hat{f_0}(\phi_0) \hat{f_1'}(\phi_1) \left( \rcavg x \phi_0(x) \phi_1(x) \right) \left(\cavg{y}{R^n} \ol{\phi_1}(P_1(y)) \prod_{i=1}^{m_2} \psi_i(Q_i(y)) \right) \\
			& = \ \sum_{\phi \in \widehat{R}} \hat{f_0}(\phi) \hat{f_1'}(\ol{\phi}) \left(\cavg{y}{R^n} \phi(P_1(y)) \prod_{i=1}^{m_2} \psi_i(Q_i(y)) \right) \\
			& = \ \sum_{1 \neq \phi \in \widehat{R}} \hat{f_0}(\phi) \hat{f_1'}(\ol{\phi}) \left(\cavg{y}{R^n} \phi(P_1(y)) \prod_{i=1}^{m_2} \psi_i(Q_i(y)) \right),
		\end{align*}
		where the first equality holds by the change of variables $x \mapsto -x$ in $R$, the second equality holds by orthogonality relations, and the last equality holds since $\hat{f_1'}(1) = \rcavg x f_1'(x) = 0$. By our choice of $C$, Cauchy--Schwarz, Plancherel's theorem, and the fact that $f_0$ is 1-bounded and $f_1'$ is 2-bounded, we conclude that
		\ba
		\left| \Lambda_{P_1}^{Q_1,\ldots, Q_{m_2}}(F';\Psi) \right| \ & \leq \ \left(\frac{d-1}{\lpf N} \right)^{2^{-d}} \sum_{1 \neq \phi \in \widehat{R}} \left| \hat{f_0}(\phi) \hat{f_1'}(\ol{\phi})\right| \\
		& \leq \ \left(\frac{d-1}{\lpf N} \right)^{2^{-d}} |R| \lr{\hat{f_0},\hat{f_0}}^{1/2} \lr{\hat{f_1'},\hat{f_1'}}^{1/2} \\
		& = \ \left(\frac{d-1}{\lpf N} \right)^{2^{-d}} \norm{f_0}_{L^2} \norm{f_1'}_{L^2} \\
		& \leq \ 2\cdot  \left(\frac{d-1}{\lpf N} \right)^{2^{-d}}.
		\ea
		Combining the two claims and the equality \eqref{one-time-use eqn 1} yields the result.
	\end{proof}
	
	
	
	\section{The induction step of Theorem~\ref{main thm}}\label{sec: induction step}

	This section concerns the induction step in the proof of Theorem~\ref{main thm}. As a reminder, we are inducting on $m_1$, and the case $m_1 = 1$ has already been shown in Proposition~\ref{main thm base}.
	
	For a sketch of the argument, the reader may consult the part of Subsection~\ref{subsec: methods} that begins after \eqref{pointer 1} and continues until the sentence after \eqref{pointer 2}.
	
	
	\subsection{Approximate control by some Gowers $U^s$ norm}\label{subsec: bounding in terms of Us}
	Let $R$ be a finite commutative ring with characteristic $N$. One of the first steps we will take in the proof of Theorem~\ref{main thm} is to bound, for 1-bounded functions $f_0,\ldots, f_{m_1} : R \to \C$ and additive characters $\psi_1,\ldots,\psi_{m_2} \in \widehat{R}$, the average
	\[\Lambda_{P_1,\ldots,P_{m_1}}^{Q_1,\ldots,Q_{m_2}}(f_0,\ldots,f_{m_1};\psi_1,\ldots, \psi_{m_2})\]
	in terms of the Gowers norm (of potentially high degree) of the constituent $f_i$ and an error term of size a negative power of $\lpf N$. To this end, we will prove an intermediate proposition (Proposition~\ref{prop: Us control}) in Section~\ref{sec: proof of prop: Us control}. 
	\begin{prop}\label{prop: Us control}
		Let $m \geq 2$ and $n \geq 1$ be integers. Let $\mathbf{P} = \{P_1(y),\ldots, P_m(y)\} \subset \Z[y_1,\ldots,y_n]$ be an independent family of polynomials with zero constant term. 
		Then there exist $\lambda \in (0,1]$, $\ve, C_0 > 0$, and $s \in \N$ with $s \geq 2$, each depending only on $\mathbf P$, along with a constant $C_1$ that depends on $\mathbf P$ and $\ve$, such that, for any finite commutative ring $R$ with characteristic $N$ satisfying $\lpf N > C_1$ and any 1-bounded functions $f_0,\ldots, f_m : R \to \C$,
		\be
		\left| \Lambda_{P_1,\ldots,P_m}(f_0,\ldots, f_m)\right| \ \leq \ \frac{C_0}{\lpf{N}^{\ve}} + 2^n\min_{0\leq j \leq m} \norm{f_j}_{U^s}^\lambda.
		\ee
	\end{prop}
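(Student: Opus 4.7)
The plan is to prove Proposition~\ref{prop: Us control} by PET induction on a well-founded measure of complexity of the polynomial family $\mathbf{P}$. The argument rests on two pillars that the excerpt flags as prerequisites: a differencing step (Lemma~\ref{linearization step}, to be sharpened into a ``permissible-operations'' version) which replaces $\Lambda_{P_1,\ldots,P_m}(f_0,\ldots,f_m)$ by another polynomial average whose family is strictly simpler in the sense of the weight-sequence ordering introduced in Subsection~\ref{subsec: weight sequences}; and a base estimate (Lemma~\ref{lem: Ud bound if invertible}) which controls a linear average by the Gowers $U^s$-norms of the $f_j$, provided that the relevant linear coefficients and their pairwise differences are units in $R$.

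First, I would fix the well-ordering on polynomial families via their weight sequences and verify that each application of the differencing step strictly decreases this ordinal while possibly enlarging the family. Because the ordinal has height bounded in terms of $\mathbf P$ only, and because the notion of ``permissible operation'' records exactly how many differencing steps are taken, the PET procedure terminates after a bounded number $k=k(\mathbf P)$ of steps, uniformly in $R$. This uniformity is essential: all the constants $\lambda$, $\ve$, $s$, $C_0$, $C_1$ accumulate over these $k$ steps, so their dependence on $\mathbf P$ and not on $R$ is exactly what this uniformity affords.

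Second, I would carry out the actual induction. For the base case, when the weight sequence corresponds to a purely linear family with coefficients $a_i \in \Z_N$, I appeal to Lemma~\ref{lem: Ud bound if invertible} after restricting the auxiliary parameters $h_1,\ldots,h_k \in R^n$ produced by the differencing steps to an ``invertibility set'' $\mathcal{H}$ on which every $a_i$ and every $a_i - a_j$ is a unit in $R$. The construction of $\mathcal{H}$ is where essential distinctness modulo $N$ (Definition~\ref{definition of essential distinctness intro}) and the $\zn$-height are used: essential distinctness prevents the relevant $\Z_N$-coefficient polynomials in the $h_j$ from being identically zero, and the $\zn$-height combined with the assumption $\lpf N > C_1$ guarantees a large density of $h$ for which the coefficients lie in $\{1,\ldots,\lpf N -1\}\cup\{N-\lpf N+1,\ldots,N-1\}$, hence are units by Lemma~\ref{lem: what is a unit}. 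The $h$ excluded by this restriction contribute an error bounded by applying Proposition~\ref{prop: bound on number of roots} to the polynomials expressing the $a_i$ and $a_i-a_j$ in the $h_j$; this error is of the desired form $C_0\,\lpf{N}^{-\ve}$. For the inductive step, I apply the sharpened linearization lemma to replace $\Lambda_{P_1,\ldots,P_m}$ by an average $\Lambda_{Q_1,\ldots,Q_{m'}}$ of strictly smaller weight sequence, with the shift of one of the $f_j$ by a parameter absorbed into a new $g_j$ that remains $1$-bounded; the inductive hypothesis applied at the new rank, combined with Cauchy--Schwarz (accounting for the raising of $\norm{f_j}_{U^s}$ to a power $\lambda$), yields the claimed bound.

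The main obstacle, and the reason this proposition requires its own section in the paper, is the interplay between the PET recursion and the presence of zero divisors in $R$. Over a finite field, the construction of $\mathcal{H}$ is essentially trivial because nonzero polynomial expressions in the $h_j$ have few roots; over a general finite commutative ring this is false (cf.\ the Alan Loper example in the introduction), and one must instead argue via $\zn$-height and the estimate of Proposition~\ref{prop: bound on number of roots}. A further subtlety in the multivariable case is that, as Subsection~\ref{subsec: polynomials} emphasizes, polynomial functions on $\Z_N^n$ do not correspond uniquely to polynomials in $\Z_N[y_1,\ldots,y_n]$; verifying that the induced coefficient functions appearing at each PET step are genuinely non-constant (and thus that essential distinctness modulo $N$ is preserved) is handled by Lemma~\ref{lem: mvpolyuniq} together with the assumption $\lpf N > C_1$ large enough relative to the degrees produced by $k(\mathbf P)$ differencing steps. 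Once these bookkeeping devices are in place, the final bound is obtained by synthesizing the accumulated Cauchy--Schwarz losses over the $k(\mathbf P)$ steps into the exponent $\lambda$ and the integer $s$ claimed in the proposition.
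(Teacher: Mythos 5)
Your high-level architecture matches the paper's: PET iteration organized by weight sequences, $\zn$-height and essential distinctness modulo $N$ to keep the auxiliary coefficients in the unit range, permissible operations (Proposition~\ref{alg stops}) to bound the number of differencing steps uniformly in $R$, and Lemma~\ref{lem: Ud bound if invertible} as the terminal linear estimate. But there are two concrete gaps.

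First, you propose to bound the excluded $h$-set by ``applying Proposition~\ref{prop: bound on number of roots} to the polynomials expressing the $a_i$ and $a_i - a_j$ in the $h_j$.'' This does not work, and the paper does something quite different. The $h$ parameters are drawn from the small box $S = (\{0,\ldots,H-1\}\cup\{N-H+1,\ldots,N-1\})^n \subset \mathbb{Z}_N^n$, where $H$ is a controlled power of $\lpf N$; you need the bad $h$ to be a vanishing \emph{proportion} of $S$. Proposition~\ref{prop: bound on number of roots} gives only a global count of zeros over $R^n$ (or over $\mathbb{Z}_N^n$), with no control over how those zeros distribute inside the tiny box $S$ -- the bad $h$ could in principle all lie in $S$. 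The paper's Lemma~\ref{lem: bound on translates violating ess dist} is the right tool: by isolating a coefficient of $P(y+h)$ that is affine in one coordinate $h_j$ with an invertible slope, it shows that at most $(2H-1)^{n-1}$ of the $(2H-1)^n$ elements of $S$ are bad, giving the crucial saving of a factor of $H$. That structural argument is what actually powers the error term $2^{n/2}m/H^{1/2}$ appearing in Lemma~\ref{lem: PET inductive step}; Proposition~\ref{prop: bound on number of roots} plays no role in the proof of Proposition~\ref{prop: Us control} and only enters later, in Section~\ref{sec: derivation of zero and positive density results}.

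Second, your base case is incomplete for $n > 1$. After the PET iteration bottoms out in a family of degree-one polynomials $\tilde P_j(y) = \sum_i a^{(j)}_i y_i$, Lemma~\ref{lem: Ud bound if invertible} requires \emph{every coordinate entry} of each $a^{(j)}$ and of each $a^{(j)}-a^{(j')}$ to be a unit. Essential distinctness together with controlled $\zn$-height only guarantees that the \emph{vectors} $a^{(j)}$ and $a^{(j)}-a^{(j')}$ are nonzero and have small entries -- individual entries can still vanish, as the example $\mathbf P = \{y_1y_2,y_1\}$ in \ref{subsec: prop: linearized family nonzeroing+ might be necessary} shows. The paper repairs this with Proposition~\ref{prop: linearized family nonzeroing+}, which performs elementary row operations on the coefficient matrix (with careful $\zn$-height bookkeeping via Lemmas~\ref{lem: zero avoidance and zn height 2} and~\ref{lem: zero avoidance and zn height}) to produce an equivalent matrix all of whose entries and same-row pairwise differences are nonzero of small $\zn$-height, hence units. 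Your sketch says nothing about this step, and without it the hypotheses of Lemma~\ref{lem: Ud bound if invertible} simply are not met in the multivariable case. A smaller stylistic point: the paper explicitly avoids framing the argument as induction on weight sequences precisely because the $\lpf N$ threshold can vary among families sharing a weight sequence; instead it ``unravels'' the recursion into a chain of inequalities of length $D \leq T(m)$, which you hint at but then undercut by invoking ``the inductive hypothesis applied at the new rank.''
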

	Proposition~\ref{prop: Us control} may be viewed as a generalization of Proposition~2.2 in \cite{peluse}, which ``can be proven by carrying out the argument in Sections 3--5 of \cite{prend} almost word-for-word, but in the finite field setting instead of the integer setting.''
	
	However, the same cannot be said of our Proposition~\ref{prop: Us control}, for a simple reason: unlike a field, a finite commutative ring can have zero divisors, which complicate matters significantly. In Subsection~\ref{subsec: methods} we sketched some of the difficulties that arise; in Section~\ref{sec: proof of prop: Us control}, we will explain these more precisely.
	
	Assuming the previous proposition, we can immediately derive the statement we will use in the proof of Theorem~\ref{main thm}:
	\begin{prop}\label{prop: real Us control}
		Let $m_1 \geq 2$, $m_2 \geq 0$, and $n \geq 1$ be integers. Let \\ $\mathbf P = \{P_1,\ldots, P_{m_1},Q_1,\ldots, Q_{m_2}\} \subset \Z[y_1,\ldots,y_n]$ be an independent family of polynomials with zero constant term. Then there exist $\lambda \in (0,1]$, $\ve, C_0 > 0$, and $s \in \N$ with $s\geq 2$, each depending only on $\mathbf P$, along with a constant $C_1$ that depends on $\mathbf P$ and $\ve$, such that, for any finite commutative ring $R$ with characteristic $N$ satisfying $\lpf N > C_1$, any vector $F = (f_0,\ldots, f_{m_1})$ of 1-bounded functions $R \to \C$, and any vector $\Psi = (\psi_1,\ldots,\psi_{m_2})$ of additive characters of $R$, one has
		\be
		\left| \Lambda_{P_1,\ldots,P_{m_1}}^{Q_1,\ldots,Q_{m_2}}(F;\Psi)\right| \ \leq \ \frac{C_0}{\lpf{N}^{\ve}} + 2^n\min_{0\leq i \leq m_1} \norm{f_i}_{U^s}^\lambda.
		\ee
	\end{prop}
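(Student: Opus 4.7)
The plan is to derive Proposition~\ref{prop: real Us control} from Proposition~\ref{prop: Us control} by absorbing the character factors $\psi_1(Q_1(y)), \ldots, \psi_{m_2}(Q_{m_2}(y))$ into the functional inputs of an enlarged polynomial configuration. The key identity is that, for any additive character $\psi$ of $R$ and any $x, z \in R$, one has $\psi(z) = \psi(x+z)\overline{\psi(x)}$; hence each factor $\psi_i(Q_i(y))$ may be rewritten as $\psi_i(x + Q_i(y))\overline{\psi_i(x)}$.

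Concretely, I would define the 1-bounded functions $\widetilde{f}_0(x) := f_0(x) \prod_{i=1}^{m_2} \overline{\psi_i(x)}$ and $\widetilde{f}_{m_1+i}(x) := \psi_i(x)$ for $i \in \{1,\ldots, m_2\}$. A direct substitution then gives the equality
\begin{equation*}
\Lambda_{P_1,\ldots,P_{m_1}}^{Q_1,\ldots,Q_{m_2}}(F; \Psi) \ = \ \Lambda_{P_1,\ldots,P_{m_1},Q_1,\ldots,Q_{m_2}}(\widetilde{f}_0, f_1, \ldots, f_{m_1}, \widetilde{f}_{m_1+1}, \ldots, \widetilde{f}_{m_1+m_2}).
\end{equation*}
Since $\{P_1,\ldots,P_{m_1},Q_1,\ldots, Q_{m_2}\}$ is, by hypothesis, an independent family of polynomials with zero constant term and has size $m_1 + m_2 \geq 2$, Proposition~\ref{prop: Us control} applies to the right-hand side and bounds it by $C_0 \lpf{N}^{-\ve} + 2^n \min_j \|\widetilde{g}_j\|_{U^s}^\lambda$, where the minimum is taken over the enlarged index set $\{0, 1, \ldots, m_1 + m_2\}$ and $\widetilde{g}_j$ denotes the $j$-th entry of the vector above.

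It remains to rewrite this minimum in terms of $\|f_j\|_{U^s}$ for $j \in \{0, 1, \ldots, m_1\}$. To this end I would invoke the standard fact that, for any $s \geq 2$ and any additive character $\chi$ of $R$, $\|f\chi\|_{U^s} = \|f\|_{U^s}$. This holds because $\Delta_h(f\chi)(x) = \chi(h)\, \Delta_h f(x)$, and any subsequent derivative absorbs the unit scalar $\chi(h)$ against its conjugate, so $\Delta_{h_1,\ldots,h_s}(f\chi) = \Delta_{h_1,\ldots,h_s} f$ for $s \geq 2$. Applied to $\widetilde{f}_0 = f_0 \cdot \prod_i \overline{\psi_i}$, this gives $\|\widetilde{f}_0\|_{U^s} = \|f_0\|_{U^s}$, while the analogous calculation with $f \equiv 1$ gives $\|\widetilde{f}_{m_1+i}\|_{U^s} = 1$. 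Since each 1-bounded $f_j$ satisfies $\|f_j\|_{U^s} \leq 1$, the indices $j > m_1$ cannot realize the minimum, so the bound collapses to $C_0 \lpf{N}^{-\ve} + 2^n \min_{0 \leq i \leq m_1} \|f_i\|_{U^s}^\lambda$, as required.

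I do not expect any nontrivial obstacle: once Proposition~\ref{prop: Us control} is in place, the deduction is essentially bookkeeping, relying only on the multiplicativity of additive characters and the character invariance of the $U^s$ norm for $s \geq 2$.
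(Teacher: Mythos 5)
Your proposal is correct and takes essentially the same approach as the paper: rewrite $\psi_j(Q_j(y)) = \overline{\psi_j(x)}\psi_j(x+Q_j(y))$, absorb the factors $\overline{\psi_j}$ into $f_0$ and treat the $\psi_j$ themselves as the extra 1-bounded functions attached to the enlarged polynomial family, then apply Proposition~\ref{prop: Us control} and use character invariance of the $U^s$ norm (the paper proves $\|f_0\prod_j\overline{\psi_j}\|_{U^s} = \|f_0\|_{U^s}$ via the inductive formula and seminorm homogeneity rather than by observing that the iterated derivatives agree for $s\geq 2$, but the two verifications are interchangeable).
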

	\begin{proof}
		Observe that
		\begin{multline}
			\Lambda_{P_1,\ldots,P_{m_1}}^{Q_1,\ldots,Q_{m_2}}(F;\Psi) \ = \ \cavg{(x,y)}{R\times R^n} f_0(x) \prod_{i=1}^{m_1} f_i(x+P_i(y))\prod_{j=1}^{m_2} \ol{\psi_j(x)} \psi_j(x+Q_j(y))  \\ = \ \Lambda_{P_1,\ldots,P_{m_1},Q_1,\ldots,Q_{m_2}}\left(f_0\prod_{j=1}^{m_2} \ol{\psi_j}, f_1,\ldots, f_{m_1},\psi_1,\ldots,\psi_{m_2}\right).
		\end{multline}
		Applying Proposition~\ref{prop: Us control}, the result follows if $\norm{f_0\prod_{j=1}^{m_2} \ol{\psi_j}}_{U^s} = \norm{f_0}_{U^s}$. Indeed, for any $h \in R$, one has
		\be
		\Delta_h\left( f_0\prod_{j=1}^{m_2} \ol{\psi_j} \right) \ = \ \Delta_h\left( f_0\right) \Delta_h\left(\prod_{j=1}^{m_2} \ol{\psi_j} \right) \ = \ \Delta_h(f_0) \cdot d_h,
		\ee
		where $d_h = \prod_{j=1}^{m_2} \ol{\psi_j(h)}$ is a complex number of modulus one; thus
		\be
		\norm{f_0\prod_{j=1}^{m_2} \ol{\psi_j}}_{U^s}^{2^s} \ = \ \rcavg{h} \norm{\Delta_h\left( f_0\prod_{j=1}^{m_2} \ol{\psi_j} \right)}_{U^{s-1}
		}^{2^{s-1}}
		\ = \ \rcavg{h} \norm{\Delta_h f_0}_{U^{s-1}
		}^{2^{s-1}} \ = \ \norm{f_0}_{U^{s}}^{2^s}
		\ee
		by (seminorm) homogeneity.
	\end{proof}
	
	\subsection{From $U^s$ control to $U^{s-1}$ control} \label{subsec: reducing the step}
	After having bounded the averages $\Lambda_{P_1,\ldots,P_{m_1}}^{Q_1,\ldots,Q_{m_2}}(F;\Psi)$ in terms of a Gowers norm of potentially high degree $s$, we will reduce $s$ using an adaptation of an inductive argument in \cite{peluse}. The first lemma that we will state is borrowed with minor notational changes; the second lemma is our upgraded version of the first; the third lemma is modified for our situation for technical reasons. This third lemma is the one which actually reduces $s$ by 1, at the cost of proliferating error terms. In the proof of Theorem~\ref{main thm}, we will show how to manage the error terms that will result from applying the third lemma $s-1$ times.
	
	For these next few lemmas, we will consider functions $\xi : R \times R' \to \C$, where $R$ and $R'$ are finite commutative rings. Given such a function, recall that we define the discrete derivative $\Delta_{h_1,\ldots,h_s}^{(1)}\xi : R \times R' \to \C$ in the following way:
	\be
	\Delta_{h}^{(1)}\xi(x,y) := \xi(x+h,y)\ol{\xi}(x,y)
	\ee
	for $h \in R$ and
	\be
	\Delta_{h_1,\ldots,h_s}^{(1)}\xi(x,y) := \Delta_{h_1}^{(1)}( \Delta_{h_2}^{(1)}( \cdots (\Delta_{h_s}^{(1)}f)\cdots ))(x,y)
	\ee
	for $h_1,\ldots, h_s \in R$.
	
	\begin{lemma}\label{lem: pel 5.1}
		Let $m$ be a positive integer, and let $R$ be a finite commutative ring. Let $\xi_1',\ldots, \xi_m' : R^2 \to \C$ be 1-bounded functions. Set 
		\be g'(x) := \rcavg y \prod_{i=1}^m \xi_i'(x,y),
		\ee
		and, for every integer $t \geq 0$ and $h_1,\ldots, h_t \in R$, set
		\be
		g_{h_1,\ldots, h_t}'(x) := \begin{cases} \rcavg y \prod_{i=1}^m \Delta_{h_1,\ldots,h_t}^{(1)} \xi_i'(x,y) & \text{ if } t > 0, \\
			g'(x) & \text{ otherwise.}
		\end{cases}
		\ee
		Then $\norm{g'}_{U^s}^{2^{2s-2}} \leq \rcavg{h_1,\ldots,h_{s-2}} \norm{g_{h_1,\ldots, h_{s-2}}'}_{U^2}^4$ for all $s \geq 2$.
	\end{lemma}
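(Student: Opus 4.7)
The plan is to proceed by induction on $s\geq 2$. The base case $s=2$ is immediate: the index set $\{h_1,\ldots,h_{s-2}\}$ is empty, so $g_{\emptyset}'=g'$ by the definition of $g_{h_1,\ldots,h_t}'$ with $t=0$, and the stated bound collapses to the equality $\norm{g'}_{U^2}^{4}=\norm{g'}_{U^2}^{4}$.

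For the inductive step, I would begin with the standard Gowers-norm recurrence $\norm{g'}_{U^s}^{2^s}=\rcavg{h}\norm{\Delta_h g'}_{U^{s-1}}^{2^{s-1}}$, raise both sides to the $2^{s-2}$-th power, and apply iterated Cauchy--Schwarz (equivalently Jensen's inequality) to pull the exponent inside the average, obtaining
\[
\norm{g'}_{U^s}^{2^{2s-2}}\;\leq\;\rcavg{h}\,\norm{\Delta_h g'}_{U^{s-1}}^{2^{2s-3}}.
\]
The essential structural observation is that
\[
\Delta_h g'(x)\;=\;g'(x+h)\,\ol{g'(x)}\;=\;\rcavg{y_0, y_1}\prod_{i=1}^{m}\xi_i'(x+h,y_0)\,\ol{\xi_i'(x,y_1)},
\]
which puts $\Delta_h g'$ in precisely the form required by the lemma, now with the auxiliary variable ranging over $R\times R$ (equivalently, with $2m$ still 1-bounded factors depending on a pair of $R$-variables). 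The induction hypothesis therefore applies to $\Delta_h g'$ at level $s-1$.

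To link the result of applying the IH to the desired conclusion, I would use the Leibniz-type identity $\Delta_h^{(1)}(F_1F_2)=\Delta_h^{(1)}(F_1)\,\Delta_h^{(1)}(F_2)$ for discrete derivatives, together with commutativity of independent averages, to obtain the key identity
\[
(\Delta_h g')_{h_1,\ldots,h_{s-3}}(x)\;=\;\Delta_h\!\bigl(g_{h_1,\ldots,h_{s-3}}'\bigr)(x).
\]
Combining the IH bound for $\norm{\Delta_h g'}_{U^{s-1}}^{2^{2s-4}}$ with this identity, squaring via another Cauchy--Schwarz, averaging over $h$, and invoking the standard identity $\rcavg{h}\norm{\Delta_h G}_{U^2}^{4}=\norm{G}_{U^3}^{8}$ where appropriate, should yield the asserted inequality.

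The main obstacle I expect is the careful bookkeeping of exponents throughout the chain of Cauchy--Schwarz and Jensen applications: each such step doubles an exponent, and the chain must land exactly on the pair of exponents $2^{2s-2}$ on the left and $4$ on the right without overshoot. A secondary point requiring attention is that at each stage the function being bounded must retain the form $\rcavg{y}\prod_i\tilde\xi_i(x,y)$ so that the IH remains applicable; the structural observation about $\Delta_h g'$ displayed above and the Leibniz-type identity for discrete derivatives of products are what ensure this compatibility.
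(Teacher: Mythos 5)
Your opening moves are sound: the base case $s=2$ is indeed an equality, the Gowers recurrence plus Jensen gives $\norm{g'}_{U^s}^{2^{2s-2}}\leq\rcavg{h}\norm{\Delta_h g'}_{U^{s-1}}^{2^{2s-3}}$, and the Leibniz identity $(\Delta_h g')_{h_1,\ldots,h_{s-3}}=\Delta_h\bigl(g'_{h_1,\ldots,h_{s-3}}\bigr)$ is correct and relevant. However, the endgame does not close, and the step you leave vague is precisely where all the work lives.

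Track the exponents carefully. Applying the inductive hypothesis at level $s-1$ to $\Delta_h g'$ gives $\norm{\Delta_h g'}_{U^{s-1}}^{2^{2s-4}}\leq\rcavg{h_1,\ldots,h_{s-3}}\norm{\Delta_h(g'_{h_1,\ldots,h_{s-3}})}_{U^2}^{4}$; squaring and using Jensen then yields
\[
\norm{g'}_{U^s}^{2^{2s-2}}\ \leq\ \rcavg{h,\,h_1,\ldots,h_{s-3}}\norm{\Delta_h\bigl(g'_{h_1,\ldots,h_{s-3}}\bigr)}_{U^2}^{8}.
\]
To finish you would need, with $G:=g'_{h_1,\ldots,h_{s-3}}$ and $G_h:=g'_{h_1,\ldots,h_{s-3},h}$, an inequality of the form $\rcavg{h}\norm{\Delta_h G}_{U^2}^{8}\leq\rcavg{h}\norm{G_h}_{U^2}^{4}$. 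This is not among the tools you cite: the identity $\rcavg{h}\norm{\Delta_h G}_{U^2}^{4}=\norm{G}_{U^3}^{8}$ produces a $U^3$ norm of $G$, not a $U^2$ norm of $G_h$, and $\Delta_h G$ and $G_h$ are genuinely different objects ($\Delta_h G$ is a product of two separate $y$-averages, whereas $G_h$ is a single $y$-average of a differenced integrand). Worse, because the Jensen step already replaced $\norm{g'}_{U^s}^{2^{2s-2}}$ by the possibly larger quantity $\rcavg{h}\norm{\Delta_h g'}_{U^{s-1}}^{2^{2s-3}}$, the inequality you would need is strictly stronger than the $s=3$ instance of the lemma itself, so it cannot simply be absorbed into the induction. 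The missing ingredient is exactly the Cauchy--Schwarz argument that collapses the independent $y$-averages hidden in $\Delta_{h_1,\ldots,h_{s-2}}g'$ into the single $y$-average defining $g'_{h_1,\ldots,h_{s-2}}$; this is the mathematical content of the lemma, and the paper handles it by deferring to Peluse's Lemma 5.1, which carries out that Cauchy--Schwarz (together with one additive change of variables) directly rather than by the induction you propose.

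A secondary point: even granting the rest, applying the inductive hypothesis to $\Delta_h g'$ is not literally permitted by the lemma as stated, since $\Delta_h g'(x)=\rcavg{y_0,y_1}\prod_i\xi_i'(x+h,y_0)\ol{\xi_i'(x,y_1)}$ has its auxiliary variable ranging over $R\times R$ rather than $R$. The lemma would first have to be restated and proved with the second factor being an arbitrary finite commutative ring $R'$ (which is essentially the content of Lemma~\ref{lem: pel 5.1, use case} in the paper). This is fixable, but you should note it, as otherwise the induction is not well-founded.
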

	\begin{proof} The proof is as in \cite[Lemma 5.1]{peluse}. An additive change of variables is used at one point; over a finite ring $R$, this step is valid, with no need to worry about invertibility of any element.
	\end{proof}
	
	We apply the previous lemma to derive the lemma that we will actually use:
	
	\begin{lemma}\label{lem: pel 5.1, use case}
		Let $m$ and $n$ be positive integers, let $R$ be a finite commutative ring, and let $R' = R^n$ with coordinatewise addition and multiplication. Let $\xi_1,\ldots, \xi_m : R \times R' \to \C$ be 1-bounded functions. Define the function $g : R \to \C$ by 
		\be g(x) := \cavg{y}{R'} \prod_{i=1}^m \xi_i(x,y),
		\ee
		and, for every integer $t \geq 0$ and $h_1,\ldots, h_t \in R$, define $g_{h_1,\ldots,h_t} : R \to \C$ by
		\be
		g_{h_1,\ldots, h_t}(x) := \begin{cases} \cavg{y}{R'} \prod_{i=1}^m \Delta_{h_1,\ldots,h_t}^{(1)} \xi_i(x,y) & \text{ if } t > 0, \\
			g(x) & \text{ otherwise.}
		\end{cases}
		\ee
		Then $\norm{g}_{U^s}^{2^{2s-2}} \leq \rcavg{h_1,\ldots,h_{s-2}} \norm{g_{h_1,\ldots, h_{s-2}}}_{U^2}^4$ for all $s \geq 2$.
	\end{lemma}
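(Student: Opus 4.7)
The plan is to derive Lemma~\ref{lem: pel 5.1, use case} from Lemma~\ref{lem: pel 5.1} by observing that the inner variable $y$ plays a purely auxiliary role in the latter's proof. I expect Lemma~\ref{lem: pel 5.1} to be proved by (i) repeatedly applying the inductive formula $\norm{g'}_{U^s}^{2^s} = \rcavg{h} \norm{\Delta_h g'}_{U^{s-1}}^{2^{s-1}}$, (ii) iterated Cauchy--Schwarz in the $x$-variable combined with additive changes of variables on $x$ and the $h_j$, and (iii) an additional Cauchy--Schwarz step that consolidates the multiple $y$-averages arising when, say, $\Delta_h g'$ is expanded as $\rcavg{y_1, y_2} \prod_i \xi_i'(x+h, y_1) \overline{\xi_i'(x, y_2)}$; this last step accounts for the unusual exponent $2^{2s-2}$ (rather than $2^s$) on the left-hand side and produces a right-hand side involving the diagonal $y$-average $g_{h_1, \ldots, h_{s-2}}$ rather than multiple independent $y$-averages. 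In none of these steps is $y$ additively shifted.

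Given this structure, the argument of Lemma~\ref{lem: pel 5.1} carries through verbatim when the $y$-domain is replaced by $R' = R^n$: Cauchy--Schwarz applied to uniform averages over $R$ works equally well for uniform averages over $R'$, and all additive changes of variables continue to take place in $R$. Writing out the proof with $\xi_i : R \times R' \to \C$ and $\cavg{y}{R'}$ in place of $\xi_i' : R \times R \to \C$ and $\rcavg{y}$ then yields Lemma~\ref{lem: pel 5.1, use case} directly.

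The main obstacle is that the natural alternative reduction---slicing $R' = R \times R^{n-1}$ into fibers indexed by $z \in R^{n-1}$, setting $g^z(x) := \rcavg{y_1} \prod_i \xi_i(x, y_1, z)$, applying Lemma~\ref{lem: pel 5.1} to each $g^z$, and then averaging over $z$---does not succeed. Indeed, the triangle inequality for $\norm{\cdot}_{U^s}$ combined with Jensen gives $\norm{g}_{U^s}^{2^{2s-2}} \leq \rcavg{h_1,\ldots,h_{s-2}} \cavg{z}{R^{n-1}} \norm{g^z_{h_1,\ldots,h_{s-2}}}_{U^2}^4$, but the analogous application of the triangle inequality to the target right-hand side yields $\norm{g_{h_1,\ldots,h_{s-2}}}_{U^2}^4 \leq \cavg{z}{R^{n-1}} \norm{g^z_{h_1,\ldots,h_{s-2}}}_{U^2}^4$, an inequality in the wrong direction to close the argument. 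Consequently, one must actually reopen the proof of Lemma~\ref{lem: pel 5.1} and verify that the $y$-variable there can be replaced by an element of any finite set; identifying and ruling out any hidden additive use of $y$ in that proof is the only substantive step.
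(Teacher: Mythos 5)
Your proposal is essentially correct, but it takes a different and somewhat less economical route than the paper. You propose to reopen the proof of Lemma~\ref{lem: pel 5.1} and verify that the $y$-variable can be replaced by an element of $R'$; you correctly identify the fiber-wise slicing argument as a dead end. The paper instead avoids re-examining the internals of Lemma~\ref{lem: pel 5.1} entirely by using a \emph{lifting} trick: define $\xi_i' : R' \times R' \to \C$ by $\xi_i'(x,y) := \xi_i(x_1,y)$, where $x = (x_1,\ldots,x_n) \in R'$, and $g' : R' \to \C$ by $g'(x) := g(x_1)$. Then Lemma~\ref{lem: pel 5.1} applies \emph{as a black box} with the ring $R$ replaced by $R'$ (it is stated for a general finite commutative ring), giving $\norm{g'}_{U^s}^{2^{2s-2}} \leq \cavg{h_1,\ldots,h_{s-2}}{R'} \norm{g'_{h_1,\ldots,h_{s-2}}}_{U^2}^4$, and the only remaining work is the elementary bookkeeping $\norm{g'}_{U^s}^{2^s} = \norm{g}_{U^s}^{2^s}$ (since $\Delta_{h}g'(x) = \Delta_{h_1}g(x_1)$) and the matching identity for the right-hand side. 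This is preferable because it requires no claim about what kinds of changes of variable occur in the proof of Lemma~\ref{lem: pel 5.1}. Your assertion that "in none of these steps is $y$ additively shifted" is not something you have verified (and the paper's own remark on Lemma~\ref{lem: pel 5.1} notes that an additive change of variables does occur); your argument would still succeed even if $y$ were shifted, because $R'$ is an additive group, but the cleaner observation is that one never needs to know --- one just applies the one-variable lemma over the bigger ring.
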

	\begin{proof} For each $i \in \{1,\ldots,m\}$, define the 1-bounded functions $\xi_i' : R' \times R' \to \C$ by $\xi_i'(x,y) := \xi_i(x_1,y)$, where $x = (x_1,\ldots, x_n)$. Similarly, define $g' : R' \to \C$ by $g'(x) = g(x_1)$. 
		
		For every integer $t \geq 0$ and $h_1,\ldots, h_t \in R'$, define the function $g_{h_1,\ldots, h_t}' : R' \to \C$
		\be
		g_{h_1,\ldots, h_t}'(x) := \begin{cases} \cavg{y}{R'} \prod_{i=1}^m \Delta_{h_1,\ldots,h_t}^{(1)} \xi_i'(x,y) & \text{ if } t > 0, \\
			g'(x) & \text{ otherwise.}
		\end{cases}
		\ee
		Applying Lemma~\ref{lem: pel 5.1} to the ring $R'$, we conclude that, for all integers $s \geq 2$,
		\begin{equation}
			\norm{g'}_{U^s}^{2^{2s-2}} \leq \cavg{h_1,\ldots,h_{s-2}} {R'}\norm{g_{h_1,\ldots, h_{s-2}}'}_{U^2}^4.
		\end{equation}
		To finish the proof, it suffices to show that
		\begin{equation}
			\norm{g'}_{U^s}^{2^s} \ = \ \norm{g}_{U^s}^{2^s}
		\end{equation}
		and that
		\begin{equation}
			\cavg{h_1,\ldots,h_{s-2}} {R'} \norm{g_{h_1,\ldots, h_{s-2}}'}_{U^2}^4 \ = \ \cavg{h_{1,1},\ldots,h_{s-2,1}} {R}\norm{g_{h_{1,1},\ldots, h_{s-2,1}}}_{U^2}^4.
		\end{equation}
		For the first claim, writing $x = (x_1,\ldots, x_n)$ and $h_i = (h_{i,1},\ldots, h_{i,n})$, we observe that $\Delta_{h_1}g'(x) = \Delta_{h_{1,1}} g(x_1)$, which implies that
		\begin{multline}
			\norm{g'}_{U^s}^{2^s} \ = \ \cavg{x,h_1,\ldots, h_s}{R'} \Delta_{h_1,\ldots, h_s} g'(x) \ = \ \cavg{x,h_1,\ldots, h_{s}}{R'} \Delta_{h_{1,1},\ldots, h_{s,1}} g(x_1) \\ = \ \cavg{x_1,h_{1,1},\ldots, h_{s,1}}{R} \Delta_{h_{1,1},\ldots, h_{s,1}} g(x_1) \ = \ \norm{g}_{U^s}^{2^s}.
		\end{multline}
		The proof of the second claim relies on the same observation as the first but is more tedious to write. Note that the final formulation of this lemma dispenses with the $h_{1,1}$-type notation, which is only needed to write out the proof unambiguously.
	\end{proof}
	
	As described above, the following lemma, despite its lengthy formulation, simply reduces the degree $s$ of the Gowers norm that bounds the average $\Lambda_{P_1,\ldots,P_{m_1}}^{Q_1,\ldots,Q_{m_2}}(F;\Psi)$. The idea of the proof is to pass to a dual function instead (see, e.g., \eqref{example of dual function} and following) and use Lemma~\ref{lem: pel 5.1, use case} to analyze its $U^s$ norm on the level of $U^2$ norms of its discrete derivatives. See also Remark~\ref{rem: explanation of lem: Gowers lowering}.

	\begin{lemma}\label{lem: Gowers lowering} Let $m_1 \geq 2$, $m_2 \geq 0$, and $n \geq 1$ be integers. Suppose that $\mathbf P = \{P_1,\ldots, P_{m_1},Q_1,\ldots, Q_{m_2} \} \subset \Z[y_1,\ldots,y_n]$ is an independent family of polynomials with zero constant term. Write $P_0 (y) = 0$. For each $k \in \{0,\ldots,m_1\}$, denote by $k'$ the number $k+1 \bmod (m_1+1)$, fix a bijection $\sigma_k : \{1,\ldots, m_1-1\} \to \{0,\ldots, m_1\} \setminus \{k,k'\}$, and define the family $\mathbf{R}_{k} := \{R_{k,1},\ldots, R_{k,m_1-1},S_{k,1},\ldots, S_{k,m_2+1}\}$, where $R_{k,i} := P_{\sigma_k(i)} - P_{k'}$ and
		\be
		S_{k,j} \ = \ \begin{cases} Q_j & 1 \leq j \leq m_2, \\ P_k - P_{k'} & j = m_2+1. \end{cases}
		\ee
		Assume the following two conditions hold:
		\begin{enumerate}
			\item There exist a positive function $b_3 : \N \to \R$, positive real numbers $b_1,b_2, b_4$, and an integer $t \geq 2$ such that, for any finite commutative ring $R$ with characteristic $N$ satisfying $\lpf{N} > b_4$, for any vector $F = (f_0,f_1,\ldots,f_{m_1})$ of 1-bounded functions $R \to \C$ and any vector $\Psi = (\psi_1,\ldots, \psi_{m_2})$ of additive characters of $R$, one has
			\be
			\left| \Lambda_{P_1,\ldots, P_{m_1}}^{Q_1,\ldots,Q_{m_2}}(F;\Psi) \right| \ \leq \ b_1 \min_{0\leq i \leq m_1} \norm{f_i}_{U^t}^{b_2} + b_3(N).
			\ee
			\item For every $k \in \{0,\ldots,m_1\}$, there exist $c_{1,k} > 0$ and a positive function $\theta_k : \N \to \R$ such that, for any finite commutative ring $R$ with characteristic $N$ satisfying $\lpf{N} > c_{1,k}$, for any vector $G = (g_0,g_1,\ldots,g_{m_1-1})$ of 1-bounded functions $R \to \C$ and any vector $\Phi = (\phi_1,\ldots, \phi_{m_2+1})$ of additive characters of $R$, one has 
			\be
			\left| \Lambda_{R_{k,1},\ldots, R_{k,m_1-1}}^{S_{k,1}\ldots,S_{k,m_2+1}}(G;\Phi) - 1_{\Phi=1} \prod_{i=0}^{m_1-1} \rcavg x g_i(x) \right| \ \leq \ \theta_k(N).
			\ee
		\end{enumerate}
		Then there exist $c_1' > 0$ and a positive function $\Theta : \N \to \R$ depending only on $\mathbf P$ (and not on $b_2,b_3,b_4$, or $t$) such that, for any finite commutative ring $R$ with characteristic $N$ satisfying $\lpf {N} > \max\{c_1',b_4\}$, for any vector $F = (f_0,f_1,\ldots,f_{m_1})$ of 1-bounded functions $R \to \C$ and any vector $\Psi = (\psi_1,\ldots, \psi_{m_2})$ of additive characters of $R$, one has
		\begin{equation}
			\left| \Lambda_{P_1,\ldots, P_{m_1}}^{Q_1,\ldots,Q_{m_2}}(F;\Psi) \right| \ \leq \ b_1^{1/2} \min_{0 \leq i \leq m_1} \norm{f_i}_{U^{t-1}}^{b_2/2^t} + \Theta(N)^{2b_2/4^t} + b_3(N)^{1/2}.
		\end{equation}
		
		Moreover, one may take $c_1'=\max_{0 \leq k \leq m_1} c_{1,k}$; writing $\theta(N) = \max_{0\leq k \leq m_1} \theta_k(N)$, one may also take $\Theta(N) = 2\theta(N)+\theta(N)^2$.
	\end{lemma}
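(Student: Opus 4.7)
My plan is to carry out a dual-function argument in the style of Peluse, simplified per the Kuca observation noted in the acknowledgment. The overall effect is to trade one Cauchy--Schwarz step (producing the $b_1(N)^{1/2}$ factor and halving $b_2$ in the exponent) against two applications of hypothesis~2 (producing the $\Theta(N)=2\theta(N)+\theta(N)^2$ error), while reducing the controlling Gowers norm from $U^t$ to $U^{t-1}$.

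First, I would fix $k \in \{0,\ldots,m_1\}$ (to be chosen at the end so as to realize the $\min_{0\le i\le m_1}$ in the conclusion) and apply the additive change of variables $x \mapsto x - P_{k'}(y)$, which is valid over any ring, to rewrite
\[
\Lambda_{P_1,\ldots, P_{m_1}}^{Q_1,\ldots,Q_{m_2}}(F;\Psi) \;=\; \mathop{\mathbb{E}}_{x \in R} f_{k'}(x)\, D_k(x),
\]
where the dual function
\[
D_k(x) \;=\; \mathop{\mathbb{E}}_{y \in R^n} f_k\bigl(x + (P_k-P_{k'})(y)\bigr) \prod_{i=1}^{m_1-1} f_{\sigma_k(i)}\bigl(x + R_{k,i}(y)\bigr) \prod_{j=1}^{m_2} \psi_j(Q_j(y))
\]
is the $y$-slice of the configuration associated to the family $\mathbf{R}_k$, with $P_k-P_{k'}$ (sitting in the $f_k$ slot) poised to become the extra ``$S$''-polynomial $S_{k,m_2+1}$ after a Fourier expansion of $f_k$.

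Second, by Cauchy--Schwarz in $x$ and $1$-boundedness of $f_{k'}$, $|\Lambda|^2 \le \mathbb{E}_x |D_k(x)|^2$. Fourier-expanding $f_k$ replaces the $f_k$ slot by an additive character, and expanding the modulus-square turns $\mathbb{E}_x |D_k|^2$ into a Plancherel-type sum over characters $\chi$ of inner averages that are precisely of the form $\Lambda_{R_{k,1},\ldots,R_{k,m_1-1}}^{S_{k,1},\ldots,S_{k,m_2+1}}(G;\Phi)$ addressed by hypothesis~2. This yields, for each $\chi$, a bound $\theta_k(N) + (\text{principal term})$; aggregating via Plancherel and absorbing the contribution of the complex-conjugate copy produces a clean estimate $|\Lambda|^2 \le \Theta(N) \cdot (\ldots) + (\text{principal term})$, where $\Theta(N)=2\theta(N)+\theta(N)^2$ captures the combined contribution of the two (conjugate) uses of hypothesis~2.

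Third, the principal term is tamed by iterating Lemma~\ref{lem: pel 5.1, use case}, which expresses $\|D_k\|_{U^s}^{2^{2s-2}}$ in terms of $U^2$-norms of discrete derivatives, and invoking hypothesis~1 with one fewer step. The inductive identity $\mathbb{E}_h\|\Delta_h f\|_{U^{t-1}}^{2^{t-1}} = \|f\|_{U^t}^{2^t}$ is then used to convert $U^t$-control into $U^{t-1}$-control, and the combination of Cauchy--Schwarz (square root, factor $1/2$) with a H\"older interpolation generates the exponents $b_2/2^t$ and $2b_2/4^t$ in the conclusion. Setting $c_1' := \max_{0\le k\le m_1}c_{1,k}$ ensures hypothesis~2 is uniformly valid; taking the minimum over the choice of $k$ produces the $\min_{0 \le i \le m_1}$ in the conclusion. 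The principal obstacle is the exponent bookkeeping at this last step: one must carefully verify that the Cauchy--Schwarz and H\"older losses compose to the exponents $2^t$ and $4^t$ exactly as stated, and that the two invocations of hypothesis~2 aggregate cleanly into the single quantity $\Theta(N) = 2\theta(N)+\theta(N)^2$, without inadvertently losing powers of $\lpf N$ when passing through the Fourier expansion of $f_k$ and the subsequent character-sum manipulations.
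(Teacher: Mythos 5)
Your plan has the right ingredients---a dual function, Cauchy--Schwarz, both hypotheses, and Lemma~\ref{lem: pel 5.1, use case}---but the order of operations is wrong, and the step where you ``Fourier-expand $f_k$ and aggregate via Plancherel'' is a genuine gap. After expanding $f_k$ into additive characters, $\cavg{x}{R}|D_k(x)|^2$ becomes a sum of the form $\cavg{\phi}{\widehat{R}}\hat{f_k}(\phi)\cdot(\text{inner average})$; if you then apply hypothesis~2 to each inner average and sum, the error term is multiplied by $\cavg{\phi}{\widehat{R}}|\hat{f_k}(\phi)|$, which by Cauchy--Schwarz and Plancherel is $\lesssim |R|^{1/2}$. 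This inflates the error to roughly $|R|^{1/2}\theta_k(N)$, which is useless precisely because $\lpf N$ bears no useful relation to $|R|$. The paper avoids this by never summing over characters: instead it applies Cauchy--Schwarz to get $|\Lambda|^2 \le \cavg{x}{R}|g(x)|^2$, rewrites $\cavg{x}{R}|g|^2$ as a $\Lambda^{Q_1,\ldots}_{P_1,\ldots}$-average with $\bar g$ in the $f_k$ slot, and applies hypothesis~1 \emph{first}, yielding $b_1(N)\norm{g}_{U^t}^{b_2} + b_3(N)$. Only then does it reduce $\norm{g}_{U^t}$ to the $U^2$-norms of the discrete derivatives $g_{h_1,\ldots,h_{t-2}}$ via Lemma~\ref{lem: pel 5.1, use case}, and controls each such $U^2$-norm by the \emph{single maximal} Fourier coefficient $\max_\phi|\hat g_{h_1,\ldots,h_{t-2}}(\phi)|/|R|$ via Lemma~\ref{simple inequality for U2 norm}. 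That maximum---not a Plancherel sum---is exactly a $\Lambda^{S_{k,1},\ldots}_{R_{k,1},\ldots}$-average, and hypothesis~2 is applied at that point (once per tuple $(h_1,\ldots,h_{t-2})$), giving $\norm{\Delta_{h_1,\ldots,h_{t-2}}f_i}_{U^1}+\theta(N)$. The quantity $\Theta(N)=2\theta(N)+\theta(N)^2$ then arises from the elementary inequality $(a+b)^2 \le a^2+2b+b^2$ (valid when $a\le 1$ and $b>0$), applied to this bound before averaging in $h$; it has nothing to do with ``two conjugate uses of hypothesis~2.''

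In short: you apply the two hypotheses in the reverse order, you sum where you must take a maximum, and the resulting $|R|^{1/2}$ loss is fatal. You correctly sensed the danger in your last sentence (``without inadvertently losing powers of $\lpf N$''), but the plan as written does incur exactly that loss. To repair it, first rewrite $\cavg{x}{R}|D_k(x)|^2$ (by pulling a factor of $D_k$ out of its defining $y$-average and changing variables $x\mapsto x+P_{k'}(y)$) into a $\Lambda^{Q_1,\ldots,Q_{m_2}}_{P_1,\ldots,P_{m_1}}$-average with $\bar{D_k}$ occupying the $f_{k'}$ slot; apply hypothesis~1 to that; and then recognize the $\phi$-Fourier coefficient of $(D_k)_{h_1,\ldots,h_{t-2}}$ as a $\Lambda^{S_{k,1},\ldots,S_{k,m_2+1}}_{R_{k,1},\ldots,R_{k,m_1-1}}$-average to which hypothesis~2 applies.
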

	\begin{remark}\label{rem: explanation of lem: Gowers lowering}
		Let us explain the role of the two conditions in Lemma~\ref{lem: Gowers lowering} in our inductive proof of Theorem~\ref{main thm}. In the inductive step, we apply the lemma repeatedly.
		
		The first condition is about Gowers seminorm control. The first time Lemma~\ref{lem: Gowers lowering} is applied, this condition will hold by Proposition~\ref{prop: real Us control}. After that, it will hold by the previous application of Lemma~\ref{lem: Gowers lowering}.
		
		The second condition will hold by the induction hypothesis of Theorem~\ref{main thm}.
	\end{remark}
	
	\begin{proof} By hypothesis, there exist $c_{1,0},c_{1,1},\ldots, c_{1,m_1},b_1,b_2,b_4 > 0$, positive functions $b_3,\theta_0,\theta_1,\ldots,\theta_{m_1}:\N\to\R$, and an integer $t \geq 2$ such that, for any finite commutative ring $R$ with characteristic $N$ satisfying $\lpf N > \max\{b_4,\max_{0 \leq k \leq m_1} c_{1,k}\}$, for any vectors $F = (f_0,\ldots, f_{m_1})$ and $G = (g_0,\ldots, g_{m_1-1})$ of 1-bounded functions $R \to \C$ and vectors $\Psi = (\psi_1,\ldots, \psi_{m_2})$ and $\Phi = (\phi_1,\ldots, \phi_{m_2+1})$ of additive characters of $R$, one has
		\be\label{eqn: ut min bound}
		\left| \Lambda_{P_1,\ldots, P_{m_1}}^{Q_1,\ldots,Q_{m_2}}(F;\Psi) \right| \ \leq \ b_1\min_{0\leq i \leq m_1} \norm{f_j}_{U^t}^{b_2} + b_3(N)
		\ee
		and, for $k \in \{0,\ldots,m_1\}$,
		\be\label{"the assumption"}
		\left| \Lambda_{R_{k,1},\ldots, R_{k,m_1-1}}^{S_{k,1}\ldots,S_{k,m_2+1}}(G;\Phi) - 1_{\Phi=1} \prod_{i=0}^{m_1-1} \rcavg x g_i(x) \right| \ \leq \ \theta_k(N).
		\ee
		
		
		Set $c_1'=\max_{0 \leq k \leq m_1} c_{1,k}$ and define the function $\theta(N) = \max_{0 \leq k \leq m_1} \theta_k(N)$. Let $R$ be a finite commutative ring with characteristic $N$ such that $\lpf N > \max\{c_1',b_4\}$.
		Let $F = (f_0,f_1,\ldots,f_{m_1})$ be a vector of 1-bounded functions $R \to \C$ and $\Psi = (\psi_1,\ldots, \psi_{m_2})$ be a vector of additive characters of $R$. Let $R' = R^n$ with coordinatewise addition and multiplication.
		
		Let us first bound the term $\Lambda_{P_1,\ldots,P_{m_1}}^{Q_1,\ldots,Q_{m_2}} (F; \Psi)$ in one of several ways. Fix $k \in \{0,\ldots, m_1\}$, and define the dual function
		\be\label{example of dual function} 
		g(x) \ := \ \cavg{y}{R'} \prod_{\substack{i=0 \\ i \neq k}}^{m_1} f_i(x+P_i(y)-P_{k}(y)) \prod_{j=1}^{m_2} \psi_j(Q_j(y)),
		\ee
		so that, after a change of variables, we have the identity
		\begin{equation}
			\Lambda_{P_1,\ldots,P_{m_1}}^{Q_1,\ldots,Q_{m_2}} (F; \Psi) \ = \ \cavg{x}{R} f_{k}(x)g(x).  
		\end{equation}
		By applying this identity, Cauchy--Schwarz, and \eqref{eqn: ut min bound}, we observe that
		\begin{align}
			\left| \Lambda_{P_1,\ldots,P_{m_1}}^{Q_1,\ldots,Q_{m_2}} (F; \Psi) \right|^2 \ & = \ \left| \cavg{x}{R} f_{k}(x)g(x) \right|^2 \nonumber \\
			& \leq \ \cavg{x}{R} \left|g(x)\right|^2 \nonumber \\
			& = \ \cavg{x}{R} \ol{g}(x) g(x) \nonumber \\
			& = \ \cavg{(x,y)}{R\times R'} \ol{g}(x+P_{k}(y)) \prod_{\substack{i = 0 \\ i \neq k}}^{m_1} f_i(x+P_i(y)) \prod_{j=1}^{m_2} \psi_j(Q_j(y))  \nonumber \\
			& \leq \ b_1 \norm{g}_{U^t}^{b_2} + b_3(N). \label{random thing 0.5}
		\end{align}
		
		Let us bound $\norm{g}_{U^t}$. Applying Lemma~\ref{lem: pel 5.1, use case} with $m := m_1+m_2$ and 
		\be
		\xi_i(x,y):= \begin{cases}
			f_i(x+P_i(y)-P_{k}(y)) & \text{ if } 1 \leq i \leq m_1 \\
			\psi_{i-m_1}(Q_{i-m_1}(y)) & \text{ if } m_1 + 1 \leq i \leq m_1+m_2\end{cases}
		\ee
		if $k = 0$ and
		\be
		\xi_i(x,y):= \begin{cases}
			f_0(x - P_{k}(y)) & \text{ if } i = k \\
			f_i(x+P_i(y)-P_{k}(y)) & \text{ if } 1 \leq i \leq m_1 \text{ and } i \neq k \\
			\psi_{i-m_1}(Q_{i-m_1}(y)) & \text{ if } m_1 + 1 \leq i \leq m_1+m_2\end{cases}
		\ee
		otherwise, one obtains
		\be\label{random thing 1}
		\norm{g}_{U^t}^{2^{2t-2}} \ \leq \ \rcavg{h_1,\ldots, h_{t-2}} \norm{g_{h_1,\ldots, h_{t-2}}}_{U^2}^4.
		\ee
		Let $h_1,\ldots,h_{t-2} \in R$ and let $\phi_{m_2+1}$ be an additive character of $R$. For each $j \in \{1,\ldots, m_2\}$, set $\phi_j = \psi_j$ if $t = 2$ and $\phi_j = 1$ if $t > 2$. Write $I = \{0,\ldots, m_1\} \setminus \{k\}$. Then 
		\be
		g_{h_1,\ldots, h_{t-2}}(x) \ = \ \cavg{y}{R'} \prod_{i \in I} \Delta_{h_1,\ldots,h_{t-2}} f_i(x+P_i(y)-P_{k}(y)) \prod_{j=1}^{m_2} \phi_j(Q_j(y)).
		\ee
		Recall that $k' = k + 1 \bmod (m_1+1)$ and that $\sigma_{k}$ is a bijection $\{1,\ldots, m_1-1\} \to I \setminus \{k'\}$. We obtain the identity 
		\begin{align}
			& \widehat{g}_{h_1,\ldots,h_{t-2}}(\phi_{m_2+1}) \nonumber 
			\\ &  = \ \cavg{(x,y)}{R\times R'} \phi_{m_2+1}(x) \prod_{i \in I} \Delta_{h_1,\ldots, h_{t-2}} f_i(x+P_i(y)-P_{k}(y)) \prod_{j=1}^{m_2} \phi_j(Q_j(y)) \nonumber \\
			& = \ \cavg{(x,y)}{R\times R'} \phi_{m_2+1}(x+P_{k}(y)-P_{k'}(y)) \prod_{i \in I} \Delta_{h_1,\ldots, h_{t-2}} f_i(x+P_i(y)-P_{k'}(y)) \prod_{j=1}^{m_2} \phi_j(Q_j(y)) \nonumber \\
			& = \ \Lambda_{R_{k,1},\ldots,R_{k,m_1-1}}^{S_{k,1},\ldots,S_{k,m_2+1}}(G_{k}; \Phi) \label{random fourier identity for g}
		\end{align}
		for the vector $G_{k} = (g_{k,0},g_{k,1},\ldots, g_{k,m_1-1})$ of 1-bounded functions given by
		\be
		g_{k,i} \ := \ \begin{cases} \phi_{m_2+1}\Delta_{h_1,\ldots,h_{t-2}}f_{k'} & i = 0 \\ \Delta_{h_1,\ldots, h_{t-2}} f_{\sigma_{k}(i)} & i \neq 0 \end{cases}
		\ee
		and the vector $\Phi := (\phi_1,\ldots,\phi_{m_2+1})$. By the second condition (this will be the induction hypothesis in the proof of Theorem~\ref{main thm}, see Remark~\ref{rem: explanation of lem: Gowers lowering}), we have
		\be \left| \widehat{g}_{h_1,\ldots,h_{t-2}}(\phi_{m_2+1})  - 1_{\Phi=1}\prod_{i=0}^{m_1-1} \rcavg x g_{k,i}(x) \right| \ \leq \ \theta_k(N).\label{random application of an assumption}
		\ee
		Analyzing the estimate
		\be
		\left| 1_{\Phi=1}\prod_{i=0}^{m_1-1} \rcavg x g_{k,i}(x) \right| \ \leq \ \min_{0 \leq i \leq m_1-1} \left| \rcavg x g_{k,i}(x)\right|
		\ee
		and applying \eqref{random fourier identity for g} and \eqref{random application of an assumption}, we conclude that
		\be
		\left| \widehat{g}_{h_1,\ldots,h_{t-2}}(\phi_{m_2+1}) \right| \ \leq \ \min_{i \in I \setminus \{k'\}} \left| \rcavg x \Delta_{h_1,\ldots,h_{t-2}}f_i(x)\right| + \theta_k(N)
		\ee		
		for any $\phi_{m_2+1} \in \hat{R}$. It follows that for any $h_1,\ldots,h_{t-2} \in R$ and any $i \in \{0,\ldots, m_1\} \setminus \{k,k'\}$, 
		\be\label{random thing 2}
		\max_{\phi \in \widehat{R}} \left| \widehat{g}_{h_1,\ldots,h_{t-2}}(\phi) \right| \ \leq \ \norm{\Delta_{h_1,\ldots,h_{t-2}}f_i}_{U^1} + \theta(N).
		\ee
		
		Thus, by \eqref{random thing 1}, Lemma~\ref{simple inequality for U2 norm}, \eqref{random thing 2}, and the 1-boundedness of the $f_i$, we deduce that, for any $i \in \{0,\ldots, m_1\} \setminus \{k,k'\}$,
		\begin{align*}
			\norm{g}_{U^t}^{2^{2t-2}} \ &
			\leq \ \rcavg{h_1,\ldots,h_{t-2}} \norm{g_{h_1,\ldots,h_{t-2}}}_{U^2}^4 \\
			& \leq \ \rcavg{h_1,\ldots,h_{t-2}} \max_{\phi\in \widehat{R}} \left| \widehat{g}_{h_1,\ldots,h_{t-2}}(\phi)\right|^2 \\
			& \leq \  \rcavg{h_1,\ldots,h_{t-2}} \norm{\Delta_{h_1,\ldots,h_{t-2}}f_i}_{U^1}^2 + 2\theta(N)+\theta(N)^2 \\
			& = \  \rcavg{h_1,\ldots,h_{t-2}} \rcavg{x,h_{t-1}} \Delta_{h_1,\ldots,h_{t-2}}f_i(x+h_{t-1})\ol{\Delta_{h_1,\ldots,h_{t-2}}f_i(x)} + 2\theta(N)+\theta(N)^2 \\
			& = \  \rcavg{x,h_1,\ldots,h_{t-1}} \Delta_{h_1,\ldots,h_{t-1}} f_i(x) + 2\theta(N)+\theta(N)^2 \\
			& = \  \norm{f_i}_{U^{t-1}}^{2^{t-1}} + 2\theta(N)+\theta(N)^2.
		\end{align*}
		Since $a^{r} + b^{r} \geq (a+b)^{r}$ whenever $a,b \geq 0$ and $r \in (0,1]$, we conclude that
		\begin{equation}\label{random thing 3}
			\norm{g}_{U^t} \ \leq \ \min_{i \in I\setminus \{k'\}} \norm{f_i}_{U^{t-1}}^{2^{1-t}} +  \Theta(N) ^{2^{2-2t}}
		\end{equation}
		for $\Theta(N) := 2\theta(N)+\theta(N)^2$. By \eqref{random thing 0.5} and \eqref{random thing 3},
		\begin{equation} \label{random thing 4}
			\left| \Lambda_{P_1,\ldots,P_{m_1}}^{Q_1,\ldots,Q_{m_2}} (F; \Psi) \right| \ \leq \ b_1^{1/2}\norm{g}_{U^t}^{b_2/2} + b_3(N)^{1/2} \ \leq \ b_1^{1/2} \min_{i \in I \setminus \{k'\}} \norm{f_i}_{U^{t-1}}^{b_2/2^t} + \Theta(N)^{2b_2/4^t} + b_3(N)^{1/2}.
		\end{equation}
		Finally, the initial choice of $k \in \{0,\ldots, m_1\}$ was arbitrary and the inequality \eqref{random thing 4} only refers to $i$ belonging to $I \setminus \{k'\} = \{0,\ldots, m_1 \} \setminus \{k,k'\}$, so repeating this argument for values of $k$ from some subset $I' \subset \{0,\ldots, m_1\}$ such that $\{0,\ldots, m_1\} = \cup_{k \in I'} \{0,\ldots m_1\} \setminus \{k,(k+1) \bmod (m_1+1) \}$ finishes the proof.
	\end{proof}
	
	\section{Proof of Theorem~\ref{main thm}}\label{sec: proof of main thm}
	We now prove the main theorem. The only proposition we use that has not been fully proven is Proposition~\ref{prop: real Us control}, which depends on Proposition~\ref{prop: Us control}, whose proof is deferred to Section~\ref{sec: proof of prop: Us control}.
	
	\begin{thmrep}[\ref{main thm}]
		Let $n \geq 1$, $m_1 \geq 1$, and $m_2 \geq 0$ be integers, and let $\mathbf P = \{ P_1,\ldots,P_{m_1},Q_1,\ldots,Q_{m_2}\} \subset \Z[y_1,\ldots,y_n]$ be an independent family of polynomials. There exist $c,C,\gamma > 0$ such that the following holds. For any finite commutative ring $R$ with characteristic $N$ satisfying $\lpf N > C$, any 1-bounded functions $f_0,\ldots, f_{m_1}: R \to \C$, and any additive characters $\psi_1,\ldots,\psi_{m_2}$ of $R$, one has
		\begin{equation}
			\left| \Lambda_{P_1,\ldots,P_{m_1}}^{Q_1,\ldots,Q_{m_2}}(f_0,\ldots, f_{m_1};\psi_1,\ldots,\psi_{m_2}) - 1_{\Psi=1} \prod_{i=0}^{m_1} \rcavg x f_i(x) \right| \ \leq \  c\,\lpf{N}^{-\gamma},
		\end{equation}
		where $1_{\Psi=1}$ equals 1 if every $\psi_i$ is trivial and 0 otherwise.
	\end{thmrep}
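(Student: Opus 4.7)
The plan is to prove the theorem by induction on $m_1$, with the base case $m_1 = 1$ handled by Proposition~\ref{main thm base}. Before starting the induction, I reduce to the case that each $P_i$ and $Q_j$ has zero constant term: substituting $g_i(x) := f_i(x + P_i(0))$ replaces $f_i(x + P_i(y))$ by $g_i(x + \tilde P_i(y))$, where $\tilde P_i := P_i - P_i(0)$, and the unimodular scalar $\prod_{j=1}^{m_2} \psi_j(Q_j(0))$ can be absorbed into $g_0$. Since $\rcavg x g_i(x) = \rcavg x f_i(x)$ for $i \geq 1$, and when $\Psi = 1$ all absorbed scalars are trivial, this reduction preserves both the form of the inequality and independence of the family.

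Now fix $m_1 \geq 2$ and assume Theorem~\ref{main thm} for $m_1 - 1$ (for all values of $m_2 \geq 0$). Write $\mu := \rcavg x f_{m_1}(x)$ and $f_{m_1}' := f_{m_1} - \mu$, so $\tfrac 12 f_{m_1}'$ is 1-bounded with zero mean. Setting $F' := (f_0, \ldots, f_{m_1-1})$ and $F'' := (f_0, \ldots, f_{m_1-1}, \tfrac 12 f_{m_1}')$, multilinearity gives
\[
	\Lambda_{P_1,\ldots,P_{m_1}}^{Q_1,\ldots,Q_{m_2}}(F;\Psi) \ = \ \mu \cdot \Lambda_{P_1,\ldots,P_{m_1-1}}^{Q_1,\ldots,Q_{m_2}}(F';\Psi) \ + \ 2\,\Lambda_{P_1,\ldots,P_{m_1}}^{Q_1,\ldots,Q_{m_2}}(F'';\Psi).
\]
The induction hypothesis, applied to $\Lambda_{P_1,\ldots,P_{m_1-1}}^{Q_1,\ldots,Q_{m_2}}(F';\Psi)$ (and using $|\mu| \leq 1$), places the first summand within $\lpf N^{-\gamma_1}$ of $1_{\Psi=1}\prod_{i=0}^{m_1} \rcavg x f_i(x)$, so it suffices to bound $\big|\Lambda_{P_1,\ldots,P_{m_1}}^{Q_1,\ldots,Q_{m_2}}(F'';\Psi)\big| \leq c\,\lpf N^{-\gamma_2}$ for some $c,\gamma_2 > 0$ depending only on $\mathbf P$.

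For this remaining bound, I first invoke Proposition~\ref{prop: real Us control} to obtain $\lambda \in (0,1]$, $\ve,C_0 > 0$, and an integer $s \geq 2$, depending only on $\mathbf P$, with
\[
	\big|\Lambda_{P_1,\ldots,P_{m_1}}^{Q_1,\ldots,Q_{m_2}}(F'';\Psi)\big| \ \leq \ \frac{C_0}{\lpf N^{\ve}} \ + \ 2^n \min_{0 \leq i \leq m_1} \norm{f_i''}_{U^s}^\lambda.
\]
Next I iterate Lemma~\ref{lem: Gowers lowering} exactly $s - 1$ times, starting from $(b_1(N), b_2, b_3(N), t) = (2^n, \lambda, C_0/\lpf N^{\ve}, s)$, thereby lowering the Gowers exponent from $U^s$ down to $U^1$. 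Each application requires hypothesis (2) of that lemma, namely, the conclusion of Theorem~\ref{main thm} applied to the $m_1 + 1$ auxiliary families $\mathbf{R}_k$, $k \in \{0,\ldots, m_1\}$. Each $\mathbf R_k$ has only $m_1 - 1$ polynomials of ``$P$-type,'' zero constant term (by the preliminary reduction), and is independent, by a short linear-algebra check: any nontrivial integer relation among the entries of $\mathbf R_k$ rearranges, via the definitions $R_{k,i} = P_{\sigma_k(i)} - P_{k'}$ and $S_{k, m_2 + 1} = P_k - P_{k'}$, into a nontrivial relation among $\{P_1, \ldots, P_{m_1}, Q_1, \ldots, Q_{m_2}\}$, contradicting their independence. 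The inductive hypothesis therefore supplies $\theta_k(N) \leq c_k \lpf N^{-\gamma_k}$ for each $k$.

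After the $s - 1$ iterations, the resulting bound has the form $b_1^{(s-1)}(N) \min_i \norm{f_i''}_{U^1}^{b_2^{(s-1)}} + b_3^{(s-1)}(N)$, where $b_1^{(s-1)}(N) \leq 2^n$, $b_2^{(s-1)}$ is a fixed positive rational depending only on $s$ and $\lambda$, and $b_3^{(s-1)}(N)$ is a finite sum with boundedly many terms (depending only on $\mathbf P$) of positive fractional powers of $\lpf N^{-\ve}$ and of $\theta_k(N)$. Since $\norm{\tfrac 12 f_{m_1}'}_{U^1} = |\rcavg x \tfrac 12 f_{m_1}'(x)| = 0$, the minimum vanishes, leaving only $b_3^{(s-1)}(N) = O(\lpf N^{-\gamma_2})$ for a suitable $\gamma_2 > 0$, completing the induction. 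The one subtlety in executing the plan is the bookkeeping in this last step: one must verify that every iterate of Lemma~\ref{lem: Gowers lowering} introduces only positive fractional powers of already polynomial-small quantities in $\lpf N$, so that no negative powers ever appear and the accumulated error genuinely decays as required.
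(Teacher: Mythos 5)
Your proposal is correct and follows essentially the same route as the paper: induction on $m_1$ with base case Proposition~\ref{main thm base}, the same decomposition $\Lambda(F;\Psi) = \mu\,\Lambda(F';\Psi) + 2\Lambda(F'';\Psi)$ with $\norm{\tfrac 12 f_{m_1}'}_{U^1} = 0$, the $U^s$ bound from Proposition~\ref{prop: real Us control}, and $s-1$ iterations of Lemma~\ref{lem: Gowers lowering} fed by the induction hypothesis applied to $\mathbf{P}'$ and to the $\mathbf{R}_k$. The bookkeeping you flag as the ``one subtlety'' is precisely what the paper carries out in its Claim~\ref{cla: a claim} and the subsequent telescoping estimate $b_3^{(1)}(N) \leq \sum_{\ell=2}^s \Theta(N)^{b_2^{(\ell)}/2^{3(\ell-1)}} + b_3^{(s)}(N)^{2^{1-s}}$, and your sketch of why it works is accurate.
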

	\begin{proof}Without loss of generality, each polynomial in $\mathbf P$ may be assumed to have zero constant term; this comes from the freedom to choose $f_0,\ldots, f_{m_1}$.
		
		We proceed by induction on $m_1$. When $m_1 = 1$, Proposition~\ref{main thm base} provides the base case.
		
		Now, let $n \geq 1$, $m_1 \geq 2$ and $m_2 \geq 0$. The induction hypothesis asserts the following: For any integers $m_1' \geq 1$ and $m_2' \geq 0$ with $m_1'<m_1$ and any independent family $\mathbf R = \{R_1,\ldots, R_{m_1'},S_1,\ldots, S_{m_2'}\} \subset \Z[y_1,\ldots, y_n]$ of polynomials with zero constant term, there exist $c(\mathbf {R}),C(\mathbf{R}),\gamma(\mathbf{R}) > 0$ such that, for any finite commutative ring $R$ with characteristic $N$ satisfying $\lpf N > C(\mathbf{R})$, any vector $G = (g_0,\ldots, g_{m_1'})$ of 1-bounded functions $R \to \C$, and any vector $\Phi = (\phi_1,\ldots,\phi_{m_2'})$ of additive characters of $R$, one has
		\be
		\left| \Lambda_{R_1,\ldots, R_{m_1'}}^{S_1,\ldots, S_{m_2'}}(G;\Phi) - 1_{\Phi=1} \prod_{i=0}^{m_1'} \rcavg x g_i(x) \right| \ \leq \ c(\mathbf{R})\,\lpf{N}^{-\gamma(\mathbf{R})}.
		\ee
		
		Let $\mathbf P = \{P_1,\ldots, P_{m_1},Q_1,\ldots, Q_{m_2}\} \subset \Z[x]$ be an independent family of polynomials with zero constant term. The goal is to find $c,C,\gamma > 0$ such that, for any finite commutative ring $R$ with characteristic $N$ satisfying $\lpf N > C$, any vector $F = (f_0,\ldots, f_{m_1})$ of 1-bounded functions $R \to \C$, and any vector $\Psi = (\psi_1,\ldots,\psi_{m_2})$ of additive characters of $R$, one has
		\be
		\left| \Lambda_{P_1,\ldots,P_{m_1}}^{Q_1,\ldots,Q_{m_2}}(F;\Psi) - 1_{\Psi=1} \prod_{i=0}^{m_1} \rcavg x f_i(x) \right| \ \leq \  c\,\lpf{N}^{-\gamma}.
		\ee
		
		Let $k \in \{0,\ldots,m_1\}$. Define the family $\mathbf{R}_k$ as in the statement of Lemma~\ref{lem: Gowers lowering}. Then $\mathbf{R}_k$ is an independent family of polynomials with zero constant term since $\mathbf P$ is such. By the induction hypothesis, there exist $c(\mathbf{R}_k),C(\mathbf{R}_k),\gamma(\mathbf{R}_k) > 0$ such that, for any finite commutative ring $R$ with characteristic $N$ satisfying $\lpf N > C(\mathbf{R}_k)$, any vector $G = (g_0,g_1,\ldots,g_{m_1-1})$ of 1-bounded functions $R \to \C$, and any vector $\Phi = (\phi_1,\ldots, \phi_{m_2+1})$ of additive characters of $R$, one has 
		\be
		\left| \Lambda_{R_{k,1},\ldots, R_{k,m_1-1}}^{S_{k,1}\ldots,S_{k,m_2+1}}(G;\Phi) - 1_{\Phi=1} \prod_{i=0}^{m_1-1} \rcavg x g_i(x) \right| \ \leq \ c(\mathbf{R}_k)\,\lpf{N}^{-\gamma(\mathbf{R}_k)}.
		\ee
		
		Define $\mathbf{P}' = \{P_1,\ldots, P_{m_1-1},Q_1,\ldots,Q_{m_2}\}$. Since $\mathbf P$ is an independent family of polynomials with zero constant term, $\mathbf{P}'$ is also such. Moreover, by the induction hypothesis, there exist $c(\mathbf{P}'), C(\mathbf{P}'), \gamma(\mathbf{P}') > 0$ such that, for any finite commutative ring $R$ with characteristic $N$ satisfying $\lpf N > C(\mathbf{P}')$, any vector $G = (g_0,\ldots, g_{m_1-1})$ of 1-bounded functions $R \to \C$, and any vector $\Phi = (\phi_1,\ldots,\phi_{m_2})$ of additive characters of $R$, one has
		\be
		\left| \Lambda_{P_1,\ldots, P_{m_1-1}}^{Q_1,\ldots, Q_{m_2}}(G;\Phi) - 1_{\Phi=1} \prod_{i=0}^{m_1-1} \rcavg x g_i(x) \right| \ \leq \ c(\mathbf{P}')\,\lpf{N}^{-\gamma(\mathbf{P}')}.
		\ee
		
		By assumption, $\mathbf P$ is an independent family of polynomials with zero constant term. Thus, by Proposition~\ref{prop: real Us control}, there exist $\lambda \in (0,1]$, $\ve, C_0 > 0$, and $s \in \N$ with $s \geq 2$, each depending only on $\mathbf P$, along with a constant $C_1$ depending on $\mathbf P$ and $\ve$, such that, for any finite commutative ring $R$ with characteristic $N$ satisfying $\lpf N > C_1$, any vector $F = (f_0,\ldots, f_{m_1})$ of 1-bounded functions $R \to \C$, and any vector $\Psi = (\psi_1,\ldots,\psi_{m_2})$ of additive characters of $R$, one has
		\be
		\left| \Lambda_{P_1,\ldots,P_{m_1}}^{Q_1,\ldots,Q_{m_2}}(F;\Psi)\right| \ \leq \ \frac{C_0}{\lpf{N}^{\ve}} + 2^n\min_{0\leq i \leq m_1} \norm{f_i}_{U^s}^\lambda.
		\ee
		
		We intend to apply Lemma~\ref{lem: Gowers lowering} repeatedly, in such a way that its first condition will be satisfied by different settings of $b_i$ at each application and that its second condition will always be satisfied by the same setting of $c_{1,k}$ and $\theta_k$ for $k \in \{0,\ldots,m_1\}$. Thus, notice that, on setting, for each $k \in \{0,\ldots,m_1\}$, $c_{1,k} := C(\mathbf{R}_k)$ and
		\be
		\theta_k(N) \ := \ c(\mathbf{R}_k)\,\lpf{N}^{-\gamma(\mathbf{R}_k)},
		\ee
		the second condition is met.
		
		Set $b_1^{(s)} := 2^n$, $b_2^{(s)} := \lambda$, $b_3^{(s)}(N) := C_0\, \lpf{N}^{-\ve}$, and $b_4^{(s)} := C_1$. Then the first condition of the lemma is met for $t = s$. Hence, there exist $c_1' > 0$ and a positive function $\Theta : \N \to \R$ depending only on $\mathbf P$ (and not on $b_1^{(s)},b_2^{(s)},b_3^{(s)},b_4^{(s)}$, or $s$), such that, for any finite commutative ring $R$ with characteristic $N$ satisfying $\lpf N > \max\{c_1',b_4^{(s)}\}$, for any vector $F = (f_0,f_1,\ldots,f_{m_1})$ of 1-bounded functions $R \to \C$ and any vector $\Psi = (\psi_1,\ldots, \psi_{m_2})$ of additive characters of $R$, one has
		\begin{equation}
			\left| \Lambda_{P_1,\ldots, P_{m_1}}^{Q_1,\ldots,Q_{m_2}}(F;\Psi) \right| \ \leq \ (b_1^{(s)})^{1/2} \min_{0 \leq i \leq m_1} \norm{f_i}_{U^{s-1}}^{b_2^{(s)}/2^s} + \Theta(N)^{2b_2^{(s)}/4^s} + b_3^{(s)}(N)^{1/2}.
		\end{equation}
		It may happen that $s = 2$, in which case we would stop here. However, for the sake of explanation, let us assume that $s > 2$.
		
		Set
		\ba
		b_1^{(s-1)} \ & := \ (b_1^{(s)})^{1/2}, \\ b_2^{(s-1)} \ & := \ b_2^{(s)}/2^s, \\
		b_3^{(s-1)}(N) \ & := \ \Theta(N)^{2b_2^{(s)}/4^s} + b_3^{(s)}(N)^{1/2}, \\
		b_4^{(s-1)} \ & := \ \max\{c_1',b_4^{(s)}\}.
		\ea
		Then the first condition of Lemma~\ref{lem: Gowers lowering} is met for $t = s-1$. Applying the lemma, we note that neither $c_1'$ nor $\Theta$ in the conclusion depend on the new settings of $b_i$, so we conclude that for any finite commutative ring $R$ with characteristic $N$ satisfying $\lpf N > \max\{c_1',b_4^{(s-1)}\}$, for any vector $F = (f_0,f_1,\ldots,f_{m_1})$ of 1-bounded functions $R \to \C$ and any vector $\Psi = (\psi_1,\ldots, \psi_{m_2})$ of additive characters of $R$, one has
		\begin{equation}
			\left| \Lambda_{P_1,\ldots, P_{m_1}}^{Q_1,\ldots,Q_{m_2}}(F;\Psi) \right| \ \leq \ (b_1^{(s-1)})^{1/2} \min_{0 \leq i \leq m_1} \norm{f_i}_{U^{s-2}}^{b_2^{(s-1)}/2^{s-1}} + \Theta(N)^{2b_2^{(s-1)}/4^{s-1}} + b_3^{(s-1)}(N)^{1/2}.
		\end{equation}
		Either $s-2 = 1$ or $s-2 > 1$. In the latter case, the following settings are suggested by the previous display:
		\ba
		b_1^{(s-2)} \ & := \ (b_1^{(s-1)})^{1/2}, \\
		b_2^{(s-2)} \ & := \ b_2^{(s-1)}/2^{s-1}, \\
		b_3^{(s-2)}(N) \ & := \ \Theta(N)^{2b_2^{(s-1)}/4^{s-1}} + b_3^{(s-1)}(N)^{1/2}, \\
		b_4^{(s-2)} \ & := \ \max\{c_1',b_4^{(s-1)}\}.
		\ea
		We continue in this way to apply Lemma~\ref{lem: Gowers lowering} to conclude the following claim:
		\begin{cla}\label{cla: a claim}
			For any finite commutative ring $R$ with characteristic $N$, if $\lpf N > b_4^{(1)}$, then for any vector $F = (f_0,f_1,\ldots,f_{m_1})$ of 1-bounded functions $R \to \C$ and any vector $\Psi = (\psi_1,\ldots, \psi_{m_2})$ of additive characters of $R$, one has
			\be\left| \Lambda_{P_1,\ldots, P_{m_1}}^{Q_1,\ldots,Q_{m_2}}(F;\Psi) \right| \ \leq \ b_1^{(1)}\min_{0\leq i\leq m_1} \norm{f_i}_{U^1}^{b_2^{(1)}} + b_3^{(1)}(N),
			\ee
			where
			\ba
			& b_1^{(\ell-1)} = \begin{cases} 
				(b_1^{(\ell)})^{1/2} & \text{ if } \ell \in \{2,\ldots, s\}, \\
				2^n & \text{ if } \ell = s+1,
			\end{cases} \\
			& b_2^{(\ell-1)} = \begin{cases} 
				b_2^{(\ell)}/2^\ell & \text{ if } \ell \in \{2,\ldots, s\}, \\
				\lambda & \text{ if } \ell = s+1,
			\end{cases} \\
			& b_3^{(\ell-1)}(N) = \begin{cases} 
				\Theta(N)^{2b_2^{(\ell)}/4^\ell} + b_3^{(\ell)}(N)^{1/2} & \text{ if } \ell \in \{2,\ldots, s\}, \\
				C_0\, \lpf{N}^{-\ve} & \text{ if } \ell = s+1,
			\end{cases} \\ 
			& b_4^{(\ell-1)} = \begin{cases} 
				\max\{c_1',C_1\} & \text{ if } \ell \in \{2,\ldots, s\}, \\
				C_1 & \text{ if } \ell = s+1.
			\end{cases}
			\ea
		\end{cla}
		Claim~\ref{cla: a claim} will be applied in a situation where one of the $f_i$ will have zero integral, hence zero $U^1$ norm, so there is no need to compute $b_1^{(1)}$, and it will be helpful to bound $b_3^{(1)}(N)$. As $b_3^{(\ell-1)}(N)$ is defined in terms of $b_2^{(\ell)}$ and $b_3^{(\ell)}(N)$, we first observe that, for every $\ell \in \{1,\ldots, s\}$,
		\begin{equation}
			b_2^{(\ell)} \ = \ \lambda/ 2^{(1/2)(s-\ell)(1+s+\ell)}.
		\end{equation}
		
		Next, we claim by induction that, for every $k \in \{2,\ldots, s\}$,
		\begin{equation}\label{eqn: first bound on b31}
			b_3^{(1)}(N) \ \leq \ \sum_{\ell = 2}^k \Theta(N)^{b_2^{(\ell)}/2^{3(\ell-1)}} + b_3^{(k)}(N)^{2^{1-k}}.
		\end{equation}
		Indeed, when $k = 2$, we have equality in \eqref{eqn: first bound on b31}, and assuming \eqref{eqn: first bound on b31} holds for $k \in \{2,\ldots, s-1\}$, we observe that
		\begin{equation}
			b_3^{(1)}(N) \ \leq \ \sum_{\ell = 2}^k \Theta(N)^{b_2^{(\ell)}/2^{3(\ell-1)}} + b_3^{(k)}(N)^{2^{1-k}}
		\end{equation}
		and, since $a^{r} + b^{r} \geq (a+b)^{r}$ whenever $a,b \geq 0$ and $r \in (0,1]$,
		\begin{equation}
			b_3^{(k)}(N)^{2^{1-k}} \ = \ \left(\Theta(N)^{2b_2^{(k+1)}/4^{k+1}} + b_3^{(k+1)}(N)^{1/2}\right)^{2^{1-k}} \ \leq \ \Theta(N)^{b_2^{(k+1)}/2^{3k} } + b_3^{(k+1)}(N)^{2^{-k}},
		\end{equation}
		which proves the claim. We would like $b_3^{(1)}(N)$ to be bounded by a negative power of $\lpf{N}$. By the claim, we have
		\begin{equation}
			b_3^{(1)}(N) \ \leq \ \sum_{\ell = 2}^s \Theta(N)^{b_2^{(\ell)}/2^{3(\ell-1)}} + b_3^{(s)}(N)^{2^{1-s}} \ = \ \sum_{\ell = 2}^s \Theta(N)^{b_2^{(\ell)}/2^{3(\ell-1)}} + (C_0\, \lpf{N}^{-\ve})^{2^{1-s}}.
		\end{equation}
		
		Let us bound this expression for appropriate values of $N$. First, consider $\Theta(N)$. There exists $k_0 \in \{0,\ldots,m_1\}$ and $C' > 0$ such that $\lpf{N} > C'$ implies
		\be
		\max_{k\in\{0,\ldots,m_1\}} c(\mathbf{R}_k)\,\lpf{N}^{-\gamma(\mathbf{R}_k)} \ = \ c(\mathbf{R}_{k_0})\,\lpf{N}^{-\gamma(\mathbf{R}_{k_0})} \ \leq \ 1.
		\ee
		Let $\gamma' = \gamma(\mathbf{R}_{k_0})$. Then, for $N$ such that $\lpf N > C'$, one has 
		\be
		\Theta(N) \ = \ 2\theta_{k_0}(N)+\theta_{k_0}(N)^2 \ \leq \ 3 \theta_{k_0}(N) \ = \ 3c(\mathbf{R}_{k_0}) \,\lpf{N}^{-\gamma'}
		\ee
		and hence
		\begin{multline}
			b_3^{(1)}(N) \ \leq \ \sum_{\ell = 2}^s \Theta(N)^{b_2^{(\ell)}/2^{3(\ell-1)}} + (C_0\, \lpf{N}^{-\ve})^{2^{1-s}} \\ \leq \ \sum_{\ell = 2}^s (3c(\mathbf{R}_{k_0}) \,\lpf{N}^{-\gamma'})^{b_2^{(\ell)}/2^{3(\ell-1)}} + (C_0\, \lpf{N}^{-\ve})^{2^{1-s}}.
		\end{multline}
		Let $\ve' \ := \ \min_{2 \leq \ell \leq s} b_2^{(\ell)}/2^{3(\ell-1)} \in (0,1]$, $\gamma'' := \min\{\gamma' \ve', \ve \cdot 2^{1-s}\} > 0$, and $c'' :=  \sum_{\ell = 2}^s (3c(\mathbf{R}_{k_0}))^{b_2^{(\ell)}/2^{3(\ell-1)}} + C_0^{2^{1-s}}$. We conclude that
		\be
		b_3^{(1)}(N) \ \leq \ c''\, \lpf{N}^{-\gamma''}
		\ee
		whenever $\lpf N > C'$.
		
		Now we will prove the required statement. Let $c := 2c''+c(\mathbf{P}')$, $C := \max\{C(\mathbf{P}'),b_4^{(1)}, C'\}$, and $\gamma := \min\{\gamma(\mathbf{P}'),\gamma''\}$.
		
		Let $R$ be a finite commutative ring with characteristic $N$ satisfying $\lpf N > C$. Let $F = (f_0,\ldots, f_{m_1})$ be a vector of 1-bounded functions $R \to \C$ and let $\Psi = (\psi_1,\ldots, \psi_{m_2})$ be a vector of additive characters of $R$. Let $F' = (f_0,\ldots, f_{m_1-1})$. Let $f_{m_1}' = f_{m_1} - \rcavg x f_{m_1}(x)$. Then $\norm{f_{m_1}'}_{U^1} = 0$ and $F'' := (f_0,f_1,\ldots, f_{m_1-1},\frac{1}{2} f_{m_1}')$ is a vector of 1-bounded functions. By the claim,
		\ba
		\left| \Lambda_{P_1,\ldots,P_{m_1}}^{Q_1,\ldots,Q_{m_2}}(F;\Psi) - \left(\rcavg{x} f_{m_1}(x)\right) \Lambda_{P_1,\ldots,P_{m_1-1}}^{Q_1,\ldots,Q_{m_2}}(F';\Psi) \right| \ & = \ 2 \left| \Lambda_{P_1,\ldots,P_{m_1}}^{Q_1,\ldots,Q_{m_2}}(F'';\Psi)\right| \\
		& \leq \ 2b_3^{(1)}(N) \\
		& \leq \ 2 c''\,\lpf{N}^{-\gamma''}.
		\ea
		Moreover, since $f_{m_1}$ is 1-bounded, we have
		\ba
		\left| \left(\rcavg{x} f_{m_1}(x)\right) \Lambda_{P_1,\ldots,P_{m_1-1}}^{Q_1,\ldots,Q_{m_2}}(F';\Psi)
		- 1_{\Psi=1} \prod_{i=0}^{m_1} \rcavg x f_i(x)
		\right| \ & \leq \ \left|  \Lambda_{P_1,\ldots,P_{m_1-1}}^{Q_1,\ldots,Q_{m_2}}(F';\Psi)
		- 1_{\Psi=1} \prod_{i=0}^{m_1-1} \rcavg x f_i(x)
		\right| \\
		& \leq \ c(\mathbf{P}')\,\lpf{N}^{-\gamma(\mathbf{P}')}
		\ea
		by the induction hypothesis. Thus,
		\be
		\left| \Lambda_{P_1,\ldots,P_{m_1}}^{Q_1,\ldots,Q_{m_2}}(F;\Psi) - 1_{\Psi=1} \prod_{i=0}^{m_1} \rcavg x f_i(x) \right| \ \leq \ 2 c''\,\lpf{N}^{-\gamma''} + c(\mathbf{P}')\,\lpf{N}^{-\gamma(\mathbf{P}')} \ \leq \ c\,\lpf{N}^{-\gamma},
		\ee
		as desired.
	\end{proof}
	
	\section{Combinatorial consequences of Theorem~\ref{main thm for intro}} \label{sec: derivation of zero and positive density results}
	
	The combinatorial consequences of Theorem~\ref{main thm for intro} all follow from Proposition~\ref{prop: config count}, which we prove in this section. First, given an independent family $\mathbf P = \{P_1,\ldots, P_m\} \subset \Z[y_1,\ldots, y_n]$ and subsets $A_0,\ldots, A_m$ of a finite commutative ring $R$, we need to estimate the number of degenerated configurations $(x,x+P_1(y),\ldots, x+P_m(y)) \in A_0 \times A_1 \times \cdots \times A_m$. As a reminder, $(x,x+P_1(y),\ldots, x+P_m(y))$ is degenerated if the set $\{0,P_1(y),\ldots, P_m(y)\} \subset R$ has cardinality less than $m+1$.
	
	\begin{prop}\label{prop: bound on number of degen configs}
		Let $\mathbf P = \{P_1,\ldots, P_m\} \subset \Z[y_1,\ldots, y_n]$ be an independent family. There exist $C',\beta > 0$ such that, for any finite commutative ring $R$ with characteristic $N$ satisfying $\lpf N > C'$, for any subsets $A_0,\ldots, A_m \subset R$, the number of $(x,y) \in R \times R^n$ such that $(x,x+P_1(y),\ldots, x+P_m(y)) \in A_0 \times A_1 \times \cdots \times A_m$ is a degenerated configuration is at most $|R|^{n+1} \cdot \lpf{N}^{-\beta}$. 
	\end{prop}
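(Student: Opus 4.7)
The key observation is that whether a configuration $(x, x+P_1(y), \ldots, x+P_m(y))$ is degenerated depends only on $y$: it is degenerated if and only if the set $\{0, P_1(y), \ldots, P_m(y)\} \subset R$ has cardinality less than $m+1$, i.e., there exist distinct indices $i, j \in \{0,1,\ldots,m\}$ (using the convention $P_0 := 0$) with $P_i(y) = P_j(y)$. In particular, for each such $y$, any $x \in R$ yields a degenerated tuple, so the count of degenerated $(x,y)$ is at most $|R|$ times the number of $y \in R^n$ for which some pair coincides. Dropping the conditions that the configuration actually lies in $A_0 \times \cdots \times A_m$ only enlarges this count, which is what we want for the upper bound.

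The plan is then straightforward: by a union bound,
\begin{equation*}
\#\{y \in R^n : \text{degenerated}\} \ \leq \ \sum_{0 \leq i < j \leq m} \#\{y \in R^n : (P_i - P_j)(y) = 0_R\}.
\end{equation*}
Independence of $\mathbf{P}$ guarantees that each of the $\binom{m+1}{2}$ integer polynomials $P_i - P_j$ (for $0 \leq i < j \leq m$, with $P_0 = 0$) is nonconstant, hence of degree at least one: if $P_i - P_j$ were a constant polynomial with $i \neq j$, this would produce a nontrivial integer linear combination of the $P_\ell$'s equal to a constant (taking $i = 0$ handles the case of a single $P_j$ being constant). Proposition~\ref{prop: bound on number of roots} therefore applies to each $P_i - P_j$, producing constants $c_{ij}, C_{ij} > 0$ and $\varepsilon_{ij} \in (0,1)$ such that, for any finite commutative ring $R$ with $\lpf{\mathrm{char}(R)} > C_{ij}$, the number of roots of $P_i - P_j$ in $R^n$ is at most $|R|^{n-1} + c_{ij}|R|^n/\lpf{\mathrm{char}(R)}^{\varepsilon_{ij}}$.

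Taking $C'_0 := \max_{i<j} C_{ij}$, $c := \binom{m+1}{2}\max_{i<j} c_{ij}$, and $\varepsilon := \min_{i<j} \varepsilon_{ij}$, we obtain, for any $R$ with $\lpf N > C'_0$, a bound on the number of degenerated $(x,y)$ of
\begin{equation*}
|R| \cdot \binom{m+1}{2}\left(|R|^{n-1} + \frac{c_0|R|^n}{\lpf{N}^{\varepsilon}}\right) \ \leq \ \binom{m+1}{2}|R|^n + \frac{c|R|^{n+1}}{\lpf{N}^{\varepsilon}},
\end{equation*}
where $c_0 = \max_{i<j} c_{ij}$. Since $|R| \geq N \geq \lpf{N}$, the first summand is at most $\binom{m+1}{2}|R|^{n+1}/\lpf{N}$. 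Choosing any $\beta \in (0, \min(1,\varepsilon))$ and then $C' \geq C'_0$ large enough (depending on $m$, $c$, $\varepsilon$, and $\beta$) so that $\binom{m+1}{2}\lpf{N}^{\beta-1} + c\,\lpf{N}^{\beta-\varepsilon} \leq 1$ whenever $\lpf{N} > C'$, we conclude the desired bound $|R|^{n+1} \cdot \lpf{N}^{-\beta}$.

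The argument is essentially a bookkeeping exercise given Proposition~\ref{prop: bound on number of roots}; the only substantive point is the reduction of the degeneracy count to a union over pairs and the verification, via the independence hypothesis, that each difference $P_i - P_j$ (including the $j = 0$ case where $P_j = 0$) is a nonconstant integer polynomial and hence falls within the scope of that root-counting bound.
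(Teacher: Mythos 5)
Your proposal is correct and follows essentially the same approach as the paper: both set $P_0 := 0$, reduce to counting $y$ for which some pair $P_i(y) = P_j(y)$ coincides, apply a union bound over pairs, invoke Proposition~\ref{prop: bound on number of roots} on each difference $P_i - P_j$, and absorb the resulting error terms into $\lpf{N}^{-\beta}$ by restricting to $\lpf N > C'$. You explicitly justify that independence makes each $P_i - P_j$ nonconstant (so the root-counting bound applies), which the paper leaves implicit, but otherwise the arguments coincide, including the use of $|R| \geq \lpf N$ to combine the $|R|^n$ and $|R|^{n+1}/\lpf{N}^\ve$ terms.
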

	\begin{proof}
		Set $P_0(y) = 0$. Denote by $I$ the set of ordered pairs $(i,j) \in \{0,\ldots, m\}^2$ such that $i < j$. Let $(i,j) \in I$. Applying Proposition~\ref{prop: bound on number of roots}, let $c_{i,j}, C_{i,j}, \ve_{i,j} > 0$ be such that, for any finite commutative ring $R$ with $\lpf{\mathrm{char}(R)} > C_{i,j}$, the number of $y \in R^n$ such that $P_i(y) - P_j(y) = 0$ is at most $|R|^{n-1} + \frac{c_{i,j}|R|^n}{\lpf{\mathrm{char}(R)}^{\ve_{i,j}}}$.
		
		Let $c$ (resp. $C, \ve$) be the maximum of $c_{i,j}$ (resp. the maximum of $C_{i,j}$, the minimum of $\ve_{i,j}$) over the set of $(i,j) \in I$. Let $\beta = \ve/2$ and let $C'$ be the maximum of $C$ and the least positive integer $C''$ such that $(c+1)\binom{m+1}{2} M^{-\beta} \leq 1$ for all integers $M \geq C''$.  
		
		Let $R$ be a finite commutative ring with characteristic $N$ satisfying $\lpf N > C'$, and let $A_0,\ldots, A_m \subset R$. The number of $y \in R^{n}$ such that $\{0,P_1(y),\ldots, P_m(y)\} \subset R$ is a set of cardinality less than $m+1$ is at most
		\begin{equation}
			\sum_{(i,j) \in I} |R|^{n-1} + \frac{c_{i,j}|R|^n}{\lpf{N}^{\ve_{i,j}}} \ \leq \ \binom{m+1}{2}\left( |R|^{n-1} +  \frac{c |R|^n}{\lpf {N}^\ve} \right).
		\end{equation}
		Thus, the number of $(x,y) \in R \times R^n$ such that $(x,x+P_1(y),\ldots, x+P_m(y))$ is a degenerated configuration contained in $A_0 \times \cdots \times A_m$ is at most
		\begin{equation}
			|A_0| \cdot \binom{m+1}{2}\left( |R|^{n-1} +  \frac{c |R|^n}{\lpf {N}^\ve} \right) \ \leq \ \binom{m+1}{2} |R|^{n+1} \left(\frac{1}{|R|} + \frac{c}{\lpf{N}^{\ve}} \right)  \ \leq \ |R|^{n+1} \cdot \lpf{N}^{-\beta}.
		\end{equation}
	\end{proof}
	Let us now prove Proposition~\ref{prop: config count}.
	\begin{proprep}[\ref{prop: config count}]
		Let $\ve \in (0,1]$. Let $\mathbf P = \{ P_1,\ldots,P_{m}\} \subset \Z[y_1,\ldots,y_n]$ be an independent family of polynomials. There exists $\gamma = \gamma(\mathbf P)$ such that, for any finite commutative ring $R$ with $\lpf{\mathrm{char}(R)}$ sufficiently large and any subsets $A_0,\ldots, A_m$ of $R$ such that 
		\begin{equation}
			\ve|A_0|\cdots |A_m| \ > \ |R|^{m+1} \cdot \lpf{\mathrm{char}(R)}^{-\gamma},
		\end{equation}
		the normalized number of nontrivial configurations
		\begin{equation}
			S \ := \ \frac{\# \{ (x,y) \in R\times R^{n} : (x,x+P_1(y),\ldots, x+P_m(y)) \in A_0 \times A_1 \times \cdots \times A_m \} }{|R|^{n+1}}	
		\end{equation}
		satisfies
		\begin{equation}\label{ccc eqn 1}
			(1-\ve)\frac{|A_0|\cdots |A_m|}{|R|^{m+1}} \ < \ S \ < \ (1+\ve)\frac{|A_0|\cdots |A_m|}{|R|^{m+1}}.
		\end{equation}
	\end{proprep}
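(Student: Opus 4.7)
The plan is to decompose the normalized count of all configurations in $A_0 \times \cdots \times A_m$ into the nontrivial part (which is $S$) and the degenerated part, bound each separately using Theorem~\ref{main thm for intro} and Proposition~\ref{prop: bound on number of degen configs}, and then choose $\gamma$ small enough that the hypothesis forces the combined error to be dominated by $\ve \cdot |A_0|\cdots|A_m|/|R|^{m+1}$.

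Concretely, let $N = \mathrm{char}(R)$ and set
\begin{equation*}
S_{\mathrm{tot}} := \frac{\#\{(x,y) \in R \times R^n : (x, x+P_1(y), \ldots, x+P_m(y)) \in A_0 \times \cdots \times A_m\}}{|R|^{n+1}},
\end{equation*}
so that $S = S_{\mathrm{tot}} - S_{\mathrm{deg}}$, where $S_{\mathrm{deg}}$ is the normalized count of degenerated configurations in the same product. Applying Theorem~\ref{main thm for intro} to the $1$-bounded indicator functions $f_i = 1_{A_i}$ yields constants $C_1,\gamma_1 > 0$ depending only on $\mathbf P$ such that, for $\lpf N > C_1$,
\begin{equation*}
\left| S_{\mathrm{tot}} - \frac{|A_0|\cdots|A_m|}{|R|^{m+1}} \right| \ \leq \ \lpf{N}^{-\gamma_1},
\end{equation*}
while Proposition~\ref{prop: bound on number of degen configs} supplies constants $C_2,\gamma_2 > 0$ depending only on $\mathbf P$ such that, for $\lpf N > C_2$, one has $0 \leq S_{\mathrm{deg}} \leq \lpf{N}^{-\gamma_2}$. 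Combining these via the triangle inequality gives
\begin{equation*}
\left| S - \frac{|A_0|\cdots|A_m|}{|R|^{m+1}} \right| \ \leq \ \lpf{N}^{-\gamma_1} + \lpf{N}^{-\gamma_2} \ \leq \ 2\,\lpf{N}^{-\gamma'},
\end{equation*}
where $\gamma' := \min\{\gamma_1,\gamma_2\}$.

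Set $\gamma := \gamma'/2$, which depends only on $\mathbf P$. For $\lpf N$ sufficiently large (exceeding $\max\{C_1,C_2\}$ and a further threshold ensuring $2\,\lpf{N}^{-\gamma'} \leq \lpf{N}^{-\gamma}$), the hypothesis $\ve|A_0|\cdots|A_m| > |R|^{m+1}\lpf{N}^{-\gamma}$ rearranges to $\lpf{N}^{-\gamma} < \ve \cdot |A_0|\cdots|A_m|/|R|^{m+1}$, so
\begin{equation*}
\left| S - \frac{|A_0|\cdots|A_m|}{|R|^{m+1}} \right| \ \leq \ \lpf{N}^{-\gamma} \ < \ \ve \cdot \frac{|A_0|\cdots|A_m|}{|R|^{m+1}},
\end{equation*}
which is precisely the inequality \eqref{ccc eqn 1}. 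Since the two substantial ingredients---the main theorem and the degeneration estimate---have already been established, the only real subtlety is the quantifier management: $\gamma$ must be fixed in terms of $\mathbf P$ alone, while the threshold on $\lpf N$ is allowed to depend on both $\mathbf P$ and $\ve$ (through the step where the constant factor $2$ is absorbed into a negative power of $\lpf N$). Apart from this bookkeeping, there is no genuine obstacle.
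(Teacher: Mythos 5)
Your proposal is correct and follows essentially the same route as the paper's proof: both decompose the count into total and degenerated parts, apply Theorem~\ref{main thm for intro} to the indicator functions and Proposition~\ref{prop: bound on number of degen configs} to the degenerated count, and absorb the resulting error into $\ve|A_0|\cdots|A_m|/|R|^{m+1}$ via the hypothesis \eqref{intro eqn 6}. The only cosmetic difference is that you use the triangle inequality on $|S - |A_0|\cdots|A_m|/|R|^{m+1}|$ to treat both sides of \eqref{ccc eqn 1} at once, whereas the paper derives the lower bound explicitly and remarks that the upper bound is similar; also, your closing remark that the threshold on $\lpf{\mathrm{char}(R)}$ depends on $\ve$ is an unnecessary caution---as in the paper's proof, it depends only on $\mathbf P$, since $\ve$ is already accounted for inside the hypothesis \eqref{intro eqn 6}.
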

	\begin{proof}
		By Proposition~\ref{prop: bound on number of degen configs}, there exist $C',\beta > 0$ such that, for any finite commutative ring $R$ with characteristic $N$ satisfying $\lpf N > C'$, for any subsets $A_0,\ldots, A_m \subset R$, the number of $(x,y) \in R \times R^n$ such that $(x,x+P_1(y),\ldots, x+P_m(y)) \in A_0 \times A_1 \times \cdots \times A_m$ is a degenerated configuration is at most $|R|^{n+1} \cdot \lpf{N}^{-\beta}$.
		
		By Theorem~\ref{main thm for intro}, there exist $C'', \gamma' > 0$ such that, for any finite commutative ring $R$ with characteristic $N$ satisfying $\lpf{N} > C''$, for any 1-bounded functions $f_0,\ldots,f_m : R \to \C$, one has
		\begin{multline}\label{working out zero density criterion eqn 1}
			\Bigg| \frac{1}{|R|^{n+1}}\sum_{x,y_1,\ldots, y_n \in R} f_0(x)f_1(x+P_1(y_1,\ldots,y_n))\cdots f_{m}(x+P_{m}(y_1,\ldots,y_n)) \\ - \left( \frac{1}{|R|} \sum_{x\in R} f_0(x) \right) \dots \left( \frac{1}{|R|} \sum_{x\in R} f_m(x) \right) \Bigg| \ \leq \ \lpf{N}^{-\gamma'}.
		\end{multline}
		Let $\gamma = \min\{\gamma', \beta\}/2$, and let $C$ be the maximum of $C'$, $C''$, and the least positive integer $C'''$ such that $2 M^{-\gamma} \leq 1$ for all integers $M \geq C'''$.
		
		Suppose $R$ is a finite commutative ring with characteristic $N$ satisfying $\lpf{N} > C$, and let $A_0,\ldots, A_m \subset R$ satisfy
		\begin{equation}\label{ccc eqn 3}
			\ve|A_0|\cdots |A_m| \ > \ |R|^{m+1} \cdot \lpf{N}^{-\gamma}.
		\end{equation}
		Applying \eqref{working out zero density criterion eqn 1} with the setting $f_i = 1_{A_i}$, $i \in \{0,\ldots, m\}$, and denoting by $M$ the cardinality of the set $\{(x,y) \in R\times R^n : (x,x+P_1(y),\ldots, x+P_m(y)) \in A_0 \times \cdots \times A_m \}$ and by $M_1$ (resp. $M_2$) the cardinality of the subset where $(x,\ldots, x+P_m(y))$ is a nontrivial (resp. degenerated) configuration, we observe that
		\begin{equation}
			\left|M/|R|^{n+1} - \prod_{i=0}^{m} |A_i|/|R| \right| \ \leq \ \lpf{N}^{-\gamma'}.
		\end{equation}
		Since $M = M_1 + M_2$, it follows that
		\begin{equation}\label{ccc eqn 2}
			S \ = \ M_1/|R|^{n+1} \ \geq \ -M_2/|R|^{n+1} + \prod_{i=0}^{m} |A_i|/|R| - \lpf{N}^{-\gamma'},
		\end{equation}
		so for the first inequality in \eqref{ccc eqn 1}, it would suffice to show that the right-hand side of \eqref{ccc eqn 2} exceeds $(1-\ve)\frac{|A_0|\cdots |A_m|}{|R|^{m+1}}$, which holds if and only if
		\begin{equation}\label{ccc eqn 4}
			\ve \prod_{i=0}^{m} |A_i|/|R| \ > \ M_2/|R|^{n+1} + \lpf{N}^{-\gamma'}.
		\end{equation}
		By Proposition~\ref{prop: bound on number of degen configs}, we have
		\begin{equation}
			M_2/|R|^{n+1} \ \leq \ \lpf{N}^{-\beta},
		\end{equation}
		so that
		\begin{equation}
			M_2/|R|^{n+1} + \lpf{N}^{-\gamma'} \ \leq \ 2\ \lpf{N}^{-\min\{\beta,\gamma'\}} \ \leq \ \lpf{N}^{-\gamma}
		\end{equation}
		by choice of $\gamma$ and $C$. This inequality and the assumption \eqref{ccc eqn 3} imply \eqref{ccc eqn 4}.
		
		The second inequality in \eqref{ccc eqn 1} is proven similarly, but it is more straightforward.
	\end{proof}

	\section{A PET induction argument}\label{sec: proof of prop: Us control}
	The goal of this section is to prove Proposition~\ref{prop: Us control} via a PET induction argument.
	
	Here is a very rough sketch of the proof of Proposition~\ref{prop: Us control} that omits all the complications unique to our setting. The main prerequisites to the proof are two lemmas, which we state later. The first lemma (Lemma~\ref{lem: Ud bound if invertible}) asserts that, over finite commutative rings, Gowers uniformity (semi)norms  $\norm{\cdot}_{U^s}$ control certain linear averages, and the second lemma (Lemma~\ref{linearization step}) relates a given polynomial average to another polynomial average with lower complexity. To prove Proposition~\ref{prop: Us control}, we apply the second lemma repeatedly until some average is obtained to which the first lemma may be applied, then synthesize the bounds arising from the second lemma.
	
	The rest of this section is organized as follows.
	
	We first prove Lemma~\ref{lem: Ud bound if invertible} in Subsection~\ref{subsec: proof of lem: Ud bound if invertible} and Lemma~\ref{linearization step} in  Subsection~\ref{subsec: proof of linearization step}.
	
	Next, in Subsection~\ref{subsec: proof of prop: Us control in case y, y2}, we give an ad hoc proof of Proposition~\ref{prop: Us control} in the special case that $\mathbf P = \{y,y^2\} \subset \Z[y]$. This proof will hint at the ways that we handle the presence of zero divisors in a general commutative ring. In the rest of Subsection~\ref{subsec: proof of prop: Us control in case y, y2}, with the benefit of the specificity provided by the ad hoc proof, we describe potential complications of the general proof of Proposition~\ref{prop: Us control}.
	
	Afterwards, we will collect and develop the technical notions that we use to properly formulate the argument in the $\{y,y^2\}$ and general cases. The notions of $\zn$-height and essential distinctness modulo $N$, although already introduced in either the introduction or in the ad hoc proof, are further developed in Subsections~\ref{subsec: zn height}~and~\ref{subsec: essential distinctness}. Weight sequences are introduced in Subsection~\ref{subsec: weight sequences}, and permissible operations in Subsection~\ref{subsec: PET algorithms}. As will be described in the analysis at the end of Subsection~\ref{subsec: proof of prop: Us control in case y, y2}, one of the two main lemmas, Lemma~\ref{linearization step}, must be rewritten and modified in terms of these technical notions; in Subsection~\ref{subsec: proof of lem: PET inductive step} we prove Lemma~\ref{lem: PET inductive step}, the upgraded version of Lemma~\ref{linearization step} suitable for our actual proof of Proposition~\ref{prop: Us control}.
	
	Finally, we prove Proposition~\ref{prop: Us control} in Subsection~\ref{subsec: proof of prop: Us control}.
	
	\subsection{Gowers norms control certain linear averages in rings}\label{subsec: proof of lem: Ud bound if invertible}
	The goal of this subsection is to prove Lemma~\ref{lem: Ud bound if invertible}. We first need to define some notation and prove an auxiliary lemma.
	
	Given a finite commutative ring $R$ and a vector $(a_0,a_1) \in R^2$, write $(a_0,a_1)^\perp = (a_1,-a_0)$, and for a positive integer $n$, given $(a_1,\ldots, a_n)$ and $(b_1,\ldots, b_n) \in R^n$, write
	\begin{equation} (a_1,\ldots, a_n) \cdot (b_1,\ldots,b_n) = \sum_{i=1}^n a_ib_i \in R.
	\end{equation}
	The following lemma is an ``invertible'' version of \cite[Lemma 5.2]{prend} in the ring setting.
	\begin{lemma}\label{lem: Ud estimate} Let $d \geq 2$ and $n \geq 1$ be integers. Let $R$ be a finite commutative ring with characteristic $N$. Let $R' = R^n$ with coordinatewise addition and multiplication. Let $g_0,g_1,\ldots, g_d : R' \to \C$ be 1-bounded functions. Let $\vec{a_2}, \ldots, \vec{a_d} \in (\mathbb{Z}_N^n)^2 \subset (R')^2$ be such that, for each $i \in \{2,\ldots, d\}$, both entries of $\vec{a_i}$ are invertible in $R'$ and such that $\vec{a_i} \cdot \vec{a_j}^\perp$ is invertible in $R'$ whenever $i \neq j$. Then we have
		\be
		\left| \cavg{z_0,z_1}{R'} g_0(z_0)g_1(z_1)\prod_{i=2}^d g_i(\vec{a_i} \cdot \vec{z})\right| \ \leq \ ||g_d||_{U^d},
		\ee
		writing $\vec{z} = (z_0,z_1) \in (R')^2$. For clarity, in the formulation and proof of this lemma, the only dot products that are written with the symbol $\cdot$ are taken with vector entries in $R'$.
	\end{lemma}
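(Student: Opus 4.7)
The plan is to prove the bound by applying the Cauchy--Schwarz inequality $d$ times in succession, peeling off one of $g_0, g_1, \ldots, g_{d-1}$ at each step, until only a $d$-fold finite difference of $g_d$ remains, which can then be identified with $\|g_d\|_{U^d}^{2^d}$. This is the standard Cauchy--Schwarz--Gowers argument; the only new feature, compared to the finite field setting, is that in place of the ``general position'' afforded by working over a field, one relies on the invertibility hypotheses on the entries of the $\vec{a_i}$ and on the $\vec{a_i} \cdot \vec{a_j}^\perp$ to carry out the requisite changes of variables.

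Concretely, I would first apply Cauchy--Schwarz with $z_0$ as the outer averaging variable, using $|g_0| \leq 1$ to drop the factor $g_0(z_0)$, then expand the resulting square and substitute $z_1' = z_1 + h_1$. This yields
\[ |T_0|^2 \ \leq \ \mathbb{E}_{h_1, z_0, z_1} g_1^{(1)}(z_1) \prod_{i=2}^d g_i^{(1)}(L_i(z_0, z_1)), \]
where $L_i(z_0,z_1) := \vec{a_i} \cdot \vec{z}$, $g_1^{(1)}(u) := g_1(u)\overline{g_1(u + h_1)}$, and $g_i^{(1)}(u) := g_i(u) \overline{g_i(u + a_i^{(2)} h_1)}$ for $i \geq 2$ are all 1-bounded. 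A second Cauchy--Schwarz in $z_1$ (to eliminate $g_1^{(1)}$), followed by the substitution $z_0' = z_0 + h_2$, gives
\[ |T_0|^4 \ \leq \ \mathbb{E}_{h_1, h_2, z_0, z_1} \prod_{i=2}^d g_i^{(2)}(L_i(z_0, z_1)), \]
where each $g_i^{(2)}$ is the second-order finite difference of $g_i$ in the directions $a_i^{(2)} h_1$ and $a_i^{(1)} h_2$. Thus the first two linear forms have been eliminated, leaving $d-1$ linear forms over the same two free variables.

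From step three onwards no coordinate axis is naturally aligned with a remaining $L_k$, so at each subsequent step I would perform an invertible linear change of variables on $(z_0, z_1)$ to place the form to be eliminated (say $L_k$) into a coordinate position, and then apply Cauchy--Schwarz as before, introducing a new shift variable $h_k$. The invertibility of $\vec{a_k}$ and of the ``cross products'' $\vec{a_i} \cdot \vec{a_j}^\perp$ guarantees that any such change of variables is a bijection on $(R')^2$, and, by direct calculation, it also guarantees that the pairwise invertibility condition is inherited by the new coefficient vectors after the change. After $d$ total Cauchy--Schwarz applications, only nested finite differences of $g_d$ in $d$ directions remain; a final rescaling $h_k \mapsto c_k^{-1} h_k$ of each shift variable, with $c_k$ a product of entries of the $\vec{a_i}$ and of $\vec{a_i} \cdot \vec{a_j}^\perp$ (all invertible by hypothesis), identifies the resulting average with exactly $\|g_d\|_{U^d}^{2^d}$, and taking $2^d$-th roots yields the stated bound.

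The main obstacle will be the careful bookkeeping required to verify, at every iteration, that the change of variables is a bijection (equivalently, that a certain $2 \times 2$ determinant over $R'$ is a unit) and that the coefficient multiplying each new shift $h_k$ inside the final nested finite difference of $g_d$ is itself a unit, so that the rescaling step is legitimate. Both facts reduce to the preservation of the pairwise invertibility condition under the $GL_2(R')$ changes of basis used throughout, but explicitly tracking the coefficients and checking invertibility at each stage is the technically delicate portion of the argument.
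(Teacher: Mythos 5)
Your proposal is correct in outline but takes a genuinely different route from the paper. The paper proves this lemma by induction on $d$: the base case $d = 2$ is handled by Fourier analysis---Lemma~\ref{config rearrangement product ver} converts the constrained sum into a character average, which is then estimated via H\"older with exponents $(2,4,4)$ and Lemmas~\ref{Fourier dilate trivial bound} and~\ref{lem: u2l4}, which relate the $L^2$ and $L^4$ norms of dilated Fourier transforms to the $U^2$ norm. The inductive step $d > 2$ writes the average as $\cavg{X}{\widehat{R'}} H(X) g_2'(X)$, applies Cauchy--Schwarz once, parametrizes the kernel $\{\vec h : \vec{a_2}\cdot \vec h = 0_{R'}\}$ by $R'$ via $x \mapsto (a_1 x, -a_0 x)$, invokes the induction hypothesis for each $\vec h$, and finishes with the change of variables enabled by the invertibility of $\vec{a_d}\cdot\vec{a_2}^\perp$. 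You instead unroll everything into $d$ direct applications of Cauchy--Schwarz (the Gowers--Cauchy--Schwarz/box-norm argument), eliminating one of $g_0,\dots,g_{d-1}$ per step and tracking the coefficient vectors through explicit $GL_2$ changes of basis on $(z_0,z_1)$. This dispenses with the Fourier-analytic input entirely, at the cost of the bookkeeping you flag. That bookkeeping does check out: writing $M$ for the change-of-variables matrix used to put $L_k$ into a coordinate position, one computes that $\det M$ is a unit, that the new cross products are $\vec{b_i}\cdot\vec{b_j}^\perp = (\det M)^{-1}(\vec{a_i}\cdot\vec{a_j}^\perp)$, and that the individual new entries are $(a_k^{(2)})^{-1}(\vec{a_i}\cdot\vec{a_k}^\perp)$ and $(a_k^{(2)})^{-1}a_i^{(2)}$---all units under the stated hypotheses---so the invariants are preserved at each step and the final rescaling of the $h_k$ is legitimate. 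In short, the paper's route trades explicit coordinate changes for Plancherel and an inductive call, while yours is more hands-on and self-contained; both are valid.
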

	\begin{proof}
		We induct on $d$. Consider the base case $d = 2$, and write $\vec{a_2} = (a_0,a_1)$. Recall that if $X = (\chi_1,\ldots, \chi_n) \in \widehat{R'}$ and $c = (c_1,\ldots, c_n) \in \mathbb{Z}_N^n$, then $X^{c}$ denotes $(\chi_1^{c_{1}},\ldots, \chi_n^{c_{n}}) \in \widehat{R'}$. By Lemma~\ref{config rearrangement product ver} and H\"{o}lder's inequality, one obtains
		\ba
		\left| \sum_{z_0,z_1 \in R'} g_0(z_0)g_1(z_1)g_2(\vec{a_2} \cdot \vec{z})\right| \ & = \ \left| \sum_{\substack{z_0,z_1,z_2 \in R' \\ a_0z_0 + a_1z_1 - z_2 = 0_{R'}}} g_0(z_0)g_1(z_1)g_2(z_2) \right| \\
		& = \ |R'|^3 \left| \cavg{X}{\widehat{R'}} \hat{g_0}(X^{a_0})\hat{g_1}(X^{a_1})\hat{g_2}(X^{-1}) \right|\\
		& \leq \ |R'|^3 \norm{g_0'g_1'\ol{\hat{\ol{g_2}}} }_{L^1} \\
		& \leq \ |R'|^3 \norm{g_0'}_{L^2} \norm{g_1'}_{L^4}\norm{\ol{\hat{\ol{g_2}}}}_{L^4},
		\ea
		where $g_i'(X) := \hat{g_i}(X^{a_i})$ for $i \in \{0,1\}$.
		Since $a_0, a_1 \in \mathbb{Z}_N^n$ are each invertible in $R'$, by Lemma~\ref{Fourier dilate trivial bound} one has $\norm{g_0'}_{L^2} \leq |R'|^{-1/2}$ and $\norm{g_1'}_{L^4} \leq |R'|^{-1/4}$. By Lemma~\ref{lem: u2l4}, one also has $\norm{\ol{\hat{\ol{g_2}}}}_{L^4} = \norm{\hat{\ol{g_2}}}_{L^4} = |R'|^{-1/4}\norm{\ol{g_2}}_{U^2} = |R'|^{-1/4}\norm{g_2}_{U^2}$. This proves the base case.
		
		Now suppose $d >2$, and still write $\vec{a_2} = (a_0,a_1)$. Define the function $G : (R')^2 \to \C$ by $G(\vec{z}) = g_0(z_0)g_1(z_1) \prod_{i=3}^d g_i(\vec{a_i} \cdot \vec{z})$. Then 
		\ba
		\sum_{z_0,z_1 \in R'} g_0(z_0)g_1(z_1)\prod_{i=2}^d g_i(\vec{a_i} \cdot \vec{z}) \ & = \  \sum_{z_0,z_1 \in R'} G(\vec{z})g_2(a_0z_0 + a_1z_1)  \\
		& = \  \sum_{\substack{z_0,z_1,z_2 \in R' \\ a_0z_0 + a_1z_1 - z_2 = 0_{R'}}} G(\vec{z})g_2(z_2)  \\
		& = \ \sum_{z_0,z_1,z_2\in R'} G(\vec{z})g_2(z_2) \cavg{X}{\widehat{R'}} X(a_0z_0+a_1z_1-z_2) \\
		& = \ \cavg{X}{\widehat{R'}} \sum_{z_0,z_1 \in R'} G(\vec{z})X(\vec{a_2} \cdot \vec{z}) \sum_{z_2 \in R'} g_2(z_2)X(-z_2) \\
		& = \ |R'| \cavg{X}{\widehat{R'}} H(X) g_2'(X),
		\ea
		where $H(X) := \sum_{z_0,z_1 \in R'} G(\vec{z})X(\vec{a_2}\cdot \vec{z})$ and $g_2'(X) := \hat{g_2}(X^{-1})$. By Cauchy--Schwarz,
		\begin{equation*}
			\left|    \sum_{z_0,z_1 \in R'} g_0(z_0)g_1(z_1)\prod_{i=2}^d g_i(\vec{a_i} \cdot \vec{z})\right\vert \ \leq \ |R'| \norm{H}_{L^2} \norm{g_2'}_{L^2}.
		\end{equation*}
		Note that $\norm{g_2'}_{L^2} \leq |R'|^{-1/2}$ holds by Lemma~\ref{Fourier dilate trivial bound}. Let us analyze the other norm. One can show that
		\be
		\norm{H}_{L^2}^2 \ = \ \sum_{\substack{\vec{z},\vec{w} \in (R')^2 \\ \vec{a_2} \cdot (\vec z - \vec w)=0_{R'}}} G(\vec z) \ol{G}(\vec w) = \sum_{\substack{\vec h \in (R')^2 \\ \vec{a_2} \cdot \vec{h} = 0_{R'}}} \sum_{\vec{z} \in (R')^2} \ol{G}(\vec z)G(\vec z + \vec h). 
		\ee
		Now fix $\vec h = (h_0,h_1) \in (R')^2$, and let $\tilde{g}_0 = \Delta_{h_0}g_0$, $\tilde{g}_1 = \Delta_{h_1}g_1$, and $\tilde{g}_i = \Delta_{\vec{a_i} \cdot \vec h} g_i$ for each $i \geq 3$. Then by the induction hypothesis, we have
		\be
		\left| \cavg{z_0,z_1}{R'} \ol{G}(\vec z)G(\vec z + \vec h) \right| \ = \ \left| \cavg{z_0,z_1}{R'} \tilde{g}_0(z_0) \tilde{g}_1(z_1) \prod_{i=3}^d \tilde{g}_i(\vec{a_i} \cdot \vec z) \right| \ \leq \ \norm{\Delta_{\vec{a_d} \cdot \vec h} g_d}_{U^{d-1}}.
		\ee
		We claim that the set $S = \{ \vec h \in (R')^2 : \vec{a_2} \cdot \vec{h} = 0_{R'} \}$ is in bijective correspondence with $R'$ via the map $\psi : R' \to S$ defined by $\psi(x) = (a_1x,-a_0x)$. First, $\psi$ is injective. Indeed, if $\psi(x) = \psi(y)$, then $(a_1x, -a_0x) = (a_1y, -a_0y)$, which implies $x = y$ since $a_0$ and $a_1$ are invertible in $R'$. Next, we have $|S| = |R'|$. Indeed, for each $x \in R'$, we observe that $(x, -a_0 \inv a_1 x) \in S$; moreover, if $x,y \in R'$ are such that $(x,y) \in S$, then the equation $y = -a_0\inv a_1 x$ uniquely determines $y$, proving the claim. With this bijection in hand, we can rewrite
		\be
		\sum_{\substack{\vec h \in (R')^2 \\ \vec{a_2} \cdot \vec{h} = 0_{R'}}} \norm{\Delta_{\vec{a_d} \cdot \vec h} g_d}_{U^{d-1}} = \sum_{x \in R'} \norm{\Delta_{(\vec{a_d} \cdot \vec{a_2}^\perp)x} g_d}_{U^{d-1}} = \sum_{x \in R'} \norm{\Delta_{x} g_d}_{U^{d-1}},
		\ee
		changing variables $x \mapsto (\vec{a_d} \cdot \vec{a_2}^\perp)^{-1}x$ in the second equality with the help of the fact that $\vec{a_d} \cdot \vec{a_2}^\perp$ is invertible in $R'$. By repeated applications of Cauchy--Schwarz, we observe that
		\be
		\left(\sum_{x \in R'} \norm{\Delta_{x} g_d}_{U^{d-1}} \right)^{2^{d-1}} \leq \ |R'|^{2^{d-1}-1} \sum_{x \in R'} \norm{\Delta_{x} g_d}_{U^{d-1}}^{2^{d-1}} \ = \ |R'|^{2^{d-1}}\norm{g_d}_{U^d}^{2^d}.
		\ee
		Combining the previous observations, we see that
		\begin{equation*}
			\norm{H}_{L^2} \ \leq |R'|^{3/2} \norm{g_d}_{U^d}
		\end{equation*}
		and hence conclude that
		\be
		\left| \sum_{z_0,z_1 \in R'} g_0(z_0)g_1(z_1)\prod_{i=2}^d g_i(\vec{a_i} \cdot \vec{z})\right| \ \leq \ |R'|^{1-\frac{1}{2}+\frac{3}{2}} \norm{g_d}_{U^d} = |R'|^{2}\norm{g_d}_{U^d},
		\ee
		completing the proof.
	\end{proof}
	Now we are ready to prove the following lemma, which asserts that Gowers norms control ``invertible'' linear averages.
	\begin{lemma}\label{lem: Ud bound if invertible}
		Let $d$ and $n$ be positive integers. Let $f_0,f_1,\ldots, f_d : R \to \C$ be 1-bounded functions on a finite commutative ring $R$ with characteristic $N$. Let $R' = R^n$ with coordinatewise addition and multiplication. Let $a^{(i)} \in \mathbb{Z}_N^n$, $i \in \{1,\ldots, d\}$, be invertible elements of $R'$ such that $a^{(i)}-a^{(j)}$ is invertible in $R'$ whenever $i \neq j$. Then
		\be \label{Ud bound in linear case}
		\left| \rcavg{x} \cavg{y}{R'} f_0(x) \prod_{i=1}^d  f_i(x+a^{(i)}\cdot y)\right| \ \leq \  ||f_d||_{U^d}.
		\ee
		For clarity, in the formulation and proof of this lemma, unless otherwise indicated, dot products are taken with vector entries in $R$.
	\end{lemma}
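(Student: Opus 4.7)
The proof proceeds by induction on $d$, paralleling the structure of the proof of Lemma~\ref{lem: Ud estimate} but adapted to the asymmetric setting where $x \in R$ rather than $x \in R'$, so that the final bound is in terms of a Gowers norm of an $R$-function, matching the statement.

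For the base case $d = 1$, factor the expression as $\rcavg{x} f_0(x) \cdot \cavg{y}{R'} f_1(x + a^{(1)} \cdot y)$. Because each coordinate of $a^{(1)}$ is a unit in $R$ (which follows from $a^{(1)}$ being invertible in $R' = R^n$ with coordinatewise structure), the map $y \mapsto a^{(1)} \cdot y$ equidistributes $R'$ onto $R$. Hence the inner expectation equals $\cavg{w}{R} f_1(w) = \|f_1\|_{U^1}$ up to a unimodular factor, independently of $x$, and the result follows from the 1-boundedness of $f_0$.

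For the inductive step, assuming the result for $d-1$: apply Cauchy--Schwarz in $x$ to peel off $f_0$, introduce a duplicated variable $y' = y + h$ with $h \in R'$, and perform the substitution $x \mapsto z - a^{(1)} \cdot y$. This trivializes the $y$-dependence of the $i = 1$ factor, which becomes $\overline{\Delta_{a^{(1)} \cdot h} f_1(z)}$, and leaves $d-1$ factors of the form $g_i^{(h)}(z + b^{(i)} \cdot y)$ for $i = 2,\ldots, d$, where $b^{(i)} = a^{(i)} - a^{(1)}$ and $g_i^{(h)}(w) = f_i(w)\overline{f_i(w + a^{(i)} \cdot h)}$. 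The critical bookkeeping point is that the vectors $b^{(2)},\ldots, b^{(d)}$ still satisfy the invertibility hypotheses: $b^{(i)} = a^{(i)} - a^{(1)}$ is invertible in $R'$ by assumption, and $b^{(i)} - b^{(j)} = a^{(i)} - a^{(j)}$ is invertible in $R'$ whenever $i \neq j$. Applying the inductive hypothesis to this $(d-1)$-factor average then yields $|\Lambda|^2 \leq \rcavg{h} \|g_d^{(h)}\|_{U^{d-1}}$; since $g_d^{(h)} = \overline{\Delta_{a^{(d)} \cdot h} f_d}$, this equals $\cavg{h}{R'} \|\Delta_{a^{(d)} \cdot h} f_d\|_{U^{d-1}}$.

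To conclude, use the invertibility of the entries of $a^{(d)}$ to substitute $h' = a^{(d)} \cdot h \in R$, converting the expectation to $\cavg{h'}{R} \|\Delta_{h'} f_d\|_{U^{d-1}}$. Applying Jensen's inequality with exponent $2^{d-1}$ together with the recursive identity $\|f_d\|_{U^d}^{2^d} = \rcavg{h'} \|\Delta_{h'} f_d\|_{U^{d-1}}^{2^{d-1}}$ bounds this by $\|f_d\|_{U^d}^2$, so that $|\Lambda| \leq \|f_d\|_{U^d}$, completing the induction. The main obstacle is simply verifying that invertibility of the $a^{(i)}$ and of the $a^{(i)} - a^{(j)}$ in $R'$ --- rather than merely in $R$ --- is precisely what propagates the hypothesis through the substitution $x \mapsto z - a^{(1)} \cdot y$ and the final variable change $h \mapsto a^{(d)} \cdot h$; once this is arranged, the proof is a standard van der Corput iteration.
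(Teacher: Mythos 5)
Your proof is correct, and it takes a more streamlined route than the paper. The paper first establishes the auxiliary Lemma~\ref{lem: Ud estimate}, whose functions and Gowers norms all live on $R'$ with coefficient pairs $\vec{a_i} \in (R')^2$, and then derives Lemma~\ref{lem: Ud bound if invertible} from it by inserting two extra averaging variables via $\cavg{y}{R'} g(y) = \cavg{y,z_0,z_1}{R'} g(y - z_0 + z_1)$, changing variables $x \mapsto x + a^{(1)}\cdot z_0$, passing to a pointwise maximum over $(x,y)$, and invoking the auxiliary lemma. You instead run the van der Corput--Cauchy--Schwarz iteration directly on the average $\rcavg{x}\cavg{y}{R'}$: after squaring, introducing $h \in R'$, and shifting $x \mapsto z - a^{(1)}\cdot y$, the $i=1$ factor detaches as $\ol{\Delta_{a^{(1)}\cdot h} f_1}(z)$, the remaining $d-1$ factors acquire coefficients $b^{(i)}=a^{(i)}-a^{(1)}$ that still satisfy the invertibility hypotheses (the heart of the matter), and the inductive hypothesis closes the loop. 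The conversion of $\cavg{h}{R'}\norm{\Delta_{a^{(d)}\cdot h}f_d}_{U^{d-1}}$ to $\cavg{h'}{R}\norm{\Delta_{h'}f_d}_{U^{d-1}}$ correctly uses that $h\mapsto a^{(d)}\cdot h$ is a surjective map $R'\to R$ with equal-size fibers whenever every entry of $a^{(d)}$ is a unit, and the final H\"older/Jensen step with exponent $2^{d-1}$ is standard. Your route is more elementary and makes the inductive structure transparent; the paper's route has the virtue of isolating Lemma~\ref{lem: Ud estimate} as a self-contained $R'$-level statement. One small imprecision in your closing sentence: the shift $x\mapsto z-a^{(1)}\cdot y$ itself needs no invertibility (it is a translation of $x$ by a constant for each fixed $y$); the invertibility of $a^{(1)}$ is actually consumed at the base case $d=1$, while invertibility of $a^{(d)}$ and of the differences $a^{(i)}-a^{(j)}$ powers the inductive step as you describe.
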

	\begin{proof}
		In this proof, we write $y = (y_1,\ldots, y_n) \in R'$ and $a^{(i)} = (a^{(i)}_1,\ldots, a^{(i)}_n) \in R'$.
		
		Suppose $d = 1$. By assumption, for every $i \in \{1,\ldots, n\}$, $a^{(1)}_i$ is a unit in $R$. Then, by, for example, the change of variables $y_1 \mapsto (a^{(1)}_{1})^{-1}(-x+y_1-\sum_{j\neq 1} a^{(1)}_j y_j)$, we obtain
		\be
		\left| \rcavg{x}\cavg{y}{R'} f_0(x)f_1(x+a^{(1)}\cdot y) \right| \ = \ \left| \rcavg{x} f_0(x) \rcavg{y_1} f_1(y_1) \right|
		\ \leq \ \norm{f_1}_{U^1}
		\ee
		by the 1-boundedness of $f_0$.
		
		Suppose $d \geq 2$. Observe that for any function $g : R' \to \C$, one has
		\be
		\cavg{y}{R'} g(y) = \cavg{y,z_0,z_1}{R'} g(y-z_0+z_1).
		\ee
		For each $x \in R$, applying this observation to the left-hand side of \eqref{Ud bound in linear case} with $g(y) = \prod_{i=1}^d  f_i(x+a^{(i)}\cdot y)$ and changing variables $x \mapsto x + a^{(1)}\cdot z_0$, one obtains
		\begin{multline}\label{maximum display}
			\left| \rcavg{x}\cavg{y}{R'} f_0(x) \prod_{i=1}^d  f_i(x+a^{(i)}\cdot y)\right| \\
			= \Bigg| \rcavg{x}\cavg{y,z_0,z_1}{R'}f_0(x+a^{(1)}\cdot z_0)f_1(x+a^{(1)}\cdot (y+z_1))  \prod_{i=2}^d f_i(x+a^{(i)}\cdot y+(a^{(1)}-a^{(i)})\cdot z_0 + a^{(i)}\cdot z_1)  \Bigg| \\
			\leq \max_{(x,y) \in R \times R'} \Bigg| \cavg{z_0,z_1}{R'}f_0(x+a^{(1)}\cdot z_0)f_1(x+a^{(1)}\cdot (y+z_1)) \prod_{i=2}^d f_i(x+a^{(i)} \cdot y+(a^{(1)}-a^{(i)})\cdot z_0+a^{(i)} \cdot z_1)  \Bigg|.
		\end{multline}
		Let $(x', y') \in R \times R'$ be such that the maximum is achieved in \eqref{maximum display}. Let $g_0(z) := f_0(x'+a^{(1)}\cdot z)$ and $g_1(z) := f_1(x'+a^{(1)}\cdot(y'+z))$, and for $i \geq 2$, let $g_i(z) := f_i(x'+a^{(i)}\cdot y'+1_{R'}\cdot z)$ and $\vec{a_i} := (a^{(1)}-a^{(i)},a^{(i)}) \in (R')^2$. By assumption, the $\vec{a_i}$ satisfy the hypotheses of Lemma~\ref{lem: Ud estimate}; thus, one has
		\ba
		\left| \rcavg{x} \cavg{y}{R'} f_0(x) \prod_{i=1}^d  f_i(x+a^{(i)}\cdot y)\right| \ & \leq \ \left| \cavg{z_0,z_1}{R'} g_0(z_0)g_1(z_1)\prod_{i=2}^d g_i(\vec{a_i} \cdot \vec{z}) \right| \\
		& \leq \  ||g_d||_{U^d},
		\ea
		where $\vec{z} = (z_0,z_1) \in (R')^2$ and the dot product here is taken with vector entries in $R'$ to match the notation in the applied lemma. Finally, we observe that $||g_d||_{U^d(R')} = ||f_d||_{U^d(R)}$, which can be verified in a trivial but cumbersome way from the definition of the $U^d$ norm.
	\end{proof}
	
	Let us exhibit an example which demonstrates that the invertibility hypotheses are necessary in Lemma~\ref{lem: Ud bound if invertible}. Let $R = \mathbb{Z}_6$, and define the 1-bounded functions $f_0,f_1 : R \to \C$ by
	$f_1(0) = f_1(3) = e^{\pi i/4}$, $f_1(1) = f_1(4) = e^{-\pi i /4}$, $f_1(2) = f_1(5) = e^{3\pi i /8}$ and $f_0(x) = 1/f_1(x)$ for each $x \in R$. Then
	\begin{equation}
		\rcavg{x,y} f_0(x) f_1(x+3y) \ = \ \rcavg{x,y} 1 \ = \ 1,
	\end{equation}
	but $\norm{f_1}_{U^1} = \left| \frac{e^{\pi i/4} + e^{-\pi i /4} + e^{3\pi i /8}}{3} \right| < 1$. Moreover, there is even no way to salvage the lemma by allowing the bound on the right-hand side of \eqref{Ud bound in linear case} to be $\norm{f_d}_{U^s}$, where $s > d$. For the function $f_1$ above, some of its discrete derivatives can be computed as follows:
	\begin{equation}
		\begin{array}{ |l||c|c|c| } 
			\hline
			& x= 0, 3 & x = 1, 4 & x = 2, 5 \\
			\hline
			\Delta_1 f_1(x) & -i & -1 & -i \\ 
			\hline
			\Delta_{1,1} f_1(x) & -i & i & 1 \\ 
			\hline
			\Delta_{1,1,1} f_1(x) & -1 & -i & -i \\ 
			\hline
			\Delta_{1,1,1,1} f_1(x) & i & 1 & -i \\ 
			\hline
			\Delta_{1,1,1,1,1} f_1(x) & -i & -i & -1 \\ 
			\hline
			\Delta_{1,1,1,1,1,1} f_1(x) & 1 & -i & i \\ 
			\hline
			\Delta_{1,1,1,1,1,1,1} f_1(x) & -i & -1 & -i \\ 
			\hline
		\end{array}
	\end{equation}
	Note that $\Delta_{1,1,1,1,1,1,1} f_1 = \Delta_1 f_1$.  
	Then, for example, to show that $\norm{f_1}_{U^2} < 1$, we estimate
	\begin{multline}
		\norm{f_1}_{U^2}^{2^2} \ = \ \rcavg{x,h_1,h_2} \Delta_{h_1,h_2} f_1(x) \ = \ \frac{1}{|R|^2} \left(\rcavg{x} \Delta_{1,1}f_1(x)\right) + \frac{1}{|R|^3} \sum_{\substack{x,h_1,h_2 \in R \\ (h_1,h_2) \neq (1,1) }} \Delta_{h_1,h_2}f_1(x) \\
		\leq \ \frac{1}{|R|^2} \left| \rcavg{x} \Delta_{1,1}f_1(x)\right| + \frac{1}{|R|^3} \sum_{\substack{x,h_1,h_2 \in R \\ (h_1,h_2) \neq (1,1) }} \left| \Delta_{h_1,h_2}f_1(x)  \right| \\
		\leq \ \frac{1}{|R|^2} \left| \rcavg{x} \Delta_{1,1}f_1(x)\right| + \frac{|R|(|R|^2-1)}{|R|^3} \ < \ \frac{1}{|R|^2} + \frac{|R|(|R|^2-1)}{|R|^3},
	\end{multline}
	where the first inequality holds by the triangle inequality, the second inequality holds trivially since the discrete derivative is 1-bounded, and the third inequality holds since $\Delta_{1,1}f_1$ has constant magnitude 1 but is not itself constant. Similarly, since $\Delta_{1,1,1} f_1$ has constant magnitude 1 but is not constant, it follows that $\norm{f_1}_{U^3} < 1$ as well, and so on.
	
	\subsection{A van der Corput-type estimate}\label{subsec: proof of linearization step}
	The goal of this subsection is to prove Lemma~\ref{linearization step}, which can be viewed as a finite ring analogue of the classical van der Corput inequality (see, e.g., Theorem 2.2 and the following remark in \cite{ertau}). We first prove two auxiliary lemmas, the first of which is morally a special case of \cite[Lemma 3.1]{prend}. We include the proof here for completeness.
	\begin{lemma} \label{lem: vdc inequality}
		Let $n$ be a positive integer, and let $R$ be a finite commutative ring with characteristic $N$. Let $g : R' \to \C$ be a function on the ring $R' = R^n$ with coordinatewise addition and multiplication. Let $\mathcal H \subset \mathbb{Z}_N^n$ be nonempty. For $h = (h_1,\ldots,h_n) \in \mathbb{Z}_N^n$, let $r_\mathcal{H}(h)$ denote the number of pairs $(h',h'') \in \mathcal{H}^2$ such that $h'_i-h''_i \equiv h_i \bmod N$ for each $i$. Then
		\be
		\left| \cavg{y}{R'} g(y) \right|^2 \ \leq \ \frac{1}{|\mathcal H|^2} \sum_{h \in \mathbb{Z}_N^n} r_\mathcal{H}(h)  \cavg{y}{R'} g(y+h)\ol{g}(y).
		\ee
	\end{lemma}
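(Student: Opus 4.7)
The plan is to run a standard van der Corput argument, using the embedding $\Z_N \hookrightarrow R$ (which exists because $R$ has characteristic $N$) coordinatewise to view $\mathcal{H} \subset \Z_N^n$ as a subset of $R'$, so that shifts $y \mapsto y + h'$ for $h' \in \mathcal{H}$ make sense in $R'$ and the average $\cavg{y}{R'}$ is invariant under such shifts.

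First, I would use shift-invariance to write
\begin{equation*}
\cavg{y}{R'} g(y) \ = \ \frac{1}{|\mathcal{H}|} \sum_{h' \in \mathcal{H}} \cavg{y}{R'} g(y+h') \ = \ \cavg{y}{R'} \frac{1}{|\mathcal{H}|} \sum_{h' \in \mathcal{H}} g(y+h').
\end{equation*}
Then I would apply the Cauchy--Schwarz inequality on the outer average over $y \in R'$, bounding the modulus squared by
\begin{equation*}
\cavg{y}{R'} \left| \frac{1}{|\mathcal{H}|} \sum_{h' \in \mathcal{H}} g(y+h') \right|^2 \ = \ \frac{1}{|\mathcal{H}|^2} \sum_{h',h'' \in \mathcal{H}} \cavg{y}{R'} g(y+h')\overline{g}(y+h'').
\end{equation*}

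Next, for each fixed pair $(h',h'')$, I would use the change of variables $y \mapsto y - h''$ in $R'$ (valid since the additive group is finite) to rewrite $\cavg{y}{R'} g(y+h')\overline{g}(y+h'')$ as $\cavg{y}{R'} g(y + (h'-h''))\overline{g}(y)$, where $h' - h''$ is computed in $\Z_N^n$ and then viewed in $R'$. Collecting terms by the value $h := h'-h'' \in \Z_N^n$ and using the definition of $r_\mathcal{H}(h)$ as the number of pairs $(h',h'') \in \mathcal{H}^2$ with $h'_i - h''_i \equiv h_i \bmod N$ for every $i$ yields
\begin{equation*}
\frac{1}{|\mathcal{H}|^2} \sum_{h',h'' \in \mathcal{H}} \cavg{y}{R'} g(y + (h'-h''))\overline{g}(y) \ = \ \frac{1}{|\mathcal{H}|^2} \sum_{h \in \Z_N^n} r_\mathcal{H}(h) \cavg{y}{R'} g(y+h)\overline{g}(y),
\end{equation*}
which is the desired inequality.

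The only substantive point to verify is that the identifications $\Z_N^n \subset R'$ are consistent: elements of $\Z_N^n$ act as valid shifts on $R'$, the change of variables $y \mapsto y - h''$ is a bijection of $R'$ onto itself preserving the uniform average, and two elements $h', h'' \in \mathcal{H}$ produce the same shift $h'-h''$ in $R'$ precisely when their difference agrees with a given $h \in \Z_N^n$ coordinatewise modulo $N$. Since $\Z_N$ embeds as a subring of $R$ via $1 \mapsto 1_R$, all of these bookkeeping checks are immediate, and no obstacle arises; this is essentially the classical van der Corput inequality transported to the ring setting.
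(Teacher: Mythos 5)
Your argument is correct and is essentially the same proof as in the paper: express the average as an $\mathcal{H}$-averaged shifted sum by invariance, apply Cauchy--Schwarz, change variables to expose the difference $h'-h''$, and reindex by $h \in \mathbb{Z}_N^n$ using $r_{\mathcal H}$. The closing remarks about the embedding $\mathbb{Z}_N^n \hookrightarrow R'$ are good hygiene but do not change the route.
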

	\begin{proof}
		Observe that
		\[\cavg{y}{R'} g(y) = \frac{1}{|\mathcal H|} \cavg{y}{R'} \sum_{h\in\mathcal H} g(y+h).\]
		Thus, by Cauchy--Schwarz, we obtain
		\begin{align*}
			\left| \cavg{y}{R'} g(y) \right|^2 \ & = \ \frac{1}{|\mathcal H|^2} \left| \cavg{y}{R'} \sum_{h\in\mathcal H} g(y+h) \right|^2 \\
			& \leq \ \frac{1}{|\mathcal H|^2} \cavg{y}{R'} \left| \sum_{h\in\mathcal H} g(y+h) \right|^2 \\
			& = \ \frac{1}{|\mathcal H|^2} \cavg{y}{R'}  \sum_{h,h'\in\mathcal H} g(y+h)\ol{g}(y+h') \\
			& = \ \frac{1}{|\mathcal H|^2} \cavg{y}{R'}  \sum_{h,h'\in\mathcal H} g(y+h-h')\ol{g}(y) \\
			& = \ \frac{1}{|\mathcal H|^2} \sum_{h\in\mathbb{Z}_N^n} r_\mathcal{H}(h) \cavg{y}{R'} g(y+h)\ol{g}(y),
		\end{align*}
		as desired.
	\end{proof}
	
	\begin{lemma}\label{vdc step} Let $n$ and $H$ be positive integers. Let $R$ be a finite commutative ring with characteristic $N$, and let $R' = R^n$ with coordinatewise addition and multiplication. Let $g : R \times R' \to \C$ be a 1-bounded function. Let $\mathcal{H}_1 \subset \mathbb{Z}_N^n$. If the set $(\{0,1,\ldots, H-1\}\cup \{N-(H-1),N-(H-1)+1,\ldots, N-1\})^n \setminus \mathcal{H}_1$ is nonempty, then it contains an $h$ such that
		\be
		\rcavg{x} \left| \cavg{y}{R'} g(x,y) \right|^2 \ \leq \ 2^n\left(\frac{|\mathcal{H}_1|}{H^n} + \left| \cavg{(x,y)}{R\times R'} g(x,y+h)\ol{g}(x,y)\right|\right).
		\ee 
	\end{lemma}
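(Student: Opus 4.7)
The plan is to apply Lemma~\ref{lem: vdc inequality} pointwise in $x$ with the cube $\mathcal H = \{0,1,\ldots,H-1\}^n \subset \mathbb{Z}_N^n$ (of cardinality $H^n$) and then average over $x \in R$. Writing $T := (\{0,\ldots,H-1\} \cup \{N-H+1,\ldots, N-1\})^n$ for the given ``small'' set in $\mathbb{Z}_N^n$ and $X(h) := \cavg{(x,y)}{R \times R'} g(x, y+h) \overline{g}(x, y)$, this yields
\[
\rcavg{x} \left|\cavg{y}{R'} g(x,y)\right|^2 \ \leq \ \frac{1}{H^{2n}} \sum_{h \in \mathbb{Z}_N^n} r_\mathcal H(h)\, X(h).
\]
The elementary facts I would verify about $r_\mathcal H$ are that (i) $r_\mathcal H$ is supported on $T$, since every element of $\mathcal H - \mathcal H$ has each coordinate in $\{-(H-1),\ldots,H-1\}$, whose reduction modulo $N$ falls in $T$; (ii) $r_\mathcal H(h) \leq H^n$ for every $h$, by a trivial coordinate-wise counting bound; and (iii) $\sum_h r_\mathcal H(h) = |\mathcal H|^2 = H^{2n}$.

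Using (i) and (ii), together with the triangle inequality, the right-hand side is at most $\frac{1}{H^n} \sum_{h \in T} |X(h)|$. I would then split the sum according to whether $h \in \mathcal H_1$ or $h \in T \setminus \mathcal H_1$. Since $g$ is 1-bounded, $|X(h)| \leq 1$, so the first part contributes at most $|\mathcal H_1|/H^n \leq 2^n |\mathcal H_1|/H^n$. For the complementary part, the estimate $|T \setminus \mathcal H_1| \leq |T| = (2H-1)^n \leq 2^n H^n$ gives
\[
\frac{1}{H^n}\sum_{h \in T \setminus \mathcal H_1} |X(h)| \ \leq \ 2^n \cdot \frac{1}{|T \setminus \mathcal H_1|}\sum_{h \in T \setminus \mathcal H_1} |X(h)|.
\]

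To finish, since $T \setminus \mathcal H_1$ is nonempty by hypothesis, the pigeonhole principle provides some $h \in T \setminus \mathcal H_1$ whose value $|X(h)|$ is at least the average on the right; selecting this $h$ and combining the two contributions yields the required inequality. The argument is largely routine, and the main (small) subtlety is arranging the pigeonhole in the correct direction: we want an $h$ with $|X(h)|$ large enough to serve as an upper bound for the averaged sum, which is why we choose $h$ realizing at least the mean of $|X(\cdot)|$ on $T \setminus \mathcal H_1$. The factor $2^n$ in the conclusion ultimately records the wasteful but clean estimate $|T| = (2H-1)^n \leq 2^n H^n$.
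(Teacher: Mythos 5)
Your proof is correct and follows essentially the same route as the paper's: apply Lemma~\ref{lem: vdc inequality} pointwise in $x$ with a cube of side length $H$, use the support and pointwise bounds on $r_{\mathcal H}$ to reduce to the set $T$, split according to $\mathcal H_1$, and pick a good $h$ in $T\setminus\mathcal H_1$. The only cosmetic differences are your choice of $\{0,\ldots,H-1\}^n$ in place of $\{1,\ldots,H\}^n$, and your use of an averaging/pigeonhole selection of $h$ where the paper simply takes the $h$ maximizing $|X(h)|$ over $T\setminus\mathcal H_1$.
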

	\begin{proof}
		Let $\mathcal H = \{1,2,\ldots, H\}^n \subset \mathbb{Z}_N^n$. For each $x \in R$, apply Lemma~\ref{lem: vdc inequality} for this $\mathcal H$ and the function $g_x(y) = g(x,y)$ to obtain
		\begin{align}
			\rcavg{x} \left| \cavg{y}{R'} g(x,y) \right|^2 \ & \leq \ \rcavg{x} \frac{1}{|\mathcal{H}|^2} \sum_{h \in \mathbb{Z}_N^n} r_\mathcal{H}(h)  \cavg{y}{R'} g_x(y+h)\ol{g_x}(y) \nonumber \\
			& \leq \ \frac{1}{|\mathcal H|} \sum_{h \in \mathbb{Z}_N^n} \frac{r_\mathcal{H}(h)}{|\mathcal H|} \left| \cavg{(x,y)}{R \times R'} g(x,y+h)\ol{g}(x,y) \right|. \label{eqn: vdc step eqn 1}
		\end{align}
		Let us make some simplifications.
		First, the summand in \eqref{eqn: vdc step eqn 1} is zero for all $h = (h_1,\ldots, h_n)$ outside of $S_0 := (\{0,1,\ldots, H-1\}\cup \{N-(H-1),N-(H-1)+1,\ldots, N-1\})^n$, for if some $h_j$ is outside $\{0,1,\ldots, H-1\}\cup \{N-(H-1),N-(H-1)+1,\ldots, N-1\}$, then we must have $H \leq N/2$ and $h_j \in \{H,H+1,\ldots, N-H\} \subset \Z_N$, but, in $\Z_N$, no element of $\{H,H+1,\ldots, N-H\}$ is a difference of two elements in $\{1,\ldots,H\}$, hence $r_\mathcal{H}(h) = 0$.
		
		Second, set $S_1 := S_0 \cap \mathcal{H}_1$ and $S_2 :=  S_0 \setminus \mathcal{H}_1$. We crudely bound $|S_1|$ and $|S_2|$. We have $|S_1| \leq |\mathcal{H}_1| \leq 2^n |\mathcal{H}_1|$. Moreover, since $S_2 = (\{0,1,\ldots,H-1\}\cup \{N-(H-1),\ldots,N-1\})^n \setminus \mathcal{H}_1$, we also have $|S_2| \leq (2(H-1)-1)^n) \leq 2^nH^n = 2^n|\mathcal{H}|$. 
		Third, for any $h \in \mathbb{Z}_N^n$ we denote $\left|\cavg{(x,y)}{R\times R'} g(x,y+h)\ol{g}(x,y)\right|$ by $G(h)$. Hence, \eqref{eqn: vdc step eqn 1} simplifies to
		\begin{align}
			\rcavg{x} \left| \cavg{y}{R'} g(x,y) \right|^2 \ & \leq \ \frac{1}{|\mathcal H|} \sum_{h \in \mathbb{Z}_N^n} \frac{r_\mathcal{H}(h)}{|\mathcal H|} G(h) \nonumber \\
			& = \ \frac{1}{|\mathcal H|}\sum_{h \in S_1}  \frac{r_\mathcal{H}(h)}{|\mathcal H|} G(h) + \frac{1}{|\mathcal H|}\sum_{h \in S_2} \frac{r_\mathcal{H}(h)}{|\mathcal H|} G(h) \nonumber \\
			& \leq \ \frac{1}{|\mathcal H|}\sum_{h \in S_1}  G(h) + \frac{1}{|\mathcal H|}\sum_{h \in S_2} G(h) \label{eqn: vdc step eqn 2} \\
			& \leq \ \frac{|S_1|}{|\mathcal H|} + \frac{|S_2|}{|\mathcal H|}\max_{h \in S_2} G(h) \label{eqn: vdc step eqn 3} \\
			& \leq \ 2^n \left( \frac{|\mathcal{H}_1|}{H^n} + \max_{h \in S_2} G(h) \right) \nonumber,
		\end{align}
		where in \eqref{eqn: vdc step eqn 2} we have used the trivial estimate $r_\mathcal{H}(h)/|\mathcal{H}| \leq 1$ and in \eqref{eqn: vdc step eqn 3} we have used the bound $G(h) \leq 1$, valid since $g$ is 1-bounded. The result follows since $S_2$ is assumed nonempty.
	\end{proof}
	To simplify notation, for $z \in \C$, we let $\mathcal{C}z$ denote the complex conjugate $\overline{z}$; we use this notation only when $z$ is some complex-valued function with a long argument.
	\begin{lemma} \label{linearization step}
		Let $m$, $n$, and $H$ be positive integers. Let $R$ be a finite commutative ring with characteristic $N$, and let $R' = R^n$ with coordinatewise addition and multiplication. Let $P_1,\ldots,P_m : R' \to R$ be functions. Let $f_0,f_1,\ldots, f_m : R \to \C$ be 1-bounded.  Let $\mathcal{H} \subset \mathbb{Z}_N^n$ be a subset.  If the set $(\{0,1,\ldots, H-1\}\cup \{N-(H-1),N-(H-1)+1,\ldots, N-1\})^n \setminus \mathcal{H}$ is nonempty, then it contains an $h$ such that
		\begin{multline}
			\left| \Lambda_{P_1,\ldots,P_m}(f_0,\ldots,f_m) \right| \\ \leq \ 2^{n/2} \left[ \left( \frac{|\mathcal{H}|}{H^n} \right)^{1/2} + \left| \cavg{(x,y)}{R\times R'} \prod_{i=1}^m \prod_{\omega \in \{0,1\}}\mathcal{C}^{\omega+1} f_i(x + P_i(y+\omega h)-P_1(y))\right|^{1/2} \right].
		\end{multline}
		For clarity, $\omega+1$ in the exponent of $\mathcal C$ is one of the integers 1 or 2, whereas $\omega$ in the expression $\omega h$ refers to the ring element $0_{R'}$ or $1_{R'}$. Moreover, $\omega h$ is thus the product in $R'$ of $\omega$ and $h$, not the dot product with vector entries in $R$.
	\end{lemma}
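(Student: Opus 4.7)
The plan is to combine a standard Cauchy--Schwarz reduction with the already-established van der Corput estimate in Lemma~\ref{vdc step}. First, I would write
\[
\Lambda_{P_1,\ldots,P_m}(f_0,\ldots,f_m) \ = \ \cavg{x}{R} f_0(x)\, G(x), \qquad G(x) \ := \ \cavg{y}{R'} \prod_{i=1}^m f_i(x+P_i(y)),
\]
and apply Cauchy--Schwarz in the $x$-variable together with the $1$-boundedness of $f_0$ to obtain
\[
\left| \Lambda_{P_1,\ldots,P_m}(f_0,\ldots,f_m) \right|^2 \ \leq \ \cavg{x}{R} |G(x)|^2 \ = \ \cavg{x}{R} \left| \cavg{y}{R'} g(x,y) \right|^2,
\]
where $g(x,y) := \prod_{i=1}^m f_i(x+P_i(y))$ is $1$-bounded on $R \times R'$.

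Next, I apply Lemma~\ref{vdc step} to this $g$ with the given set $\mathcal{H}$. The hypothesis that $(\{0,\ldots,H-1\} \cup \{N-(H-1),\ldots,N-1\})^n \setminus \mathcal H$ is nonempty is exactly what is required, and it yields an $h$ in that set such that
\[
\cavg{x}{R} \left| \cavg{y}{R'} g(x,y) \right|^2 \ \leq \ 2^n\left( \frac{|\mathcal H|}{H^n} + \left| \cavg{(x,y)}{R\times R'} g(x,y+h)\,\overline{g}(x,y) \right| \right).
\]
Expanding the product inside this last average gives
\[
g(x,y+h)\,\overline{g}(x,y) \ = \ \prod_{i=1}^m f_i(x+P_i(y+h))\, \overline{f_i}(x+P_i(y)).
\]

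The only thing that separates this expression from the one in the statement is the position of $P_1(y)$ as a ``basepoint.'' I would therefore perform the additive change of variables $x \mapsto x - P_1(y)$ inside the average over $R \times R'$ (valid fiberwise for each fixed $y$, since $(R,+)$ is a finite group), which converts the product above into
\[
\prod_{i=1}^m f_i(x+P_i(y+h) - P_1(y))\, \overline{f_i}(x+P_i(y) - P_1(y)) \ = \ \prod_{i=1}^m \prod_{\omega \in \{0,1\}} \mathcal{C}^{\omega+1} f_i(x + P_i(y+\omega h) - P_1(y)),
\]
matching the desired form since $\mathcal{C}^1 = \overline{\,\cdot\,}$ corresponds to $\omega = 0$ and $\mathcal{C}^2 = \mathrm{id}$ to $\omega = 1$. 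Finally, taking square roots and invoking the elementary subadditivity $\sqrt{a+b} \leq \sqrt{a} + \sqrt{b}$ for $a,b \geq 0$ produces the stated bound with the leading factor $2^{n/2}$.

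There is no substantive obstacle here: both Cauchy--Schwarz and the change of variables are entirely formal in the finite-ring setting, and Lemma~\ref{vdc step} already absorbs the combinatorial content about which $h$ one can select. The only subtle point worth double-checking is the bookkeeping in the $x \mapsto x - P_1(y)$ shift: the shift is $y$-dependent, but because it is performed inside the inner $x$-average for each fixed $y$, its Jacobian is trivial on $(R,+)$, and the $y$-dependence drops out cleanly.
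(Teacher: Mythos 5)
Your argument is correct and matches the paper's proof of Lemma~\ref{linearization step} essentially step for step: define $g(x,y) = \prod_i f_i(x+P_i(y))$, apply Cauchy--Schwarz with the $1$-boundedness of $f_0$, invoke Lemma~\ref{vdc step}, change variables $x \mapsto x - P_1(y)$, and finish with $\sqrt{a+b} \leq \sqrt a + \sqrt b$. The only cosmetic difference is the order in which you take square roots versus perform the change of variables; the content is identical.
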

	\begin{remark}
		The $h$ depends on the functions $f_i$. To see this, notice that in the proof of Lemma~\ref{vdc step}, the $h$ is chosen to maximize the quantity $G(h)$, which depends on the quantity $g(x,y)$, which, as we will see at the beginning of the proof of Lemma~\ref{linearization step}, depends on the functions $f_i$. 
	\end{remark}
	\begin{proof}
		Set $g(x,y) := \prod_{i=1}^m f_i(x+P_i(y))$. By applying Cauchy--Schwarz, the 1-boundedness of $f_0$, and Lemma~\ref{vdc step}, one obtains for some $h \in (\{0,1,\ldots, H-1\}\cup \{N-(H-1),N-(H-1)+1,\ldots, N-1\})^n \setminus \mathcal{H}$ that
		\ba
		\left| \Lambda_{P_1,\ldots,P_m}(f_0,\ldots,f_m) \right|^2 \ & = \ \left| \rcavg{x} f_0(x) \cavg{y}{R'} g(x,y) \right|^2 \\
		& \leq \ \rcavg{x} |f_0(x)|^2 \left| \cavg{y}{R'} g(x,y) \right|^2 \\
		& \leq \ \rcavg{x} \left| \cavg{y}{R'} g(x,y) \right|^2 \\
		& \leq \ 2^n \left( \frac{|\mathcal{H}|}{H^n}+\left| \cavg{(x,y)}{R\times R'} g(x,y+h)\ol{g}(x,y) \right| \right).
		\ea
		The result follows after noting that $(a+b)^{1/2} \leq a^{1/2} + b^{1/2}$ holds for nonnegative real numbers $a$ and $b$ and changing variables $x \mapsto x - P_1(y)$.
	\end{proof}
	
	\subsection{Ad hoc proof of Proposition~\ref{prop: Us control} in the case $\mathbf P = \{y,y^2\}$}\label{subsec: proof of prop: Us control in case y, y2}
	We first recall the notion of $\zn$-height and prove a basic lemma about it.
	\begin{definition}
		Let $a \in \Z$ and $N \in \N$. The \emph{$\zn$-height} of $a$ is the minimal nonnegative integer $H$ such that $a \bmod N \in \{0,1,\ldots,H\}\cup\{N-H,\ldots, N-1\}$.
		Given a real number $x$, define $\norm{x}_{\R / \Z} := \min\{ |n - x| : n \in \Z\}$ to be the distance to the closest integer. Equivalently, the $\zn$-height of $a$ is the minimal nonnegative integer $H$ such that $\norm{a/N}_{\R/\Z} \leq H/N$.
	\end{definition}
	The following lemma confirms that $\zn$-heights of sums or products of integers can be bounded in a simple way.
	\begin{lemma}\label{lem: manipulating 0-closeness mod N}
		Let $N$ be a positive integer. For $i \in \{1,2\}$, let $a_i$ and $H_i$ be integers such that the $\zn$-height of $a_i$ is at most $H_i$. Then the $\zn$-height of $a_1+a_2$ is at most $H_1+H_2$, and the $\zn$-height of $a_1a_2$ is at most $H_1H_2$. 
	\end{lemma}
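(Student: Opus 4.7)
The plan is to work with the equivalent characterization of $\zn$-height in terms of the distance-to-nearest-integer function $\|\cdot\|_{\R/\Z}$, since both claims then become nearly immediate estimates. Recall that the $\zn$-height of $a$ equals the minimal nonnegative integer $H$ with $\|a/N\|_{\R/\Z} \leq H/N$. So by hypothesis, there exist integers $n_1, n_2$ with $|a_i - n_i N| \leq H_i$ for $i \in \{1,2\}$; write $r_i := a_i - n_i N$, so $|r_i| \leq H_i$.

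For the additive claim, I would apply the standard triangle inequality for $\|\cdot\|_{\R/\Z}$, which gives
\[
\|(a_1+a_2)/N\|_{\R/\Z} \ \leq \ \|a_1/N\|_{\R/\Z} + \|a_2/N\|_{\R/\Z} \ \leq \ \frac{H_1+H_2}{N},
\]
and then translate back to the definition of $\zn$-height to conclude that the $\zn$-height of $a_1+a_2$ is at most $H_1+H_2$.

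For the multiplicative claim, I would expand
\[
a_1 a_2 \ = \ (n_1 N + r_1)(n_2 N + r_2) \ = \ (n_1 n_2 N + n_1 r_2 + n_2 r_1)N + r_1 r_2,
\]
so that $a_1 a_2 \equiv r_1 r_2 \pmod{N}$ with $|r_1 r_2| \leq H_1 H_2$. Hence there is an integer $m$ with $|a_1 a_2 - mN| \leq H_1 H_2$, which by definition means the $\zn$-height of $a_1 a_2$ is at most $H_1 H_2$.

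There is no real obstacle here; the only mild subtlety is that if $H_1 H_2$ (or $H_1+H_2$) exceeds $\lfloor N/2 \rfloor$ then the ``set'' form of the definition collapses (every integer already has $\zn$-height at most $\lfloor N/2 \rfloor$), but the bounds $H_1+H_2$ and $H_1 H_2$ remain valid a fortiori in that regime, so no case split is required.
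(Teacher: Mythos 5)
Your proof is correct and follows essentially the same route as the paper: the additive claim is the triangle inequality for $\norm{\cdot}_{\R/\Z}$, and the multiplicative claim rests on the observation that $a_1a_2 \equiv r_1r_2 \pmod N$ with $|r_ir_2| \leq H_1H_2$, which is the same algebraic identity the paper exhibits in the equivalent form $\norm{a_1a_2/N}_{\R/\Z} \leq N\norm{a_1/N}_{\R/\Z}\norm{a_2/N}_{\R/\Z}$. Your residue-based phrasing of the multiplicative step is, if anything, a bit more transparent than the paper's.
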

	\begin{proof}
		For $i\in\{1,2\}$, let $n_i$ be an integer such that $|n_i - a_i/N| = \norm{a_i/N}_{\R /\Z}$. By the triangle inequality,
		\begin{equation}
			\norm{(a_1+a_2)/N}_{\R/\Z} \ \leq \ |n_1+n_2 - (a_1+a_2)/N| \ \leq \ \norm{a_1/N}_{\R/\Z} + \norm{a_2/N}_{\R/\Z} \ \leq \ (H_1+H_2)/N.
		\end{equation}
		Similarly,
		\begin{multline}
			\norm{a_1a_2/N}_{\R/\Z} \ \leq \ |-Nn_1n_2 + n_2a_1 +n_1a_2 - a_1a_2/N| \ = \ N\left| n_1n_2 - n_2a_1/N - n_1a_2/N +a_1a_2/N^2\right| \\
			= \ N \left| n_1-a_1/N \right|\left| n_2-a_2/N \right| \ = \ N \norm{a_1/N}_{\R/\Z}\norm{a_1/N}_{\R/\Z} \ \leq \ H_1H_2/N,
		\end{multline}
		as desired.
	\end{proof}
	
	Now we give an ad hoc proof of Proposition~\ref{prop: Us control} in the special case that $\mathbf P = \{y,y^2\} \subset \Z[y]$.
	\begin{prop}\label{example: prop: Us control}
		There exist $\lambda \in (0,1]$, $C_0, C_1 > 0$, and $s \in \N$ with $s \geq 2$ such that, for any finite commutative ring $R$ with characteristic $N$ satisfying $\lpf N > C_1$ and any 1-bounded functions $f_0,f_1, f_2 : R \to \C$, one has
		\be\label{example: eqn: negative power of lpf N}
		\left| \rcavg{x,y} f_0(x)f_1(x+y)f_2(x+y^2)\right| \ \leq \ \frac{C_0}{\lpf{N}^{1/2}} + 2\min_{0\leq j \leq 2} \norm{f_j}_{U^s}^\lambda.
		\ee
	\end{prop}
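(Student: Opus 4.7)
The plan is to reduce $\Lambda_{y,y^2}(f_0,f_1,f_2)$ to a linear average via a single application of Lemma~\ref{linearization step} and then invoke Lemma~\ref{lem: Ud bound if invertible}. The key observation is that, after the preliminary change of variables $x \mapsto x - y^2$, the average becomes $\Lambda_{-y^2,\,y-y^2}(f_2,f_0,f_1)$, and the two polynomials $P_1 = -y^2$ and $P_2 = y - y^2$ share the same leading quadratic term; consequently, a single van der Corput step anchored at $P_1$ produces polynomial shifts $P_i(y+\omega h_1) - P_1(y)$ that are all linear in $y$ (the quadratic contributions cancel), thereby avoiding a second PET step and the attendant loss of a further square root.

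Concretely, set $H := \lfloor (\lpf{N} - 1)/4 \rfloor$ and $\mathcal{H}_1 := \{0\}$, and apply Lemma~\ref{linearization step} to $\Lambda_{-y^2,\,y-y^2}(f_2,f_0,f_1)$ with anchor $P_1 = -y^2$. This yields a nonzero $h_1 \in \Z_N$ of $\zn$-height at most $H - 1$ such that $|\Lambda_{y,y^2}(f_0,f_1,f_2)| \leq \sqrt{2}\,[H^{-1/2} + |A|^{1/2}]$, where
\[
A = \cavg{(x,y)}{R \times R} \overline{f_0}(x)\,f_0(x - 2h_1 y - h_1^2)\,\overline{f_1}(x+y)\,f_1\bigl(x + (1 - 2h_1)y + h_1 - h_1^2\bigr).
\]
The $y$-coefficients of the three non-anchor shifts are $-2h_1, 1, 1 - 2h_1$, and their pairwise differences lie in $\{\pm 2h_1,\,\pm 1,\,\pm(1+2h_1)\}$; by Lemma~\ref{lem: manipulating 0-closeness mod N} and the constraint $|h_1| \leq H - 1 < \lpf{N}/4$, each of these integers has $\zn$-height strictly less than $\lpf{N}$ and is nonzero modulo $N$ (for $\pm 2h_1$ because $h_1 \neq 0 \bmod N$, and for $\pm(1\pm 2h_1)$ because they are nonzero integers of absolute value less than $\lpf{N}$). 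Lemma~\ref{lem: what is a unit} then gives invertibility in $R$, so Lemma~\ref{lem: Ud bound if invertible} (with $n = 1$ and $d = 3$), applied after cyclic relabeling of the non-anchor positions, yields $|A| \leq \|f_j\|_{U^3}$ for any $j \in \{0,1\}$. Combining gives
\[
|\Lambda_{y,y^2}(f_0,f_1,f_2)| \ \leq\ C_0\, \lpf{N}^{-1/2} + 2\,\min\bigl(\|f_0\|_{U^3},\|f_1\|_{U^3}\bigr)^{1/2}
\]
for an appropriate $C_0$. To also include $\|f_2\|_{U^3}$ in the minimum, one complements this with a companion argument placing $f_2$ in a non-anchor position (e.g., a two-step PET applied to the original $\Lambda_{y,y^2}(f_0,f_1,f_2)$, first anchored at $P_1 = y$ and then at the residual quadratic polynomial whose difference is already linear), yielding the corresponding Gowers bound on $\|f_2\|_{U^3}$, to be merged with the above by adjusting $C_0$ and $\lambda$.

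The central difficulty is the arithmetic control of invertibility: each integer coefficient arising after van der Corput differencing must be a unit in $R$ in order for Lemma~\ref{lem: Ud bound if invertible} to apply, something that over a finite field happens automatically once nonzero, but over a general finite commutative ring requires the combination of (i) controlling $\zn$-heights below $\lpf{N}$ via Lemma~\ref{lem: manipulating 0-closeness mod N} together with Lemma~\ref{lem: what is a unit}, and (ii) excising a small finite set $\mathcal{H}_1$ of ``bad'' directions $h_1$ where some coefficient would vanish modulo $N$. The tradeoff between small $\mathcal{H}_1$ (to keep the error $|\mathcal{H}_1|/H$ small) and large $H$ (for decay in $\lpf{N}$) within the invertibility constraint $H < \lpf{N}/4$ is manageable by hand for the family $\{y, y^2\}$, but produces the combinatorial complications in the general setting of Proposition~\ref{prop: Us control} that motivate the formal development of $\zn$-height, essential distinctness modulo $N$, weight sequences, and permissible operations in the following subsections.
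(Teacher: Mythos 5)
Your proposal is correct and matches the paper's approach --- a one-step van der Corput differencing (after the change of variables $x \mapsto x - y^2$) controls $\|f_0\|_{U^3}$ and $\|f_1\|_{U^3}$, and a two-step PET on the original average controls $\|f_2\|_{U^3}$ --- merely presenting the two cases in the opposite order from the paper (which works out $j=2$ in detail and sketches $j=1$). One small improvement is your parity observation that $1\pm 2h_1$ is automatically a nonzero odd integer of absolute value below $\lpf{N}$, so that $\mathcal{H}_1=\{0\}$ suffices, whereas the paper's $j=1$ sketch conservatively excludes $\mathcal{H}=\{0,\pm 1/2\}$.
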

	
	\begin{proof}
		The constant $C_1$ will be chosen later.
		
		Let $R$ be a finite commutative ring with characteristic $N$ satisfying $\lpf N > C_1$, and let $f_0,f_1, f_2 : R \to \C$ be 1-bounded functions.
		
		The desired inequality \eqref{example: eqn: negative power of lpf N} holds if, for each $j \in \{0,1,2\}$,
		\be\label{example: eqn: negative power of lpf N, nonminimum version}
		\left| \rcavg{x,y} f_0(x)f_1(x+y)f_2(x+y^2) \right| \ \leq \ \frac{C_0}{\lpf{N}^{1/2}} + 2\norm{f_j}_{U^s}^\lambda.
		\ee
		We will first show \eqref{example: eqn: negative power of lpf N, nonminimum version} in the case $j = 2$.
		
		Applying Lemma~\ref{linearization step} with the settings $m=2$, $n=1$, $H = H_1$ to be decided later, $P_1(y) = y$, $P_2(y) = y^2$, and $\mathcal H = \mathcal H_1$ to be decided later, there exists $h_1 \in (\{0,1\ldots, H_1-1\} \cup\{N-(H_1-1),N-(H_1-1)+1,\ldots, N-1\}) \setminus \mathcal{H}_1$ such that
		\begin{multline}\label{example: basic application of lemma about linearization 1}
			\left| \rcavg{x,y} f_0(x)f_1(x+y)f_2(x+y^2) \right| \\ \leq \ 2^{1/2} \left[\left(\frac{|\mathcal H_1|}{H_1}\right)^{1/2}   + \left| \cavg{x,y}{R} \prod_{i=1}^2 \prod_{\omega \in \{0,1\}}\mathcal{C}^{\omega+1} f_i(x + P_i(y+\omega h_1)-P_1(y))\right|^{1/2} \right].
		\end{multline}
		
		Let $a_1 = P_1(h_1)-P_1(0)$; then
		\be
		\ol{f_1}(x+P_1(y)-P_1(y))f_1(x+P_1(y+h_1)-P_1(y)) = \Delta_{a_1}f_1(x+(P_1-P_1)(y)) = \Delta_{a_1}f_1(x).
		\ee
		Set $g_0 := \Delta_{a_1}f_1$. Set $g_{1} := \ol{f_{2}}$, $g_{2} := f_{2}$, $Q_{1}(y) := y^2 - y$, and $Q_{2}(y):= (y+h_1)^2 - y$. It is easy to check that
		\be
		g_0(x)g_1(x+Q_1(y)) g_{2}(x+Q_{2}(y)) = \prod_{i=1}^2 \prod_{\omega\in\{0,1\}} \mathcal{C}^{\omega+1}f_i(x+P_i(y+\omega h_1) -P_1(y)).
		\ee
		By construction, $g_{2} = f_2$, the $g_i$ are 1-bounded, and \eqref{example: basic application of lemma about linearization 1} implies
		\begin{multline}\label{example: intermediate eqn in PET inductive step}
			\left| \rcavg{x,y} f_0(x)f_1(x+y)f_2(x+y^2)\right| \\ \leq \ 2^{1/2} \left[ \left(\frac{|\mathcal H_1|}{H_1}\right)^{1/2} + \left| \rcavg{x,y} g_0(x)g_1(x+Q_1(y))g_2(x+Q_2(y))\right|^{1/2}\right].
		\end{multline}
		Applying Lemma~\ref{linearization step} with the settings $m=2$, $n=1$, $H = H_2$ to be decided later, $P_1 = Q_1$, $P_2 = Q_2$, and $\mathcal H = \mathcal H_2$ to be decided later, there exists $h_2 \in (\{0,1\ldots, H_2-1\} \cup\{N-(H_2-1),N-(H_2-1)+1,\ldots, N-1\}) \setminus \mathcal{H}_2$ such that
		\begin{multline}\label{example: basic application of lemma about linearization 2}
			\left| \rcavg{x,y} g_0(x)g_1(x+Q_1(y))f_2(x+Q_2(y)) \right| \\ \leq \ 2^{1/2} \left[\left(\frac{|\mathcal H_2|}{H_2}\right)^{1/2}   + \left| \cavg{x,y}{R} \prod_{i=1}^2 \prod_{\omega \in \{0,1\}}\mathcal{C}^{\omega+1} g_i(x + Q_i(y+\omega h_2)-Q_1(y))\right|^{1/2} \right].
		\end{multline}
		Set $g_0' := \ol{g_1}$, $g_1' := g_1$, $g_2' := \ol{g_2}$, $g_3' := g_2$, and 
		\begin{align*}
			Q_1'(y) \ & := \ Q_1(y+h_2)-Q_1(y) \ = \ 2h_2y +h_2^2 - h_2, \\ Q_2'(y) \ & := \ Q_2(y)-Q_1(y) \ = \ 2h_1y+h_1^2, \\ Q_3'(y) \ & := \ Q_2(y+h_2) - Q_1(y) \ = \ 2(h_1+h_2)y + (h_1+h_2)^2-h_2.
		\end{align*} 
		Then we have $g_3' = g_2 = f_2$, the $g_i'$ are 1-bounded, and 
		\be
		g_0'(x)g_1'(x+Q_1'(y))g_{2}'(x+Q_{2}'(y))g_3'(x+Q_3'(y)) = \prod_{i=1}^2 \prod_{\omega\in\{0,1\}} \mathcal{C}^{\omega+1}g_i(x+Q_i(y+\omega h_2) -Q_1(y)),
		\ee
		so by \eqref{example: basic application of lemma about linearization 2}, we obtain
		\begin{multline}\label{example: intermediate eqn in PET inductive step 2}
			\left| \rcavg{x,y} g_0(x)g_1(x+Q_1(y))g_2(x+Q_2(y))\right| \\ \leq \ 2^{1/2} \left[ \left(\frac{|\mathcal H_2|}{H_2}\right)^{1/2} + \left| \rcavg{x,y} g_0'(x)g_1'(x+Q_1'(y))g_2'(x+Q_2'(y))g_3'(x+Q_3'(y))\right|^{1/2}\right].
		\end{multline}
		Since $(a+b)^{1/2} \leq a^{1/2} + b^{1/2}$ for nonnegative real numbers $a$ and $b$, \eqref{example: intermediate eqn in PET inductive step} and \eqref{example: intermediate eqn in PET inductive step 2} imply
		\begin{multline}
			\left| \rcavg{x,y} f_0(x)f_1(x+y)f_2(x+y^2)\right| \\ \leq \ 2^{1/2} \left(\frac{|\mathcal H_1|}{H_1}\right)^{1/2} + 2^{3/4} \left(\frac{|\mathcal H_2|}{H_2}\right)^{3/4} + 2^{3/4} \left| \rcavg{x,y} g_0'(x)g_1'(x+Q_1'(y))g_2'(x+Q_2'(y))g_3'(x+Q_3'(y))\right|^{1/4}.
		\end{multline}
		We wish to bound the expression 
		\begin{equation} \left| \rcavg{x,y} g_0'(x)g_1'(x+Q_1'(y))g_2'(x+Q_2'(y))g_3'(x+Q_3'(y))\right|\end{equation} using Lemma~\ref{lem: Ud bound if invertible}. The constant term of each polynomial argument can be absorbed by using a version of $g'_i$ with a translated argument, but the linear term of each argument cannot be absorbed in this way. Thus, to satisfy the conditions of the lemma, it would suffice if each of the elements $2h_1$, $2h_2$, $2(h_1+h_2)$, and $2(h_1-h_2)$ were invertible modulo $N$. A direct way to ensure we are in this situation is to choose $H_1$, $H_2$, $\mathcal H_1$, and $\mathcal H_2$ in such a way that $2h_1$, $2h_2$, $2(h_1+h_2)$, and $2(h_1-h_2)$ belong to \begin{equation}
			S_{\rm{target}} := \{1,\ldots, \lpf N - 1\} \cup \{N-(\lpf N - 1), N - (\lpf N - 1) + 1, \ldots, N- 1\},
		\end{equation} because any element of $\zn$ that belongs to this set must be coprime to $N$ and hence a unit by Lemma~\ref{lem: what is a unit}.
		
		We now observe the following facts. First, $S_{\rm{target}}$ is precisely the set of nonzero elements in $\zn$ with $\zn$-height at most $\lpf N - 1$. Second, by Lemma~\ref{lem: manipulating 0-closeness mod N}, if $h_1, h_2$ have $\zn$-height at most $\lfloor \frac{\lpf N - 1}{4}\rfloor$, then $2h_1$ and $2h_2$ have $\zn$-height at most $2\lfloor \frac{\lpf N - 1}{4}\rfloor \leq \lpf N - 1$ and $2(h_1+h_2)$ and $2(h_1-h_2)$ have $\zn$-height at most $4\lfloor \frac{\lpf N - 1}{4}\rfloor \leq \lpf N - 1$. It follows\footnote{We also need to make some mild assumptions like $H_1-1 > 0$ and $H_2 - 1 > 0$ to ensure nonemptiness of the sets that $h_1$ and $h_2$ are drawn from, but to simplify the exposition of this example, we ignore this detail here and below.} that we may take $H_1 = H_2 = \lfloor \frac{\lpf N - 1}{4} \rfloor$ to ensure $h_1$ and $h_2$ are such that all of $2h_1$, $2h_2$, $2(h_1+h_2)$, and $2(h_1-h_2)$ have $\zn$-height at most $\lpf N -1$. 
		
		Thus, each of $2h_1$, $2h_2$, $2(h_1+h_2)$, and $2(h_1-h_2)$ will belong to $S_{\rm{target}}$ (and hence be invertible modulo $N$) as soon as they are nonzero modulo $N$. To this end, we may impose some condition on $\lpf N$ and then select $\mathcal H_1$ and $\mathcal H_2$ appropriately to exclude any particular values of $h_1$ and $h_2$ that would cause some of these four terms to be zero. Assuming that $\lpf N > 2$, it follows that 2 is a unit and not a zero divisor. This in turn allows us to choose\footnote{This choice interacts with the omitted detail in the previous footnote in a distracting way. We handle this nonemptiness issue properly in the general proof of Proposition~\ref{prop: Us control}.} $\mathcal H_1 = \{0\}$ and $\mathcal H_2 = \{0,h_1,-h_1\}$.
		
		Thus, by Lemma~\ref{lem: Ud bound if invertible},
		\begin{equation}\label{example: applying lem: Ud bound if invertible 1}
			\left| \rcavg{x,y} g_0'(x)g_1'(x+Q_1'(y))g_2'(x+Q_2'(y))g_3'(x+Q_3'(y))\right| \ \leq \ \norm{g_3'}_{U^3} \ = \ \norm{f_2}_{U^3}.
		\end{equation}
		
		For sufficiently large $\lpf N$, we have $\frac{1}{H_1} \leq \frac{16}{\lpf N}$, whence
		\begin{multline}
			\left| \rcavg{x,y} f_0(x)f_1(x+y)f_2(x+y^2)\right| \\ \leq \ 2^{1/2} \left(\frac{16}{\lpf N}\right)^{1/2} + 2^{3/4} \left(\frac{48}{\lpf N}\right)^{3/4} + 2^{3/4} \norm{f_2}_{U^3}^{1/4} \\ \leq \ \frac{32^{1/2} + 96^{3/4}}{\lpf{N}^{1/2}} + 2\norm{f_2}_{U^3}^{1/4}.
		\end{multline}
		Thus, we need to choose $C_1$ at least so that $\frac{1}{H_1} \leq \frac{16}{\lpf N}$ holds. This proves \eqref{example: eqn: negative power of lpf N, nonminimum version} in the case $j = 2$.
		
		Let us sketch the proof of \eqref{example: eqn: negative power of lpf N, nonminimum version} in the case $j = 1$; the case $j = 0$ is similar. First observe
		\begin{equation}
			\rcavg{x,y} f_0(x)f_1(x+y)f_2(x+y^2) \ = \ \rcavg{x,y} f_2(x)f_0(x-y^2)f_1(x+y-y^2)
		\end{equation}
		by the change of variables $x \mapsto x - y^2$. Applying Lemma~\ref{linearization step} with the settings $m = 2$, $n = 1$, $H$ to be decided later, $P_1(y) = -y^2$, $P_2(y) = y-y^2$, and $\mathcal H$ to be decided later, there exists $h \in (\{0,1,\ldots, H-1\} \cup \{N-(H-1),\ldots, N-1\}) \setminus \mathcal H$ such that
		\begin{multline}\label{example: unnamed eqn 1}
			\left|\rcavg{x,y} f_2(x)f_0(x-y^2)f_1(x+y-y^2) \right| \\
			\leq \ 2^{1/2}\left[ \left(\frac{|\mathcal H|}{H}\right)^{1/2}   + \left| \cavg{x,y}{R} \ol{f_0}(x) f_0(x-2hy-h^2) \ol{f_1}(x+y)f_1(x+(1-2h)y+h-h^2)\right|^{1/2} \right].
		\end{multline}
		Again, we wish to apply Lemma~\ref{lem: Ud bound if invertible} to conclude that
		\begin{equation}\label{example: applying lem: Ud bound if invertible 2}
			\left| \cavg{x,y}{R} \ol{f_0}(x) f_0(x-2hy-h^2) \ol{f_1}(x+y)f_1(x+(1-2h)y+h-h^2)\right| \ \leq \ \norm{f_1}_{U^3}.
		\end{equation}
		To satisfy the conditions of the lemma, it would suffice if each of the elements $1$, $-2h$, $1-2h$, and $1+2h$ were invertible modulo $N$. One way to ensure we are in this situation is to require $\lpf N > 2$ and then choose $H$ and $\mathcal H$ in such a way that $-2h$, $1-2h$, and $1+2h$ belong to $S_{\rm{target}}$. Thus, if we choose $H = \lfloor \frac{\lpf N - 2}{2} \rfloor$, then $h$ has $\zn$-height at most $H$, so $-2h$ has $\zn$-height at most $2H \leq \lpf N - 2$ and $1-2h$ and $1+2h$ each have $\zn$-height at most $\lpf N -1$. Thus, provided that $-2h$, $1-2h$, and $1+2h$ are nonzero, they each belong to $S_{\rm{target}}$ and hence are invertible modulo $N$. Hence, we may take $\mathcal H = \{0,\pm 1/2\}$, where $1/2$ denotes the multiplicative inverse of 2 modulo $N$, which exists since $\lpf N > 2$. Thus, after using \eqref{example: applying lem: Ud bound if invertible 2} and \eqref{example: unnamed eqn 1} to derive a bound in terms of $\norm{f_1}_{U^3}$ and $\lpf N$, there is some numerical fiddling, as in the case $j = 2$ above, to determine conditions on $C_1$ and the exact shape of the resulting inequality, whose discussion we omit here.
	\end{proof}
	Let us highlight several aspects of the previous proof that become more cumbersome when we work with a general family of linearly independent polynomials $\mathbf P = \{P_1(y),\ldots, P_m(y)\} \subset \Z[y]$, rather than just $\{y,y^2\}$. In general, we need to work with such families of polynomials in $n$ indeterminates, but for the sake of exposition in the next few paragraphs, we will take $n = 1$.
	
	The first issue is straightforward. Namely, it may not be clear to all readers that, given $\mathbf P$, there is always a way to apply Lemma~\ref{linearization step} a certain number of times so as to, like in the case $j = 2$ above, create a chain of inequalities (viz. \eqref{example: intermediate eqn in PET inductive step} and \eqref{example: intermediate eqn in PET inductive step 2}), the last of which includes an average of functions with linear polynomial arguments in $y$. This issue may be handled using a PET induction scheme, which ensures that such a method always exists. This is why we will need to introduce weight sequences.
	
	There are still additional issues. Suppose we have a PET induction scheme according to which we can create a chain of inequalities, the last of which is in terms of the magnitude of
	\begin{equation}\label{example: final average}
		\rcavg{x,y} g_0(x)g_1(x+a_1y+b_1)g_2(x+a_2y+b_2)\cdots g_{k}(x+a_ky+b_k)
	\end{equation}
	(for some 1-bounded $g_i$ and some $k$), which we will in these few paragraphs call ``the final average.'' 
	
	The second issue is that, as the ``complexity'' of the family $\mathbf P$ increases, it quickly becomes impractical to determine the exact linear coefficients $a_i$ appearing in the final average, such as those coefficients $-2h$, $1$, and $1-2h$ in \eqref{example: applying lem: Ud bound if invertible 2}.
	
	As a result, it becomes more difficult to choose appropriate conditions on $\lpf N$ and to choose $\mathcal H_1$, $\mathcal H_2$, and so on, to exclude values of $h_1$, $h_2$, and so on, in such a way as to ensure that, in the final average, no $a_i$ is a zero divisor and no difference of two $a_i$ is a zero divisor.
	
	To address these difficulties, we introduce some technical notions whose only purpose is to manage the repeated application of Lemma~\ref{linearization step} in such a way that the $a_i$ in the final average are invertible with invertible differences. These notions are $\zn$-height and essential distinctness.
	
	There is a third issue, which cannot be seen from the example of $\{y,y^2\}$, and the simplest example that demonstrates it is likely something like $\{y^3,y^3 + y^2\}$. We delay the discussion of this issue until the necessary terminology is developed in Subsection~\ref{subsec: PET algorithms}.
	
	Finally, there is a fourth issue, which can only occur when working with polynomials in $n > 1$ indeterminates: Sometimes the final average is not immediately suitable for applying Lemma~\ref{lem: Ud bound if invertible} even after managing $\zn$-height and essential distinctness. We discuss this issue in Section~\ref{sec: fixing a matrix}. 
	
	\subsection{$\zn$-height}\label{subsec: zn height}
	We introduced the notion of $\zn$-height for an integer in the previous subsection before the proof of Proposition~\ref{example: prop: Us control}. Let us develop the notion a little more.
	\begin{definition}
		Let $N$, $n$ be positive integers. Let $P$ belong to $\Z[y_1,\ldots, y_n]$ or $\zn[y_1,\ldots, y_n]$. Define the \emph{$\zn$-height} of $P$ as the maximum of the $\zn$-heights of the coefficients of $P$. For a finite subset $\mathbf P = \{P_1,\ldots, P_m\}$ of $\Z[y_1,\ldots, y_n]$ or $\zn[y_1,\ldots, y_n]$, define the \emph{$\zn$-height} of $\mathbf P$ as the maximum of the $\zn$-heights of the $P_j$. 
	\end{definition}
	We will need to understand the effect of Lemma~\ref{linearization step} on the $\zn$-height of a family of polynomials. The following bound will be sufficient for our purposes.
	\begin{lemma}\label{lem: P(y+h)-Q(y) height bound} Let $N$, $n$, $H_1$, $H_2$, and $d$ be positive integers such that $d > 1$. Let $P(y),Q(y) \in \Z_N[y_1,\ldots, y_n]$ be polynomials with $2 \leq \deg_{\zn}(P) \leq d$ and $\deg_{\zn}(Q) \leq d$. Suppose the $\zn$-height of $\{P,Q\}$ is at most $H_1$. Let $h = (h_1,\ldots, h_n) \in \Z^n$ be such that each $h_i$ has $\zn$-height at most $H_2$. Then the $\zn$-height of $P(y+h)-Q(y)$ is at most $\frac 12 (d+1)^{2dn}H_1H_2^d$. 
	\end{lemma}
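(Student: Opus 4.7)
The plan is to expand $P(y+h)-Q(y)$ monomial by monomial, extract an explicit formula for each coefficient, and then control the $\Z_N$-height of that coefficient using Lemma~\ref{lem: manipulating 0-closeness mod N}. First I would write $P(y) = \sum_{\alpha} c_\alpha\, y^\alpha$ and $Q(y) = \sum_{\beta} c'_\beta\, y^\beta$ where the sums range over multi-indices $\alpha,\beta \in (\N\cup\{0\})^n$ with $|\alpha|,|\beta| \leq d$, so that each $c_\alpha, c'_\beta \in \Z_N$ has $\Z_N$-height at most $H_1$. Applying the multinomial expansion $\prod_i (y_i+h_i)^{\alpha_i} = \sum_{\beta \leq \alpha} \big(\prod_i \binom{\alpha_i}{\beta_i}\big) y^\beta h^{\alpha-\beta}$ and collecting like terms yields, for each multi-index $\beta$ with $|\beta| \leq d$, the explicit coefficient
\begin{equation*}
	[y^\beta]\bigl(P(y+h)-Q(y)\bigr) \ = \ \Bigl(\sum_{\alpha \geq \beta,\ |\alpha|\leq d} c_\alpha \prod_{i=1}^n \binom{\alpha_i}{\beta_i} h_i^{\alpha_i-\beta_i}\Bigr) - c'_\beta.
\end{equation*}

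Next I would bound the $\Z_N$-height of a single summand. By Lemma~\ref{lem: manipulating 0-closeness mod N}, since each $h_i$ has $\Z_N$-height at most $H_2$ and since the positive integer $\binom{\alpha_i}{\beta_i}$ trivially has $\Z_N$-height at most $\binom{\alpha_i}{\beta_i}$ itself, repeated application (first taking powers, then multiplying integer factors, then multiplying by $c_\alpha$) gives
\begin{equation*}
	\text{height}\Bigl(c_\alpha \prod_{i=1}^n \binom{\alpha_i}{\beta_i} h_i^{\alpha_i-\beta_i}\Bigr) \ \leq \ H_1 \cdot \Bigl(\prod_{i=1}^n \binom{\alpha_i}{\beta_i}\Bigr) \cdot H_2^{|\alpha|-|\beta|} \ \leq \ H_1 \cdot 2^{|\alpha|} \cdot H_2^{d} \ \leq \ H_1 \cdot 2^d \cdot H_2^d,
\end{equation*}
where I used $\prod_i \binom{\alpha_i}{\beta_i} \leq \prod_i 2^{\alpha_i} = 2^{|\alpha|} \leq 2^d$.

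To finish, I would use the additive part of Lemma~\ref{lem: manipulating 0-closeness mod N} to sum over all at most $(d+1)^n$ admissible multi-indices $\alpha$ (plus the single term $c'_\beta$, whose height is at most $H_1 \leq H_1 H_2^d$), obtaining
\begin{equation*}
	\text{height}\bigl([y^\beta](P(y+h)-Q(y))\bigr) \ \leq \ \bigl((d+1)^n \cdot 2^d + 1\bigr)\, H_1 H_2^d \ \leq \ 2(d+1)^n 2^d H_1 H_2^d.
\end{equation*}
It remains a routine numerical check that $2(d+1)^n 2^d \leq \tfrac12 (d+1)^{2dn}$ whenever $d \geq 2$ and $n \geq 1$: this reduces to $4 \cdot 2^d \leq (d+1)^{n(2d-1)}$, which is easily verified for $d=2,n=1$ (namely $16 \leq 27$) and grows much faster on the right for larger $d$ or $n$. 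Since this bound holds uniformly over all $\beta$, it bounds the $\Z_N$-height of $P(y+h)-Q(y)$ itself.

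There is no real obstacle here beyond bookkeeping; the only point that requires attention is choosing generous enough slack in the constant $\tfrac12(d+1)^{2dn}$ to absorb both the multinomial coefficients and the number of monomials, and the numerical check above confirms the stated bound suffices.
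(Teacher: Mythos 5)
Your proposal is correct and follows essentially the same route as the paper's proof: expand $P(y+h)$ via the multinomial theorem, bound each summand's $\Z_N$-height using Lemma~\ref{lem: manipulating 0-closeness mod N}, add up over the at most $(d+1)^n$ admissible multi-indices (plus the $Q$-term), and verify a routine numerical inequality. The only difference is cosmetic: you bound the product of binomial coefficients by $\prod_i \binom{\alpha_i}{\beta_i} \leq 2^{|\alpha|} \leq 2^d$, whereas the paper uses the cruder $\prod_i \binom{\beta_i}{\delta_i} \leq d^{dn}$; your intermediate estimate is sharper, but both comfortably fit inside the generous constant $\tfrac12(d+1)^{2dn}$.
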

	\begin{proof} Write $P(y) = \sum_{\alpha \in S} c_\alpha y^\alpha$, where $S = \{\alpha \in \Z^n : 0 \leq \alpha_1,\ldots, \alpha_n \leq d \}$. Write $P(y+h) = P(y_1+h_1,\ldots, y_n + h_n) = \sum_{\alpha\in S} c''_\alpha y^\alpha$ and $P(y+h)-Q(y) = \sum_{\alpha\in S} c'''_\alpha y^\alpha$. Fix $\alpha \in S$, and define $S' := \{\beta \in S : \beta_i \geq \alpha_i\}$. For each $\beta \in S'$, let $c'_\beta$ denote the coefficient of $y^\alpha$ that appears when expanding the expression $c_\beta (y+h)^\beta$. The coefficient $c''_\alpha$ is then precisely $\sum_{\beta \in S'} c'_\beta$. Let us compute $c'_\beta$ for $\beta \in S'$. We have
		\begin{multline}
			c_\beta(y+h)^\beta \ = \ c_\beta \prod_{i=1}^n (y_i+h_i)^{\beta_i} \ = \ c_\beta \prod_{i=1}^n \left(\binom{\beta_i}{\alpha_i}h_i^{\beta_i-\alpha_i}y_i^{\alpha_i} +\sum_{\substack{0 \leq k \leq \beta_i \\ k \neq \alpha_i}} \binom{\beta_i}{k} h_i^{\beta_i-k} y_i^k \right),
		\end{multline}
		so that, setting $\delta_i = \beta_i - \alpha_i$, we have
		\begin{equation}
			c'_\beta \ = \ c_\beta \prod_{i=1}^n \binom{\beta_i}{\alpha_i} h_i^{\beta_i-\alpha_i} \ = \ c_\beta \prod_{i=1}^n \binom{\beta_i}{\delta_i} h_i^{\delta_i}.
		\end{equation}
		Now we will apply Lemma~\ref{lem: manipulating 0-closeness mod N}. First, by assumption on the $\Z_N$-height of $P$, we know that $c_\beta$ has $\zn$-height at most $H_1$. 
		Next, for each $i$, using crude bounds everywhere, we have $\delta_i \leq \beta_i \leq d$, which implies that $\binom{\beta_i}{\delta_i} \leq \beta_i^{\beta_i} \leq d^d$. Thus, for each $i$, $\binom{\beta_i}{\delta_i}$ is an integer of $\zn$-height at most $d^d$. Finally, we know that $\sum_{i=1}^n \delta_i \leq \sum_{i=1}^n \beta_i \leq d$, and each $h_i$ has $\zn$-height at most $H_2$ by assumption. Hence, applying the lemma, we conclude that $c'_\beta$ has $\zn$-height at most $d^{dn} H_1H_2^d$. Using a crude bound, we see there are at most $(n+1)^d$ coefficients $c'_\beta$ since $|S'| \leq |S| \leq (d+1)^n$. Hence, using the fact that $c''_\alpha = \sum_{\beta \in S'} c'_\beta$ and applying the lemma again, we conclude that $c''_\alpha$ has $\zn$-height at most $(d+1)^nd^{dn}H_1H_2^d$.
		
		Moreover, the coefficient $c_0$ of $y^\alpha$ in $Q$ also trivially has $\zn$-height at most $d^{dn} H_1H_2^d$ by assumption on $Q$. Hence, applying Lemma~\ref{lem: manipulating 0-closeness mod N} again, $c'''_\alpha = c''_\alpha+c_0$ has $\zn$-height at most $((d+1)^n+1)d^{dn}H_1H_2^d$. This argument is valid for each $\alpha \in S$.
		
		Finally, we observe that
		\begin{equation}
			((d+1)^n+1)d^{dn} \ \leq \ \frac 12 (d+1)^{2dn}
		\end{equation}
		since $d \geq 2$ and $n \geq 1$.
	\end{proof}

	In the proof of Lemma~\ref{lem: PET inductive step}, we will need another technical result about $\zn$-height, which we record here.
	\begin{lemma}\label{lem: bound on translates violating ess dist}
		Let $N$, $H$, and $n$ be positive integers. Let $c \in \Z$. Let $P(y), Q(y) \in \zn[y_1,\ldots, y_n]$ be two polynomials such that $1 \leq \deg_{\zn}(P) < \lpf N$ and the nonzero coefficients of $P$ are invertible modulo $N$. Then the number of $h=(h_1,\ldots, h_n) \in \mathbb{Z}_N^n$ such that $P(y+h) = Q(y) + c$ in $\zn[y_1,\ldots,y_n]$ and each $h_i$ has $\zn$-height at most $H-1$ is at most $(2H-1)^{n-1}$.
	\end{lemma}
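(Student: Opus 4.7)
My plan is to exploit the fact that, when expanding $P(y+h)$, the degree-$(d-1)$ coefficients depend \emph{linearly} on $h$ and carry the leading coefficients of $P$ as factors, which (by hypothesis) are invertible. This will let me solve for one coordinate $h_{i_0}$ in terms of the others, reducing the count from a naive $(2H-1)^n$ to $(2H-1)^{n-1}$.

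Let $d = \deg_{\zn}(P)$; by hypothesis $2 \le d < \lpf N$. Choose a multi-index $\alpha^* \in (\N \cup \{0\})^n$ with $|\alpha^*| = d$ and $c_{\alpha^*} \ne 0$, where $P = \sum_\alpha c_\alpha y^\alpha$. Pick $i_0 \in \{1,\ldots,n\}$ with $\alpha^*_{i_0} \ge 1$, which exists since $d \ge 2$. Now expand
\begin{equation*}
P(y+h) = \sum_\alpha c_\alpha \prod_i (y_i + h_i)^{\alpha_i} = \sum_\beta \Bigl(\sum_{\alpha \ge \beta} c_\alpha \prod_i \tbinom{\alpha_i}{\beta_i} h_i^{\alpha_i - \beta_i}\Bigr)\, y^\beta,
\end{equation*}
where $\alpha \ge \beta$ means componentwise. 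Setting $\beta := \alpha^* - e_{i_0}$ (so $|\beta| = d-1$), the only $\alpha$ contributing with $|\alpha| = d$ are those of the form $\beta + e_j$ for some $j$, and the contribution with $|\alpha| = d-1$ is just the constant $c_\beta$ (here no $\alpha$ with $|\alpha| > d$ contributes since $d = \deg_\zn(P)$). Comparing the $y^\beta$ coefficients in the identity $P(y+h) = Q(y) + c$ in $\zn[y_1,\ldots,y_n]$ yields
\begin{equation*}
\sum_{j=1}^n (\beta_j + 1)\, c_{\beta + e_j}\, h_j \ \equiv \ q_\beta - c_\beta + c\cdot \mathbf{1}_{\beta = 0} \pmod{N},
\end{equation*}
where $q_\beta$ is the $y^\beta$-coefficient of $Q$.

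The coefficient of $h_{i_0}$ in this equation is $(\beta_{i_0} + 1) c_{\beta + e_{i_0}} = \alpha^*_{i_0} c_{\alpha^*}$. Here $c_{\alpha^*}$ is invertible modulo $N$ by the hypothesis on nonzero coefficients of $P$, and $\alpha^*_{i_0}$ is a positive integer at most $d < \lpf N$, so $\gcd(\alpha^*_{i_0}, N) = 1$ and $\alpha^*_{i_0}$ is invertible by Lemma~\ref{lem: what is a unit}. Hence $\alpha^*_{i_0} c_{\alpha^*}$ is invertible modulo $N$, and $h_{i_0}$ is uniquely determined modulo $N$ by the remaining coordinates $(h_j)_{j \ne i_0}$.

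To finish, observe that an integer has $\zn$-height at most $H-1$ iff its residue lies in $\{0,1,\ldots,H-1\} \cup \{N-(H-1),\ldots,N-1\}$, a set of cardinality at most $2H-1$. Therefore the tuple $(h_j)_{j \ne i_0}$ has at most $(2H-1)^{n-1}$ possible values, and each such choice forces $h_{i_0}$ uniquely (with no guarantee it has $\zn$-height at most $H-1$, which can only decrease the count). The main ``obstacle'' is essentially bookkeeping---selecting the right monomial $\beta$ so that the relevant linear coefficient carries both the leading coefficient $c_{\alpha^*}$ and a factor $\alpha^*_{i_0}$ that are simultaneously invertible; once $\beta = \alpha^* - e_{i_0}$ is identified, the rest is a direct coefficient comparison. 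No PET-induction or $\zn$-height arithmetic beyond this single step is needed.
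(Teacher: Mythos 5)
Your proof is correct and takes the same underlying strategy as the paper's: compare coefficients of $P(y+h)$ and $Q(y)+c$ at a well-chosen monomial to obtain an equation in which one coordinate $h_{i_0}$ appears with an invertible coefficient, so that fixing the other $n-1$ coordinates (each ranging over a set of size at most $2H-1$) determines $h_{i_0}$ uniquely. Where you differ is in the selection of the monomial, and your choice is cleaner. You pick $\alpha^*$ of top degree $d = \deg_{\zn}(P)$ and set $\beta = \alpha^* - e_{i_0}$; then the degree constraint $d-1 = |\beta| \le |\gamma| \le d$ on any contributing $\gamma \ge \beta$ immediately forces $\gamma \in \{\beta\} \cup \{\beta + e_j : j\}$, so the coefficient of $y^\beta$ in $P(y+h)$ is exactly affine-linear in $h$, namely $c_\beta + \sum_j (\beta_j+1)\,c_{\beta+e_j}\,h_j$, and the $h_{i_0}$-coefficient is $\alpha^*_{i_0}\,c_{\alpha^*}$, a product of units. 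The paper instead selects $j$ to maximize the $j$-th exponent among monomials appearing in $P$, then takes $\alpha$ of maximal degree among those achieving this, and works at $y^{\alpha - e_j}$; this requires an extra maximality argument to rule out other contributing $\beta$ with $\beta_j = \alpha_j$ (and strictly speaking, the intermediate claim that $c'_\beta = c_\beta \binom{\alpha_j}{1} h_j + c_\beta F_\beta$ only holds once one knows $\beta = \alpha$, a small expository wrinkle). Your degree-based choice sidesteps all of this. One cosmetic note: the $\mathbf{1}_{\beta=0}$ factor on the right-hand side of your coefficient equation is unnecessary since $|\beta| = d - 1 \ge 1$, so $\beta \ne 0$; but this does not affect the argument.
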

	\begin{remark}
		We will apply this lemma in the case $\deg_{\zn}(P) > 1$.
	\end{remark}
	\begin{proof}
		The conclusion is trivial to verify when $\deg_{\zn}(P) = 1$, so we now assume $\deg_{\zn}(P) > 1$.
		
		Let $S$ be the set of $\alpha = (\alpha_1,\ldots,\alpha_n)\in \Z^n$ such that the coefficient of $y^\alpha = y_1^{\alpha_1}\cdots y_n^{\alpha_n}$ in $P(y)$ is nonzero modulo $N$, and write $P(y) = \sum_{\alpha \in S} c_\alpha y^\alpha$. By assumption on the degree of $P$, the set $S' := \{ \alpha \in S : \sum_{i=1}^n \alpha_i \geq 2\}$ is nonempty. It follows that there exists $j \in \{1,\ldots, n\}$ such that \begin{equation} S''_j \ := \ \{\alpha \in S' : \alpha_j > 0 \text{ and } \forall \beta = (\beta_1,\ldots, \beta_n) \in S, \; \beta_j \leq \alpha_j \} \end{equation} is nonempty. Let $\alpha \in S''_j$ be such that \begin{equation}
			\sum_{i=1}^n \alpha_i \ = \ \max\{d \in \N : \exists \beta \in S''_j, \; d = \sum_{i=1}^n \beta_i\},
		\end{equation}
		and write $\alpha' := (\alpha_1,\ldots, \alpha_{j-1},\alpha_j-1,\alpha_{j+1},\ldots,\alpha_n)$.
		
		We claim that the coefficient of $y^{\alpha'}$ in $P(y+h)$ is of the form $c_\alpha \binom{\alpha_j}{1}h_j + F$ for some quantity $F \in \zn$ not depending on $h_j$. Let $S''' := \{ \beta \in S : \beta_j \geq \alpha_j - 1 \text{ and } \beta_i \geq \alpha_i$ for $i \neq j\}$, and, for each $\beta \in S'''$, denote by $c'_{\beta}$ the coefficient of $y^{\alpha'}$ that appears when expanding the expression $c_\beta (y+h)^{\beta}$. Thus, the coefficient of $y^{\alpha'}$ in $P(y+h)$ is precisely $\sum_{\beta \in S'''} c'_{\beta}$.
		
		By construction, we have $\alpha \in S''_j$, which implies that any $\beta \in S'''$ satisfies $\beta_j \leq \alpha_j$. Hence, any $\beta \in S'''$ has either $\beta_j = \alpha_j - 1$ or $\beta_j = \alpha_j$. Thus, on the one hand, suppose $\beta \in S'''$ satisfies $\beta_j = \alpha_j - 1$. Then 
		\begin{equation}
			c_\beta(y+h)^\beta \ = \ c_\beta \left(y_j^{\alpha_j-1} + \sum_{k=1}^{\alpha_j-1} \binom{\alpha_j-1}{k} h_j^ky_j^{\alpha_j-1-k} \right) \prod_{i\neq j} (y_i + h_i)^{\beta_i},
		\end{equation}
		so that $c'_\beta = c_\beta F_\beta$, where $F_\beta$ is some polynomial expression in $h_i$, $i \neq j$.
		
		On the other hand, suppose $\beta \in S'''$ satisfies $\beta_j = \alpha_j$. Then
		\begin{equation}
			c_\beta(y+h)^\beta \ = \ c_\beta \left(\binom{\alpha_j}{1}h_j y_j^{\alpha_j-1} + \sum_{\substack{0 \leq k \leq \alpha_j, \\ k \neq 1}} \binom{\alpha_j}{k} h_j^ky_j^{\alpha_j-k} \right) \prod_{i\neq j} (y_i + h_i)^{\beta_i},
		\end{equation}
		so that $c'_\beta = c_\beta \binom{\alpha_j}{1} h_j + c_\beta F_\beta$, where $F_\beta$ is again some polynomial expression in $h_i$, $i \neq j$.
		
		To prove the claim, it remains only to show that the only $\beta \in S'''$ such that $\beta_j = \alpha_j$ is just $\alpha$. To see this, observe that $\beta \in S''_j$ and hence must satisfy $\sum_{i=1}^n \beta_i \leq \sum_{i=1}^n \alpha_i$ by the choice of $\alpha$. On the other hand, the fact that $\beta$ belongs to $S'''$ means that $\beta_i \geq \alpha_i$ for $i \neq j$, which, together with the assumption $\beta_j = \alpha_j$, implies that $\sum_{i=1}^n \alpha_i \leq \sum_{i=1}^n \beta_i$, yielding $\sum_{i=1}^n \alpha_i = \sum_{i=1}^n \beta_i$. Finally, we know that $\beta_{i} \geq \alpha_i$ for $i \neq j$. Supposing to the contrary that some $\beta_{i'} > \alpha_{i'}$ for $i' \neq j$, we would conclude from $\sum_{i\neq i'} \beta_i \geq \sum_{i \neq i'} \alpha_i$ that $\sum_{i=1}^n \beta_i > \sum_{i=1}^n \alpha_i$, which is a contradiction. Hence all $\beta_i = \alpha_i$ and $\beta = \alpha$, proving the claim.
		
		Thus, by the claim, the coefficient of $y^{\alpha'}$ in $P(y+h)$ is of the form $c_\alpha \alpha_j h_j + F$ for some $F \in \zn$ not depending on $h_j$, but possibly depending on $h_i$, $i \neq j$. Let $c'$ be the (possibly zero) coefficient of $y^{\alpha'}$ in $Q(y) + c$. By assumption, $c' \equiv c_\alpha \alpha_j h_j + F \bmod N$. Hence, fixing $h_i \in \{0,1,\ldots, H-1\}\cup \{N-(H-1),\ldots,N-1\}$ for every $i \neq j$, we observe that $h_j$ is determined by the equation $h_j \equiv (c' - F)/(c_\alpha \alpha_j) \bmod N$, where, by assumption, $c_\alpha$ is invertible modulo $N$ and the fact that $\alpha_j \leq \deg_{\zn}(P) < \lpf N$ implies that $\alpha_j$ is invertible modulo $N$. The result follows.
	\end{proof}

	\subsection{Essential distinctness modulo $N$} \label{subsec: essential distinctness} 
	As in previous instances of PET induction, we use the concept of essential distinctness of a family of polynomials as a kind of non-degeneracy condition that is preserved under differencing the family by applying Lemma~\ref{linearization step}. We recall the definition we use:
	
	\begin{definition}
		Let $m$, $n$, and $N$ be positive integers. Let $\mathbf{P} = \{P_1(y),\ldots, P_m(y)\} \subset \zn[y_1,\ldots, y_n]$ be a family of polynomials. We say that $\mathbf{P}$ is \emph{essentially distinct} if for any distinct $i,j \in \{1,\ldots, m\}$, neither $y \mapsto P_i(y)$ nor $y \mapsto P_i(y) - P_j(y)$ is a constant function $\mathbb{Z}_N^n \to \zn$.
	\end{definition}
	
	As explained in Subsection~\ref{subsec: polynomials}, there is no nice correspondence between polynomials over $\zn$ and polynomial functions over $\zn$, since multiple polynomials may induce the same polynomial function. Moreover, as in other PET induction arguments, the relevant property that we need to guarantee is the non-constant nature of certain induced polynomial functions.
	
	This complication in the definition explains why we needed to develop the material in Subsection~\ref{subsec: polynomials} about polynomials vs. polynomial functions, which we now use to prove the following proposition, whose purpose is to simplify some of the bookkeeping in the proof of Proposition~\ref{prop: Us control}.
	
	\begin{proposition}\label{prop: R-essential distinctness lpf condition}
		Let $m \geq 2$ and $n \geq 1$ be integers, and let $\mathbf P = \{P_1(y),\ldots,P_m(y)\} \subset \Z[y_1,\ldots, y_n]$ be an independent family of polynomials with zero constant term. There exists $C^* > 0$ such that, for any finite commutative ring $R$ with characteristic $N$ satisfying $\lpf N > C^*$, the family $\{P_1,\ldots, P_m\}$, viewed as a finite subset of $\zn[y_1,\ldots, y_n]$, is essentially distinct.
	\end{proposition}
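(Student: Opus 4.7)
The plan is to reduce the problem, via Lemma~\ref{lem: mvpolyuniq}, to a statement about polynomials rather than polynomial functions, and then to use independence together with the zero constant term hypothesis to produce a visible nonzero coefficient that survives reduction mod $N$. First, I would unpack what the hypotheses give at the level of polynomials in $\Z[y_1,\ldots,y_n]$. Since $\mathbf P$ is independent, for each $i$ the polynomial $P_i$ is non-constant (the combination $1 \cdot P_i$ is non-trivial, hence non-constant), and for each pair $i \neq j$ the polynomial $P_i - P_j$ is non-constant (the combination with coefficients $1$ and $-1$ is non-trivial). Because each $P_i$, and hence each $P_i - P_j$, has zero constant term, each such polynomial is either the zero polynomial or has at least one nonzero integer coefficient $c$ attached to a non-constant monomial $y^\alpha$; since zero is constant, the former is ruled out by independence, leaving the latter.

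Next I would choose $C^*$ to simultaneously bound the degrees and the absolute values of the relevant coefficients. Let
\[
d := \max_{1 \leq i \leq m} \deg(P_i),
\]
so that $\deg(P_i - P_j) \leq d$ as well, and let $M$ be the maximum absolute value of any nonzero coefficient appearing in any $P_i$ or any $P_i - P_j$ (over $i \neq j$). Set $C^* := \max(d, M)$. For any $R$ with characteristic $N$ satisfying $\lpf N > C^*$, we have $\deg Q < \lpf N$ for each relevant $Q \in \{P_i\} \cup \{P_i - P_j : i \neq j\}$, and also $N \geq \lpf N > |c|$ for every nonzero integer coefficient $c$ of such a $Q$, so $c \not\equiv 0 \bmod N$.

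Finally, I would argue by contradiction. Suppose some such $Q$ induces a constant function $y \mapsto c_0 \in \zn$ from $\zn^n$ to $\zn$. Then the reduction $Q \bmod N$ and the constant polynomial $c_0 \in \zn[y_1,\ldots,y_n]$ induce the same function on $\zn^n$, and both have degree less than $\lpf N$. Lemma~\ref{lem: mvpolyuniq} therefore forces them to be equal as elements of $\zn[y_1,\ldots,y_n]$, which means every coefficient of a non-constant monomial of $Q$ is $\equiv 0 \bmod N$. This contradicts the existence, established in the first paragraph, of a nonzero integer coefficient $c$ of some non-constant monomial of $Q$, since the second paragraph guarantees $c \not\equiv 0 \bmod N$. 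No real obstacle is present; the only mild subtlety is the need to bundle the degree bound (needed for the uniqueness lemma) and the coefficient-size bound (needed to detect a nonzero residue) into a single threshold $C^*$, which is handled by taking the maximum and using $N \geq \lpf N$.
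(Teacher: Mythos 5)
Your proof is correct. It follows the same overall skeleton as the paper's proof — pick $C^*$ to handle both degree bounds and coefficient considerations, then use Lemma~\ref{lem: mvpolyuniq} to upgrade a hypothetical equality of polynomial functions to an equality of polynomials over $\zn$, and derive a contradiction — but your route to the contradiction is more direct. The paper invokes Proposition~\ref{prop: lindep over zn}, obtaining a constant $C_1$ (from a nonzero $m\times m$ minor of the coefficient matrix) so that the family is linearly independent over $\Z_N$, and then contradicts that linear independence: a zero polynomial or a pair of equal polynomials cannot sit inside a linearly independent family. You instead skip linear independence entirely and observe directly that each $P_i$ and each $P_i - P_j$ (all non-constant by independence, with zero constant term) has a nonzero integer coefficient $c$ on a non-constant monomial, and that $|c| < \lpf N \le N$ forces $c \not\equiv 0 \bmod N$, so the reduction mod $N$ is not a constant polynomial in $\zn[y_1,\ldots,y_n]$. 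Your approach uses strictly less machinery: you only need that each relevant polynomial is nonzero mod $N$, not full linear independence over $\Z_N$, and you get that nonzeroness by a size bound on coefficients rather than by a determinant argument. The paper's choice presumably reflects that Proposition~\ref{prop: lindep over zn} is already set up and reused in Lemma~\ref{lem: multicharacter product of independent polynomials as arguments}; your proof would be a slightly leaner drop-in replacement. One small point worth making explicit (you handle it implicitly): for the difference case, the constant function that $P_i - P_j$ hypothetically induces must actually be the zero constant, because $(P_i-P_j)(0)=0$ by the zero-constant-term hypothesis — this is what lets you compare against the constant polynomial $0$ rather than an unknown $c_0$; the paper makes the same tacit reduction.
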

	\begin{proof}
		By Proposition~\ref{prop: lindep over zn}, let $C_1 > 0$ be such that, for any positive integer $M$ with $\lpf M > C_1$, the family $\{P_1,\ldots, P_m\}$, viewed as a family of polynomials in $\Z_M[y_1,\ldots, y_n]$, is linearly independent over $\Z_M$.
		
		Let $C^* = \max\{C_1,\deg(P_1),\ldots, \deg(P_m)\}$.
		
		Let $R$ be a finite commutative ring with characteristic $N$ satisfying $\lpf N > C^*$. Then the family $\{P_1,\ldots, P_m\}$ is linearly independent over $\Z_N$.
		
		Now, observe that for any $i \in \{1,\ldots, m\}$, $y \mapsto P_i(y)$ is not a constant function $\mathbb{Z}_N^n \to \zn$. Indeed, if it were, then since $P_i(0,\ldots, 0) = 0$, we would have $P_i(y_1,\ldots, y_n) \equiv 0 \bmod N$ for all $y_1,\ldots, y_n \in \zn$. Since $\deg_{\zn} (P_i) \leq \deg(P_i) \leq C^* < \lpf N$, by Lemma~\ref{lem: mvpolyuniq}, it would follow that $P_i$ is the zero polynomial in $\zn[y_1,\ldots, y_n]$, which cannot be a member of a family of linearly independent polynomials over $\Z_N$.
		
		Similarly, if $y \mapsto P_i(y) - P_j(y)$ were a constant function $\mathbb{Z}_N^n \to \zn$ for distinct $i,j \in \{1,\ldots, m\}$, then it would follow that $P_i(y_1,\ldots, y_n) \equiv P_j(y_1,\ldots, y_n) \bmod N$ for all $y_1,\ldots, y_n \in \zn$ and hence $P_i = P_j$ by Lemma~\ref{lem: mvpolyuniq}, which again contradicts the linear independence of $\{P_1,\ldots, P_m\}$ over $\Z_N$.
	\end{proof}

	\subsection{Weight sequences}\label{subsec: weight sequences}
	The goal of this subsection is to define weight sequences, which we use to manage the PET induction argument in the proof of Proposition~\ref{prop: Us control}.
	
	First, we need to define the notion of a weight sequence of a family of multivariable polynomials with coefficients in either $\Z$ or $\zn$.
	
	Fix a positive integer $n$ for the remainder of this discussion. Let $S$ be the set of all $n$-tuples of nonnegative integers, not all of which are zero. Define an order $\prec_n$ on $S$ by asserting that $\alpha \prec_n \beta$ if and only if either $\sum_{i=1}^n \alpha_i < \sum_{i=1}^n \beta_i$ or both $\sum_{i=1}^n \alpha_i = \sum_{i=1}^n \beta_i$ and there exists $r \in \{1,\ldots,n\}$ such that $\alpha_r < \beta_r$ and for every $i < r$, $\alpha_i = \beta_i$. It is easy to check that the ordered sets $(S,\prec_n)$ and $(\N, <)$ are order isomorphic, so we let $\pi : S \to \N$ be the unique bijective function\footnote{Here we use $\pi$ to denote the order isomorphism instead of the usual $f$.} such that $\pi(\alpha) < \pi(\beta)$ if and only if $\alpha \prec_n \beta$.
	
	For example, when $n = 2$, observe that 
	\begin{equation} (0,1) \prec_2 (1,0) \prec_2 (0,2) \prec_2 (1,1) \prec_2 (2,0) \prec_2 \ldots
	\end{equation}
	Then $\pi((0,1)) = 1$, $\pi((1,0)) = 2$,  $\pi((0,2)) = 3$, and so on. This order $\prec_2$ formalizes the idea that $y_2 = y_1^0y_2^1$ is lighter than $y_1 = y_1^1y_2^0$, which is lighter than $y_2^2 = y_1^0y_2^2$, which is lighter than $y_1y_2 = y_1^1y_2^1$, which is lighter than $y_1^2 = y_1^2y_2^0$, and so on. Defined below, the leading coefficient of a polynomial will be the one attached to the heaviest monomial with a nonzero coefficient.
	
	Let $P(y) \in \Z[y_1,\ldots, y_n]$ be nonconstant. Write $P(y) = c_{(0,\ldots,0)}+\sum_{\alpha \in S} c_\alpha y^\alpha$, and define $\beta$ to be the unique $\beta \in S$ such that $\pi(\beta) = \max\{ \pi(\alpha) : \alpha \in S \text{ and } c_\alpha \neq 0 \}$. We call $L(P) := c_\beta$ the \emph{leading coefficient} of $P$ and $w(P) := \pi(\beta)$ the \emph{weight} of $P$.
	
	Now fix a positive integer $N$, and suppose that $P \in \zn[y_1,\ldots, y_n]$ (resp. $\Z[y_1,\ldots, y_n]$) satisfies $\deg_{\zn}(P) > 0$. Analogously, define $\beta_N$ to be the unique $\beta_N \in S$ such that 
	\begin{equation}
		\pi(\beta_N) = \max\{ \pi(\alpha) : \alpha \in S \text{ and } c_\alpha \not\equiv 0 \bmod N\}.
	\end{equation} We call $L_{\zn}(P) := c_{\beta_N}$ the \emph{leading coefficient} (resp. \emph{$\zn$-leading coefficient}) of $P$ and \\ $w_{\zn}(P) := \pi(\beta_N)$ the \emph{weight} (resp. \emph{$\zn$-weight}) of $P$. The notions of ($\zn$-)weight and ($\zn$-)degree are compatible in the following sense: For $P$ and $Q \in \Z[y_1,\ldots, y_n]$, $\deg_{\zn}(P) \leq \deg_{\zn}(Q)$ if and only if $w_{\zn}(P) \leq w_{\zn}(Q)$. 
	
	Now fix a positive integer $m$. Let $\mathbf{P} := \{P_1(y),\ldots, P_m(y)\} \subset \Z[y_1,\ldots,y_n]$ be a family of nonconstant polynomials. For each positive integer $r$ and $N$, we define 
	\begin{equation}
		W_{r}(\mathbf{P}) := |\{ L(P_i) \in \Z : i \in \{1,\ldots, m\} \text{ such that } w(P_i) = r \}|.
	\end{equation} Finally, we define the \emph{weight sequence} of $\mathbf{P}$ by $W(\mathbf{P}) := (W_1(\mathbf{P}),W_2(\mathbf{P}),\ldots)$.
	
	Similarly, let $\mathbf{P} := \{P_1(y),\ldots, P_m(y)\} \subset \zn[y_1,\ldots,y_n]$ (resp. $\Z[y_1,\ldots,y_n]$). For each positive integer $r$ and $N$, define 
	\begin{equation} W_{\zn,r}(\mathbf{P}) := |\{ L_{\zn}(P_i) \in \zn : i \in \{1,\ldots, m\} \text{ such that } w_{\zn}(P_i) = r \}|. \end{equation} We define the \emph{weight sequence} (resp. \emph{$\zn$-weight sequence}) of $\mathbf P$ by
	\begin{equation} W_{\zn}(\mathbf{P}) := (W_{\zn,1}(\mathbf{P}),W_{\zn,2}(\mathbf P), \ldots).
	\end{equation}

	All notions involved in defining ($\zn$-)weight sequences depend on $\prec_n$, but we will suppress notating this, as we will always work in a context with a fixed $n$.
	
	Let us give an example. The family \begin{equation}
		\mathbf P := \{y_1^2 +3y_2^2, 8y_1^2, 2y_1^2 + y_1y_2, 7y_2^2 + y_1, 2y_1,6y_2 +2y_1, y_1, 4y_1 + 2\} \subset \mathbb{Z}[y_1,y_2]
	\end{equation}
	has weight sequence $(0,3,1,0,3,0,\ldots)$ and $\mathbb{Z}_7$-weight sequence $(0,3,0,0,2,0,\ldots)$. First recall that $(0,1) \prec_2 (1,0) \prec_2 (0,2) \prec_2 (1,1) \prec_2 (2,0) \prec_2 \ldots$.
	
	There are four polynomials in $\mathbf P$ of weight 2, namely $2y_1$, $6y_2 + 2y_1$, $y_1$, and $4y_1$, which respectively have leading coefficients 2, 2, 1, and 4; hence $W_2(\mathbf P) = 3$. There is one polynomial of weight 3, namely $7y_2^2+y_1$, so $W_3(\mathbf P) = 1$. There are three polynomials of weight 5, namely $y_1^2 +3y_2^2$, $8y_1^2$, and $2y_1^2 + y_1y_2$, which respectively have leading coefficients 1, 8, and 2; hence $W_5(\mathbf P) = 3$. There are no polynomials in $\mathbf P$ of any other weight, so $W_r(\mathbf P) = 0$ for $r \neq 2, 3, 5$.
	
	There are five polynomials in $\mathbf P$ of $\mathbb{Z}_7$-weight 2, namely $7y_2^2+y_1$, $2y_1$, $6y_2 + 2y_1$, $y_1$, and $4y_1$, which respectively have $\mathbb{Z}_7$-leading coefficients 1, 2, 2, 1, and 4; hence $W_{\mathbb{Z}_7,2}(\mathbf P) = 3$. There are three polynomials of $\mathbb{Z}_7$-weight 5, namely $y_1^2 +3y_2^2$, $8y_1^2$, and $2y_1^2 + y_1y_2$, which respectively have $\mathbb{Z}_7$-leading coefficients 1, 1, and 2; hence $W_{\mathbb{Z}_7,5}(\mathbf P) = 2$. There are no polynomials in $\mathbf P$ of any other $\mathbb{Z}_7$-weight, so $W_{\mathbb{Z}_7,r}(\mathbf P) = 0$ for $r \neq 2, 5$.
	
	Let $\mathcal{W}$ be the collection of sequences of nonnegative integers, not all of which are zero, and all but finitely many of which are zero. Every ($\zn$-)weight sequence belongs to $\mathcal{W}$.
	
	Here is a basic fact about weight sequences that will be useful in the proof of Proposition~\ref{prop: Us control}.
	\begin{proposition}\label{prop: weight sequence initial stability}
		Let $n$ and $m$ be positive integers. Let $\mathbf P = \{P_1(y),\ldots, P_m(y)\} \subset \Z[y_1,\ldots,y_n]$ be an independent family of polynomials with zero constant term such that $w(P_m) = \max_{1 \leq j \leq m} w(P_j)$. Define the family $\mathbf{P'} = \{P_1-P_m,\ldots, P_{m-1}-P_m, -P_m\} \subset \Z[y_1,\ldots, y_n]$. There exists a constant $C_{\rm{stable}} > 0$ such that whenever $N$ is a positive integer with $\lpf N > C_{\rm{stable}}$, we have
		\begin{enumerate}
			\item $w_{\zn}(P) = w(P)$ for any $P \in \mathbf P \cup \mathbf{P'}$,
			\item $W_{\zn}(\mathbf P) = W(\mathbf P)$,
			\item $W_{\zn}(\mathbf{P'}) = W(\mathbf{P'})$, and
			\item $\max_{1 \leq j \leq m} \deg_{\zn}(P_j) = \max_{1 \leq j \leq m} \deg(P_j)$.
		\end{enumerate} 
	\end{proposition}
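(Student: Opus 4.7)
The plan is to choose $C_{\rm stable}$ so large that reduction modulo $N$ preserves both the non-vanishing and the pairwise distinctness (within each weight class) of the integer leading coefficients of all polynomials in $\mathbf P \cup \mathbf{P'}$. The key observation is that for any nonconstant $P \in \Z[y_1,\ldots,y_n]$, one has $w_{\zn}(P) = w(P)$ and $L_{\zn}(P) \equiv L(P) \bmod N$ as soon as $L(P) \not\equiv 0 \bmod N$, since by definition $w_{\zn}(P)$ is determined by the heaviest monomial whose integer coefficient survives reduction.

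First, I would verify that every polynomial in $\mathbf P \cup \mathbf{P'}$ is nonconstant, so that each has a well-defined weight and leading coefficient over $\Z$. For $P \in \mathbf P$ this is a hypothesis, and $-P_m$ is obviously nonconstant. For $P_i - P_m$ with $i < m$, if it were constant then (since $P_i$ and $P_m$ both have zero constant term) it would be the zero polynomial, whence $P_i = P_m$, contradicting the independence of $\mathbf P$ via the nontrivial relation $1\cdot P_i - 1\cdot P_m = 0$.

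Next, I would collect the finite set $\mathcal L$ of integer leading coefficients of polynomials in $\mathbf P \cup \mathbf{P'}$, together with the finite set $\mathcal D$ of all nonzero differences $L(P) - L(Q)$, where $P$ and $Q$ are drawn either both from $\mathbf P$ or both from $\mathbf{P'}$ with $w(P) = w(Q)$. Every element of $\mathcal L \cup \mathcal D$ is a nonzero integer, and so setting
\[ C_{\rm stable} \ := \ \max\bigl\{ |c| : c \in \mathcal L \cup \mathcal D \bigr\}, \]
any $N$ with $\lpf N > C_{\rm stable}$ has every element of $\mathcal L \cup \mathcal D$ coprime to $N$, hence nonzero modulo $N$.

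With this choice, all four conclusions follow. Claim (1) is immediate from the observation above applied to each $P \in \mathbf P \cup \mathbf{P'}$. For claims (2) and (3), claim (1) shows that the collection of polynomials of weight $r$ in $\mathbf P$ (resp.\ $\mathbf{P'}$) coincides with the collection of polynomials of $\zn$-weight $r$, and the integer leading coefficients among these remain pairwise distinct modulo $N$ by the design of $\mathcal D$, giving $W_{\zn,r} = W_r$ for every $r$. Claim (4) follows from (1) applied to an index $j_0$ achieving $\deg(P_{j_0}) = \max_j \deg(P_j)$: since $\deg_{\zn}$ and $w_{\zn}$ are monotonically related exactly as $\deg$ and $w$ are, preservation of weight forces preservation of degree at $j_0$, which combined with the trivial bound $\deg_{\zn}(P_j) \leq \deg(P_j)$ yields the equality of maxima. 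There is no substantial obstacle here; the only subtlety worth noting is the use of independence together with the zero constant term hypothesis to ensure that each $P_i - P_m$ with $i < m$ is nonconstant, after which the argument is pure bookkeeping.
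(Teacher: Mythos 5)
Your proof is correct and takes essentially the same approach as the paper: pick a threshold so that the relevant integer coefficient data is preserved when reduced modulo $N$, then note that $w_{\zn} = w$ propagates through the definitions of $W_{\zn}$ and $\deg_{\zn}$. The paper chooses $C_{\rm stable} = 2H$ where $H$ bounds \emph{all} coefficients of $\mathbf P \cup \mathbf{P'}$ and uses $\lpf N \leq N$ to deduce $N > 2H$, whereas you restrict to leading coefficients and their same-weight pairwise differences and use coprimality via $\lpf N$ directly; both are elementary and immediate, and your explicit check that every $P_i - P_m$ is nonconstant is a worthwhile detail the paper leaves implicit.
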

	\begin{proof} Let $S \subset \Z$ consist of every integer that appears as a coefficient of some polynomial in $\mathbf P \cup \mathbf{P'}$. Let $H$ be an integer such that every $c \in S$ satisfies $|c| \leq H$.
		
		Now suppose $N > 2H$, and let $c, c' \in S$. Then $c \equiv 0 \bmod N$ if and only if $c = 0$, and $c - c' \equiv 0 \bmod N$ if and only if $c = c'$. The first equivalence implies that $w_{\zn}(P) = w(P)$ for any $P \in \mathbf P \cup \mathbf{P'}$, and the second equivalence implies that any two polynomials in $\mathbf P$ (or in $\mathbf{P'}$) have the same leading coefficient if and only if they have the same $\zn$-leading coefficient; hence, for any positive integer $r$, $W_{\zn,r}(\mathbf P) = W_r(\mathbf P)$ and $W_{\zn,r}(\mathbf{P'}) = W_r(\mathbf{P'})$. This shows that $W_{\zn}(\mathbf P) = W(\mathbf P)$ and $W_{\zn}(\mathbf{P'}) = W(\mathbf{P'})$.
		
		To conclude that we can, therefore, take $C_{\rm{stable}} = 2H$, the only thing left to observe is that, for $P \in \mathbf P$, $w_{\zn}(P) = w(P)$ implies $\deg_{\zn}(P) = \deg(P)$; hence $\max_{1 \leq j \leq m} \deg_{\zn}(P_j) = \max_{1 \leq j \leq m} \deg(P_j)$.
	\end{proof}

	\subsection{PET algorithms}\label{subsec: PET algorithms}
	The purpose of this subsection is to develop the notions of a \emph{permissible operation} and a \emph{PET algorithm}. We first describe why we introduce these notions.
	
	We have stated that, roughly speaking, the proof of Proposition~\ref{prop: Us control} proceeds by repeated application of Lemma~\ref{linearization step} according to a PET induction scheme managed by weight sequences. In the previous subsections, we introduced various notions whose only purpose will be to manage the repeated application of Lemma~\ref{linearization step} to guarantee some desirable property about the final inequality that we obtain (namely, that it is in terms of an average that is controlled by a Gowers uniformity norm of some relevant function). In this subsection, we develop a tool to analyze the PET induction scheme itself.
	
	In the first published instances of PET induction (for example, \cite{wmpet}), it was common to argue in the following manner:
	\begin{enumerate}
		\item Introduce a well-ordering (call it $\prec$) on finite families of polynomials $\mathbf P$, using so-called ``weights,'' ``weight matrices,'' ``degree sequences,'' or, in this article, weight sequences.
		\item State a proposition that some conclusion holds for all finite families of polynomials $\mathbf P$: ``For all $\mathbf P$, $P(\mathbf P)$ holds.'' 
		\item Prove this proposition by induction on the well-ordering. In this way, it is not necessary, given a family $\mathbf Q$, to determine exactly which families $\mathbf P \prec \mathbf Q$ are such that $P(\mathbf P)$ implies $P(\mathbf Q)$.
	\end{enumerate}
	Thus, suppose that $\mathbf P, \mathbf P_1, \mathbf P_2, \ldots, \mathbf P_k$ are finite families of polynomials such that
	\begin{align*}
		P(\mathbf P_1) \ & \Rightarrow \ P(\mathbf P_2), \\
		P(\mathbf P_2) \ & \Rightarrow \ P(\mathbf P_3), \\
		& \ \vdots\\
		P(\mathbf P_{k-1}) \ & \Rightarrow \ P(\mathbf P_k), \\
		P(\mathbf P_k) \ & \Rightarrow \ P(\mathbf P),
	\end{align*}
	and $P(\mathbf P_1)$ has been established as part of the base case. Then $P(\mathbf P)$ holds, of course.
	
	A consequence of using this approach is that, unless one works out a specific example like $\mathbf P = \{y,y^2\}$, the possible values of $k$ are unknown, as are the intermediate families $\mathbf P_i$.
	
	More recent applications of PET induction often keep track of this kind of information. For example, see \cite[Lemma 4.5]{chufrahost}, \cite[Section 5]{dfmks}, \cite[Section 4]{kkl}, or \cite[Section 4]{pel20}.
	
	Similarly, we will need some control on $k$. Thus, we structure the proof of Proposition~\ref{prop: Us control} so that relevant information about the chain of implications is not forgotten. We introduce permissible operations for this purpose.
	
	Preparing to give the formal definition soon, we think of a permissible operation as one of the implication arrows $\Rightarrow$ in such a diagram as above. Actually, for convenience, we will define the notion of permissible operation slightly more broadly, so it will include all valid implication arrows, along with a few invalid ones.
	
	Let us now describe our present situation with more detail, because the previous discussion omitted a certain complication in our setting.
	
	Loosely speaking, Proposition~\ref{prop: Us control} asserts that, given a certain kind of family of polynomials $\mathbf P = \{P_1,\ldots, P_m\} \subset \Z[y_1,\ldots, y_n]$, a certain inequality holds for any 1-bounded functions $f_0,\ldots, f_m: R \to \C$ on a finite commutative ring $R$ with characteristic $N$ such that $\lpf N$ is sufficiently large. To prove this proposition, we will apply Lemma~\ref{lem: PET inductive step} $k \in \N$ times.
	
	A potential obstacle to our proof arises from the following fact:
	\begin{equation}\label{claimed possible nonuniformity} \text{The value of } k \text{ depends not only on } \mathbf P, \text{ but also on } N \text{ and the functions } f_0,\ldots, f_m.
	\end{equation}
	See Subsection~\ref{subsec: claimed possible nonuniformity} for a detailed example demonstrating two possible values of $k$ for the family $\mathbf P = \{y^3, y^3 +y^2\}$, depending on the given $f_j$.
	
	In the end, the validity of the proof of Proposition~\ref{prop: Us control} relies on a certain kind of uniformity that is not apparent from the statement of Lemma~\ref{lem: PET inductive step}. In particular, the number $k$ should be bounded by a constant that only depends on $\mathbf P$, and not on $N$ or the functions $f_j$.
	
	In the remaining part of this subsection, let us define the notions of permissible operation and PET algorithm and show that $k$ is bounded in terms of $\mathbf P$.
	
	Fix a positive integer $n$ for the whole subsection. Recall that we denote by $\mathcal{W}$ the collection of sequences of nonnegative integers, not all of which are zero, and all but finitely many of which are zero.
	
	Let $\mathcal S$ be the set consisting of all the ordered pairs $(m,(a_i)_{i\in\N})$ such that $m$ is a positive integer, $(a_1,a_2,\ldots) \in \mathcal{W} \cup \{(0,0,\ldots)\}$, and $\sum_{i=1}^\infty a_i \leq m$. Let $\mathcal S_{\rm{degree}=1}$ be the set of pairs in $\mathcal S$ for which the sequence part is of the form $(i_1,\ldots,i_{n},0,0,\ldots)$ for some nonnegative integers $i_1, \ldots, i_n$, not all zero, and let $\mathcal S_{\rm{degree}=0}$ be the set of pairs in $\mathcal S$ for which the sequence part is $(0,0,\ldots)$. Denote $\mathcal S_{\rm{degree}=0} \cup \mathcal S_{\rm{degree}=1}$ by $\mathcal S_{\rm{degree}\leq 1}$.
	
	Fix a positive integer $N > 1$. Note that if $(a_1,a_2,\ldots)$ is the weight sequence for some essentially distinct family $\{P_1,\ldots,P_m\} \subset \zn[y_1,\ldots, y_n]$ of polynomials, then $(m,(a_1,a_2,\ldots)) \in \mathcal S \setminus \mathcal S_{\rm{degree}=0}$; moreover, $(m,(a_1,a_2,\ldots)) \in \mathcal S_{\rm{degree}=1}$ if and only if the degree of each $P_j$ is one.
	
	Let us now define, generally, \emph{a PET algorithm} in a way that is local to this article and is sufficiently broad to cover the full range of situations that arise in the proof of Proposition~\ref{prop: Us control} and for which we require a uniform bound.
	
	Given a pair in $\mathcal S \setminus \mathcal S_{\rm{degree}\leq 1}$, perform certain \emph{permissible operations}, one at a time, from a limited set of operations, to transform it into a pair belonging to $\mathcal S_{\rm{degree}=1}$. We will call such a sequence of operations a PET algorithm (applied to a specific pair in $\mathcal{S} \setminus \mathcal S_{\rm{degree}\leq 1}$, if such specification is necessary).
	
	First, we describe the permissible operations that may be performed on a given pair $a^* = (m,(a_i)_{i\in\N}) \in \mathcal S \setminus \mathcal S_{\rm{degree}=0}$, depending on whether $a^*$ satisfies the following condition:
	\begin{equation} \label{is the sequence a lonely one?}
		\text{There exists } s > n \text{ such that } a_s = 1 \text{ and } a_i = 0 \text{ for all } i \neq s.
	\end{equation}
	
	
	On the one hand, suppose $a^*$ satisfies \eqref{is the sequence a lonely one?}, so that $a^* \in \mathcal{S} \setminus \mathcal S_{\rm{degree}\leq 1}$. We assert that the only permissible operations are those that yield a pair of the form \begin{equation} b^* = (m',(i_1,\ldots,i_{s-1},0,0,\ldots)) \in \mathcal S \setminus \mathcal S_{\rm{degree}=0},
	\end{equation} where $m'$ and the $i_j$ are some nonnegative integers which satisfy $m' \in \{m,m+1,\ldots,2m\}$ and $i_1 + i_2 + \cdots + i_{s-1} \leq 2m$ and are such that not all $i_j = 0$; note that taking $m' = 2m$, $i_1 = 1$ and $i_2 = i_3 = \cdots = i_{s-1} = 0$ ensures $b^* \in \mathcal S \setminus \mathcal S_{\rm{degree}=0}$, so there is at least one permissible operation for such $a^*$. 
	
	On the other hand, suppose $a^*$ does not satisfy \eqref{is the sequence a lonely one?}. Then $a^* \in \mathcal S \setminus \mathcal S_{\rm{degree}=0}$, and there exists $s' \geq 0$ such that $a_{s'+1} > 0$ and, for each $i$ belonging to the possibly empty set $\{1,2,\ldots, s'\}$, $a_i = 0$. We assert that the only permissible operations are those that yield a pair of the form
	\begin{equation} b^* = (m',(i_1,i_2,\ldots,i_{s'},a_{s'+1}-1,a_{s'+2},a_{s'+3},\ldots)) \in \mathcal S \setminus \mathcal S_{\rm{degree}=0},
	\end{equation} where $m'$ and the $i_j$ are some nonnegative integers which satisfy $m' \in \{m,m+1,\ldots,2m\}$ and $i_1 + i_2 + \cdots + i_{s'} \leq 2m$; note that taking  $m' = m$ and (if $s' > 0$) $i_1=i_2 = \dots = i_{s'} = 0$ ensures that $b^* \in \mathcal S$, so again there is at least one permissible operation for such $a^*$.
	
	Next, we observe the following fact: The only permissible operation that yields a pair of the form $b^* = (m',(0,0,\ldots)) \in \mathcal S_{\rm{degree} = 0}$ is one that is applied to a pair $a^*$ with sequence part of the form $(1,0,0,\ldots)$; hence $a^* \in \mathcal S_{\rm{degree}=1}$. 
	
	We are ready to describe a PET algorithm. Let $(m,(a_i)_{i\in\N}) \in \mathcal S \setminus \mathcal S_{\rm{degree} \leq 1}$. Perform a permissible operation on the pair $(m,(a_i)_{i\in\N})$ to obtain a new pair $(m',(a_i')_{i\in\N})$, which, by the previous paragraph, must belong to either $\mathcal S_{\rm{degree}=1}$ or $\mathcal S \setminus \mathcal S_{\rm{degree}\leq 1}$. In the former case, stop. In the latter case, perform a permissible operation on $(m',(a_i')_{i\in\N})$ to obtain $(m'',(a_i'')_{i\in\N})$, check whether $(m'',(a_i'')_{i\in\N})$ belongs to $\mathcal S_{\rm{degree}=1}$ or $\mathcal S \setminus \mathcal S_{\rm{degree}\leq 1}$, and so on...
	
	Though we do not find this formalism useful in this article, a PET algorithm may be formalized as follows. Let $\mathcal G$ be the directed graph with vertex set $\mathcal S$ and a directed edge from $a^*$ to $b^*$ if and only if a permissible operation transforms $a^*$ to $b^*$. Then a PET algorithm is just a path on $\mathcal G$ from a vertex in $\mathcal S \setminus \mathcal{S}_{\rm{degree}\leq 1}$ to a vertex in $\mathcal{S}_{\rm{degree}=1}$.
	
	There are many PET algorithms which transform a given element of $\mathcal{S} \setminus \mathcal S_{\rm{degree}\leq 1}$ into an element of $\mathcal S_{\rm{degree}=1}$, because at every step, there are potentially multiple valid permissible operations which may be applied.
	
	Though not strictly logically necessary for our purposes, we first claim that any PET algorithm terminates after finitely many steps, or applications of a permissible operation. Let $(b_1,b_2,\ldots), (c_1,c_2,\ldots) \in \mathcal{W}$; we may define an order $\prec'$ on $\mathcal{W}$ by asserting that \\ $(b_1,b_2,\ldots) \prec' (c_1,c_2,\ldots)$ if and only if there exists $r \in \N$ such that $b_r < c_r$ and for all $s > r$, we have $b_s = c_s$. Notice that a permissible operation applied to $(m,(a_i)_{i\in\N})$ results in a new pair $(m',(a_i')_{i\in\N})$ whose sequence part satisfies $(a_i') \prec' (a_i)$. But $\mathcal{W}$ is a well-ordered set under $\prec'$, and, if continued forever, a PET algorithm would generate a strictly decreasing (according to $\prec'$) sequence in $\mathcal W$, which contradicts the definition of well-ordering. Thus, a PET algorithm terminates after finitely many steps, but we have not determined how many.
	
	We now intend to argue that any PET algorithm applied to $(m,(a_i)_{i\in\N})$ terminates after a number of steps that is bounded in terms of $m$ only, which will be convenient for our purposes. We give both an informal and a formal argument for this claim. The informal argument is the following: Among the finitely many possible permissible operations that may be applied to a given element of $\mathcal S$, there is clearly one that can delay eventual termination of the algorithm the longest; indeed, always take $m' = 2m$ and $i_{s-1} = 2m$ or (if $s' > 0$) $i_{s'} = 2m$, letting the other $i_j$ be zero. To some readers, it may be clear that this choice, repeated every time a permissible operation is chosen, will maximize the number of steps until the algorithm terminates, and it should also be clear that calculation of the exact number of steps required would both be solely in terms of $m$. For readers who may be unconvinced by this informal argument, we now give a formal argument.
	
	For each positive integer $d$, define $\mathcal{S}_d$ to be the subset of $\mathcal S$ consisting of all pairs $(m,(a_1,a_2,\ldots))$ such that $a_1 = a_2 = \cdots = a_d = 0$. We need a lemma:
	\begin{lemma}\label{lem: clearing up to d} Let $d$ be a positive integer. There exist nondecreasing functions $f, g : \N \to \N$ with the following property: For any $a^* = (m,(a_1,a_2,\ldots,)) \in \mathcal S \setminus \mathcal{S}_{\rm{degree}=0}$, the number of permissible operations required to transform $a^*$ into a pair of the form $(m',(0,\ldots,0,a_{d+1},a_{d+2},\ldots)) \in \mathcal{S}_d$ (for some $m'$) is at most $f(m)$, and any resulting $m'$ is at most $g(m)$. 
	\end{lemma}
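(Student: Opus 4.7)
The plan is to prove this by induction on $d$, constructing nondecreasing functions $f_d, g_d$ recursively.

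For the base case $d = 1$: if $a_1 = 0$, then $a^* \in \mathcal S_1$ already and zero operations suffice. Otherwise $a_1 > 0$, and since $n \geq 1$ the lonely condition (which would force $a_1 = 0$) fails, so the only available move is a non-lonely operation with $s' = 0$, yielding $(m', (a_1 - 1, a_2, a_3, \ldots))$ with $m' \in \{m, \ldots, 2m\}$. Choosing $m' = m$ throughout, we reduce $a_1$ to zero in $a_1 \leq m$ steps—using, at the final step if the sequence has become $(1, 0, 0, \ldots)$, the terminal operation to $(0, 0, \ldots) \in \mathcal S_{\rm{degree}=0} \subset \mathcal S_1$ noted just before the lemma—without changing the $m$-component. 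Setting $f_1(m) := m$ and $g_1(m) := m$ completes the base case.

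For the inductive step, assume $f_d$ and $g_d$ have been constructed. Given $a^*$, first apply the inductive hypothesis to reach $b^{(0)} \in \mathcal S_d$ in at most $f_d(m)$ operations, with $m$-component $M \leq g_d(m)$ and entries at positions $d+1, d+2, \ldots$ preserved. If the $(d+1)$-th entry is zero, $b^{(0)} \in \mathcal S_{d+1}$ and we are done. Otherwise, iteratively apply a ``simple'' non-lonely operation with $s' = d$, all $i_j = 0$, and $m' = M$, each of which keeps us in $\mathcal S_d$ and decrements the $(d+1)$-th entry by one; these are permissible so long as the result is not all zeros.

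Two terminal scenarios arise. If the pair never reaches the singular form $(M, (0, \ldots, 0, 1, 0, 0, \ldots))$ (with the $1$ at position $d+1$ and later entries zero), then simple decrements drive the $(d+1)$-th entry to zero in at most $a_{d+1} \leq M$ steps without changing $M$, landing us in $\mathcal S_{d+1}$. If the pair does take that singular form, then either the lonely condition applies (when $d+1 > n$) or a non-lonely operation with some $i_j \neq 0$ is forced (when $d+1 \leq n$); either way the result has the form $(m_1, (i_1, \ldots, i_d, 0, 0, \ldots))$ with $m_1 \leq 2M$ and $i_1 + \cdots + i_d \leq 2M$, and one further application of the inductive hypothesis clears positions $1, \ldots, d$ using at most $f_d(2M)$ additional operations and yielding an $m$-component at most $g_d(2M)$. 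Setting $f_{d+1}(m) := f_d(m) + g_d(m) + 1 + f_d(2 g_d(m))$ and $g_{d+1}(m) := g_d(2 g_d(m))$ gives the required nondecreasing functions. The main obstacle is the careful case analysis at the singular configuration to confirm, uniformly across $d+1 > n$ and $d+1 \leq n$, that a single complex operation followed by one further application of the inductive hypothesis suffices to reach $\mathcal S_{d+1}$; once this is in hand, the recursion on $(f_d, g_d)$ is mechanical.
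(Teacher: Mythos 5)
The lemma has to be read as a \emph{worst-case} bound: every sequence of permissible operations starting at $a^*$ reaches $\mathcal{S}_d$ within $f(m)$ steps, and the $m$-component at that moment is at most $g(m)$. This is what Proposition~\ref{alg stops} needs --- it must bound \emph{any} PET algorithm, not just one well-chosen one (note also the phrase ``any resulting $m'$'' in the statement). Your proof instead constructs a single favourable sequence. This is already visible in the base case: you take $g_1(m)=m$ by ``choosing $m'=m$ throughout,'' but a permissible operation is free to double the $m$-component at each step, so after $a_1 \leq m$ decrements the $m$-component can be as large as $2^m m$, which is the bound the paper actually takes for $g$.

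The gap is more serious in the inductive step. Starting from a pair in $\mathcal{S}_d$ with positive $(d+1)$-th entry, a non-lonely permissible operation may set $i_1,\dots,i_d$ to \emph{any} nonnegative integers with $i_1+\cdots+i_d \leq 2m$; nothing restricts them to zero, so the pair may leave $\mathcal{S}_d$ after every single decrement of the $(d+1)$-th entry, not only at your ``singular configuration.'' The paper's proof re-invokes the inductive hypothesis after each of the up to $m$ decrements, with the $m$-component compounding through $(2g)^{\circ i}(m)$; that is why its $f'$ and $g'$ are built from iterated compositions of $g$, whereas your $f_{d+1}(m)=f_d(m)+g_d(m)+1+f_d(2g_d(m))$ budgets for only one re-entry into $\mathcal{S}_d$. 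As written, the argument establishes that \emph{some} short PET algorithm exists, which is not enough for Proposition~\ref{alg stops}.
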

	\begin{proof}
		We induct on $d$. First suppose $d=1$. Let $a^* = (m,(a_1,a_2,\ldots,)) \in \mathcal S \setminus \mathcal{S}_{\rm{degree}=0}$. We observe that it takes $a_1 \leq m$ permissible operations to transform $a^*$ into a pair of the form $(m',(0,a_2,a_3,\ldots)) \in \mathcal{S}_1$ for some $m'$. From the definition of a permissible operation, it is clear that $m' \leq 2^{a_1}m$, hence $m' \leq 2^{m}m$. Thus, in the base case we can take $f(m) := m$ and $g(m) := 2^mm$, which are both nondecreasing.
		
		Now, suppose $d \geq 1$ and that there exist nondecreasing functions $f, g : \N \to \N$ with the following property: For any $a^* = (m,(a_1,a_2,\ldots,)) \in \mathcal S \setminus \mathcal{S}_{\rm{degree}=0}$, the number of permissible operations required to transform $a^*$ into a pair of the form $(m',(0,\ldots,0,a_{d+1},a_{d+2},\ldots)) \in \mathcal{S}_d$ (for some $m'$) is at most $f(m)$, and any resulting $m'$ is at most $g(m)$.
		
		We must show that there exist nondecreasing functions $f',g' : \N \to \N$ with the following property: For any $a^* = (m,(a_1,a_2,\ldots,)) \in \mathcal S \setminus \mathcal{S}_{\rm{degree}=0}$, the number of permissible operations required to transform $a^*$ into a pair of the form $(m',(0,\ldots,0,a_{d+2},a_{d+3},\ldots)) \in \mathcal{S}_{d+1}$ (for some $m'$) is at most $f'(m)$, and any resulting $m'$ is at most $g'(m)$.
		
		Let $a^* = (m,(a_1,a_2,\ldots,)) \in \mathcal S \setminus \mathcal{S}_{\rm{degree}=0}$. By the induction hypothesis, the number of permissible operations required to transform $a^*$ into a pair of the form $a^*_1 = (m_1,(0,\ldots, 0,a_{d+1},a_{d+2},\ldots)) \in \mathcal{S}_d$ for some $m_1$ is at most $f(m)$, and also $m_1 \leq g(m)$. Suppose that $a_{d+1} > 0$.
		
		By applying one permissible operation to $a^*_1$, we obtain a pair $b^*_1 \not\in \mathcal{S}_{\rm{degree}=0}$ of the form $(m_1', (i_1,\ldots,i_d, a_{d+1}-1,a_{d+2},\ldots))$ for some nonnegative integers $m_1'$ and $i_1,\ldots, i_d$ satisfying $i_1+\cdots+i_d \leq 2m_1$ and $m_1' \leq 2m_1$. According to the induction hypothesis, by applying at most $f(m_1')$ permissible operations to $b^*_1$, we obtain a pair $a^*_2$ of the form \\ $(m_2,(0,\ldots,0,a_{d+1}-1,a_{d+2},\ldots)) \in \mathcal{S}_d$, where $m_2 \leq g(m_1')$.
		
		Now we repeat the argument of the previous paragraph as many times as is needed; for example, we illustrate the case when $a_{d+1}-1 > 0$, i.e., when $a_2^* \not\in  \mathcal{S}_{\rm{degree}=0}$ and it is necessary to apply the argument one more time. By applying one permissible operation to $a^*_2$, we obtain a pair $b^*_2 \not\in  \mathcal{S}_{\rm{degree}=0}$ of the form $(m_2', (i_1,\ldots,i_d, a_{d+1}-2,a_{d+2},\ldots))$ for some nonnegative integers $m_2'$ and $i_1,\ldots, i_d$ satisfying $i_1+\cdots+i_d \leq 2m_2$ and $m_2' \leq 2m_2$. According to the induction hypothesis, by applying at most $f(m_2')$ permissible operations to $b^*_2$, we obtain a pair $a^*_3$ of the form $(m_3,(0,\ldots,0,a_{d+1}-2,a_{d+2},\ldots)) \in \mathcal{S}_d$, where $m_3 \leq g(m_2')$.
		
		To summarize the result, for each $i \in \{1,\ldots, a_{d+1}\}$, $m_{i+1} \leq g(m_{i}')$ and $m_i' \leq 2m_i$; moreover, the total number of permissible operations required to obtain from $a_1^*$ the pair \\ $a_{a_{d+1}+1}^* = (m_{a_{d+1}+1},(0,\ldots,0,a_{d+2},\ldots)) \in \mathcal{S}_{d+1}$ is at most $\sum_{i=1}^{a_{d+1}} 1 + f(m_i') = a_{d+1} + \sum_{i=1}^{a_{d+1}} f(m_i')$. We will estimate this quantity in a moment. Note that the number of permissible operations required to obtain from $a^*$ the pair $a_1^*$ is $f(m)$; hence, in total, the number of permissible operations required to transform $a^*$ into a pair belonging to $\mathcal{S}_{d+1}$ is at most $f(m) + a_{d+1} + \sum_{i=1}^{a_{d+1}} f(m_i')$. We now estimate this quantity.
		
		We have $a_{d+1} \leq m$ by the assumption $a^* \in \mathcal S$. Moreover, for each $i \in \{1,\ldots, a_{d+1}\}$, we claim that $f(m_i') \leq f((2g)^{\circ i}(m))$, where, for a function $h : \N \to \N$, we denote repeated composition by $h^{\circ 0}(x) := x$, $h^{\circ 1}(x) := h(x)$, $h^{\circ 2}(x) = h(h(x))$, $h^{\circ 3}(x) = h(h(h(x)))$, and so on. Indeed, let us first show that for each $i \in \{1,\ldots, a_{d+1}\}$, we have $m_i' \leq (2g)^{\circ i}(m)$; the claimed inequality will then follow from the nondecreasing nature of $f$. In the case that $i = 1$, we have $m_1' \leq 2m_1 \leq 2g(m)$. Moreover, if $m_{i}' \leq (2g)^{\circ i}(m)$ holds for some $i \in \{1,\ldots, a_{d+1}-1\}$, then it follows that $m_{i+1}' \leq 2m_{i+1} \leq 2g(m_i')$, from which we conclude $m_{i+1}' \leq 2g((2g)^{\circ i}(m)) = (2g)^{\circ(i+1)}(m)$ by the nondecreasing nature of $g$.
		
		Thus, $f(m) + a_{d+1} + \sum_{i=1}^{a_{d+1}} f(m_i') \leq f(m) + m + \sum_{i=1}^{a_{d+1}} f((2g)^{\circ i}(m)) \leq m + \sum_{i=0}^{m} f((2g)^{\circ i}(m))$, so we may define $f' : \N \to \N$ by $f'(m) := m + \sum_{i=0}^{m} f((2g)^{\circ i}(m))$, which is nondecreasing as a sum of $m+2$ nondecreasing functions and which is defined independently of $a^*$.
		
		To complete the overall inductive argument, we should also define $g' : \N \to \N$ so that $m_{a_{d+1}+1} \leq g'(m)$, which ensures that the non-sequence part of $a_{a_{d+1}+1}^* \in \mathcal{S}_{d+1}$ is bounded above by $g'(m)$. Notice that $m_{a_{d+1}+1} \leq g(m_{a_{d+1}}')$, so it follows by the nondecreasing nature of $g$ that $m_{a_{d+1}+1} \leq g((2g)^{\circ a_{d+1}}(m))$, which is trivially at most \\ $(2g)^{\circ(a_{d+1}+1)}(m) \leq (2g)^{\circ(m+1)}(m)$. Hence we may define $g'$ by $g'(m) := (2g)^{\circ(m+1)}(m)$, which is defined independently of $a^*$.
	\end{proof}
	
	Now we are ready to prove the following proposition:
	
	\begin{prop} \label{alg stops}
		Let $a^* = (m,(a_i)_{i\in\N}) \in \mathcal S \setminus \mathcal S_{\rm{degree}\leq 1}$. There exists a constant $T = T(m)$ such that any PET algorithm applied to $a^*$ terminates after at most $T$ steps.
	\end{prop}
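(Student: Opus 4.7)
My plan is to reduce the proposition to Lemma~\ref{lem: clearing up to d} by the following device: although a PET algorithm halts upon first entering $\mathcal{S}_{\rm{degree}=1}$, one can formally continue applying permissible operations past the halting state, and the resulting extended sequence still falls within the scope of the lemma. The length of the original PET algorithm is bounded by the length of any such extension.

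First I would pick $D$ to be the largest index with $a_D>0$, which exists because only finitely many $a_i$ are nonzero, and note $D>n$ since $a^*\notin\mathcal{S}_{\rm{degree}\leq 1}$. The crucial observation is that the set $\{(b_i): b_i=0 \text{ for } i>D\}$ is stable under permissible operations: in the lonely case, the new sequence is zero from position $s=D$ onward; in the non-lonely case, positions strictly above the active index $s'+1\leq D$ are left unchanged. Consequently, if at any point we reach a state in $\mathcal{S}_D$ (i.e., one whose first $D$ positions are zero), then in fact we are in $\mathcal{S}_{\rm{degree}=0}$, because the tail from position $D+1$ on has been zero throughout.

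Next I would consider an arbitrary PET algorithm $a^*=a^*_0\to a^*_1\to\cdots\to a^*_K$ with $a^*_K\in\mathcal{S}_{\rm{degree}=1}$, and extend it by continuing to apply any permissible operations from $a^*_K$. From $\mathcal{S}_{\rm{degree}=1}$ every permissible operation is of the non-lonely type with $s'+1\leq n$, so the extended sequence stays within $\mathcal{S}_{\rm{degree}\leq 1}$, and each such operation strictly decreases the sequence part in $\prec'$. Since $(\mathcal{W}\cup\{(0,0,\ldots)\},\prec')$ is well-ordered, the extension eventually reaches $\mathcal{S}_{\rm{degree}=0}\subseteq\mathcal{S}_D$ after finitely many additional steps, giving an extended sequence $a^*_0\to\cdots\to a^*_K\to\cdots\to a^*_{K'}$ of permissible operations transforming $a^*$ into a pair in $\mathcal{S}_D$. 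Applying Lemma~\ref{lem: clearing up to d} with $d=D$, one gets $K'\leq f(m)$ for the nondecreasing function $f$ provided by that lemma, and hence $K\leq K'\leq f(m)$; setting $T:=f(m)$ completes the proof.

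The main conceptual point I want to be careful about is the interpretation of Lemma~\ref{lem: clearing up to d}: its inductive proof bounds the length of \emph{any} sequence of permissible operations transforming $a^*$ into $\mathcal{S}_d$ (not merely a privileged constructed one), and in particular the extended sequence above qualifies. This is what makes the ``extend-and-apply'' trick legitimate: the bound $K'\leq f(m)$ is uniform over all choices made during the extension, so in particular it applies to the extension obtained by first running the arbitrary PET algorithm and then continuing it somehow into $\mathcal{S}_{\rm{degree}=0}$. A minor point on notation: the function $f$ supplied by Lemma~\ref{lem: clearing up to d} depends also on $d=D$; one can absorb this dependence into $T(m)$ by letting $T(m)$ also depend implicitly on $a^*$ through $D$, consistent with the informal discussion preceding the proposition.
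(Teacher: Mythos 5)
Your proof is correct and takes essentially the same route as the paper: both pick $D$ to be the last nonzero index of the sequence part of $a^*$, apply Lemma~\ref{lem: clearing up to d} with $d = D$, use the fact that the zero tail from position $D+1$ onward is preserved by permissible operations so that $\mathcal S_D$ coincides with $\mathcal S_{\rm{degree}=0}$ along the trajectory, and conclude that a PET algorithm must halt at or before the penultimate step. Your explicit extension of the halted algorithm past $\mathcal S_{\rm{degree}=1}$ is what the paper's terser phrase that ``a PET algorithm would have stopped at $b^*$ or perhaps even earlier'' leaves implicit, and your closing remark that $T$ really depends on $a^*$ through $D$ rather than on $m$ alone is a correct and worthwhile observation (the informal discussion preceding the proposition is, if anything, slightly too optimistic on that point).
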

	\begin{proof}
		Let $d$ be the largest index such that $a_{d+1} = a_{d+2} = \cdots = 0$. By Lemma~\ref{lem: clearing up to d}, there exists $f : \N \to \N$ such that the number of permissible operations required to transform $a^*$ into a pair of the form $a_{\rm{cleared}} = (m',(0,0,\ldots)) \in \mathcal S_{\rm{degree} = 0}$ is at most $f(m)$. As we observed before the lemma, the only permissible operation that yields $a_{\rm{cleared}}$ is one that is applied to a pair $b^*$ with sequence part of the form $(1,0,0,\ldots)$, which belongs to $\mathcal S_{\rm{degree}=1}$. Thus, a PET algorithm would have stopped at $b^*$ or perhaps even earlier. Hence we may take $T(m) = f(m)$. 
	\end{proof}
	With some effort, an explicit estimate of $T$ can be made by unfolding the proof of Lemma~\ref{lem: clearing up to d}, though we will not need it here.
	
	\subsection{Proof of Lemma~\ref{lem: PET inductive step}}\label{subsec: proof of lem: PET inductive step}
	We are now ready to prove the upgraded version of Proposition~\ref{linearization step}, which incorporates all of the notions we have introduced in the previous subsections. As a reminder, for a real number $x$, $\lceil x \rceil$ is the least integer that is at least $x$.
	
	\begin{lemma} \label{lem: PET inductive step}
		Let $m$, $n$, $N$, $H \in \N$ be such that $\max\{2,m^2\} \leq H \leq \lceil N/2 \rceil$.
		Let $R$ be a finite commutative ring with characteristic $N$, and let $R' = R^n$ with coordinatewise addition and multiplication. Let $f_0,\ldots,f_m : R \to \C$ be 1-bounded functions. Let $\mathbf{P} = \{P_1(y),\ldots, P_m(y)\} \subset \zn[y_1,\ldots, y_n]$ be an essentially distinct family such that
		\begin{enumerate}
			\item the maximal degree $k := \max_{1\leq i\leq m} \deg_{\zn}(P_i)$ satisfies $1 < k < \lpf N$, 
			\item $w_{\zn}(P_m) = \max_{1 \leq i \leq m} w_{\zn}(P_i)$, and
			\item the $\zn$-height of $\mathbf P$ is at most $M < \lpf N$.
		\end{enumerate} Then there exist a positive integer $m' \leq 2m$, 1-bounded functions $g_0,g_1,\ldots, g_{m'} : R \to \C$ with $g_{m'} = f_m$, and an essentially distinct family $\mathbf Q = \{Q_1(y),\ldots,Q_{m'}(y)\} \subset \zn[y_1,\ldots,y_n]$ such that
		\begin{enumerate}
			\item the maximal degree $\max_{1\leq i\leq m'} \deg_{\zn}(Q_i)$ is at most $k$,
			\item $w_{\zn}(Q_{m'}) = \max_{1\leq i \leq m'} w_{\zn}(Q_i)$, 
			\item the $\zn$-height of $\mathbf Q$ is at most $(k+1)^{2kn}MH^k$,
			\item a permissible operation applied to $(m,W_{\zn}(\mathbf P))$ yields $(m',W_{\zn}(\mathbf Q))$, and
			\item \begin{equation}\label{eqn in PET inductive step}
				\left| \Lambda_{P_1,\ldots,P_m}(f_0,\ldots,f_m) \right| \ \leq \ 2^{n/2} \left[ \frac{2^{n/2}m}{H^{1/2}} + \left|\Lambda_{Q_1,\ldots,Q_{m'}}(g_0,\ldots,g_{m'})\right|^{1/2}\right] .
			\end{equation}
		\end{enumerate}	
	\end{lemma}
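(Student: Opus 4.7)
The plan is to apply Lemma~\ref{linearization step} to $\mathbf{P}$ with a carefully chosen ``pivot'' polynomial $P_{i_0}$ in the role of $P_1$ in that lemma, a carefully chosen excluded set $\mathcal{H}$ of $h$-values in $(\{0,\ldots,H-1\} \cup \{N-(H-1),\ldots, N-1\})^n$, and the given $H$. After obtaining a specific $h \notin \mathcal H$, one identifies the resulting expression as $\Lambda_{Q_1,\ldots,Q_{m'}}(g_0,\ldots,g_{m'})$ for an appropriate new family $\mathbf{Q}$ and $1$-bounded functions $g_j$: the $(i_0,0)$-factor $\overline{f_{i_0}}(x)$ (together with the $(i_0,1)$-factor if $P_{i_0}$ is linear) gets absorbed into $g_0$, and any $(j,0)$--$(j,1)$ pair that differs only by a constant---which happens precisely when $P_j$ is linear---is merged into a single factor. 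The choice of $P_{i_0}$ depends on whether $W_{\zn}(\mathbf{P})$ satisfies \eqref{is the sequence a lonely one?}. In the non-lonely case, letting $s'+1$ be the smallest index with $W_{\zn,s'+1}(\mathbf P) > 0$, I pick $P_{i_0}$ of $\zn$-weight $s'+1$ with $L_{\zn}(P_{i_0}) \neq L_{\zn}(P_m)$; the non-lonely hypothesis makes this possible. In the lonely case with $m \geq 2$, I pick $P_{i_0} = P_{j_\star}$ where $j_\star \neq m$ maximizes $w_{\zn}(P_{j_\star} - P_m)$; the edge case $m=1$ is handled directly by setting $P_{i_0}=P_1$. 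Either way, the polynomial $Q_{m'} := P_m(y+h) - P_{i_0}(y)$ coming from the $(m,1)$-factor is designed to be of maximum $\zn$-weight in $\mathbf{Q}$, with $g_{m'} = f_m$.

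I would define $\mathcal{H}$ to be the set of $h$ in the allowed box for which some $P_i(y+h) - P_{i'}(y)$ with $\deg_{\zn}(P_i) \geq 2$ is a constant polynomial; these account for exactly those degeneracies in $\mathbf Q$ that are not already excluded by essential distinctness of $\mathbf P$ or resolved by merging. Lemma~\ref{lem: bound on translates violating ess dist} applies since the assumption $M < \lpf N$ forces each nonzero coefficient in $\mathbf P$ to be a unit in $\zn$ (via Lemma~\ref{lem: what is a unit}), and each ordered pair $(i,i')$ contributes at most $(2H-1)^{n-1}$ forbidden $h$-values, yielding $|\mathcal{H}| \leq m^2(2H-1)^{n-1}$. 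The hypothesis $H \geq m^2$ ensures $|\mathcal H| < (2H-1)^n$, so Lemma~\ref{linearization step} applies and produces a valid $h \notin \mathcal{H}$; the same hypothesis also gives $(|\mathcal H|/H^n)^{1/2} \leq 2^{n/2}m/H^{1/2}$, matching the first term in \eqref{eqn in PET inductive step}. Essential distinctness of $\mathbf Q$ then follows automatically: any potential constant difference with $\omega = \omega'$ reduces to essential distinctness of $\mathbf P$, and any with $\omega \neq \omega'$ is either excluded by $\mathcal H$ (when the relevant $P_i$ has $\zn$-degree $\geq 2$) or resolved by the merging step (when $P_i$ is linear).

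The degree bound (1) is immediate. For the $\zn$-height bound (3), Lemma~\ref{lem: P(y+h)-Q(y) height bound} with $d = k$, $H_1 = M$, $H_2 = H$ handles polynomials built from $P_i$ of $\zn$-degree $\geq 2$, and a direct bound via Lemma~\ref{lem: manipulating 0-closeness mod N} gives at most $(n+2)MH \leq (k+1)^{2kn} MH^k$ in the linear case. For the weight-sequence transformation (4), I track leading terms: in the non-lonely case, at weights $> s'+1$ each distinct $\zn$-leading coefficient of $\mathbf P$ is preserved, while at weight $s'+1$ the count drops by exactly one under the shift $c' \mapsto c' - L_{\zn}(P_{i_0})$ (which sends $L_{\zn}(P_{i_0}) \mapsto 0$); in the lonely case, every new polynomial lands at weight $< s$. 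A count of retained and merged factors gives $m' \leq 2m-1$, and the $\sum i_j \leq 2m$ constraint in the permissible-operation framework is satisfied. The hardest part is property (2), the maximality of $w_{\zn}(Q_{m'})$, which is delicate in the lonely case because the leading terms of every $P_j(y+\omega h) - P_{i_0}(y)$ simultaneously cancel. Here the strategic choice $P_{i_0} = P_{j_\star}$ is essential: it guarantees that $Q_{m'} = (P_m(y+h) - P_m(y)) - R_{j_\star}(y)$, with $R_j := P_j - P_m$, has $\zn$-weight equal to $\max(w_{\zn}(P_m(y+h) - P_m(y)),\, w_{\zn}(R_{j_\star}))$, which by the maximality of $w_{\zn}(R_{j_\star})$ dominates the $\zn$-weight of every other polynomial in $\mathbf{Q}$.
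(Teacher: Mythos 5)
Your high-level strategy (pick a pivot cleverly, apply Lemma~\ref{linearization step}, reinterpret the result as a new $\Lambda$-average, verify the bookkeeping) is the same as the paper's, and several of your subsidiary steps---the bound $|\mathcal H| \leq m^2(2H-1)^{n-1}$ via Lemma~\ref{lem: bound on translates violating ess dist}, the $\zn$-height bound via Lemma~\ref{lem: P(y+h)-Q(y) height bound}, the merging of the linear pairs---are right. But there is a genuine gap in the lonely case.

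The key claim you make there is that with $P_{i_0} = P_{j_\star}$ (where $j_\star$ maximizes $w_{\zn}(P_{j_\star}-P_m)$), one has
\[
w_{\zn}(Q_{m'}) \ = \ \max\bigl(w_{\zn}(P_m(y+h)-P_m(y)),\, w_{\zn}(R_{j_\star})\bigr).
\]
This is false: when the two $\zn$-weights on the right coincide, their leading coefficients can cancel in the difference $Q_{m'} = (P_m(y+h)-P_m(y)) - R_{j_\star}(y)$, leaving $Q_{m'}$ with strictly smaller $\zn$-weight. Concretely, take $\mathbf{P} = \{y^3,\, y^3+y^2\} \subset \Z_7[y]$ with $P_m = y^3+y^2$. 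Then $m=2$, $j_\star = 1$, $R_{j_\star} = -y^2$, and $P_m(y+h)-P_m(y) = 3hy^2 + \cdots$. For $h=2$ we get $3h+1\equiv 0 \pmod 7$, so $Q_{m'} = P_m(y+2)-P_1(y) = (3h+1)y^2 + (3h^2+2h)y + \cdots$ has $\zn$-weight $2$, while $P_m - P_1 = y^2$ and $P_1(y+h)-P_1(y) = 3hy^2+\cdots$ both have $\zn$-weight $3$. One can check that $h=2$ does not lie in your excluded set $\mathcal{H}$ (the resulting family $\mathbf{F}_h$ is still essentially distinct), and $H=4$ is admissible when $N=7$, $m=2$. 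So $Q_{m'}$ is \emph{not} of maximal $\zn$-weight, and condition (2) of the lemma fails. The $h$ is handed to you by Lemma~\ref{linearization step}, so you cannot evade the bad case by choosing $h$ differently without enlarging $\mathcal H$ to exclude weight-cancelling translates; that would require new counting lemmas beyond Lemma~\ref{lem: bound on translates violating ess dist}, which you did not develop.

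The paper handles this instead by allowing $Q_{m'}$ to \emph{fail} to have maximal weight in $\mathbf{Q}_0$, then applying an additional change of variables $x \mapsto x - Q_{i'}(y)$, where $Q_{i'}$ is any polynomial of maximal $\zn$-weight $r$ in $\mathbf{Q}_0$; this yields the new family $\mathbf{Q}_1$ with $Q'_{i'}=-Q_{i'}$, $Q'_i = Q_i - Q_{i'}$, and $Q'_{m'} = Q_{m'} - Q_{i'}$ automatically has $\zn$-weight $r$ because $w_{\zn}(Q_{m'}) < r = w_{\zn}(Q_{i'})$. In other words, the fix is a cheap post-hoc relabelling, not a clever upfront pivot choice. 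Your proposal also needs the paper's separate verification that $W_{\zn}(\mathbf Q_1)$ is still reached by a permissible operation, which is nontrivial since the translation redistributes leading coefficients across weights. (As a minor secondary point: your non-lonely claim ``pick $P_{i_0}$ of $\zn$-weight $s'+1$ with $L_{\zn}(P_{i_0}) \neq L_{\zn}(P_m)$; the non-lonely hypothesis makes this possible'' is literally false when $s'+1 < w_{\zn}(P_m)$ and the sole minimal-weight polynomial happens to have the same numerical leading coefficient as $P_m$; the correct statement is that either $s'+1 < w_{\zn}(P_m)$, making the leading-coefficient condition moot, or $s'+1 = w_{\zn}(P_m)$ and the non-lonely hypothesis supplies a distinct leading coefficient. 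This is fixable, unlike the lonely-case gap.)
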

	\begin{remark}
		The family $\mathbf Q$ depends on the functions $f_i$. This dependence arises from the application of Lemma~\ref{linearization step} in the proof below. For an explanation of why Lemma~\ref{linearization step} introduces this dependence, see the remark following the formulation of that lemma. 
	\end{remark}
	\begin{proof}
		After potentially reindexing the first $m-1$ polynomials in $\mathbf P$ if necessary, we may, using the fact that $k > 1$, ensure $w_{\zn}(P_1) = \min \{w_{\zn}(P_i) : 1 \leq i \leq m\}$ and the existence of $\ell \in \{1,\ldots, m\}$ such that
		\begin{enumerate}[(i)]
			\item $\deg_{\zn}(P_i) = 1$ if and only if $i < \ell$, and
			\item $w_{\zn}(P_\ell) = \min\{ w_{\zn}(P_i) : \ell \leq i \leq m\}$.
		\end{enumerate} In the case that $\ell = 1$, we additionally ensure, if possible, that the leading coefficient of $P_1$ is distinct from the leading coefficient of $P_m$.
		
		Denote by $S$ the set $(\{0,1,\ldots, H-1\}\cup \{N-H+1,N-H+2,\ldots, N-1\})^n \subset \mathbb{Z}_N^n$. Since $H \leq \lceil N/2 \rceil$, we observe that $|S| = (2H-1)^n$. Let $\mathcal H$ be the set of $h \in S$ for which the family $\mathbf F_h = \{P_1(y),\ldots, P_m(y), P_\ell(y+h), \ldots, P_m(y+h)\} \subset \zn[y_1,\ldots, y_n]$ is not essentially distinct.
		First, we claim that $|\mathcal H| \leq m^2(2H-1)^{n-1}$. Indeed, let $h \in \mathcal H$. Since the family $\mathbf P$ is essentially distinct and $\mathbf F_h$ is not essentially distinct, it follows that for some $i \in \{\ell,\ldots, m\}$, $j \in \{1,\ldots,m\}$, and $c \in \mathbb{Z}_N$, the equation $P_i(y+h) \equiv P_j(y) + c \bmod N$ holds for all $y \in \mathbb{Z}_N^n$.
		
		We observe that $\deg_{\zn}(P_i(y+h)) \leq \deg_{\zn}(P_i(y)) \leq k < \lpf N$ and $\deg_{\zn}(P_j(y)+c) \leq k < \lpf N$. It follows by Lemma~\ref{lem: mvpolyuniq} that the corresponding coefficients of $P_i(y+h)$ and $P_j(y)+c$ must agree modulo $N$.
		
		Observe that $2 \leq \deg_{\zn}(P_i) \leq k < \lpf N$ and the nonzero coefficients of $P_i$ are invertible modulo $N$ by the assumption that the $\zn$-height of $P_i$ is at most $M < \lpf N$ and by Lemma~\ref{lem: what is a unit}. Applying Lemma~\ref{lem: bound on translates violating ess dist} with $P = P_i$ and $Q = P_j$, there are at most $(2H-1)^{n-1}$ choices of $h \in S$ such that $P_i(y+h) = P_j(y) + c$ as polynomials over $\zn$. There were at most $m$ choices for $P_j$ and at most $m$ choices for $P_i$, proving the claim.
		
		Next, the assumption that $H \geq \max\{2,m^2\}$ implies that $2H - 1 > m^2$. With the help of this inequality, we observe that $\mathcal H$ is such that $S \setminus \mathcal{H}$ is nonempty since $|S| = (2H-1)^n > (2H-1)^{n-1}m^2 \geq |\mathcal{H}|$. By Lemma~\ref{linearization step}, there thus exists $h \in S \setminus \mathcal{H}$ such that
		\begin{multline} \label{basic application of lemma about linearization}
			\left| \Lambda_{P_1,\ldots,P_m}(f_0,\ldots,f_m) \right| \\ \leq \ 2^{n/2} \left[\left(\frac{|\mathcal H|}{H^n}\right)^{1/2}   + \left| \cavg{(x,y)}{R\times R'} \prod_{i=1}^m \prod_{\omega \in \{0,1\}}\mathcal{C}^{\omega+1} f_i(x + P_i(y+\omega h)-P_1(y))\right|^{1/2} \right].
		\end{multline}
		Note that $\left(\frac{|\mathcal H|}{H^n}\right)^{1/2} \leq \frac{2^{(n-1)/2}m}{H^{1/2}} \leq \frac{2^{n/2}m}{H^{1/2}}$.
		
		Let us prepare now to define the family $\mathbf Q$ as one of two families $\mathbf Q_0$ or $\mathbf Q_1$. First, we define $\mathbf Q_0$ and verify that it satisfies some of the claimed conditions about $\mathbf Q$ in the formulation of this lemma; later, we will define $\mathbf Q_1$ and verify that it satisfies some of the same conditions. In the process, we will discover whether to set $\mathbf Q = \mathbf Q_0$ or $\mathbf Q = \mathbf Q_1$ depending on $\mathbf P$. At the end, we will verify any remaining claimed conditions about $\mathbf Q$ in the formulation of this lemma.
		
		For each $i$ in the (possibly empty) set $\{1,\ldots, \ell -1\}$, let $a_i = P_i(h)-P_i(0)$; then
		\be
		\ol{f_i}(x+P_i(y)-P_1(y))f_i(x+P_i(y+h)-P_1(y)) = \Delta_{a_i}f_i(x+(P_i-P_1)(y)).
		\ee
		For the same $i$, set $g_{i-1} := \Delta_{a_i}f_i$ and $Q_{i-1}(y) := P_i(y)-P_1(y)$. For each $j$ with $0 \leq j \leq m-\ell$, define the functions $g_{\ell + 2j-1} := \ol{f_{\ell+j}}$ and $g_{\ell + 2j} := f_{\ell+j}$ and the polynomials $Q_{\ell+2j-1} := P_{\ell+j} - P_1$ and $Q_{\ell+2j}(y):= P_{\ell+j}(y+h) - P_1(y)$. It is easy to check that
		\be
		g_0(x)g_1(x+Q_1(y))\cdots g_{m'}(x+Q_{m'}(y)) = \prod_{i=1}^m \prod_{\omega\in\{0,1\}} \mathcal{C}^{\omega+1}f_i(x+P_i(y+\omega h) -P_1(y)),
		\ee
		where $m' = 2m - \ell \in \{m, \ldots, 2m\}$. By construction, $g_{m'} = f_m$, the $g_i$ are 1-bounded, and \eqref{basic application of lemma about linearization} implies
		\begin{equation}\label{intermediate eqn in PET inductive step}
			\left| \Lambda_{P_1,\ldots,P_m}(f_0,\ldots,f_m) \right| \ \leq \ 2^{n/2} \left[ \frac{2^{n/2}m}{H^{1/2}} + \left|\Lambda_{Q_1,\ldots,Q_{m'}}(g_0,\ldots,g_{m'})\right|^{1/2}\right].
		\end{equation}
		
		Define the family $\mathbf Q_0 := \{Q_1,\ldots, Q_{m'}\} 
		\subset \zn[y_1,\ldots, y_n]$. This family is essentially distinct, since adding $P_1$ to each polynomial yields the family $\mathbf F_h$, which has been guaranteed to be essentially distinct by our choice of $h$.
		
		If $Q_i \in \mathbf Q_0$ is of the form $P_{j_0}-P_{j_1}$, then clearly the $\zn$-height of $Q_i$ is bounded by $2M \leq \frac 12 (k+1)^{2kn}MH^k$. Otherwise, applying Lemma~\ref{lem: P(y+h)-Q(y) height bound}, we have that the $\zn$-height of $Q_i$ is at most $\frac 12 (k+1)^{2kn} MH^k$. Thus, the $\zn$-height of $\mathbf Q_0$ is at most $\frac 12 (k+1)^{2kn}MH^k$.
		
		Using the fact that, for any $i$, both of the polynomials $P_i(y)$ and $P_i(y+h)$ have the same leading coefficient, we compute the weight sequence $W_{\zn}(\mathbf Q_0)$. We define $t \geq 1$ to be the smallest integer such that $W_{\zn,t}(\mathbf P) > 0$. As a result of the process of choosing $\ell$, we have that $t = w(P_1)$. For $s > t$, we have $W_{\zn,s}(\mathbf Q_0) = W_{\zn,s}(\mathbf P)$. Indeed, if $w_{\zn}(P_i) > t$ for some $i$, then both of the polynomials $P_i(y) - P_1(y)$ and $P_i(y+h)-P_1(y)$ have the same leading coefficient as $P_i$ does. Next, it is similarly clear that $W_{\zn,t}(\mathbf Q_0) = W_{\zn,t}(\mathbf P) - 1$, since we have subtracted $P_1(y)$ from the relevant polynomials (the ones with weight $t = w_{\zn}(P_1)$). Indeed, if $w_{\zn}(P_i) = t$ for some $i \neq 1$, then both of the polynomials $P_i(y)-P_1(y)$ and $P_i(y+h)-P_1(y)$ have the same leading coefficient, although this time it will be different from the leading coefficient of $P_i$. For both of these computations of entries in the weight sequence of $\mathbf Q_0$, it makes no difference whether, for a given $i$, both $P_i-P_1$ and $P_i(y+h)-P_1(y)$ are actually included in $\mathbf Q_0$, since it is only important to notice whether new leading coefficients are introduced by passing to this new family with more polynomials. Finally, since there are only $m' \leq 2m$ polynomials in the family $\mathbf Q_0$, it is clear that $W_{\zn,1}(\mathbf Q_0) = i_1, W_{\zn,2}(\mathbf Q_0) = i_2, \ldots, W_{\zn,t-1}(\mathbf Q_0) = i_{t-1}$ for some nonnegative integers $i_1,\ldots, i_{t-1}$ such that $i_1 + \cdots + i_{t-1} \leq 2m$. Hence
		\begin{equation}\label{pre-figured computation of weight sequence of Q_0}
			W_{\zn}(\mathbf Q_0) \ = \ (i_1,\ldots,i_{t-1},W_{\zn,t}(\mathbf P)-1,W_{\zn,t+1}(\mathbf P), \ldots).
		\end{equation}
		
		
		Set $r = \max_{1\leq i \leq m'} w_{\zn}(Q_i)$. Then one of the five following mutually exclusive conditions holds:
		\begin{enumerate}[(a)]
			\item $\ell \geq 2$,
			\item $\ell = 1$ and $w_{\zn}(P_m) > w_{\zn}(P_1)$,
			\item $\ell = 1$, $w_{\zn}(P_1) = w_{\zn}(P_2) = \ldots = w_{\zn}(P_m)$, and $L_{\zn}(P_1) \neq L_{\zn}(P_m)$,
			\item $\ell = 1$, $w_{\zn}(P_1) = w_{\zn}(P_2) = \ldots = w_{\zn}(P_m)$, $L_{\zn}(P_1) = \ldots = L_{\zn}(P_m)$, and $w_{\zn}(Q_{m'}) = r$,
			\item $\ell = 1$, $w_{\zn}(P_1) = w_{\zn}(P_2) = \ldots = w_{\zn}(P_m)$, $L_{\zn}(P_1) = \ldots = L_{\zn}(P_m)$, and $w_{\zn}(Q_{m'}) < r$.
		\end{enumerate}
		In the first four conditions, $w_{\zn}(Q_{m'}) = r$ holds. Indeed, if $\ell \geq 2$, then $\deg_{\zn}(P_m) > \deg_{\zn}(P_1)$, so $Q_{m'}(y) = P_m(y+h)-P_1(y)$ has weight $r$. If $\ell = 1$ and $w_{\zn}(P_m) > w_{\zn}(P_1)$, then again $Q_{m'}$ has weight $r$. If $\ell = 1$, $w_{\zn}(P_1) = w_{\zn}(P_2) = \ldots = w_{\zn}(P_m)$, and it was possible earlier to ensure that the leading coefficient of $P_1$ is distinct from the leading coefficient of $P_m$, then still $Q_{m'}$ has maximal weight $r$. If $\ell = 1$, $w_{\zn}(P_1) = w_{\zn}(P_2) = \ldots = w_{\zn}(P_m)$, the leading coefficients of $P_1,\ldots, P_m$ are all the same, and it happens that $w_{\zn}(Q_{m'}) = r$, then certainly $w_{\zn}(Q_{m'}) = r$. Thus, if one of (a), (b), (c), or (d) holds, we will take $\mathbf Q = \mathbf Q_0$ to ensure that the maximal weight condition holds; the maximal degree condition and the condition that a permissible operation applied to $(m,W_{\zn}(\mathbf P))$ yields $(m',W_{\zn}(\mathbf Q))$ will both be checked later.
		
		Suppose the fifth condition holds. Then $\ell = 1$, $w_{\zn}(P_1) = w_{\zn}(P_2) = \ldots = w_{\zn}(P_m)$, the leading coefficients of $P_1,\ldots, P_m$ are all the same, but $w_{\zn}(Q_{m'}) < r$.
		By definition, $r = \max_{1\leq i \leq m'} w_{\zn}(Q_i)$, and hence we may choose $i' \in \{1,\ldots, m'\}$ such that $w_{\zn}(Q_{i'}) = r$. Since $w_{\zn}(Q_{m'}) < r$, it follows that $i' \neq m'$.
		
		Define the family $\mathbf Q_1 := \{Q'_{1},Q'_2,\ldots,Q'_{m'}\} \subset \zn[y_1,\ldots,y_n]$ by setting $Q'_{i'}:= -Q_{i'}$ and $Q'_i := Q_i - Q_{i'}$ for $i \neq i'$. Changing variables $x \mapsto x - Q_{i'}(y)$, we observe that
		\[ \Lambda_{Q_1,\ldots,Q_{m'}}(g_0,\ldots, g_{m'}) = \Lambda_{Q'_1,\ldots, Q'_{i'-1},Q'_{i'},Q'_{i'+1}, \ldots, Q'_{m'}}(g_{i'},g_1,\ldots, g_{i'-1},g_0,g_{i'+1},\ldots,g_{m'}),\]
		which helps \eqref{intermediate eqn in PET inductive step} to imply
		\[
		\left| \Lambda_{P_1,\ldots,P_m}(f_0,\ldots,f_m) \right| \ \leq \ 2^{n/2} \left[ \frac{2^{n/2}m}{H^{1/2}} + \left|\Lambda_{Q'_1,\ldots,Q'_{m'}}(g'_0,\ldots,g'_{m'})\right|^{1/2}\right],
		\]
		where $g'_0 := g_{i'}$, $g'_{i'} := g_0$, and $g'_i := g_i$ for $i \neq 0, i'$ are 1-bounded functions $R \to \C$ with $g'_{m'} = g_{m'} = f_m$.
		Moreover, $\mathbf Q_1$ is an essentially distinct family of polynomials (since $\mathbf Q_0$ is) with $\zn$-height at most $2\cdot \frac{1}{2} (k+1)^{2kn}MH^k$ (since the $\zn$-height of $\mathbf Q_0$ is at most $\frac{1}{2} (k+1)^{2kn}MH^k$) such that $w_{\zn}( Q'_{m'}) = r = \max_{1\leq i \leq m'} w_{\zn}(Q'_i)$. Thus, if (e) holds, we will take $\mathbf Q = \mathbf Q_1$ to ensure that the maximal weight condition holds; we need only to check the maximal degree condition and the condition that a permissible operation applied to $(m,W_{\zn}(\mathbf P))$ yields $(m',W_{\zn}(\mathbf Q))$.
		
		First, let us check the maximal degree condition in the five cases (a) through (e). Whether we take $\mathbf Q = \mathbf Q_0$ or $\mathbf Q = \mathbf Q_1$, it is clear that the maximal degree of $\mathbf Q$ is at most $k = \max_{1\leq i \leq m} \deg_{\zn}(P_i)$, since each polynomial in $\mathbf Q$ is a sum of finitely many polynomials of the form $\pm P_i(y)$ or $\pm P_i(y+h)$.
		
		Finally, we check that a permissible operation applied to $(m,W_{\zn}(\mathbf P))$ yields $(m',W_{\zn}(\mathbf Q))$. In all cases, we have $m' \in \{m,\ldots, 2m\}$, so it only remains to check that the weight sequence of $\mathbf Q$ obeys the required restrictions, which themselves slightly vary based on the weight sequence of $\mathbf P$. Consider the following statement:
		\begin{equation} \label{reprise: is the sequence a lonely one?}
			\text{There exists } s > n \text{ such that } W_{\zn,s}(\mathbf P) = 1 \text{ and } W_{\zn,i}(\mathbf P) = 0 \text{ for all } i \neq s.
		\end{equation}
		Whether \eqref{reprise: is the sequence a lonely one?} holds changes the definition of a permissible operation applied to $(m,W_{\zn}(\mathbf P))$. We first observe that ``either (d) or (e) holds'' is equivalent to ``\eqref{reprise: is the sequence a lonely one?} holds''. 
		
		On the one hand, suppose that \eqref{reprise: is the sequence a lonely one?} does not hold. Then (a), (b), or (c) holds, so we are in the situation where $\mathbf Q = \mathbf Q_0$. Recalling our computation
		\eqref{pre-figured computation of weight sequence of Q_0} of the weight sequence of $\mathbf Q_0$ and comparing to the definition of permissible operation (note $s' = t-1$), we see that a permissible operation applied to $(m,W_{\zn}(\mathbf P))$ yields $(m',W_{\zn}(\mathbf Q))$.
		
		On the other hand, suppose that \eqref{reprise: is the sequence a lonely one?} does hold, so that (d) or (e) holds. Then $s = t$. In this situation, whether we took $\mathbf{Q} = \mathbf{Q}_0$ or $\mathbf{Q} = \mathbf{Q}_1$, we are assured that the weight sequence $W_{\zn}(\mathbf Q)$ is not the zero sequence. Indeed, let $Q \in \mathbf Q$. Since $\mathbf Q$ is essentially distinct, $y \mapsto Q(y)$ is not a constant function $\mathbb{Z}_N^n \to \zn$ and hence $\deg_{\zn}(Q) > 0$; thus, $W_{\zn,w_{\zn}(Q)}(\mathbf Q) \geq 1$. 
		
		In the case that (d) holds, we have $\mathbf Q = \mathbf Q_0$. Recalling our computation \eqref{pre-figured computation of weight sequence of Q_0} and noting by the previous paragraph that $W_{\zn}(\mathbf Q_0)$ is not the zero sequence, it follows that not all the $i_j$ are zero, so a permissible operation applied to $(m,W_{\zn}(\mathbf P))$ yields $(m',W_{\zn}(\mathbf Q))$.
		
		Finally, in the case that (e) holds (so $\mathbf Q = \mathbf Q_1$), we compute $W_{\zn}(\mathbf Q_1)$ by means of the weight sequence of $\mathbf Q_0$. Recall our computation \eqref{pre-figured computation of weight sequence of Q_0}, which asserts that
		\begin{equation}
			W_{\zn}(\mathbf Q_0) \ = \ (i_1,\ldots,i_{t-1},W_{\zn,t}(\mathbf P)-1,W_{\zn,t+1}(\mathbf P), \ldots).
		\end{equation}
		Since condition \eqref{reprise: is the sequence a lonely one?} holds with $s = t$, this computation initially simplifies to
		\begin{equation}
			W_{\zn}(\mathbf Q_0) \ = \ (i_1,\ldots,i_{t-1},0,0, \ldots).
		\end{equation}
		Next, note that $r \leq t-1$ and hence $i_{j} = 0$ for $j$ belonging to the possibly empty set $\{r+1,r+2,\ldots, t-1\}$, so actually
		\begin{equation}
			W_{\zn}(\mathbf Q_0) \ = \ (i_1,\ldots, i_{r},0,0,\ldots).
		\end{equation}
		Now, let us turn to the weight sequence of $\mathbf Q_1$. First, since $\mathbf Q_0$ has no polynomials of weight more than $r$, neither does $\mathbf Q_1$; hence $W_{\zn,j}(\mathbf Q_1) = 0$ for all $j > r$.
		
		Next, we observe that $W_{\zn,r}(\mathbf Q_1) = W_{\zn,r}(\mathbf Q_0) = i_{r}$. Indeed, if $\{a_1,a_2,\ldots, a_{i_{r}}\}$ is the set of leading coefficients of polynomials in $\mathbf Q_0$ of weight $r$, and if $a_1$ denotes the leading coefficient of $Q_{i'}$, then $\{a_2-a_1,a_3-a_1,\ldots, a_{i_{r}}-a_1, -a_1\}$ is the set of leading coefficients of polynomials in $\mathbf Q_1$ of weight $r$.
		
		As for computing $W_{\zn,j}(\mathbf Q_1)$ for $j < r$, we observe that the only polynomials in $\mathbf Q_1$ of weight less than $r$ must arise by subtracting $Q_{i'}$ from a polynomial in $\mathbf Q_0$ of weight $r$ which has the same leading coefficient as $Q_{i'}$, namely $a_1$. The number of such polynomials is at most $2m-i_{r}$, because there are at least $i_{r} - 1$ polynomials in $\mathbf Q_0$ of weight $r$ that have leading coefficient not equal to $a_1$ modulo $N$, and because there is at least one polynomial in $\mathbf Q_0$ of weight not equal to $r$, namely $Q_{m'}$. Thus there exist nonnegative integers $i'_1,i'_2,\ldots, i'_{r-1}$ such that $i'_1 +i'_2 +\cdots + i'_{r-1} \leq 2m - i_{r}$ and $W_{\zn,j}(\mathbf Q_1) = i'_j$ holds for each $j < r$. To summarize, we have
		\begin{equation}W_{\zn}(\mathbf Q_1) = (i'_1,i'_2,\ldots, i'_{r-1},i_{r},0,0,\ldots)
		\end{equation}
		with $i'_1+i'_2+\dots +i'_{r-1} + i_r \leq 2m$. Moreover, we already argued that $W_{\zn}(\mathbf Q_1)$ is not the zero sequence, though it is perhaps just as clear that $i_r$ is nonzero. In any case, a permissible operation applied to $(m,W_{\zn}(\mathbf P))$ yields $(m',W_{\zn}(\mathbf Q_1))$.
		
		This completes the proof in each of the exhaustive and mutually exclusive conditions (a) through (e).	
	\end{proof}

	\subsection{Proof of Proposition~\ref{prop: Us control}}\label{subsec: proof of prop: Us control}
	For the reader's convenience, we recall Proposition~\ref{prop: Us control}. 
	
	\begin{proprep}[\ref{prop: Us control}]
		Let $m \geq 2$ and $n \geq 1$ be integers. Let $\mathbf{P} = \{P_1(y),\ldots, P_m(y)\} \subset \Z[y_1,\ldots,y_n]$ be an independent family of polynomials with zero constant term. 
		Then there exist $\lambda \in (0,1]$, $\ve, C_0 > 0$, and $s \in \N$ with $s \geq 2$, each depending only on $\mathbf P$, along with a constant $C_1$ that depends on $\mathbf P$ and $\ve$, such that, for any finite commutative ring $R$ with characteristic $N$ satisfying $\lpf N > C_1$ and any 1-bounded functions $f_0,\ldots, f_m : R \to \C$,
		\be\label{eqn: negative power of lpf N}
		\left| \Lambda_{P_1,\ldots,P_m}(f_0,\ldots, f_m)\right| \ \leq \ \frac{C_0}{\lpf{N}^{\ve}} + 2^n\min_{0\leq j \leq m} \norm{f_j}_{U^s}^\lambda.
		\ee
	\end{proprep}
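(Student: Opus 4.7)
The plan is to prove the proposition by iterating Lemma~\ref{lem: PET inductive step} finitely many times until the resulting polynomial family is linear (degree $1$), then applying Lemma~\ref{lem: Ud bound if invertible} after an appropriate change of variables. First I would enlarge $C_1$ so that $\lpf N$ exceeds the thresholds $C^*$ from Proposition~\ref{prop: R-essential distinctness lpf condition} and $C_{\rm stable}$ from Proposition~\ref{prop: weight sequence initial stability}; then $\mathbf{P}$, viewed over $\zn[y_1,\ldots,y_n]$, is essentially distinct, its $\zn$-weight sequence equals its integer weight sequence, and $\deg_{\zn}(P_i) = \deg(P_i)$ for all $i$. After reindexing so $w_{\zn}(P_m) = \max_i w_{\zn}(P_i)$, the pair $(m, W_{\zn}(\mathbf{P}))$ belongs to $\mathcal S \setminus \mathcal S_{\rm degree = 0}$, and by Proposition~\ref{alg stops} any PET algorithm applied to it terminates in at most $T = T(m)$ steps.

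Next I would iterate Lemma~\ref{lem: PET inductive step} a total of at most $T$ times, producing a sequence of essentially distinct polynomial families $\mathbf{P}^{(j)} = \{P_1^{(j)},\ldots,P_{m_j}^{(j)}\}$ with $m_j \leq 2^j m$, each of degree at most $d := \max_i \deg(P_i)$, and of $\zn$-height at most $M_j$, where $M_0$ is the initial $\zn$-height of $\mathbf P$ and $M_j \leq (d+1)^{2dn} M_{j-1} H_j^d$. The window parameter $H_j$ at the $j$th step I would choose as $H_j = \lfloor \lpf{N}^{\alpha_j} \rfloor$ for suitable exponents $\alpha_j = \alpha_j(\mathbf P) > 0$, shrinking rapidly with $j$; the exponents must be balanced so that (i) $\max\{2, m_j^2\} \leq H_j \leq \lceil N/2 \rceil$, (ii) each $M_{j-1}$ remains below $\lpf N$, and (iii) the accumulated error $\sum_j (2^{n/2})^{j}(2^{n/2} m_j H_j^{-1/2})^{2^{-(j-1)}}$ is still a negative power of $\lpf N$. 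Since $T$, $d$, and $m$ depend only on $\mathbf P$, such a choice exists provided $\lpf N$ is large in terms of $\mathbf P$.

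After termination, we arrive at a family $\mathbf Q = \{Q_1,\ldots,Q_{m'}\}$ where each $Q_i(y) = c_i + a_i \cdot y$ with $a_i \in \mathbb{Z}_N^n$ (thinking of $y$ as a column and $a_i$ as a row vector). Absorbing the constants $c_i$ into the corresponding 1-bounded functions, the final average has the form $\rcavg{x}\cavg{y}{R^n} g_0(x)\prod_{i=1}^{m'} g_i(x + a_i \cdot y)$ with $g_{m'} = f_m$. Essential distinctness of $\mathbf Q$ translates into the statement that the $n \times m'$ matrix $A$ with columns $a_i$ has no two equal columns and no zero column. I would then apply Proposition~\ref{prop: linearized family nonzeroing+} to obtain an invertible $\zn$-linear change of variables in $y$ (i.e., elementary row operations on $A$) producing a matrix $B$ with distinct, nonzero entries in each row, and $\zn$-height at most $(8M_T)^{2^{n(m')^2}}$. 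Choosing any single row gives us a sequence of $a^{(i)} \in \mathbb{Z}_N$ which are nonzero and pairwise distinct modulo $N$; requiring $\lpf N > (8M_T)^{2^{n(m')^2}}$, Lemma~\ref{lem: what is a unit} guarantees that each $a^{(i)}$ and each difference $a^{(i)} - a^{(j)}$ is a unit in $R$, so Lemma~\ref{lem: Ud bound if invertible} applies and yields a bound of the form $\|g_{m'}\|_{U^{m'}} = \|f_m\|_{U^{m'}}$.

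To obtain the minimum over all $j \in \{0,\ldots,m\}$, I would repeat the entire argument $m+1$ times, each time first making the change of variables $x \mapsto x - P_j(y)$ (with $P_0 := 0$) to convert the average into $\Lambda_{P'_1,\ldots,P'_m}(f'_0,\ldots,f'_m)$, where the primed polynomial family $\{P_i - P_j : i \neq j\} \cup \{-P_j\}$ is again independent with zero constant term (after absorbing constants into the functions), and where $f_j$ sits at the slot tracked as $f_m$ by the algorithm. Taking the best bound across these $m+1$ runs yields the minimum in the conclusion, with $s$ and $\lambda$ depending only on $m$, $n$, $d$, $T(m)$, and $\mathbf P$. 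The main obstacle will be the interlocking balancing of the exponents $\alpha_j$: each $H_j$ must be large relative to the cumulative $\zn$-height growth governed by $(d+1)^{2dn}$ and the doubly-exponential blowup $(8M_T)^{2^{n(m')^2}}$ from Proposition~\ref{prop: linearized family nonzeroing+}, while still leaving a net negative power of $\lpf N$ in the error from the $T$ iterated applications of Lemma~\ref{lem: PET inductive step} (where each iteration halves the exponent of the preceding error). Because $T$, $d$, $m$, and the initial $\zn$-height are all absolute constants depending only on $\mathbf P$, the system has a (rather crude) explicit solution, giving the required $\lambda, \ve, C_0, s, C_1$.
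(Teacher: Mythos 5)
Your overall strategy matches the paper's: stabilize $\zn$-properties via Propositions~\ref{prop: R-essential distinctness lpf condition} and \ref{prop: weight sequence initial stability}, iterate Lemma~\ref{lem: PET inductive step} until the family is linear, bound the number of iterations via Proposition~\ref{alg stops}, apply Proposition~\ref{prop: linearized family nonzeroing+} and then Lemma~\ref{lem: Ud bound if invertible}, and balance the $H_j$'s. But there are two concrete gaps.

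First, the $k=1$ base case is missing. If $\max_i \deg(P_i) = 1$, then $(m,W_{\zn}(\mathbf P)) \in \mathcal S_{\rm degree=1}$, so there is no PET algorithm to run and Lemma~\ref{lem: PET inductive step} cannot even be applied once (it requires maximal degree $>1$). The paper handles this case separately, by Gauss--Jordan elimination over $\Q$ on the $n\times m$ coefficient matrix (using independence over $\Z$, which is stronger than essential distinctness), reducing directly to a bound by $\min_j \|f_j\|_{U^1}$. Your proposal silently assumes the iteration starts, so it never addresses $k=1$.

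Second, your recipe for achieving the minimum over $j$ --- ``make the change of variables $x\mapsto x-P_j(y)$, with $P_0:=0$, and then $f_j$ sits at the slot tracked as $f_m$'' --- does not work. After $x\mapsto x-P_j(y)$ the function $f_j$ becomes the one attached to the constant argument $x$, i.e.\ it lands in the $f_0$ slot of the new average, not the $f_m$ slot. And for $j=0$ your change of variables is the identity, so $f_0$ stays put. Moreover, even if you manually moved $f_j$ to the last slot, the polynomial attached to it is then $-P_j$, whose weight is $w(P_j)$; when $w(P_j) < \max_i w(P_i)$ this violates the weight-maximality hypothesis of Lemma~\ref{lem: PET inductive step} at the very first step. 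The correct device is what the paper does: always subtract $P_m$ (a maximal-weight polynomial) rather than $P_j$, obtaining the family $\mathbf P' = \{P_1-P_m,\ldots,P_{m-1}-P_m,-P_m\}$, and then reorder so that $f_j$ sits in the last slot, attached to the polynomial $P_j - P_m$ (or $-P_m$ when $j=0$), which \emph{does} have maximal weight. For $j$ with $w(P_j) = w(P_m)$ one can instead just permute $\mathbf P$ itself. This distinction, encoded by the sets $J$ and $J'$ in the paper's proof, is essential; your version as stated would fail to satisfy the lemma's hypotheses.

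A smaller imprecision: when applying Lemma~\ref{lem: Ud bound if invertible} after Proposition~\ref{prop: linearized family nonzeroing+}, you write that ``choosing any single row gives $a^{(i)} \in \mathbb{Z}_N$.'' The lemma needs the \emph{columns} of $B$, which are vectors in $\mathbb{Z}_N^n$; the fact that each \emph{row} of $B$ has distinct nonzero entries is what ensures the entries of those column vectors and their pairwise differences are units. You cannot simply restrict to one row (that would drop $n-1$ of the averaging variables). The intent is probably right, but the phrasing conflates rows with columns and scalars with vectors.
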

	
	\begin{proof}
		Define $k := \max_{1\leq j \leq m} \deg(P_j)$, and observe by linear independence that $k > 0$. We handle the $k = 1$ and $k > 1$ cases in different ways.
		
		First suppose $k = 1$. Then there exist integers $a_i^{(j)}$, $i \in \{1,\ldots, n\}$ and $j \in \{1,\ldots, m\}$, such that $P_j(y) = \sum_{i=1}^n a_i^{(j)} y_i$ for each $j \in \{1,\ldots, m\}$. Form the $n \times m$ integer matrix $A$ with entries given by $A_{i,j} := a_i^{(j)}$. We can put $A$ into reduced row echelon form using Gauss--Jordan elimination. Moreover, since $A$ was composed of $m$ linearly independent columns, the resulting matrix will be the $m \times m$ identity matrix with $n-m$ rows of zeros attached to the bottom.
		
		Let $R$ be a finite commutative ring with characteristic $N$. In the previous paragraph, we described the result of Gauss--Jordan elimination (over $\Q$) applied to the matrix $A$. However, to be useful to us, this process should be implemented in the ring $R$ as well, so that
		\begin{equation}
			\Lambda_{P_1,\ldots,P_m}(f_0,\ldots, f_m) \ = \ \cavg{x}{R}\cavg{y_1,\ldots, y_m}{R} f_0(x) f_1(x+y_1) \cdots f_m(x+y_m)
		\end{equation}
		for any given $f_j : R \to \C$. To justify this equality, first view $A$ as a matrix with entries in $R$. Elementary row operations on this matrix correspond to either switching the labels on two variables $y_{i_1}$ and $y_{i_2}$ for $i_1 \neq i_2$, changing variables $y_i \mapsto \alpha y_i$ by the scale factor $\alpha \in R$, or changing variables $y_{i_1} \mapsto y_{i_1} + \alpha y_{i_2}$, where $i_1 \neq i_2$.  Since the entries of $A$ are integers, it suffices to use only rational scale factors $\alpha = p/q$ to reduce $A$. As long as $p$ and $q$ are invertible in $R$, which can be guaranteed by requiring $\lpf{N}$ to be larger than $\max\{|p|,|q|\}$, either of the changes of variables $y_i \mapsto \frac{p}{q}y_i$ and $y_{i_1} \mapsto y_{i_1} + \frac pq y_{i_2}$ is valid in $R$. Moreover, only finitely many different values of $\alpha$ are needed to reduce $A$. Hence, there exists $C_1$ depending on $A$ (and hence $\mathbf P$) such that, for any finite commutative ring $R$ with characteristic $N$ satisfying $\lpf N > C_1$ and any 1-bounded functions $f_0,\ldots, f_m : R \to \C$, one has
		\begin{align*}
			\left| \Lambda_{P_1,\ldots,P_m}(f_0,\ldots, f_m)\right| \ & = \ \left|\cavg{x}{R}\cavg{y_1,\ldots, y_m}{R} f_0(x) f_1(x+y_1) \cdots f_m(x+y_m) \right| \\
			& = \ \left| \cavg{x}{R} f_0(x) \prod_{j=1}^m \cavg{y_j}{R} f_j(y_j)\right| \\
			& \leq \ \min_{0 \leq j \leq m} \norm{f_j}_{U^1}
		\end{align*}
		by the change of variables $y_j \mapsto y_j - x$  for $j \in \{1,\ldots, m\}$ and 1-boundedness, from which the claimed result follows by setting $s = 2$ and $\lambda = 1$, choosing $C_0$ and $\ve$ arbitrarily, and recalling that $\norm{f_j}_{U^1} \leq \norm{f_j}_{U^2}$.
		
		Now suppose $k > 1$, so that $(m,W(\mathbf P)) \in \mathcal{S}\setminus \mathcal S_{\rm{degree}\leq 1}$.
		
		By reordering $\mathbf P$ if necessary, without loss of generality, we may assume $w(P_m) = \max_{1\leq j \leq m} w(P_j)$. (It is without loss of generality since we have not fixed the functions $f_j$ yet.) Define the family $\mathbf{P'} = \{P_1-P_m,\ldots, P_{m-1}-P_m, -P_m\} \subset \Z[y_1,\ldots, y_n]$.
		
		Applying Proposition~\ref{prop: weight sequence initial stability}, there exists a constant $C_{\rm{stable}} > 0$ such that whenever $N$ is a positive integer with $\lpf N > C_{\rm{stable}}$, we have
		\begin{enumerate}
			\item $w_{\zn}(P) = w(P)$ for any $P \in \mathbf P \cup \mathbf{P'}$,
			\item $W_{\zn}(\mathbf P) = W(\mathbf P)$,
			\item $W_{\zn}(\mathbf{P'}) = W(\mathbf{P'})$, and
			\item $\max_{1 \leq j \leq m} \deg_{\zn}(P_j) = \max_{1 \leq j \leq m} \deg(P_j)$.
		\end{enumerate}
		
		By Proposition~\ref{prop: R-essential distinctness lpf condition}, there exists $C^* > 0$ such that if $N$ is a positive integer such that $\lpf N > C^*$, then each of the families $\mathbf P$ and $\mathbf{P'}$, viewed as finite subsets of $\zn[y_1,\ldots, y_n]$, is essentially distinct.

		By Proposition~\ref{alg stops}, there exists $T = T(m)$ such that any PET algorithm applied to $(m,W(\mathbf P))$ or to $(m,W(\mathbf P'))$ terminates after at most $T$ moves.
		
		Set $\lambda = 2^{-T}$. Let $C_{k,n} = (k+1)^{2kn}$. Let $\alpha = 2^{2^{2T}nm^2}$. Let $\ve > 0$ be such that $\ve < \frac{1}{2^T\alpha kT}$. Set $C_0 = 2^nTm$ and $s = 2^Tm$. Set
		\[
		C_1 = \max\left\{C_{\rm{stable}},k,C^*,(2m^2)^{(\ve)^{-1}},\left( 2+2^{2T}\right)^{(1-2^T\ve)^{-1}}, \left( 2\left(2^{kT^2+4}MC_{k,n}^T \right)^\alpha \right)^{(1-2^T\alpha kT\ve)^{-1}}
		\right\}.
		\]

		Let $R$ be a finite commutative ring with characteristic $N$ satisfying $\lpf N > C_1$, and let $f_0,\ldots, f_m : R \to \C$ be 1-bounded functions.
		
		The desired inequality \eqref{eqn: negative power of lpf N} holds if, for each $j \in \{0,1,\ldots, m\}$,
		\be\label{eqn: negative power of lpf N, nonminimum version}
		\left| \Lambda_{P_1,\ldots,P_m}(f_0,\ldots, f_m)\right| \ \leq \ \frac{C_0}{\lpf{N}^{\ve}} + 2^n\norm{f_j}_{U^s}^\lambda.
		\ee
		
		Let $M$ be the $\zn$-height of $\mathbf P$. 
		
		For simplicity, we will show \eqref{eqn: negative power of lpf N, nonminimum version} in the case $j = m$. The general case $j \neq m$ can be obtained by one of two modifications: either interchange $P_j$ with $P_m$ if $w(P_j) = w(P_m)$, or translate $x \mapsto x - P_m(y)$ if $w(P_j) < w(P_m)$. Regardless of the modification used, the $\zn$-height of the polynomial family which plays the role of $\mathbf{P}$ is at most $2M$. Therefore, we will showcase the proof of \eqref{eqn: negative power of lpf N, nonminimum version} when $j = m$ as though the $\zn$-height of $\mathbf P$ were actually $2M$, in order to be fully convinced of the argument's lack of dependence on $j \in \{0,\ldots, m\}$.
		
		We will show \eqref{eqn: negative power of lpf N, nonminimum version} by repeated applications of Lemma~\ref{lem: PET inductive step}. 
		
		Set $m_0 := m$. For each $j \in \{1,\ldots, m\}$, define $P_j^{(0)} \in \zn[y_1,\ldots, y_n]$ as $P_j$ with coefficients reduced modulo $N$. Let $\mathbf{P}_0 := \{P_1^{(0)},\ldots, P_m^{(0)}\} \subset \zn[y_1,\ldots,y_n]$. Define $f_0^{(0)} := f_0$ and $f_j^{(0)} := f_j$ for each $j \in \{1,\ldots,m\}$.
		
		We prepare to apply Lemma~\ref{lem: PET inductive step}. Let $H_1$ be an integer to be chosen later which satisfies $\max\{2,m_0^2\} \leq H_1 \leq \lceil N/2 \rceil$. The family $\mathbf{P}_0$ is essentially distinct since $C^* \leq C_1 < \lpf N$. Since we chose to ensure $C_1 \geq \max\{k,C_{\rm{stable}}\}$, it also follows that
		\[ \max_{1 \leq j \leq m} \deg_{\zn}(P^{(0)}_j) = \max_{1 \leq j \leq m} \deg_{\zn}(P_j) = \max_{1 \leq j \leq m} \deg(P_j) = k \leq C_1 < \lpf N \]
		and $\max_{1 \leq j \leq m} \deg_{\zn}(P^{(0)}_j) = k > 1$.
		Moreover, since $C_1 \geq C_{\rm{stable}}$, we have \[w_{\zn}(P^{(0)}_m) = w_{\zn}(P_m) = w(P_m) = \max_{1 \leq j \leq m} w(P_j) =  \max_{1 \leq j \leq m} w_{\zn}(P_j) = \max_{1 \leq j \leq m} w_{\zn}(P^{(0)}_j).\]
		Hence, provided that $2M < \lpf N$, it follows by Lemma~\ref{lem: PET inductive step} that there exist a positive integer $m_1 \leq 2^1m_0$, 1-bounded functions $f_0^{(1)},f_1^{(1)},\ldots, f_{m_1}^{(1)} : R \to \C$ with $f_{m_1}^{(1)} = f_m$, and an essentially distinct family of polynomials $\mathbf P_1 = \{P_1^{(1)},\ldots, P_{m_1}^{(1)} \} \subset \zn[y_1,\ldots, y_n]$ such that the maximal degree and weight conditions $k':= \max_{1\leq j \leq m_1} \deg_{\zn}(P_{j}^{(1)}) \leq k$ and $w_{\zn}(P_{m_1}^{(1)}) = \max_{1 \leq j \leq m_1} w_{\zn}(P_j^{(1)})$ hold, the $\zn$-height of $\mathbf P_1$ is at most $2C_{k,n}M H_1^k$, a permissible operation applied to $(m_0,W_{\zn}(\mathbf P_0))$ yields $(m_1,W_{\zn}(\mathbf P_1))$, and
		\be
		\left| \Lambda_{P_1^{(0)},\ldots,P_m^{(0)}}(f_0^{(0)},\ldots,f_m^{(0)}) \right| \ \leq \ 2^{n/2} \left[ \frac{2^{n/2}m_0}{H_1^{1/2}} + \left|\Lambda_{P_1^{(1)},\ldots,P_{m_1}^{(1)}}(f_0^{(1)},\ldots,f_{m_1}^{(1)})\right|^{1/2}\right].
		\ee
		This first application of Lemma~\ref{lem: PET inductive step} performs a permissible operation to $a^*=(m_0,W_{\zn}(\mathbf{P}_0)) \in \mathcal{S}\setminus \mathcal S_{\rm{degree}=1}$ to obtain the pair $b^* = (m_1,W_{\zn}(\mathbf{P}_1)) \in \mathcal{S} \setminus \mathcal{S}_{\rm{degree}=0}$ in accordance with a PET algorithm applied to $a^*$. If in fact $b^* \in \mathcal S_{\rm{degree}=1}$, such a PET algorithm would terminate here; moreover, since $k'$ would equal 1 in this case, we could not further apply Lemma~\ref{lem: PET inductive step} to bound $\left|\Lambda_{P_1^{(1)},\ldots,P_{m_1}^{(1)}}(f_0^{(1)},\ldots,f_{m_1}^{(1)})\right|$. For clarity, we illustrate what happens when $b^* \in \mathcal{S}\setminus \mathcal S_{\rm{degree} \leq 1}$, i.e., when $k' > 1$ and a PET algorithm for $a^*$ has not yet been completed, leaving open the possibility of applying Lemma~\ref{lem: PET inductive step} again to bound $\left|\Lambda_{P_1^{(1)},\ldots,P_{m_1}^{(1)}}(f_0^{(1)},\ldots,f_{m_1}^{(1)})\right|$. Afterwards, we summarize the inequality or inequalities we obtain when $b^*$ belongs either to $\mathcal S_{\rm{degree}=1}$ or to $\mathcal{S}\setminus \mathcal S_{\rm{degree} \leq 1}$, no matter how many steps of a PET algorithm for $a^*$, or applications of Lemma~\ref{lem: PET inductive step}, it requires.
		
		We prepare to apply Lemma~\ref{lem: PET inductive step}; the hypotheses are easier to check now than in the first application of Lemma~\ref{lem: PET inductive step} above. Let $H_2$ be an integer to be chosen later which satisfies $\max\{2,m_1^2\} \leq H_2 \leq \lceil N/2 \rceil$. The family $\mathbf P_1$ is essentially distinct because the previous application of Lemma~\ref{lem: PET inductive step} ensured it; for the same reason, the maximal degree $k'$ satisfies $1 < k' \leq k < \lpf N$ and the weight of $P^{(1)}_{m_1}$ is maximal in $\mathbf P_1$. Thus, provided that $2C_{k,n}MH_1^k < \lpf N$, by Lemma~\ref{lem: PET inductive step}, there exist a positive integer $m_2 \leq 2m_1 \leq 2^2m_0$, 1-bounded functions $f_0^{(2)},f_1^{(2)},\ldots, f_{m_2}^{(2)} : R \to \C$ with $f_{m_2}^{(2)} = f_{m_1}^{(1)} = f_m$, and an essentially distinct family of polynomials $\mathbf P_2 = \{P_2^{(2)},\ldots, P_{m_2}^{(2)} \} \subset \zn[y_1,\ldots,y_n]$
		such that the maximal degree and weight conditions $\max_{1\leq j \leq m_2} \deg_{\zn} (P_{j}^{(2)}) \leq k' \leq k$ and  $w_{\zn}(P_{m_2}^{(2)}) = \max_{1 \leq j \leq m_2} w_{\zn}(P_j^{(2)})$ hold, the $\zn$-height of $\mathbf P_2$ is at most \begin{equation} 2C_{k,n}(k'+1)^{2k'n} MH_1^kH_2^{k'} \ \leq \ 2C_{k,n}^2MH_1^kH_2^k,
		\end{equation} a permissible operation applied to $(m_1,W_{\zn}(\mathbf P_1))$ yields $(m_2,W_{\zn}(\mathbf P_2))$, and
		\be
		\left|\Lambda_{P_1^{(1)},\ldots,P_{m_1}^{(1)}}(f_0^{(1)},\ldots,f_{m_1}^{(1)})\right| \ \leq \ 2^{n/2} \left[ \frac{2^{n/2}m_1}{H_2^{1/2}} + \left|\Lambda_{P_1^{(2)},\ldots,P_{m_2}^{(2)}}(f_0^{(2)},\ldots,f_{m_2}^{(2)})\right|^{1/2}\right].
		\ee
		This second application of Lemma~\ref{lem: PET inductive step} performs a permissible operation on $(m_1,W_{\zn}(\mathbf{P}_1))$ to obtain the pair $(m_2,W_{\zn}(\mathbf{P}_2)) \in \mathcal{S} \setminus \mathcal{S}_{\rm{degree}=0}$ in continued accordance with a PET algorithm applied to $a^*$. Applying Lemma~\ref{lem: PET inductive step} repeatedly in this way (until we must stop because the maximal degree of some $\mathbf{P}_j$ is not greater than 1) follows a PET algorithm in the sense that the pairs $(m_1, W_{\zn}(\mathbf P_1))$, $(m_2, W_{\zn}(\mathbf P_2))$, and so on are intermediate steps in a PET algorithm applied to $a^* = (m_0, W_{\zn}(\mathbf P_0))$. Thus, we apply Lemma~\ref{lem: PET inductive step} until we reach the last step of whichever PET algorithm we are compelled to follow by the repeated applications of the lemma, which in the end yields a pair belonging to $\mathcal{S}_{\rm{degree}=1}$. 
		We conclude the following claim:
		\begin{cla}
			There exists a positive integer $D = D(a^*) \leq T$ such that, for every $i \in \{1,\ldots, D\}$, there exist a positive integer $m_{i} \leq 2^{i} m_0$, 1-bounded functions $f_0^{(i)},\ldots, f_{m_i}^{(i)} : R \to \C$ with $f_{m_{i}}^{(i)} = f_m$, and an essentially distinct family of polynomials $\mathbf{P}_{i} = \{P_1^{(i)},\ldots, P_{m_{i}}^{(i)}\} \subset \zn[y_1,\ldots,y_n]$ of $\zn$-height at most $2C_{k,n}^{i}M \left( \prod_{j=1}^{i} H_j \right)^k$ such that
			\be\label{eqn: claim nested counting estimate}
			\left|\Lambda_{P_1^{(i-1)},\ldots,P_{m_{i-1}}^{(i-1)}}(f_0^{(i-1)},\ldots,f_{m_{i-1}}^{(i-1)})\right| \ \leq \ 2^{n/2} \left[ \frac{2^{n/2}m_{i-1}}{H_{i}^{1/2}} + \left|\Lambda_{P_1^{(i)},\ldots,P_{m_{i}}^{(i)}}(f_0^{(i)},\ldots,f_{m_{i}}^{(i)})\right|^{1/2}\right].
			\ee 
			Moreover, the family $\mathbf{P}_D$ consists of polynomials of degree 1 since $(m_D,W_{\zn}(\mathbf{P}_D)) \in \mathcal {S}_{\rm{degree}=1}$.  
			All of this holds provided the following conditions are met:
			\begin{enumerate}
				\item For each $i \in \{1,\ldots, D\}$, given $m_{i-1}$, an integer $H_{i}$ can be chosen to satisfy $\max\{2,m_{i-1}^2\} \leq H_{i} \leq \lceil N/2 \rceil$.
				\item For each $i \in \{0,\ldots, D-1\}$, we have $2C_{k,n}^i M\left( \prod_{j=1}^i H_j \right)^k < \lpf N$.
			\end{enumerate}
		\end{cla}
		Analyzing the inequalities offered by equation \eqref{eqn: claim nested counting estimate} for each $i \in \{1,\ldots, D\}$, and using the fact that $(a+b)^{1/2} \leq a^{1/2} + b^{1/2}$ for nonnegative real numbers $a$ and $b$, it can be shown by induction that for each $\ell \in \{1,\dots, D\}$, one has
		\begin{equation}
			\left| \Lambda_{P_1,\ldots,P_m}(f_0,\ldots, f_m)\right| \ \leq \ 2^n \sum_{i=1}^{\ell}  \left(\frac{m_{i-1}}{H_{i}^{1/2}} \right)^{2^{-(i-1)}} + 2^{n(1-\frac{1}{2^\ell})} \left|\Lambda_{P_1^{(\ell)},\ldots,P_{m_\ell}^{(\ell)}}(f_0^{(\ell)},\ldots, f_{m_\ell}^{(\ell)}) \right|^{2^{-\ell}}.
		\end{equation}
		Hence, when $\ell = D$, crudely bounding $2^{n(1-\frac{1}{2^{D}})} \leq 2^n$ and $\left( \frac{m_{i-1}}{H_{i}^{1/2}}  \right)^{2^{-(i-1)}} \leq \left( \frac{m_{i-1}}{H_{i}^{1/2}}  \right)^{2^{-(D-1)}}$ for $i \in \{1,\dots, D\}$, we obtain
		\be\label{eqn: claim unnested counting estimate}
		\left| \Lambda_{P_1,\ldots,P_m}(f_0,\ldots, f_m)\right| \ \leq \ 2^n \sum_{i=1}^{D} \left(\frac{m_{i-1}}{H_{i}^{1/2}} \right)^{2^{-(D-1)}} + 2^n \left|\Lambda_{P_1^{(D)},\ldots,P_{m_D}^{(D)}}(f_0^{(D)},\ldots, f_{m_D}^{(D)}) \right|^{2^{-D}}.
		\ee
		By the claim, the $\zn$-height of $\mathbf{P}_D$ is at most $M' := 2C_{k,n}^{D}M \left( \prod_{j=1}^{D} H_j \right)^k$.
		Since $\mathbf P_D \subset \zn[y_1,\ldots, y_n]$ is a family of polynomials of degree 1, we may, for each $j \in \{1,\ldots, m_D\}$, define the polynomial $\tilde{P}_j(y) := \sum_{i=1}^n a_i^{(j)} y_i$, with $a_i^{(j)} \in \zn$, such that $\tilde{P}_j(y)$ and $P_j^{(D)}(y)$ differ by a constant. For each $j \in \{1, \ldots, m_D\}$, we also define the 1-bounded function $\tilde{f}_j(x) := f^{(D)}_{j}(x+P_j^{(D)}(y)-\tilde{P}_j(y))$, which is just a translate of $f^{(D)}_j$. In particular, it follows that $\norm{\tilde{f}_{m_D}}_{U^{m_D}} = \norm{f^{(D)}_{m_D}}_{U^{m_D}}$, which we will use in a moment. Now, since $\mathbf P_D$ is essentially distinct and has $\zn$-height at most $M'$, the family $\tilde{\mathbf{P}} := \{\tilde{P}_1,\ldots, \tilde{P}_{m_D} \} \subset \zn[y_1,\ldots, y_n]$ is also essentially distinct and has $\zn$-height at most $M'$.
		
		It will now be helpful to define the \emph{$\zn$-height} of a matrix $A$ with entries in $\zn$ as the smallest nonnegative integer $M$ such that the $\zn$-height of each entry in $A$ is at most $M$. 
		
		Form the $n \times m_D$ matrix $A$ with entries given by $A_{i,j} = a_i^{(j)} \in \zn$.  Since $\tilde{\mathbf{P}}$ is essentially distinct, every column of $A$ has a nonzero entry and no two columns of $A$ are identical. Provided that $(8M')^{2^{nm_D^2}} < N$, if we apply Proposition~\ref{prop: linearized family nonzeroing+} with $M_0 = M'$ and $m = m_D$, then we conclude that $A$ can be transformed by a series of elementary row operations (on the set of $n \times m_D$ matrices with entries in $\zn$) to a matrix $B$ of $\zn$-height at most $M'':= (8M')^{2^{nm_D^2}}$ such that, within each row of $B$, all entries are distinct and nonzero. All such elementary row operations can be viewed as being done on the set of $n \times m_D$ matrices with entries in $R$ since $\zn$ is a subring of $R$. Notate $B = (b_{i}^{(j)})$. Thus, we have
		\begin{align*}
			\Lambda_{P_1^{(D)},\ldots,P_{m_D}^{(D)}}(f_0^{(D)},\ldots, f_{m_D}^{(D)}) \ & = \ \Lambda_{\tilde{P}_1,\ldots, \tilde{P}_{m_D}}(f_0^{(D)},\tilde{f}_1,\ldots, \tilde{f}_{m_D}) \\
			& = \ \cavg{x}{R} \cavg{y}{R'}f_0^{(D)}(x) \prod_{i=1}^{m_D} \tilde{f}_i (x + b^{(i)}\cdot y), 
		\end{align*}
		where $R' = R^n$ with coordinatewise addition and multiplication, the dot product is with vector entries in $R$, and, for each $j \in \{1,\ldots, m_D\}$, $b^{(j)} := (b_1^{(j)},\ldots, b_n^{(j)})$ is the $j$th column of $B$. By construction, within each row of $B$, all entries are distinct and nonzero. This implies that $b^{(i)}$ and $b^{(i)} - b^{(j)}$ for $i, j \in \{1,\ldots, m_D\}$ with $i \neq j$ are vectors in $\mathbb{Z}_N^n$ with nonzero entries; moreover, since the $\zn$-height of $B$ is at most $M''$, then if the $H_j$ can additionally be chosen so that $2M'' < \lpf N$, it follows by Lemma~\ref{lem: what is a unit} that each entry of $b^{(i)}$ and $b^{(i)}-b^{(j)}$ is invertible modulo $N$. We conclude by Lemma~\ref{lem: Ud bound if invertible} that \be
		\left|\Lambda_{P_1^{(D)},\ldots,P_{m_D}^{(D)}}(f_0^{(D)},\ldots, f_{m_D}^{(D)}) \right| \ \leq \ \norm{\tilde{f}_{m_D}}_{U^{m_D}} \ = \ \norm{f^{(D)}_{m_D}}_{U^{m_D}} \ = \ \norm{f_m}_{U^{m_D}}
		\ee
		since $f_{m_D}^{(D)} = f_m$, which implies with the help of \eqref{eqn: claim unnested counting estimate} that
		\be
		\left| \Lambda_{P_1,\ldots,P_m}(f_0,\ldots, f_m)\right| \ \leq \ 2^n \sum_{i=1}^{D} \left(\frac{m_{i-1}}{H_{i}^{1/2}} \right)^{2^{-(D-1)}} + 2^n  \norm{f_m}_{U^{m_D}}^{2^{-D}}.
		\ee
		Moreover, we have
		\be
		\norm{f_m}_{U^{m_D}}^{2^{-D}} \ \leq \ \norm{f_m}_{U^{2^Dm}}^{2^{-D}} \ \leq \ \norm{f_m}_{U^{s}}^{2^{-D}} \ \leq \ \norm{f_m}_{U^{s}}^{\lambda}
		\ee
		since $m_D \leq 2^Dm \leq 2^Tm = s$ and the Gowers norms are nondecreasing and since $\norm{f_m}_{U^s} \leq 1$ and $1 \geq 2^{-D} \geq \lambda$. Thus, we have
		\be\label{eqn: intermediate eqn 1}
		\left| \Lambda_{P_1,\ldots,P_m}(f_0,\ldots, f_m)\right| \ \leq \ 2^n \sum_{i=1}^{D} \left(\frac{m_{i-1}}{H_{i}^{1/2}} \right)^{2^{-(D-1)}} + 2^n\norm{f_m}_{U^{s}}^{\lambda}.
		\ee
		
		Now, let us clarify the previously unspecified choices of $H_i$. This will propel us towards the conclusion. For each $i \in \{1,\ldots, D\}$, set $H_i = 2^{2i-2} \cdot \lceil \lpf{N}^{2^D\ve} \rceil$. First, observe that
		\be
		\frac{m_{i-1}}{H_i^{1/2}} \ \leq \ \frac{m}{\lceil \lpf{N}^{2^D\ve}\rceil^{1/2}} \ \leq \ \frac{m}{ \lpf{N}^{2^{D-1}\ve}},
		\ee
		which implies
		\be
		2^n \sum_{i=1}^{D} \left(\frac{m_{i-1}}{H_{i}^{1/2}} \right)^{2^{-(D-1)}} \ \leq \ \frac{2^nDm^{2^{-(D-1)}}}{\lpf N^\ve} \ \leq \ \frac{2^nTm}{\lpf N^\ve} \ = \  \frac{C_0}{\lpf N^\ve},
		\ee
		so \eqref{eqn: intermediate eqn 1} implies
		\be
		\left| \Lambda_{P_1,\ldots,P_m}(f_0,\ldots, f_m)\right| \ \leq \ \frac{C_0}{\lpf N^\ve} + 2^n\norm{f_m}_{U^{s}}^{\lambda},
		\ee
		which proves \eqref{eqn: negative power of lpf N, nonminimum version} in the case $j = m$.
		
		Finally, we summarize and check the conditions of the claim and of the subsequent applications of Proposition~\ref{prop: linearized family nonzeroing+} and Lemma~\ref{lem: Ud bound if invertible}:
		\begin{enumerate}
			\item For each $i \in \{1,\ldots, D\}$, we have $\max\{2,m_{i-1}^2\} \leq H_{i} \leq \lceil N/2 \rceil$.
			\item For each $i \in \{0,\ldots, D-1\}$, we have $2C_{k,n}^i M\left( \prod_{j=1}^i H_j \right)^k < \lpf N$.
			\item $(8M')^{2^{nm_D^2}} < N$, where $M' = 2C_{k,n}^{D}M \left( \prod_{j=1}^{D} H_j \right)^k$.
			\item $2M'' < \lpf N$, where $M'' = (8M')^{2^{nm_D^2}}$.
		\end{enumerate}
		It is clear that the fourth condition implies $M'' < \lpf N$, which clearly implies the second and third conditions. Thus, we check the first and fourth conditions to finish the proof.
		
		By the choice of $C_1$, which implies that $\lpf N > C_1 \geq (2m^2)^{(\ve)^{-1}}$, we have
		\be
		\max\{2,m_{i-1}^2\} \ \leq \ 2m_{i-1}^2 \ \leq\ 2^{2i-1} m^2\ \leq\ 2^{2i-2} \lpf{N}^\ve\ \leq\ 2^{2i-2} \lpf{N}^{2^D\ve} \ \leq \  H_i
		\ee
		for each $i$. Moreover, for any $i \in \{1,\ldots, D\}$, $H_i \leq \lceil N/2 \rceil$ is implied by the single inequality $H_D \leq \lceil N/2 \rceil$, which follows from $\frac{\lpf N - 1}{2} = \lfloor \frac{\lpf N}{2} \rfloor \leq \lceil N/2\rceil$ and the inequality
		\be
		\lceil\lpf{N}^{2^D\ve}\rceil \left(\frac{ \lpf{N}}{\lceil \lpf{N}^{2^D\ve}\rceil} -2^{2D-1}\right) \ \geq \ 1,
		\ee
		which holds if both factors are at least one. Indeed, obviously $\lceil\lpf{N}^{2^D\ve}\rceil \geq 1$; one also sees that $\frac{ \lpf{N}}{\lceil \lpf{N}^{2^D\ve}\rceil} -2^{2D-1} \geq 1$ holds since it is implied by the inequality
		\be
		\frac{ \lpf{N}^{1-2^D\ve}}{2} -2^{2D-1} \ \geq \ 1,
		\ee
		which holds by the choice of $C_1 \geq (2+2T)^{(1-2^T\ve)^{-1}}$ since $\lpf{N}^{1-2^D\ve} \geq \lpf{N}^{1-2^T\ve} \geq 2^{2T} + 2 \geq 2^{2D} + 2$. This verifies the first condition.
		
		As for the fourth condition, we first observe that
		\begin{equation}
			\left( \prod_{j=1}^D H_j \right)^k\ = \ 2^{kD(D-1)}\lceil \lpf{N}^{2^D\ve} \rceil^{kD},
		\end{equation}
		which implies that
		\begin{multline}\label{eqn: intermediate eqn 2}
			8M' \ = \ 2^{kD(D-1)+4}C_{k,n}^D M\lceil \lpf{N}^{2^D\ve} \rceil^{kD} \ \leq \ 2^{kD^2+4}C_{k,n}^D M\left( \lpf{N}^{2^D\ve} \right)^{kD} \\ \leq \ 2^{kT^2+4}C_{k,n}^T M\lpf{N}^{2^T\ve kT},
		\end{multline}
		where the first inequality holds since $\lceil x \rceil \leq 2x$ for any real $x>1$ and the second inequality holds since $D \leq T$. Then, we observe that 
		\begin{multline}
			2M'' \ = \ 2(8M')^{2^{nm_D^2}} \ \leq \ 2(8M')^{2^{n(2^Dm)^2}} \ \leq \ 2(8M')^{\alpha} \ \leq \ 2(2^{kT^2+4}C_{k,n}^T M\lpf{N}^{2^T\ve kT})^\alpha
			\\ = \ 2(2^{kT^2+4}C_{k,n}^T M)^\alpha \lpf{N}^{2^T\alpha kT\ve} \ < \ \lpf N,
		\end{multline}
		where the third inequality holds by \eqref{eqn: intermediate eqn 2} and the fourth inequality holds since $\lpf N > C_1 \geq \left( 2\left(2^{kT^2+4}MC_{k,n}^T \right)^\alpha \right)^{(1-2^T\alpha kT\ve)^{-1}}$. This completes the proof.
	\end{proof}

	\section{A matrix manipulation in the proof of Proposition~\ref{prop: Us control}}
	\label{sec: fixing a matrix}
	
	In this section, we prove Proposition~\ref{prop: linearized family nonzeroing+}, which is necessary for our proof of Proposition~\ref{prop: Us control} in the multivariable case $n > 1$. See Subsection~\ref{subsec: prop: linearized family nonzeroing+ might be necessary} for an example demonstrating the necessity.
	
	First, we prove two simple lemmas relating $\zn$-height and nonzeroness modulo $N$ and then prove the desired proposition.
	
	\begin{lemma}\label{lem: zero avoidance and zn height 2}
		Let $H$, $H'$, and $N$ be positive integers which satisfy $HH' < N$. Suppose $x, y \in \zn$ are nonzero and such that $x$ (resp. $y$) has $\zn$-height at most $H$ (resp. $H'$). Then $xy$ is nonzero modulo $N$. 
	\end{lemma}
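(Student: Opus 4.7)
The plan is to lift $x$ and $y$ to integer representatives of small absolute value and multiply them as integers, avoiding any issue of wrap-around modulo $N$.

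By definition, since $x \in \zn$ has $\zn$-height at most $H$, we have $x \bmod N \in \{0,1,\ldots,H\} \cup \{N-H,\ldots,N-1\}$. The assumption that $x$ is nonzero excludes $0$, so there exists an integer $\tilde x$ with $1 \leq |\tilde x| \leq H$ whose reduction modulo $N$ equals $x$ (namely $\tilde x$ is either the element in $\{1,\ldots,H\}$ or its shift $\tilde x - N \in \{-H,\ldots,-1\}$, whichever class $x$ lies in). Applying the same reasoning to $y$ produces an integer $\tilde y$ with $1 \leq |\tilde y| \leq H'$ and $\tilde y \equiv y \bmod N$.

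Then $\tilde x \tilde y$ is a nonzero integer (as a product of nonzero integers) satisfying $|\tilde x \tilde y| \leq HH' < N$ by hypothesis. In particular, $\tilde x \tilde y \not\equiv 0 \bmod N$, and since $xy \equiv \tilde x \tilde y \bmod N$, we conclude that $xy$ is nonzero modulo $N$.

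There is essentially no obstacle here: the argument is a straightforward ``no wrap-around'' observation, and it does not need the full strength of Lemma~\ref{lem: manipulating 0-closeness mod N} (which would bound the $\zn$-height of $xy$ by $HH'$ but would not by itself rule out the case $xy \equiv 0$). Lifting to integer representatives is the natural move because nonzeroness is automatic in $\Z$ and is preserved by reduction modulo $N$ as long as the absolute value stays below $N$.
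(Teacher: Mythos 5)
Your proof is correct, and it takes a genuinely different (and more elementary) route than the paper. The paper argues by contradiction through a gcd calculation: assuming $xy \equiv 0 \bmod N$, it observes that $N$ must divide $\gcd(x,N)\gcd(y,N)$, and then shows $1 < \gcd(x,N)\gcd(y,N) \leq HH' < N$, a contradiction. The key step there is establishing $\gcd(x,N) \leq H$ directly from the $\zn$-height bound, which takes a small case analysis. Your argument sidesteps the gcd machinery entirely by lifting to signed integer representatives $\tilde x, \tilde y$ with $1 \leq |\tilde x| \leq H$ and $1 \leq |\tilde y| \leq H'$: the product $\tilde x\tilde y$ is a nonzero integer of absolute value strictly less than $N$, so it cannot reduce to $0 \bmod N$, and $\tilde x \tilde y \equiv xy$. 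Both proofs encode the same ``no wrap-around'' phenomenon, but yours is shorter and more transparent; the paper's has the minor side benefit of implicitly recording the fact that a nonzero $x \in \zn$ of $\zn$-height at most $H$ has $\gcd(x,N) \leq H$, though that is not needed for the lemma itself.
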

	\begin{proof}
		Suppose to the contrary that $xy \equiv 0 \bmod N$. Then we would have $\gcd(x,N)\gcd(y,N) \equiv 0 \bmod N$. Since neither $x$ nor $y$ are zero modulo $N$, it follows that both $\gcd(x,N)$ and $\gcd(y,N)$ exceed 1, hence $\gcd(x,N)\gcd(y,N) > 1$. Finally, if we show that $\gcd(x,N)\gcd(y,N) < N$ as an inequality in integers, this would give a contradiction with $\gcd(x,N)\gcd(y,N) \equiv 0 \bmod N$.
		
		We claim that $\gcd(x,N) \leq H$. Since the height of $x$ is at most $H$ and $x$ is nonzero, we may choose an integer $x' \in \{1,\ldots, H\} \cup \{N-H,\ldots, N-1\}$ such that $x' \equiv x \bmod N$. Then at least one of $1 \leq x' \leq H$ or $N-H \leq x' \leq N-1$ holds. If the former holds, then $\gcd(x,N) = \gcd(x',N) \leq x' \leq H$. If the latter holds, then $1 \leq N - x' \leq H$, so that then $\gcd(x,N) = \gcd(x',N) = \gcd(N-x',N) \leq N-x' \leq H$. Hence $\gcd(x,N) \leq H$, and similarly $\gcd(y,N) \leq H'$. Thus $\gcd(x,N)\gcd(y,N) \leq HH' < N$ by assumption, completing the proof by contradiction.
	\end{proof}
	
	\begin{lemma}\label{lem: zero avoidance and zn height}
		Let $M$, $N$, and $H$ be positive integers which satisfy $M < H$ and $M+MH < N$. If $x, y \in \zn$ have $\zn$-height at most $M$ and $x$ is nonzero, then $x + Hy$ is nonzero. 
	\end{lemma}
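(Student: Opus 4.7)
The plan is to work with integer representatives of $x$ and $y$ that witness their $\zn$-heights and then bound $|x + Hy|$ as an integer expression. By the definition of $\zn$-height, I would choose integers $x', y' \in \{-M, -M+1, \ldots, M-1, M\}$ such that $x \equiv x' \bmod N$ and $y \equiv y' \bmod N$; since $x$ is nonzero in $\zn$ and $|x'| \leq M < N$ (the bound $M < N$ is forced by the hypothesis $M + MH < N$ together with $H \geq 1$), the integer $x'$ is nonzero as well.

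Next, I would observe that $x + Hy \equiv x' + Hy' \bmod N$. The triangle inequality gives
\begin{equation*}
|x' + Hy'| \ \leq \ |x'| + H|y'| \ \leq \ M + HM \ < \ N,
\end{equation*}
where the last inequality is the hypothesis. Now suppose, for contradiction, that $x + Hy \equiv 0 \bmod N$. Then $x' + Hy'$ is an integer multiple of $N$ of magnitude strictly less than $N$, so $x' + Hy' = 0$ as an integer equation.

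From $x' = -Hy'$ and $|x'| \leq M$, one deduces $H|y'| \leq M$, hence $|y'| \leq M/H < 1$ by the hypothesis $M < H$. Since $y'$ is an integer, this forces $y' = 0$, and then $x' = -Hy' = 0$, contradicting that $x' \neq 0$. I expect no real obstacle: the argument is a direct computation with integer representatives, the hypotheses $M < H$ and $M + MH < N$ each get used in exactly one place (to force $y' = 0$ and to force $|x' + Hy'| < N$, respectively), and no additional machinery beyond the definition of $\zn$-height is needed.
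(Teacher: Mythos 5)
Your proof is correct. You take a genuinely different route from the paper: you pick symmetric integer representatives $x', y' \in \{-M, \ldots, M\}$ and argue via a single magnitude estimate $|x' + Hy'| \leq M + HM < N$, which forces $x' + Hy' = 0$ as integers, and then the hypothesis $M < H$ kills $y'$ (hence $x'$). The paper instead works with the asymmetric representatives $\{0,\ldots,M\} \cup \{N-M,\ldots,N-1\}$, observes that $-x$ lands in $\{1,\ldots,M\}\cup\{N-M,\ldots,N-1\}$ and $Hy$ in $\{0,H,\ldots,HM\}\cup\{N-HM,\ldots,N-H\}$, and then checks by hand that none of the four pairs of constituent intervals can intersect modulo $N$ under the stated hypotheses. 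The two arguments use the hypotheses in exactly the same places, but your version replaces the four-case disjointness check with a one-line triangle inequality followed by an integer divisibility observation, which is tidier and makes it transparent which hypothesis is doing which job. The only small thing worth being explicit about is the step $M < N$ that legitimizes lifting $x$ to a nonzero integer $x'$; you correctly note this follows from $M + MH < N$ with $H \geq 1$.
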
 
	\begin{proof}
		We observe that $-x \in \{1,\ldots, M\} \cup \{N-M,\ldots, N-1\}$ and $Hy \in \{0,H,\ldots, HM\} \cup \{N-HM,\ldots, N-H\}$. Then $x + Hy \equiv 0 \bmod N$ could only hold if $\{1,\ldots, M\} \cup \{N-M,\ldots, N-1\}$ intersects $\{0,H,\ldots, HM\} \cup \{N-HM,\ldots, N-H\}$ modulo $N$. Now, we observe the following:
		\begin{enumerate}
			\item $\{1,\ldots, M\}$ does not intersect $\{0,H,\ldots, HM\}$ modulo $N$ if $M < H$ and $HM < N$.
			\item $\{1,\ldots, M\}$ does not intersect $\{N-HM,\ldots, N-H\}$ modulo $N$ if $M < N$ and $M < N-HM$.
			\item $\{N-M,\ldots, N-1\}$ does not intersect $\{0,H,\ldots, HM\}$ modulo $N$ if $HM < N$ and $HM < N-M$.
			\item $\{N-M,\ldots, N-1\}$ does not intersect $\{N-HM,\ldots, N-H\}$ modulo $N$ if $N-HM > 0$ and $N-H < N-M$.
		\end{enumerate}
		Removing redundant conditions, we see that $x +Hy \equiv 0 \bmod N$ cannot hold if $M < H$ and $M+MH < N$. 
	\end{proof}
	
	The next proposition is algorithmic in nature, and its (admittedly tedious) proof is straightforward. We recall that the $\zn$-height of a matrix $A$ with entries in $\zn$ is the smallest nonnegative integer $M$ such that the $\zn$-height of each entry in $A$ is at most $M$. In the proof of Proposition~\ref{prop: Us control}, the proposition below is applied to a matrix with coefficients determined by an essentially distinct family of linear polynomials with bounded $\zn$-height. After applying the proposition, certain linear averages with parameters determined by the new matrix $B$ are controllable by way of Lemma~\ref{lem: Ud bound if invertible}.
	
	\begin{proposition}\label{prop: linearized family nonzeroing+}
		Let $N$, $M_0$, $n$, and $m$ be positive integers such that $M_0^{2^{nm^2}} 2^{3 \cdot 2^{nm^2}} < N$. Let $A = (a_{i,j})$ be an $n \times m$ matrix with entries in $\zn$ such that every column has a nonzero entry, no two columns are identical, and every entry has $\zn$-height at most $M_0$. Then $A$ can be transformed via elementary row operations into a matrix $B$ of $\zn$-height at most $(8M_0)^{2^{nm^2}}$ such that, within each row of $B$, all entries are distinct and nonzero.
	\end{proposition}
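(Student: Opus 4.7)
The plan is to construct $B$ from $A$ by an iterative procedure that eliminates ``bad positions'' one at a time, each via a single elementary row operation. Call a position $(i,j)$ a \emph{zero position} if the current $(i,j)$-entry is zero, and a triple $(i,j,\ell)$ with $j < \ell$ an \emph{equality position} if the $(i,j)$- and $(i,\ell)$-entries coincide. The goal is to reach a matrix with no bad positions; there are at most $nm + n\binom{m}{2} \leq nm^2$ of them in total. Both hypotheses on $A$---every column has a nonzero entry and no two columns are identical---are preserved under any invertible row operation, since they translate to the nonzeroness in $\Z_N^n$ of the columns and pairwise column differences.

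The heart of the argument is a single-step procedure. Given a current matrix of $\Z_N$-height $M^{(t)}$ with some bad position, say a zero position $(i,j)$ (the equality case is analogous), pick any row $k$ with $a_{k,j} \neq 0$, which exists by the column-nonzeroness hypothesis. Perform ``add $H \cdot \text{row}_k$ to $\text{row}_i$'' for an integer $H$ with $\Z_N$-height between $M^{(t)} + 1$ and $4M^{(t)}$. Three consequences follow from Lemmas~\ref{lem: zero avoidance and zn height 2} and~\ref{lem: zero avoidance and zn height}: first, the new $(i,j)$-entry $0 + H a_{k,j} = H a_{k,j}$ is nonzero in $\Z_N$, since the heights of $H$ and $a_{k,j}$ multiply to at most $4(M^{(t)})^2 < N$; second, any previously nonzero entry $a_{i,j'}$ in row $i$ remains nonzero after becoming $a_{i,j'} + H a_{k,j'}$, since $H$ strictly exceeds the current $\Z_N$-height; third, any previously-unequal pair $a_{i,j'} \neq a_{i,\ell'}$ in row $i$ yields a new pair with nonzero difference, by the same reasoning applied to column differences. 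Consequently, the operation strictly reduces the total number of bad positions (bad positions in other rows being unaffected, since an elementary row operation leaves the source row untouched) and raises the $\Z_N$-height to at most $M^{(t)}(1+H) \leq 8(M^{(t)})^2$.

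Iterating the single step at most $nm^2$ times yields the required $B$. The recursion $M^{(t+1)} \leq 8(M^{(t)})^2$ with $M^{(0)} = M_0$ gives $M^{(nm^2)} \leq (8M_0)^{2^{nm^2}}$, matching the claimed height bound. The hypothesis $M_0^{2^{nm^2}} 2^{3 \cdot 2^{nm^2}} < N$, which is equivalent to $(8M_0)^{2^{nm^2}} < N$, guarantees $M^{(t)} < N$ throughout, so both nonvanishing lemmas remain applicable at every step. The main technical point is ensuring that a valid integer $H$ always exists in the required $\Z_N$-height window $(M^{(t)}, 4M^{(t)}]$; this window contains at least $3M^{(t)} \geq 3$ integers when $M^{(t)} \geq 1$, and the conditions of the lemmas hold uniformly for any such $H$, so no further restriction on $H$ is needed. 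The main obstacle, beyond bookkeeping, is verifying monotonicity of the ``bad-position count'' under the single step, which is exactly what the three consequences above provide; once that is established, the $nm^2$ iteration bound and the squaring height recursion drop out mechanically.
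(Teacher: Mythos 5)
Your proof takes essentially the same route as the paper's (iteratively eliminate bad positions by elementary row operations, controlling $\Z_N$-heights via Lemmas~\ref{lem: zero avoidance and zn height 2} and~\ref{lem: zero avoidance and zn height} and squaring the height bound at each step), but with two cleaner organizational choices: you treat zero positions and equality positions in a single unified step rather than the paper's two-phase approach, and you observe that the hypotheses ``every column has a nonzero entry'' and ``no two columns are identical'' are preserved automatically because row operations act as invertible linear maps on column vectors, whereas the paper re-verifies these facts by hand at each step. These are genuine economies, and the iteration count $\leq nm + n\binom{m}{2} \leq nm^2$ and the height recursion $M^{(t+1)} \leq 8(M^{(t)})^2$ yielding $(8M_0)^{2^{nm^2}}$ both match the paper.

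There is, however, a quantitative slip in the choice of $H$. For your consequence~3 (previously-distinct pairs in row $i$ stay distinct), the relevant application of Lemma~\ref{lem: zero avoidance and zn height} is to $x = a_{i,j'} - a_{i,\ell'}$ and $y = a_{k,j'} - a_{k,\ell'}$, both of which have $\Z_N$-height at most $2M^{(t)}$, not $M^{(t)}$. The lemma therefore requires $2M^{(t)} < H$, but your window $\{M^{(t)}+1, \ldots, 4M^{(t)}\}$ permits $H$ as small as $M^{(t)}+1 \le 2M^{(t)}$, for which the lemma does not apply. The same height-doubling occurs in the equality-fixing step itself: the new $(i,j)$--$(i,\ell)$ difference is $H(a_{k,j}-a_{k,\ell})$, and $a_{k,j}-a_{k,\ell}$ has $\Z_N$-height up to $2M^{(t)}$, so Lemma~\ref{lem: zero avoidance and zn height 2} needs $4M^{(t)}\cdot 2M^{(t)} < N$ rather than $4M^{(t)}\cdot M^{(t)} < N$. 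Both issues are repaired by taking $H \in \{2M^{(t)}+1,\ldots,4M^{(t)}\}$ (a nonempty window of size $2M^{(t)}$); the required inequalities then read $2M^{(t)} < H$ and $2M^{(t)} + 2M^{(t)}H < N$, which is exactly the condition ``$2M_i < C_{i+1}$ and $2M_i + 2M_iC_{i+1} < N$'' used throughout the paper's proof. Since the upper end $4M^{(t)}$ of the window is unchanged, the recursion $M^{(t+1)} \leq 8(M^{(t)})^2$ and your final bound are unaffected, and the hypothesis $M_0^{2^{nm^2}}2^{3\cdot 2^{nm^2}} < N$ (equivalently $(8M_0)^{2^{nm^2}}<N$) is still strong enough to make all the lemma applications legitimate at every step.
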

	\begin{proof}
		The result is trivial when $n = 1$, since we can take $B = A$. Suppose $n > 1$.
		
		First, we show that we can transform $A$ via elementary row operations into a matrix $A'$ which has all nonzero entries, while keeping track of the $\zn$-height. Moreover, we will ensure that no two columns of $A'$ are identical.
		
		If every entry of $A$ is already nonzero, then take $A' = A$. Otherwise, let $a_{i_1,j_1} \equiv 0 \bmod N$ be a zero entry. By assumption, the $j_1$th column of $A$ has a nonzero entry, say $a_{i_1',j_1}$. Let $C_1$ be a positive integer to be determined later. Add to $A$'s $i_1$th row $C_1$ times the $i_1'$th row of $A$ to obtain the matrix $A_1$. Notate $A_1 = (a_{i,j}^{(1)})$. By construction, $A_1$ agrees with $A$ on all entries not in the $i_1$th row, and in the $i_1$th row we have the relation $a_{i_1,j}^{(1)} \equiv a_{i_1,j} + C_1a_{i_1',j} \bmod N$ for all $j$. Next, provided that $2M_0 < C_1$ and $2M_0 + 2M_0C_1 < N$, we claim that every entry of $A$ in the $i_1$th row that was nonzero remains nonzero in $A_1$, that $a_{i_1,j_1}^{(1)}$ is nonzero, that the $\zn$-height of $A_1$ is at most $2C_1M_0$, that $A_1$ has the property that no two columns are identical, and that every column of $A_1$ has a nonzero entry.

		Applying Lemma~\ref{lem: zero avoidance and zn height} with $M = M_0$ and $H = C_1$, provided that $M_0 < C_1$ and $M_0+M_0C_1 < N$, it follows that every entry of $A$ in the $i_1$th row that was nonzero remains nonzero in $A_1$. 
		
		Moreover, applying Lemma~\ref{lem: zero avoidance and zn height 2} with $H = C_1$ and $H'= M_0$, we see that $a_{i_1,j_1}^{(1)} \equiv a_{i_1,j_1}+C_1a_{i_1',j_1} \equiv C_1a_{i_1',j_1} \bmod N$ is nonzero if $C_1M_0 < N$ since $a_{i_1',j_1}$ is nonzero and has $\zn$-height at most $M_0$, proving the second claim.
		
		For the third claim, each entry of $A_1$ in the $i_1$th row has $\zn$-height at most $2M_0C_1$, and each entry of $A_1$ outside the $i_1$th row has $\zn$-height at most $M_0 < 2M_0C_1$.
		
		Next, we argue that no two columns of $A_1$ are identical. If two columns of $A$ differ on two entries outside of the $i_1$th row, then the same columns in $A_1$ will still differ on these two entries, since $A_1$ agrees with $A$ outside of the $i_1$th row. On the other hand, if two columns of $A$ only differ on their entries $a_{i_1,j}$ and $a_{i_1,j'}$ in the $i_1$th row for some distinct $j, j'$, then we see that $a^{(1)}_{i_1,j} - a^{(1)}_{i_1,j'} \equiv (a_{i_1,j} - a_{i_1,j'}) + C_1(a_{i_1',j} - a_{i_1',j'}) \bmod N$. Applying Lemma~\ref{lem: zero avoidance and zn height} with $H = C_1$ and $M = 2M_0$, provided that $2M_0 < C_1$ and $2M_0 + 2M_0C_1 < N$, it follows that $a^{(1)}_{i_1,j} - a^{(1)}_{i_1,j'}$ is nonzero. Hence the $j$th and $j'$th columns of $A_1$ differ, and the fourth claim holds.
		
		To show the fifth claim, we observe that if the $j$th column of $A$ has a nonzero entry not in the $i_1$th row, then the $j$th column of $A_1$ has the same nonzero entry since $A_1$ and $A$ agree there, and if the $j$th column of $A$ has a nonzero entry in the $i_1$th row, then the corresponding entry in $A_1$ is still nonzero by a previous claim.
		
		In the previous argument, we used the inequalities $M_0 < C_1$, $M_0+M_0C_1 < N$, $C_1M_0 < N$, $2M_0 < C_1$, and $2M_0 + 2M_0C_1 < N$. Since $M_0$ and $C_1$ are positive integers, the inequalities $2M_0 < C_1$ and $2M_0 + 2M_0C_1 < N$ suffice to imply the others.
		
		Let us summarize the previous argument. Defining a positive integer $C_1$ whose exact value is still to be determined and using the facts that every column of $A$ has a nonzero entry, that no two columns of $A$ are identical, and that the $\zn$-height of $A$ is at most $M_0$, we performed one elementary row operation to obtain the matrix $A_1$, which, provided that $2M_0 < C_1$ and $2M_0 + 2M_0C_1 < N$, is such that every column has a nonzero entry, no two columns are identical, the $\zn$-height of its entries is at most $M_1 := 2M_0C_1$, and it has at least one zero entry fewer than $A$.
		
		It is easy to see that, if necessary, we could repeat this argument by choosing a zero entry of $A_1$, finding a nonzero entry in the same column, and adding a positive integer constant $C_2$ (whose exact value is to be determined later) times that row to the row containing the zero entry, and verifying that the resulting matrix $A_2$, while preserving the properties necessary to continue inductively, has at least one zero entry fewer than $A_1$ and now has $\zn$-height at most $M_2 := 2M_1C_2$, all provided that $2M_1 < C_2$ and $2M_1 + 2M_1C_2 < N$ hold.
		
		Suppose that it takes $D$ applications of the previous argument to obtain a matrix with no zero entries. Regardless of which row operations we actually performed, we must have $D \leq nm$ since $A$ is an $n \times m$ matrix. Let $C_1,\ldots, C_D$ be some positive integer constants whose exact value will be determined later. For each $i \in \{0,\ldots, D-1\}$, define $M_{i+1} := 2M_{i}C_{i+1}$. Thus, provided that for each $i \in \{0,\ldots, D-1\}$, the inequalities $2M_{i} < C_{i+1}$ and $2M_{i} + 2M_{i}C_{i+1} < N$ hold, we obtain a matrix $A' := A_D$ with $\zn$-height at most $M_D$, no zero entries, and no two columns identical.
		
		Now, we show that we can transform $A' = A_D = (a^{(D)}_{i,j})$ via elementary row operations into a matrix $B$ (while keeping track of the $\zn$-height) such that, within each row of $B$, all entries are distinct and nonzero. If $A'$ is already such that within each row, all entries are distinct, then take $B = A'$. Otherwise, let $i_{D+1} \in \{1,\ldots, n\}$ and $j_{D+1},j_{D+1}' \in \{1,\ldots m\}$ be indices such that $j_{D+1} \neq j_{D+1}'$ and $a^{(D)}_{i_{D+1},j_{D+1}} \equiv a^{(D)}_{i_{D+1},j_{D+1}'} \bmod N$. Since no two columns of $A_D$ are identical, there exists $i_{D+1}' \in \{1,\ldots, n\}$ such that $a^{(D)}_{i_{D+1}',j_{D+1}} \not\equiv a^{(D)}_{i_{D+1}',j'_{D+1}} \bmod N$. Add to the $A_D$'s $i_{D+1}$th row $C_{D+1}$ times the $i_{D+1}'$th row of $A_D$ to obtain the matrix $A_{D+1} = (a_{i,j}^{(D+1)})$. By construction, $A_{D+1}$ agrees with $A_D$ on all entries not in the $i_{D+1}$th row, and in the $i_{D+1}$th row we have the relation $a^{(D+1)}_{i_{D+1},j} \equiv a^{(D)}_{i_{D+1},j} + C_{D+1}a^{(D)}_{i_{D+1}',j} \bmod N$ for all $j$. Next, provided that $2M_D < C_{D+1}$ and $2M_D + 2M_DC_{D+1} < N$, we claim that every pair of distinct entries on the $i_{D+1}$th row of $A_D$ remains a pair of distinct entries on the same row of $A_{D+1}$, that $a^{(D+1)}_{i_{D+1},j_{D+1}} \not\equiv a^{(D+1)}_{i_{D+1},j_{D+1}'} \bmod N$, that every entry of $A_{D+1}$ is nonzero, that the $\zn$-height of $A_{D+1}$ is at most $2M_DC_{D+1}$, and that $A_{D+1}$ has the property that no two columns are identical.
		
		For the first claim, suppose that $a_{i_{D+1},j}^{(D)} \not\equiv a_{i_{D+1},j'}^{(D)} \bmod N$. To see that $a_{i_{D+1},j}^{(D+1)} \not\equiv a_{i_{D+1},j'}^{(D+1)} \bmod N$, apply Lemma~\ref{lem: zero avoidance and zn height} with $M = 2M_D$ and $H = C_{D+1}$ to $x = a^{(D)}_{i_{D+1},j}-a^{(D)}_{i_{D+1},j'}$ and $y = a^{(D)}_{i'_{D+1},j}-a^{(D)}_{i'_{D+1},j'}$, which is a valid application of the lemma if $2M_D < C_{D+1}$ and $2M_D + 2M_DC_{D+1} < N$.
		
		For the second claim, we know $a^{(D+1)}_{i_{D+1},j_{D+1}} - a^{(D+1)}_{i_{D+1},j_{D+1}'} \equiv C_{D+1}(a^{(D)}_{i_{D+1},j_{D+1}} - a^{(D)}_{i_{D+1},j_{D+1}'}) \bmod N$ and, provided that $2M_{D}C_{D+1} < N$, apply Lemma~\ref{lem: zero avoidance and zn height 2} with $H = C_{D+1}$ and $H' = 2M_D$ to conclude that $a^{(D+1)}_{i_{D+1},j_{D+1}} \not\equiv a^{(D+1)}_{i_{D+1},j_{D+1}'} \bmod N$.
		
		For the third claim, by construction, $A_{D+1}$ agrees with $A_D$ on every row that is not the $i_{D+1}$th row, so in particular $A_{D+1}$ has nonzero entries there. On the $i_{D+1}$th row, we have, for each $j$, the relation $a^{(D+1)}_{i_{D+1},j} \equiv a^{(D)}_{i_{D+1},j} + C_{D+1}a^{(D)}_{i_{D+1}',j} \bmod N$, from which it follows that, after applying Lemma~\ref{lem: zero avoidance and zn height} with $H = C_{D+1}$ and $M = M_D$ and provided that $M_D < C_{D+1}$ and $M_D + M_DC_{D+1} < N$, the entry $a^{(D+1)}_{i_{D+1},j}$ is nonzero.
		
		For the fourth claim, the $\zn$-height of entries of $A_{D+1}$ not on the $i_{D+1}$th row is at most $M_D \leq 2M_DC_{D+1}$ since those entries are the same as the corresponding entries in $A_D$, and the $\zn$-height of entries of $A_{D+1}$ on the $i_{D+1}$th row is at most $M_D + C_{D+1}M_D \leq 2M_DC_{D+1}$.
		
		For the fifth claim, if two columns of $A_{D}$ differ on a row other than the $i_{D+1}$th row, then the corresponding two columns still differ in $A_{D+1}$ because $A_{D+1}$ agrees with $A_D$ on that row, and, for $j \neq j'$, if the $j$th and $j'$th columns of $A_{D}$ differ on the $i_{D+1}$th row, then $a^{(D)}_{i_{D+1},j}$ and $a^{(D)}_{i_{D+1},j'}$ are a pair of distinct entries on the $i_{D+1}$th row of $A_D$, which by the first claim implies that $a^{(D+1)}_{i_{D+1},j}$ and $a^{(D+1)}_{i_{D+1},j'}$ are a distinct pair of entries in $A_{D+1}$. Since $A_D$ has the property that no two columns are identical, these two cases cover all possible pairs of columns in $A_{D+1}$.
		
		In the previous argument, we used the inequalities $2M_D < C_{D+1}$, $2M_D+2M_DC_{D+1} < N$, $2M_DC_{D+1} < N$, $M_D < C_{D+1}$, and $M_D + M_DC_{D+1} < N$. Since $M_D$ and $C_{D+1}$ are positive integers, the inequalities $2M_D < C_{D+1}$ and $2M_D + 2M_DC_{D+1} < N$ suffice to imply the others.
		
		Let us summarize the previous argument. Defining a positive integer $C_{D+1}$ whose exact value is still to be determined and using the facts that every entry of $A_D$ is nonzero, that no two columns of $A_D$ are identical, and that the $\zn$-height of $A_D$ is at most $M_D$, we performed one elementary row operation to obtain the matrix $A_{D+1}$, which, provided that $2M_D < C_{D+1}$ and $2M_D + 2M_DC_{D+1} < N$, is such that every entry is nonzero, no two columns are identical, the $\zn$-height of its entries is at most $M_{D+1} := 2M_DC_{D+1}$, and it has at least one pair of same-row identical entries fewer than $A_D$.
		
		It is easy to see that, if necessary, we could repeat this argument by choosing a pair of same-row identical entries of $A_{D+1}$, finding a same-row pair of distinct entries from the same two columns, and adding a positive integer constant $C_{D+2}$ (whose exact value is to be determined later) times the row containing the same-row pair of distinct entries to the row containing the chosen pair of same-row identical entries, and verifying that the resulting matrix $A_{D+2}$, while preserving the properties necessary to continue inductively, has at least one pair of same-row identical entries fewer than $A_{D+1}$ and now has $\zn$-height at most $M_{D+2} := 2M_{D+1}C_{D+2}$, all provided that $2M_{D+1} < C_{D+2}$ and $2M_{D+1} + 2M_{D+1}C_{D+2} < N$ hold.
		
		Suppose that it takes $D'$ applications of the previous argument to obtain a matrix with the property that, within a row, all entries are distinct. Regardless of which row operations we actually performed, we must have $D' \leq n\binom m 2$ since, within each of $n$ rows of $A_D$, there are $\binom m 2$ pairs of entries. Let $C_{D+1},\ldots, C_{D+D'}$ be some positive integer constants whose exact values will be determined later. For each $i \in \{0,\ldots, D'-1\}$, define $M_{D+i+1} := 2M_{D+i}C_{D+i+1}$. Thus, provided that for each $i \in \{0,\ldots, D'-1\}$, the inequalities $2M_{D+i} < C_{D+i+1}$ and $2M_{D+i} + 2M_{D+i}C_{D+i+1} < N$ hold, we obtain a matrix $B := A_{D+D'}$ with $\zn$-height at most $M_{D+D'}$ and such that within each row, all entries are distinct and nonzero.
		
		Let us now choose suitable values for $C_1,\ldots, C_{D+D'}$ so that all the requisite inequalities hold. To summarize the inequalities used in the preceding proof, we need to show that $2M_i < C_{i+1}$ and $2M_i + 2M_iC_{i+1} < N$ hold for each $i \in \{0,\ldots, D+D'-1\}$.
		
		For each $i \in \{0,1,\ldots, D+D'-1\}$, let $C_{i+1} := 2^{3\cdot 2^i - 1} M_0^{2^i}$. Since $M_{i+1} = 2M_iC_{i+1}$ holds for each $i \in \{0,\ldots, D+D'-1\}$, we may conclude that, for each $i \in \{0,\ldots, D+D'\}$, we have $M_{i} = M_0^{2^i} 2^{3\cdot 2^i - 3}$. Hence, for each $i \in \{0,\ldots, D+D'-1\}$, we have $2M_i = M_0^{2^i} 2^{3\cdot 2^i - 2} < M_0^{2^i} 2^{3\cdot 2^i - 1} = C_{i+1}$, proving one set of inequalities. Now, since $m \geq 1$, it follows that $D+D' \leq nm + n\binom m 2 = n \binom{m+1}{2} \leq nm^2$. Hence, for each $i \in \{0,\ldots, D+D'-1\}$, we have
		$2M_i + 2M_iC_{i+1} = 2M_{i} + M_{i+1} \leq 3M_{D+D'} < 4M_{D+D'} = M_0^{2^{D+D'}} 2^{3\cdot 2^{D+D'}-1} \leq M_0^{2^{nm^2}} 2^{3\cdot 2^{nm^2}} < N$, where the last inequality holds by an initial assumption on $N$, $M_0$, $n$, and $m$.
		
		Finally, to show the claimed bound on the $\zn$-height of $B$, crudely bound
		\begin{equation} M_{D+D'} \ = \ M_0^{2^{D+D'}} 2^{3 \cdot 2^{D+D'} -3} \ \leq \ M_0^{2^{nm^2}} 2^{3 \cdot 2^{nm^2}} \ = \ (8M_0)^{2^{nm^2}}.
		\end{equation} 
	\end{proof}

	\newpage
	\section{Some PET diagrams}\label{sec: PET examples}
	The purpose of this section is to give two examples that were promised earlier. Each example will be explained in its own subsection.
	
	Before we give either example, let us introduce a kind of diagram to simplify working with families of polynomials that are created when Lemmas~\ref{linearization step}~and~\ref{lem: PET inductive step} are applied repeatedly. These diagrams, which we call PET diagrams, will focus on the polynomial families and ignore the created inequalities and other parameters.
	
	As a reminder, the differencing procedure that Lemma~\ref{linearization step} and Lemma~\ref{lem: PET inductive step} perform is the following:
	Let $\{P_1(y),\ldots, P_m(y)\} \subset \zn[y_1,\ldots,y_n]$ be a family of polynomials with some restrictions that we do not repeat here, and suppose that $P_1,\ldots, P_{\ell-1}$ are of degree one for some $\ell \in \{1,\ldots, m\}$. Most of the time, we difference by $P_1$ and some parameter $h \in \mathbb{Z}_N^n$ to obtain the new family 
	\begin{multline}\label{review of differenced family}
		\{P_2-P_1, P_3-P_1,\ldots, P_{\ell - 1} - P_1, \\ P_\ell-P_1, P_\ell(y+h)-P_1(y),P_{\ell + 1} - P_1, P_{\ell+1}(y+h)-P_1(y), \ldots, P_m - P_1, P_m(y+h)-P_1(y)\}.
	\end{multline}
	In a certain case, which was properly treated in the proof of Lemma~\ref{lem: PET inductive step}, another family is created instead, but we do not review the details here because the situation does not occur in any of the examples in this section.
	
	For completeness, we should also mention that there are some restrictions on which polynomial $P_1$ we choose to difference by: if at least one $P_i$ has degree one, then $P_1$ must have degree one, and if all $P_i$ have degree greater than one, then $P_1$ must have minimal weight among them. This restriction is always followed in the examples below.
	
	Now, let us give an example. 
	
	The following PET diagram describes the relevant aspects of the $j = 2$ case of the proof of Proposition~\ref{example: prop: Us control}. It will be helpful to show the diagram first and then explain it:
	\begin{align*}
		& \{ \underline{y}, y^2\} \\
		\longrightarrow_1 \ & \{ y^2-y, (y+h_1)^2 - y\} \\ = \ & \{ \underline{y^2-y}, y^2 + (2h_1-1)y \} \\
		\longrightarrow_2 \ & \{ (y+h_2)^2 - (y+h_2) - (y^2-y), y^2 + (2h_1-1)y - (y^2- y),
		\\ & \; (y+h_2)^2 + (2h_1-1)(y+h_2) - (y^2 -y) \} \\ = \ & \{  2h_2y, 2h_1y, (2h_2+2h_1)y\}
	\end{align*}
	
	In that proof, Lemma~\ref{linearization step} is applied twice to obtain the inequalities \eqref{example: intermediate eqn in PET inductive step} and \eqref{example: intermediate eqn in PET inductive step 2}, which are then suitably combined. The inequality \eqref{example: intermediate eqn in PET inductive step} relates averages involving the families $\{P_1, P_2\} = \{y,y^2\}$
	and $\{Q_1, Q_2\} = \{P_2(y)-P_1(y),P_2(y+h_1)-P_1(y)\} = \{y^2-y,(y+h_1)^2-y\}$. This connection between the two families is encoded in the diagram as $\longrightarrow_1$, and we underline $P_1(y) = y$ in the first row of the diagram as a shorthand for the fact that the family $\{Q_1,Q_2\}$ in the second line is created by differencing by $P_1$. Note also that the differencing parameter $h_1$ is introduced in the step labeled by $\longrightarrow_1$. A final simplification that we make in these diagrams is to omit constant terms: In the expansion of the second line to the third line, the constant term $h_1^2$ is omitted from the second polynomial $y^2 + (2h_1-1)y$, because it is not relevant for us here. 
	
	Similarly, the inequality \eqref{example: intermediate eqn in PET inductive step 2} relates averages involving the families $\{Q_1, Q_2\} = \{y^2-y,(y+h_1)^2-y\}$ and $\{Q_1', Q_2', Q_3'\} = \{ Q_1(y+h_2)-Q_1(y), Q_2(y)-Q_1(y), Q_2(y+h_2) - Q_1(y)\} = \{2h_2y +h_2^2 - h_2, 2h_1y+h_1^2, 2(h_1+h_2)y + (h_1+h_2)^2-h_2\}$. This connection between the two families is encoded in the diagram as $\longrightarrow_2$, and we underline $Q_1(y) = y^2-y$ in the third row of the diagram as a shorthand for the fact that the family $\{Q'_1,Q'_2,Q'_3\}$ in the fourth line is created by differencing by $Q_1$. Note also that, in the expansion of the fourth/fifth line to the last line, again the constant terms are dropped from the diagram.
	
	In this example, we took (a part of) an existing proof of Proposition~\ref{prop: Us control} in a special case, removed all information not pertaining to polynomial families, and summarized the remaining information in a diagram. However, if we are careful, we can produce a PET diagram directly, without condensing it from an existing proof. This allows us to accomplish the goal of this section with much less writing. We will do so now, but there is one caveat we must be wary of.
	
	We already see that these diagrams focus on the finite sequence of polynomial families that arise as Lemma~\ref{lem: PET inductive step} is applied repeatedly to prove Proposition~\ref{prop: Us control} for a given family. To be precise, our diagrams suppress information about the ring $R$, its characteristic $N$, and the 1-bounded functions $f_i$ about which a bound is asserted by the conclusion of the proposition. However, as noted in the remark immediately following the formulation of Lemma~\ref{linearization step}, the key point is that such information can influence the values of the parameters $h_i$, which in turn can change which polynomial is underlined (i.e., is differenced by) at a given step.
	
	Consequently, at key junctures in the construction of a PET diagram, we can reintroduce some of this suppressed information. In this way, we can deduce the existence of the examples we seek. It will be important to reintroduce the suppressed information in a way that remains consistent with the hypotheses of Lemma~\ref{lem: PET inductive step}; only by satisfying this requirement can we be convinced that our PET diagram is actually evidence for an objective phenomenon that we could notice in a more tedious expanded proof of the special case of Proposition~\ref{prop: Us control} for a certain family or families.
	\newpage 
	\subsection{Why Proposition~\ref{prop: linearized family nonzeroing+} is necessary}\label{subsec: prop: linearized family nonzeroing+ might be necessary}
	
	In Section~\ref{sec: fixing a matrix}, it was claimed that Proposition~\ref{prop: linearized family nonzeroing+} is (in general) necessary in the proof of Proposition~\ref{prop: Us control} when $n > 1$.
	
	The following PET diagram describes the polynomial families that are created during the proof of Proposition~\ref{prop: Us control} in the case that $\mathbf P = \{y_1y_2,y_1\} \subset \Z[y_1,y_2]$.
	\begin{align*}
		& \{ y_1y_2, \underline{y_1}\} \\
		\longrightarrow_1 \ & \{ y_1y_2 - y_1, (y_1+h_1)(y_2+h_1')-y_1\} \\
		= \ & \{ \underline{y_1y_2 - y_1}, y_1y_2 + (h_1'-1)y_1 + h_1y_2 \} \\
		\longrightarrow_2 \ & \{ (y_1+h_2)(y_2+h_2')-(y_1+h_2)-(y_1y_2-y_1), y_1y_2 + (h_1'-1)y_1 + h_1y_2 - (y_1y_2-y_1), \\ & \ (y_1+h_2)(y_2+h_2') + (h_1'-1)(y_1+h_2) + h_1(y_2+h_2') - (y_1y_2 - y_1)\} \\ = \ & \{ h_2'y_1 + h_2y_2, h_1'y_1 + h_1y_2, (h_2'+h_1')y_1 + (h_2+h_1)y_2 \}
	\end{align*}
	As noted in the remark immediately following the formulation of Lemma~\ref{lem: PET inductive step}, the exact values of the parameters $(h_1,h_1') \in \mathbb{Z}_N^2$ and $(h_2,h_2') \in \mathbb{Z}_N^2$ depend on the arbitrary functions $f_i : R \to \C$, which have been suppressed in our diagram. Since the $f_i$ are arbitrary, it may happen that $h_2 \equiv -h_1 \bmod N$, where $N$ is the characteristic of $R$, in which case the final line of the diagram would be
	\begin{equation}\label{random poly fam 1}
		\{ h_2'y_1 + h_2y_2, h_1'y_1 + h_1y_2, (h_2'+h_1')y_1 \}.
	\end{equation}
	Indeed, the essential distinctness of the family $\{ y_1y_2 - y_1, y_1y_2 + (h_1'-1)y_1 + h_1y_2 \}$, the coincidence $h_2 \equiv -h_1 \bmod N$, and the essential distinctness of the family \eqref{random poly fam 1} are not in contradiction, as can be verified by writing out which polynomials are required to be nonconstant.
	
	But clearly \eqref{random poly fam 1} is a family of degree 1 polynomials in $y_1$ and $y_2$, not all of whose linear coefficients are nonzero.
	\newpage
	\subsection{Why Proposition~\ref{alg stops} is necessary} \label{subsec: claimed possible nonuniformity}
	In Subsection~\ref{subsec: PET algorithms}, we claimed that \eqref{claimed possible nonuniformity} holds. We prove this claim when $\mathbf P = \{y^3, y^3 + y^2\}$. Having seen the proof in this special case, the general truth of the claim will be obvious.
	
	First note that, by one of the assumptions of Lemma~\ref{lem: PET inductive step}, we must be working in the situation that $\lpf N > 3$, where $N$ is the characteristic of the ring $R$. Thus, when we see the number 3 below, it is not a zero divisor.
	
	We will produce two PET diagrams, which diverge at an early point, depending on whether a certain coincidence holds.
	
	In this first PET diagram, we assume that the coincidence $3h_1 \equiv 1 \bmod N$ occurs; the starred equality reflects the resulting simplification:
	\begin{align*}
		& \{ \underline{y^3}, y^3+y^2\}
		\\ \longrightarrow_1 \ & \{ (y+h_1)^3 - y^3, y^3 + y^2-y^3, (y+h_1)^3 + (y+h_1)^2 - y^3\}
		\\ = \ & \{ 3h_1y^2 + 3h_1^2y, y^2, (3h_1+1)y^2 +(3h_1^2+2h_1)y \}
		\\ \overset{*}{=} \ & \{ y^2 + h_1y, \underline{y^2}, 2y^2 + y \}
		\\ \longrightarrow_2 \ & \{ y^2 + h_1y - y^2, 2y^2 + y - y^2,
		\\ & \ (y+h_2)^2 + h_1(y+h_2) -y^2, (y+h_2)^2 - y^2, 2(y+h_2)^2 + (y+h_2)-y^2 \}
		\\ = \ & \{  h_1y, y^2 +y, (2h_2+h_1)y, 2h_2y, y^2 + (4h_2 + 1)y\}
		\\ \overset{\rm{reorder}}{=} \ & \{\underline{h_1y},2h_2y, (2h_2+h_1)y, y^2+y,y^2+(4h_2+1)y \}
		\\ \longrightarrow_3 \ & \{ 2h_2y-h_1y, (2h_2+h_1)y-h_1y, y^2+y-h_1y,y^2+(4h_2+1)y-h_1y,
		\\ & \ (y+h_3)^2+(y+h_3)-h_1y,(y+h_3)^2+(4h_2+1)(y+h_3)-h_1y
		\}
		\\ = \ & \{(2h_2-h_1)y,\underline{2h_2y}, y^2 + (1-h_1)y, y^2 + (4h_2-h_1+1)y,
		\\ \ & y^2 + (2h_3 + 1 - h_1)y, y^2 + (2h_3+4h_2+1-h_1)y\}
		\\ \longrightarrow_4 \ & \{ (2h_2-h_1)y-2h_2y, y^2 + (1-h_1)y-2h_2y, y^2 + (4h_2-h_1+1)y-2h_2y,
		\\ & \ y^2 + (2h_3 + 1 - h_1)y -2h_2y, y^2 + (2h_3+4h_2+1-h_1)y -2h_2y,
		\\ & \
		(y+h_4)^2 + (1-h_1)(y+h_4)-2h_2y, (y+h_4)^2 + (4h_2-h_1+1)(y+h_4)-2h_2y,
		\\ & \ (y+h_4)^2 + (2h_3 + 1 - h_1)(y+h_4)-2h_2y, (y+h_4)^2 + (2h_3+4h_2+1-h_1)(y+h_4)-2h_2y\}
		\\ = \ &
		\{ \underline{-h_1y}, y^2 + (1-h_1-2h_2)y, y^2 + (2h_2-h_1+1)y,
		\\ & \ y^2 + (2h_3 + 1 - h_1-2h_2)y, y^2 + (2h_3+2h_2+1-h_1)y,
		\\ & \
		y^2 + (2h_4+1-h_1-2h_2)y, y^2 + (2h_4+2h_2-h_1+1)y,
		\\ & \ y^2 + (2h_4+2h_3 + 1 - h_1-2h_2)y, y^2 + (2h_4+2h_3+2h_2+1-h_1)y\}
	\end{align*}
	\begin{align*}
		\longrightarrow_5 \ & \{y^2 + (1-h_1-2h_2)y+h_1y, y^2 + (2h_2-h_1+1)y+h_1y,
		\\ & \ y^2 + (2h_3 + 1 - h_1-2h_2)y+h_1y, y^2 + (2h_3+2h_2+1-h_1)y+h_1y,
		\\ & \
		y^2 + (2h_4+1-h_1-2h_2)y+h_1y, y^2 + (2h_4+2h_2-h_1+1)y+h_1y,
		\\ & \ y^2 + (2h_4+2h_3 + 1 - h_1-2h_2)y+h_1y, y^2 + (2h_4+2h_3+2h_2+1-h_1)y+h_1y,
		\\ & \ (y+h_5)^2 + (1-h_1-2h_2)(y+h_5)+h_1y, (y+h_5)^2 + (2h_2-h_1+1)(y+h_5)+h_1y,
		\\ & \ (y+h_5)^2 + (2h_3 + 1 - h_1-2h_2)(y+h_5)+h_1y, (y+h_5)^2 + (2h_3+2h_2+1-h_1)(y+h_5)+h_1y,
		\\ & \
		(y+h_5)^2 + (2h_4+1-h_1-2h_2)(y+h_5)+h_1y, (y+h_5)^2 + (2h_4+2h_2-h_1+1)(y+h_5)+h_1y,
		\\ && \llap{$(y+h_5)^2 + (2h_4+2h_3 + 1 - h_1-2h_2)(y+h_5)+h_1y, (y+h_5)^2 + (2h_4+2h_3+2h_2+1-h_1)(y+h_5)+h_1y\}$}
		\\ = \ & 
		\{\underline{y^2 + (1-2h_2)y}, y^2 + (2h_2+1)y,
		\\ & \ y^2 + (2h_3 + 1 -2h_2)y, y^2 + (2h_3+2h_2+1)y,
		\\ & \
		y^2 + (2h_4+1-2h_2)y, y^2 + (2h_4+2h_2+1)y,
		\\ & \ y^2 + (2h_4+2h_3 + 1 -2h_2)y, y^2 + (2h_4+2h_3+2h_2+1)y,
		\\ & \ y^2 + (2h_5+1-2h_2)y, y^2 + (2h_5+2h_2+1)y,
		\\ & \ y^2 + (2h_5+2h_3 + 1-2h_2)y, y^2 + (2h_5+2h_3+2h_2+1)y,
		\\ & \
		y^2 + (2h_5+2h_4+1-2h_2)y, y^2 + (2h_5+2h_4+2h_2+1)y,
		\\ & \ y^2 + (2h_5+2h_4+2h_3 + 1-2h_2)y, y^2 + (2h_5+2h_4+2h_3+2h_2+1)y \}
		\\ \longrightarrow_6 \ & 
		\{4h_2y,
		\\ & \ 2h_3y, (2h_3+4h_2)y,
		\\ & \
		2h_4y, (2h_4+4h_2)y,
		\\ & \ (2h_4+2h_3)y, (2h_4+2h_3+4h_2)y,
		\\ & \ 2h_5y, (2h_5+4h_2)y,
		\\ & \ (2h_5+2h_3)y, (2h_5+2h_3+4h_2)y,
		\\ & \
		(2h_5+2h_4)y, (2h_5+2h_4+4h_2)y,
		\\ & \ (2h_5+2h_4+2h_3)y, (2h_5+2h_4+2h_3+4h_2)y,
		\\ & \ 2h_6y, (2h_6+4h_2)y,
		\\ & \ (2h_6+2h_3)y, (2h_6+2h_3+4h_2)y,
		\\ & \
		(2h_6+2h_4)y, (2h_6+2h_4+4h_2)y,
		\\ & \ (2h_6+2h_4+2h_3)y, (2h_6+2h_4+2h_3+4h_2)y,
		\\ & \ (2h_6+2h_5)y, (2h_6+2h_5+4h_2)y,
		\\ & \ (2h_6+2h_5+2h_3)y, (2h_6+2h_5+2h_3+4h_2)y,
		\\ & \
		(2h_6+2h_5+2h_4)y, (2h_6+2h_5+2h_4+4h_2)y,
		\\ & \ (2h_6+2h_5+2h_4+2h_3)y, (2h_6+2h_5+2h_4+2h_3+4h_2)y \}
	\end{align*}

	Now, in this second PET diagram, we assume that $3h_1 \not\equiv 1 \bmod N$ and $3h_1 \not\equiv -1 \bmod N$; compare the right-hand side of the starred equality here to the right-hand side of the starred equality in the first diagram.
	\begin{align*}
		& \{ \underline{y^3}, y^3+y^2\}
		\\ \longrightarrow_1 \ & \{ (y+h_1)^3 - y^3, y^3 + y^2-y^3, (y+h_1)^3 + (y+h_1)^2 - y^3\}
		\\ = \ & \{ 3h_1y^2 + 3h_1^2y, y^2, (3h_1+1)y^2 +(3h_1^2+2h_1)y \}
		\\ \overset{*}{=} \ & \{ 3h_1y^2 + 3h_1^2y, \underline{y^2}, (3h_1+1)y^2 +(3h_1^2+2h_1)y \}
		\\ \longrightarrow_2 \ & \{ 3h_1y^2 + 3h_1^2y-y^2, (3h_1+1)y^2 +(3h_1^2+2h_1)y-y^2,
		\\ & \ 3h_1(y+h_2)^2 + 3h_1^2(y+h_2)-y^2, (y+h_2)^2-y^2, (3h_1+1)(y+h_2)^2 +(3h_1^2+2h_1)(y+h_2) - y^2 \}
		\\ = \ & \{ (3h_1-1)y^2 + 3h_1^2y, 3h_1y^2 +(3h_1^2+2h_1)y,
		\\ & \ (3h_1-1)y^2+ (6h_1h_2+3h_1^2)y, 2h_2y, 3h_1y^2 + ( 6h_1h_2 + 2h_2+ 3h_1^2+2h_1)y \}
		\\ \overset{\rm{reorder}}{=} \ & \{ \underline{2h_2y},(3h_1-1)y^2 + 3h_1^2y, 3h_1y^2 +(3h_1^2+2h_1)y,
		\\ & \ (3h_1-1)y^2+ (6h_1h_2+3h_1^2)y, 3h_1y^2 + (6h_1h_2 + 2h_2 + 3h_1^2+2h_1)y \}
		\\ \longrightarrow_3 \ & \{ (3h_1-1)y^2 + 3h_1^2y-2h_2y, 3h_1y^2 +(3h_1^2+2h_1)y-2h_2y,
		\\ & \ (3h_1-1)y^2+ (6h_1h_2+3h_1^2)y-2h_2y, 3h_1y^2 + (6h_1h_2 + 2h_2 + 3h_1^2+2h_1)y-2h_2y,
		\\ & \ (3h_1-1)(y+h_3)^2 + 3h_1^2(y+h_3)-2h_2y, 3h_1(y+h_3)^2 +(3h_1^2+2h_1)(y+h_3)-2h_2y,
		\\ && \llap{$(3h_1-1)(y+h_3)^2+ (6h_1h_2+3h_1^2)(y+h_3)-2h_2y, 3h_1(y+h_3)^2 + (6h_1h_2 + 2h_2 + 3h_1^2+2h_1)(y+h_3)-2h_2y\}$} 
		\\ \overset{**}{=} \ & \{ \underline{(3h_1-1)y^2 + (3h_1^2-2h_2)y}, 3h_1y^2 +(3h_1^2+2h_1-2h_2)y,
		\\ & \ (3h_1-1)y^2+ (6h_1h_2+3h_1^2-2h_2)y, 3h_1y^2 + (6h_1h_2 + 3h_1^2+2h_1)y,
		\\ & \ (3h_1-1)y^2 + (6h_1h_3 -2h_3+3h_1^2-2h_2)y, 3h_1y^2 +(6h_1h_3+3h_1^2+2h_1-2h_2)y,
		\\ & \ (3h_1-1)y^2+ (6h_1h_3-2h_3+6h_1h_2+3h_1^2-2h_2)y, 3h_1y^2 + (6h_1h_3+6h_1h_2 + 3h_1^2+2h_1)y\}
	\end{align*}
	
	Before continuing the calculation, we offer some comments about the validity of some of the steps above.
	
	In the $\longrightarrow_2$ step, we may choose to difference by $y^2$ for the following reasons: First, since Lemma~\ref{lem: PET inductive step} ensures the essential distinctness of the family $\{3h_1y^2 + 3h_1^2y, y^2, (3h_1+1)y^2 +(3h_1^2+2h_1)y\}$ that results from the $\longrightarrow_1$ step, it follows that $h_1$ cannot be zero modulo $N$, since that would force the first polynomial $3h_1y^2+3h_1^2y$ to be the zero polynomial. As a result, the first polynomial $3h_1y^2+3h_1^2y$ has degree two because the fact that $h_1 \not\equiv 0 \bmod N$ and the fact that 3 is not a zero divisor together imply that $3h_1 \not\equiv 0 \bmod N$. Moreover, the third polynomial $(3h_1+1)y^2 + (3h_1^2+2h_1)y$ is also of degree two since we assume $3h_1 \not\equiv -1 \bmod N$. Hence, the third polynomial does not degenerate to a linear polynomial, and we may thus choose to difference by $y^2$ in the $\longrightarrow_2$ step. 
	
	Similarly, since neither $3h_1$ nor $3h_1-1$ are zero modulo $N$, no polynomial on the right-hand side of the double-starred equality has degree less than two, whence we may difference using the underlined polynomial $P_1(y)=(3h_1-1)y^2 + (3h_1^2-2h_2)y$ to obtain
	\begin{align*}
		\longrightarrow_4 \ & \{3h_1y^2 +(3h_1^2+2h_1-2h_2)y-P_1(y),
		\\ & \ (3h_1-1)y^2+ (6h_1h_2+3h_1^2-2h_2)y-P_1(y), 3h_1y^2 + (6h_1h_2 + 3h_1^2+2h_1)y-P_1(y),
		\\ & \ (3h_1-1)y^2 + (6h_1h_3 -2h_3+3h_1^2-2h_2)y-P_1(y), 3h_1y^2 +(6h_1h_3+3h_1^2+2h_1-2h_2)y-P_1(y),
		\\ & \ (3h_1-1)y^2+ (6h_1h_3-2h_3+6h_1h_2+3h_1^2-2h_2)y-P_1(y),
		\\ & \ 3h_1y^2 + (6h_1h_3+6h_1h_2 + 3h_1^2+2h_1)y-P_1(y),
		\\ & \ (3h_1-1)(y+h_4)^2 + (3h_1^2-2h_2)(y+h_4)-P_1(y),
		\\ & \ 3h_1(y+h_4)^2 +(3h_1^2+2h_1-2h_2)(y+h_4)-P_1(y),
		\\ & \ (3h_1-1)(y+h_4)^2+ (6h_1h_2+3h_1^2-2h_2)(y+h_4)-P_1(y),
		\\ & \ 3h_1(y+h_4)^2 + (6h_1h_2 + 3h_1^2+2h_1)(y+h_4)-P_1(y),
		\\ & \ (3h_1-1)(y+h_4)^2 + (6h_1h_3 -2h_3+3h_1^2-2h_2)(y+h_4)-P_1(y),
		\\ & \ 3h_1(y+h_4)^2 +(6h_1h_3+3h_1^2+2h_1-2h_2)(y+h_4)-P_1(y),
		\\ & \ (3h_1-1)(y+h_4)^2+ (6h_1h_3-2h_3+6h_1h_2+3h_1^2-2h_2)(y+h_4)-P_1(y),
		\\ & \ 3h_1(y+h_4)^2 + (6h_1h_3+6h_1h_2 + 3h_1^2+2h_1)(y+h_4)-P_1(y) \} 
		\\ = \ & \{y^2 +2h_1y,
		\\ & \ 6h_1h_2y, y^2 + (6h_1h_2 +2h_1+2h_2)y,
		\\ & \ (6h_1h_3 -2h_3)y, y^2 +(6h_1h_3+2h_1)y,
		\\ & \ (6h_1h_3-2h_3+6h_1h_2)y,
		\\ & \ y^2 + (6h_1h_3+6h_1h_2 + 2h_1+2h_2)y,
		\\ & \ (6h_1h_4-2h_4)y,
		\\ & \ y^2 +(6h_1h_4+2h_1)y,
		\\ & \ (6h_1h_4-2h_4+6h_1h_2)y,
		\\ & \ y^2 + (6h_1h_4+6h_1h_2 +2h_1+2h_2)y,
		\\ & \ (6h_1h_4-2h_4+6h_1h_3 -2h_3)y,
		\\ & \ y^2 +(6h_1h_4+6h_1h_3+2h_1)y,
		\\ & \ (6h_1h_4-2h_4+6h_1h_3-2h_3+6h_1h_2)y,
		\\ & \ y^2 + (6h_1h_4+6h_1h_3+6h_1h_2 +2h_1+2h_2)y \} 
		\\ \overset{\rm{reorder}}{=} \ & \{ 6h_1h_2y, (6h_1h_3 -2h_3)y, (6h_1h_3-2h_3+6h_1h_2)y, (6h_1h_4-2h_4)y,
		\\ & \ (6h_1h_4-2h_4+6h_1h_2)y, (6h_1h_4-2h_4+6h_1h_3 -2h_3)y, (6h_1h_4-2h_4+6h_1h_3-2h_3+6h_1h_2)y,
		\\ & \ y^2 +2h_1y, y^2 + (6h_1h_2 +2h_1+2h_2)y, y^2 +(6h_1h_3+2h_1)y,  y^2 + (6h_1h_3+6h_1h_2 + 2h_1+2h_2)y,
		\\ & \ y^2 +(6h_1h_4+2h_1)y, y^2 + (6h_1h_4+6h_1h_2 +2h_1+2h_2)y,
		\\ & \ y^2 +(6h_1h_4+6h_1h_3+2h_1)y, y^2 + (6h_1h_4+6h_1h_3+6h_1h_2 +2h_1+2h_2)y \}
	\end{align*}
	We will not complete the second PET diagram, but we can still see how it will end. As we can see from the last line, the polynomial family that results from the $\longrightarrow_4$ step consists of seven polynomials of degree 1 and a nonzero number of polynomials of degree two. If we were to continue the diagram, we would need to difference at least seven more times, since, by essential distinctness, none of the linear polynomials have the same linear coefficient. In fact, it would take exactly eight more times before we obtain a polynomial family consisting only of linear polynomials.
	
	Thus, in the first PET diagram (where $3h_1 \equiv 1 \bmod N$), we see that it takes six applications of Lemma~\ref{lem: PET inductive step} to reach the end state, but from the second PET diagram (where $3h_1 \not\equiv \pm 1 \bmod N$), it is clear that it takes twelve applications of the same lemma. But either of these situations can happen, since the $f_i$, which are arbitrary, may be rigged to force either one. This proves the claim \eqref{claimed possible nonuniformity}.

\end{document}